\newcommand{\Hmm}[1]{\leavevmode{\marginpar{\tiny%
$\hbox to 0mm{\hspace*{-0.5mm}$\leftarrow$\hss}%
\vcenter{\vrule depth 0.1mm height 0.1mm width \the\marginparwidth}%
\hbox to 0mm{\hss$\rightarrow$\hspace*{-0.5mm}}$\\\relax\raggedright #1}}}
\newcommand{\N}{{\mathbb{N}}}
\newcommand{\R}{{\mathbb{R}}}
\newcommand{\C}{{\mathbb{C}}}
\newcommand{\f}{\frac}
\newcommand{\ol}{\overline}
\newcommand{\wti}{\widetilde}
\newcommand{\hatt}{\widehat}
\newcommand{\beq}{\begin{equation}}
\newcommand{\eeq}{\end{equation}}
\newcommand{\bdm}{\begin{displaymath}}
\newcommand{\edm}{\end{displaymath}}
\newcommand{\ba}{\begin{align}}
\newcommand{\ea}{\end{align}}
\newcommand{\bpf}{\begin{proof}}
\newcommand{\epf}{\end{proof}}
\newcommand{\la}{\langle}
\newcommand{\ra}{\rangle}
\newcommand{\supp}{\mathrm{supp}\, }               
\newcommand{\dist}{\mathrm{dist}}               
\newcommand{\veps}{\varepsilon}
\newcommand{\re}{\mathrm{Re}}
\newcommand{\im}{\mathrm{Im}}
\newcommand{\dav}{{d_{\mathrm{av}}}}
\newcommand{\sgn}[1]{{\mathrm{sgn}(#1)}}
\newcommand{\id}{\mathbf{1}}                
\newcommand{\calF}{\mathcal{F}}
\newcommand{\calG}{\mathcal{G}}
\newtheorem{theorem}{Theorem}
\newtheorem{proposition}[theorem]{Proposition}
\newtheorem{lemma}[theorem]{Lemma}
\newtheorem{corollary}[theorem]{Corollary}
\theoremstyle{definition}
\newtheorem{definition}[theorem]{Definition}
\newtheorem{remark}[theorem]{Remark}
\newtheorem{remarks}[theorem]{Remarks}
\newcounter{theoremi}[theorem]
\numberwithin{theorem}{section}
\numberwithin{equation}{section}
\newcounter{smalllist}
\newcounter{listi}
\newenvironment{theoremlist}{\begin{list}{{\rm(\roman{listi})}}{%
\setlength{\topsep}{0mm}\setlength{\parsep}{0mm}\setlength{\itemsep}{0mm}%
\setlength{\labelwidth}{1.5em}\setlength{\leftmargin}{1.7em}\usecounter{listi}%
}}{\end{list}}
\newcounter{smallenum}
\newcounter{assumptions}
\newenvironment{assumptions}{\begin{list}{\textbf{A\rm\arabic{assumptions}})}{%
\setlength{\topsep}{0mm}\setlength{\parsep}{0mm}\setlength{\itemsep}{0mm}%
\setlength{\labelwidth}{2em}\setlength{\leftmargin}{2em}\usecounter{assumptions}%
}}{\end{list}}
\begin{document}

\title[Thresholds for existence of DMS for general nonlinearities]{Thresholds for existence of dispersion management solitons for general nonlinearities}
\author[M.-R.~Choi, D.~Hundertmark, Y.-R.~Lee]{Mi-Ran Choi, Dirk Hundertmark, Young-Ran~Lee}
\address{Department of Mathematics, Sogang University, 35 Baekbeom-ro,
    Mapo-gu, Seoul 04107, Korea.}%
\email{rani9030@sogang.ac.kr}
\address{Department of Mathematics, Institute for Analysis, Karlsruhe Institute of Technology, 76128 Karlsruhe, Germany.}%
\email{dirk.hundertmark@kit.edu}%
\address{Department of Mathematics, Sogang University,  35 Baekbeom-ro,
    Mapo-gu, Seoul 04107, South Korea.}%
\email{younglee@sogang.ac.kr}

\thanks{\copyright 2016 by the authors. Faithful reproduction of this article,
        in its entirety, by any means is permitted for non-commercial purposes}
\date{\today, version continuous-general-nonlinearity-7-3.tex}

\begin{abstract}
We prove a threshold phenomenon for the existence of solitary solutions of the dispersion management equation for positive and zero average dispersion for a large class of nonlinearities. These solutions are found as minimizers of
nonlinear and nonlocal variational problems which are invariant under
a large non--compact group. There exists a threshold such that minimizers exist when the power of the solitons is bigger than the threshold. Our proof of existence of minimizers is
rather direct and avoids the use of Lions' concentration compactness
argument. The existence of dispersion managed
solitons is shown under very mild conditions on the
dispersion profile and the nonlinear polarization of optical active medium, which cover all physically relevant cases for the dispersion profile and a large class of nonlinear polarizations, for example, they are allowed to change sign.
\end{abstract}

\maketitle
{\hypersetup{linkcolor=black}
\tableofcontents}

\section{Introduction}\label{introduction}
\subsection{The variational problems}
\label{sec:intro-1}
We show the existence of minimizers for a family of nonlocal and nonlinear variational problems
 \beq\label{eq:min}
    E_\lambda^{\dav}\coloneqq \inf \left\{ H(f): \|f\|^2=\lambda\right\},
 \eeq
where $\lambda>0$, the average dispersion $\dav\ge 0$, $\|f\|^2=\int_\R |f|^2\, dx$, the Hamiltonian takes the form
 \beq\label{eq:energy}
    H(f)\coloneqq\f{d_{\text{av}}}{2}\|f'\|^2-N(f) ,
 \eeq
and the nonlocal nonlinearity is given by
 \beq\label{eq:N(f)}
   N(f)\coloneqq\iint_{\R^2} V(|T_r f(x)|)dx \psi(r) dr .
 \eeq
 Here $V:[0,\infty)\to \R$ is a suitable nonlinear potential and $T_r=e^{ir\partial_x^2}$ is the solution operator of the free Schr\"{o}dinger equation in one dimension.
 The function $\psi$ is assumed to be in suitable $L^p$-spaces.

 If $\dav>0$ then, strictly speaking, the infimum in \eqref{eq:min} is taken over all $f$ with additionally $f\in H^1(\R)$, the usual Sobolev space of square integrable functions whose distributional derivative $f'$ is also square integrable.
 One can recover our formulation \eqref{eq:min} by setting
 $\|f'\|\coloneqq\infty$ if $f\in L^2\setminus H^1$.

 Our interest in these variational problems stems from the fact that the minimizers of \eqref{eq:min} are the building blocks for (quasi-)periodic breather type solutions, the so-called dispersion management solitons, of the dispersion managed nonlinear Schr\"odinger equation.
 The function $\psi$ is the density of a probability measure related to the local dispersion profile $d_0$, see the discussion in Section \ref{sec:connection-nonlinear-optics},  especially Lemma \ref{lem:properties-mu} and formula \eqref{eq:density-psi}.
 The dispersion management solitons have attracted a lot of interest in the development of ultrafast longhaul optical data transmission fibers. So far, it has mainly been studied for a Kerr-type nonlinearity, i.e., the special case where $V(a)=a^4$. The purpose of this work is to extend our previous existence results from \cite{HuLee2012} to  a large class of nonlinearities $V$ and also to positive average dispersion. We address the connection of the above variational problems with nonlinear optics later in Section \ref{sec:connection-nonlinear-optics}.

 The standard approach to show the existence of a minimizer of \eqref{eq:min} is to identify it as the strong limit of a suitable minimizing sequence, that is, a sequence
 $(f_n)_{n\in\N}\subset L^2(\R)$, with $\|f_n\|^2 =\lambda$ and
 $E^{\dav}_\lambda = \lim_{n\to \infty} H(f_n)$. The catch is that the above  variational problem is invariant under translations of $L^2(\R)$ if $\dav>0$ and under translations and boosts, that is, shifts in Fourier space, if $\dav=0$. This invariance under a large non-compact group of transformations leads to a loss of compactness since minimizing sequences can easily converge weakly to zero. The usual strategy to compensate for such a loss of compactness is Lions' concentration compactness method. In a previous paper, \cite{HuLee2012}, we used an alternative approach, which directly showed that  modulo the natural symmetries of the problem, minimizing sequences stay compact. The tools were very much tailored to the special type of Kerr nonlinearity. This paper extends our approach from \cite{HuLee2012} to a much more general setting. This extension is by no means straightforward, see Section \ref{sec:nonlinear-estimates} and Remark \ref{rem:nonlinearity}.

Our main assumptions on the nonlinear potential $V:\R_+\to\R $ are
\begin{assumptions}
\item \label{ass:A1}  $V$ is continuous on $\R_+=[0,\infty)$ and differentiable on $(0,\infty)$ with $V(0)=0$. There exist $2 \le  \gamma_1\le \gamma_2 <\infty$ such that
	\begin{align*} 
		|V'(a)|\lesssim a^{\gamma_1-1} + a^{\gamma_2-1} \quad\text{ for all } a>0.
	\end{align*}
\item \label{ass:A2} $V$ is continuous on $\R_+$ and differentiable on $(0,\infty)$ with $V(0)=0$. There exists $\gamma_0>2$ such that
	\begin{align*}
		V'(a)a \ge \gamma_0 V(a)\quad\text{ for all } a>0.
	\end{align*}
\item \label{ass:A3} There exists $a_0>0$ such that $V(a_0)>0$.
\end{assumptions}
\vspace{2pt}
Above, we use the convention $f\lesssim g$, if there exists a finite constant $C>0$ such that $f\le Cg$.

These three assumptions above are our main requirements on the nonlinear potential. For our existence results, depending on whether $\dav=0$ or $\dav>0$, we will have to pose some additional restrictions on the range of $\gamma_1\le\gamma_2$. For example, if $\dav>0$, we will need that $2< \gamma_1\le\gamma_2<10$ and we will also have some additional $L^p$ conditions on $\psi$ to ensure the existence of minimizers for \eqref{eq:min}. We will see that these conditions yield a threshold phenomenon
, that is, $f\in H^1(\R)$, respectively $f \in L^2(\R)$, if $\dav=0$, with $\|f\|^2=\lambda$ and $H(f) =E^\dav_\lambda$   exists at least for large enough power $\lambda$.
In order to guarantee the existence of solutions for arbitrarily small
$\lambda>0$, we need to strengthen assumption \ref{ass:A3} to
\vspace{2pt}

\begin{assumptions}
\setcounter{assumptions}{3}
\item \label{ass:A4} If $\dav>0$ we assume that there exist $\veps>0 $
	 and $2< \kappa_0 < 6$ such that
	\begin{align*}
		V(a)\gtrsim a^{\kappa_0} \quad\text{ for all } 0< a\le\veps.
	\end{align*}
 If $\dav = 0$ we assume that there exists $\veps>0 $ such that $V(a)>0$ for all $0<a\le \veps$.
%
\end{assumptions}

\begin{remarks}
  \begin{theoremlist}
 	\item\label{rem:simple} An integration shows that \ref{ass:A1} implies
		\begin{align*}
		|V(a)|\lesssim a^{\gamma_1} + a^{\gamma_2}.
	\end{align*}
	Much more important for us is the fact that \ref{ass:A1} allows us to control the \emph{nonlocal} nonlinearity $N$ under \emph{splitting}, see Lemma \ref{lem:V-splitting} and the discussion in  Section \ref{subsec:Vsplitting}.
  \item Examples of nonlinearities obeying assumptions \ref{ass:A1} through \ref{ass:A3} are given by
  \begin{align*}
  	V(a) = \sum_{j=1}^J c_j a^{s_j}
  \end{align*}  	
  with $c_j > 0$, $2<s_j<10$ if $\dav>0$, respectively, $2<s_j<6$ if $\dav=0$,
  and $J\in\N$, but our assumptions also allow nonlinear potentials which can become negative, for example, for $\dav>0$ we can allow for
  \begin{align*}
  	V(a) = -a^4 +a^8\quad \text{for } a\ge 0 .
  \end{align*}
  It certainly fulfills \ref{ass:A1}. Since
  \begin{align*}
  	V'(a)a = -4a^4 +8 a^8 = 4(-a^4+a^8) +4a^8 \ge 4V(a),
  \end{align*}
  it also obeys \ref{ass:A2}. Moreover,  $V(a_0)>0$ for all large enough $a_0$, so \ref{ass:A3} holds.

  Similarly, if $\dav=0$ we can allow for
 \begin{align*}
  	V(a) = -a^3 +a^5\quad \text{for } a\ge 0 .
  \end{align*}
  If we did not assume \ref{ass:A3}, then the nonlinearities could also be strictly negative for all $a>0$, for example, $V(a)= -a^4-a^6$ obeys \ref{ass:A1} and because of
  \begin{align*}
  	V'(a)a = -4a^4 -6 a^6 = 6(-\frac{4}{6}a^4 - a^6) \ge 6 V(a)
  \end{align*}
  also \ref{ass:A2}, but then the critical threshold $\lambda^\dav_{\mathrm{cr}}$ given in Theorems \ref{thm:existence+} and \ref{thm:existence0} would be infinite. The threshold is finite once there exists $f\in H^1(\R) $ with $N(f)>0$, see 
  Theorem \ref{thm:threshold-phenomena}.\ref{thm:threshold-phenomena-5}.
 \end{theoremlist}
\end{remarks}

Our first main result is

\begin{theorem}[Thresholds for existence for positive average dispersion]\label{thm:existence+}
  Assume $\dav>0$, $V$ obeys assumptions \ref{ass:A1} through \ref{ass:A3} for some  $2< \gamma_1\le \gamma_2<10$, and $\psi\in L^{\alpha_\delta}$ has compact support for some $\delta>0$, where $\alpha_\delta:=\alpha_\delta(\gamma_2):=\max\{1,\f{4}{10-\gamma_2}+\delta\}$. Then
  \begin{theoremlist}
  	\item	
		There exists a threshold $0\le\lambda^\dav_\mathrm{cr}<\infty$ such that $E^{\dav}_\lambda =0$ for $0< \lambda < \lambda^\dav_{\mathrm{cr}}$ and $-\infty<E^{\dav}_\lambda<0$ for  $\lambda> \lambda^\dav_{\mathrm{cr}}$.
	\item
		If $0<\lambda<\lambda_{\mathrm{cr}}^\dav$, then no minimizer for the constrained  minimization problem  \eqref{eq:min} exists.
		If $\gamma_1\ge 6$, then $\lambda_{\mathrm{cr}}^\dav>0$.
	\item
		If $\lambda>\lambda_{\mathrm{cr}}^\dav$, then any minimizing sequence for  \eqref{eq:min} is up to translations relatively compact in $L^2(\R)$, in particular,  there exists a minimizer for \eqref{eq:min}.
		This minimizer is also a weak solution of the dispersion management equation \eqref{eq:GT} for some Lagrange multiplier $\omega< 2E^{\dav}_\lambda/\lambda<0$.
	\item
		If $V$ obeys in addition \ref{ass:A4},  then $\lambda_{\mathrm{cr}}^\dav=0$.
  \end{theoremlist}
\end{theorem}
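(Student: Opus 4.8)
The plan is to reduce everything to two ingredients: the scaling structure forced by \ref{ass:A2}, and the averaged nonlinear estimates of Section~\ref{sec:nonlinear-estimates}. The starting observation is that \ref{ass:A2} is equivalent to monotonicity of $a\mapsto V(a)a^{-\gamma_0}$, so that $V(ca)\ge c^{\gamma_0}V(a)$ for every $c\ge 1$ and $a>0$, \emph{irrespective of the sign of} $V$. Since $T_r$ is linear and $\psi\ge 0$, integrating this pointwise inequality against $dx\,\psi(r)\,dr$ yields $N(cf)\ge c^{\gamma_0}N(f)$ for $c\ge 1$, and with $H(cf)=\tfrac{\dav}{2}c^2\|f'\|^2-N(cf)$ this gives the scaling bound $H(cf)\le c^2H(f)+(c^2-c^{\gamma_0})N(f)$. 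To prove part (i), I would first note $E^{\dav}_\lambda\le 0$ for all $\lambda$: spreading a fixed test function, $f_n=n^{-1/2}f(\cdot/n)$, kills the kinetic energy like $n^{-2}$, while \ref{ass:A1} (in the form $|V(a)|\lesssim a^{\gamma_1}+a^{\gamma_2}$, Remark~\ref{rem:simple}) together with $\gamma_1>2$ forces $N(f_n)\to 0$. Next, if $E^{\dav}_{\lambda_0}<0$ then any near--minimizer $f$ has $N(f)=\tfrac{\dav}{2}\|f'\|^2-H(f)>0$, and scaling up ($c>1$) makes both terms in the scaling bound negative, so $E^{\dav}_{\Lambda}<0$ for all $\Lambda>\lambda_0$. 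Hence $\lambda^{\dav}_{\mathrm{cr}}:=\sup\{\lambda>0:E^{\dav}_\lambda=0\}$ splits the axis as claimed. Finiteness $\lambda^{\dav}_{\mathrm{cr}}<\infty$ follows from \ref{ass:A3}: a low--frequency bump of amplitude $a_0$ has $T_rf\approx f$ on the support of $\psi$, hence $N(f)>0$, and scaling it up drives $H$ below $0$ for large $\lambda$. Finally $E^{\dav}_\lambda>-\infty$ is where $\gamma_2<10$ and $\psi\in L^{\alpha_\delta}$ enter: the estimates of Section~\ref{sec:nonlinear-estimates} give $N(f)\lesssim \|f\|_2^{\gamma_i-\beta_i}\|f'\|_2^{\beta_i}$ with $\beta_i$ \emph{strictly} below $2$ (the exponent $\tfrac{\gamma}{2}-1-\tfrac{2}{\alpha'}$ drops under $2$ precisely because $\alpha_\delta>\tfrac{4}{10-\gamma_2}$), so $N(f)\le \tfrac{\dav}{4}\|f'\|^2+C$ and $H$ is coercive.

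For part (ii), suppose a minimizer $f$ existed for some $0<\lambda<\lambda^{\dav}_{\mathrm{cr}}$. Then $H(f)=E^{\dav}_\lambda=0$; as $f\in H^1(\R)$ is nonzero it is non--constant, so $\|f'\|>0$ and $N(f)=\tfrac{\dav}{2}\|f'\|^2>0$. Scaling by $c>1$ with $c^2\lambda$ still below $\lambda^{\dav}_{\mathrm{cr}}$ gives $H(cf)<0$, contradicting $E^{\dav}_{c^2\lambda}=0$ from part (i). For the positivity $\lambda^{\dav}_{\mathrm{cr}}>0$ when $\gamma_1\ge 6$, I would use the \emph{critical} version of the same estimate: since $\gamma_2\ge\gamma_1\ge 6$ and $\psi\in L^{\alpha_\delta}\subset L^{4/(10-\gamma_i)}$ (compact support), tuning the averaging to the endpoint gives $\beta_i=2$, i.e.\ $N(f)\le C\bigl(\lambda^{(\gamma_1-2)/2}+\lambda^{(\gamma_2-2)/2}\bigr)\|f'\|^2$. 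For $\lambda$ small the prefactor is below $\tfrac{\dav}{2}$, whence $H(f)\ge 0$, $E^{\dav}_\lambda=0$, and $\lambda<\lambda^{\dav}_{\mathrm{cr}}$.

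Part (iii) is the heart of the matter. The scaling bound upgrades to \emph{strict} monotonicity: for $\theta>1$ and $\lambda>\lambda^{\dav}_{\mathrm{cr}}$, inserting $N(f)=\tfrac{\dav}{2}\|f'\|^2-H(f)$ into $H(\sqrt{\theta}f)$ gives $E^{\dav}_{\theta\lambda}\le \theta^{\gamma_0/2}E^{\dav}_\lambda<\theta E^{\dav}_\lambda$, so $\mu\mapsto E^{\dav}_\mu/\mu$ is strictly decreasing and the strict subadditivity $E^{\dav}_\lambda<E^{\dav}_{\mu}+E^{\dav}_{\lambda-\mu}$ holds for $0<\mu<\lambda$. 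Given a minimizing sequence $(f_n)$, coercivity bounds it in $H^1(\R)$, and $N(f_n)\ge -H(f_n)\to -E^{\dav}_\lambda>0$ rules out vanishing (a vanishing sequence would have $N(f_n)\to0$ by the estimates of Section~\ref{sec:nonlinear-estimates}), so after translations $f_n(\cdot-y_n)\rightharpoonup f\ne 0$. The decisive tool is the splitting Lemma~\ref{lem:V-splitting}, giving the Brezis--Lieb type identity $N(f_n)=N(f)+N(f_n-f)+o(1)$ and hence $H(f_n)=H(f)+H(f_n-f)+o(1)$; combined with strict subadditivity this forces $\|f\|^2=\lambda$, so $f_n\to f$ in $L^2$, $N(f_n-f)\to 0$, and then $H(f_n-f)\to 0$ forces $\|(f_n-f)'\|\to 0$, i.e.\ strong $H^1$ convergence to a minimizer. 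The Euler--Lagrange equation \eqref{eq:GT} then holds with a multiplier $\omega$; pairing it with $f$ gives $\omega\lambda=\dav\|f'\|^2-\iint V'(|T_rf|)|T_rf|\,dx\,\psi(r)\,dr$, and \ref{ass:A2} together with $N(f)=\tfrac{\dav}{2}\|f'\|^2-E^{\dav}_\lambda$ yields $\omega\lambda\le(1-\tfrac{\gamma_0}{2})\dav\|f'\|^2+\gamma_0E^{\dav}_\lambda\le\gamma_0E^{\dav}_\lambda<2E^{\dav}_\lambda$, which is the claim $\omega<2E^{\dav}_\lambda/\lambda<0$.

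Finally, part (iv): under \ref{ass:A4} with $2<\kappa_0<6$ I would exhibit, for every $\lambda>0$, a spread--out small--amplitude profile $f=A\phi(\cdot/L)$ with $\|f\|^2=\lambda$ (so $A^2L$ is fixed and $A\to0$ as $L\to\infty$). For large $L$ the amplitude stays below $\veps$ and $T_rf\approx f$, so $N(f)\gtrsim \int|f|^{\kappa_0}\,dx\sim A^{\kappa_0}L$, while $\tfrac{\dav}{2}\|f'\|^2\sim A^2L^{-1}$; the ratio scales like $L^{3-\kappa_0/2}\to\infty$ because $\kappa_0<6$, so $H(f)<0$ and $E^{\dav}_\lambda<0$ for all $\lambda>0$, i.e.\ $\lambda^{\dav}_{\mathrm{cr}}=0$. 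The genuine obstacle throughout is not the scheme above but the two nonlinear inputs of Section~\ref{sec:nonlinear-estimates}: the sharp averaged bound with the exponent bookkeeping behind $\gamma_2<10$ and $\alpha_\delta$, and---above all---the splitting Lemma~\ref{lem:V-splitting}, whose proof must replace the pointwise Brezis--Lieb argument (unavailable for a nonlocal, non--homogeneous, possibly sign--changing $V$) and is exactly the step flagged in Remark~\ref{rem:nonlinearity}.
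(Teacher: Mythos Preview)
Your treatment of parts (i), (ii), and (iv) is essentially correct and close to the paper's: the scaling consequence of \ref{ass:A2}, spreading test functions to see $E^{\dav}_\lambda\le 0$, the coercivity from Proposition~\ref{prop:N-boundedness} for $E^{\dav}_\lambda>-\infty$, the non-existence via scaling, the bound $N(f)\lesssim \lambda^{(\gamma_j-2)/2}\|f'\|^2$ when $\gamma_1\ge 6$, and Gaussian/spread-out tests under \ref{ass:A4} all match (the paper packages (i)--(ii) through the auxiliary function $R(\lambda)$ in Section~\ref{sec:threshold}, but the content is the same).

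Part (iii) is where you diverge, and there is a genuine gap. The paper does \emph{not} run a concentration--compactness/Brezis--Lieb argument. Instead it proves tightness \emph{directly}: the fat-tail Proposition~\ref{prop:fat-tail-bound+} shows that if $f$ has at least $\delta$ mass to the left of $a$ and to the right of $b$, then $H(f)$ exceeds $[1-c(\delta/\lambda)^{\gamma_0/2}]E^{\dav}_\lambda$ minus an error vanishing as $b-a\to\infty$. Applied to a minimizing sequence with $H(f_n)\to E^{\dav}_\lambda<0$, this forces uniform spatial tightness after translation (Proposition~\ref{prop:tightness for positive average}); Fourier tightness comes for free from the $H^1$ bound. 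Then Lemma~\ref{lem:strong-convergence-L2} upgrades weak to strong $L^2$ convergence, and Lemma~\ref{lem:continuity} passes to the limit in $N$. The splitting Lemma~\ref{lem:V-splitting} and Proposition~\ref{prop:N-splitting} are used inside the fat-tail bound to control $N(f_{-1}+f_0+f_1)-N(f_{-1})-N(f_1)$ when $f_{-1},f_1$ have \emph{separated supports}---not to produce a Brezis--Lieb identity for weakly convergent sequences.

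Your proposed identity $N(f_n)=N(f)+N(f_n-f)+o(1)$ for $f_n\rightharpoonup f$ in $H^1$ is not what Lemma~\ref{lem:V-splitting} delivers. That lemma bounds $|V(|z+w|)-V(|z|)-V(|w|)|$ by $|z||w|(\ldots)$, which after integration gives $|N(f_n)-N(f)-N(f_n-f)|\lesssim M_\psi^{\gamma_1}(f_n-f,f)+M_\psi^{\gamma_2}(f_n-f,f)$. For this to be $o(1)$ you would need $M_\psi^\gamma(g_n,f)\to 0$ whenever $g_n\rightharpoonup 0$ in $H^1$; the available bounds (Propositions~\ref{prop:M-bounded}, \ref{prop:M-H1-bilinear}) control $M_\psi^\gamma$ only via norms or via \emph{separation of supports}, neither of which yields this. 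The nonlocality---$T_r g_n$ does not localize where $g_n$ does---blocks the usual pointwise-a.e.\ Brezis--Lieb route. Likewise, your claim that vanishing forces $N(f_n)\to 0$ ``by the estimates of Section~\ref{sec:nonlinear-estimates}'' is not supported: those estimates bound $|N|$ by powers of $\|f\|$ and $\|f'\|$, which stay bounded away from zero along a vanishing minimizing sequence. So the scheme you sketch for (iii) would need substantial new input to close; the paper's fat-tail route sidesteps all of this.
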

\begin{remark}
	If $\gamma_2<6$ then $\f{4}{10-\gamma_2}<1$ and thus $\alpha_\delta=1$ for all small enough $\delta>0$. So if $2<\gamma_1\le \gamma_2<6$ we only need $\psi\in L^1$ with compact support for Theorem \ref{thm:existence+} to hold.

	If $6\le \gamma_2<10$ the density  $\psi$ has to have  compact support and to be in $L^p$ with $p$ slightly larger than $\f{4}{10-\gamma_2}$. 	With the bound \eqref{eq:L-p-bound-psi} this translates into some slightly more restrictive integrability bound
	on $1/d_0$, which still covers all physically relevant local dispersion profiles.
\end{remark}

We have a similar existence result in the case of $d_{\text{av}}=0 $ under slightly different $L^p$ assumptions on the density $\psi$.

\begin{theorem}[Threshold for existence for zero average dispersion] \label{thm:existence0}
Assume $\dav=0$ and $V$ obeys assumptions \ref{ass:A1} through \ref{ass:A3} with $2< \gamma_1\le \gamma_2< 6$, and that  the density $\psi$ has compact support and
	$\psi\in L^{\f{4}{6-\gamma_2}+\delta}$ for some  $\delta>0$.
  \begin{theoremlist}
	\item 	
		There exists a threshold $0\le\lambda^0_\mathrm{cr}<\infty$ such that $E^{0}_\lambda =0$ for $0< \lambda < \lambda^0_{\mathrm{cr}}$ and $-\infty<E^{0}_\lambda<0$ for  $\lambda> \lambda^0_{\mathrm{cr}}$.
	\item
		If $\lambda>\lambda^0_{\mathrm{cr}}$, then any minimizing sequence for  \eqref{eq:min} is up to translations and boosts, that is, translations in Fourier space, relatively compact in $L^2(\R)$, in particular,  there exists a minimizer for \eqref{eq:min}.
		This minimizer is also a weak solution of the dispersion management equation \eqref{eq:GT} for some Lagrange multiplier $\omega< 2E^{0}_\lambda/\lambda<0$.
	\item
		If $V$ obeys in addition \ref{ass:A4},  then $\lambda^0_{\mathrm{cr}}=0$.
  \end{theoremlist}	
\end{theorem}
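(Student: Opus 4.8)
The plan is to reformulate the problem as a maximization. Since $\dav=0$ we have $H=-N$, so with $\calE(\lambda):=-E^0_\lambda=\sup\{N(f):\norm{f}^2=\lambda\}$ the whole theorem becomes a statement about $\calE$. First I would settle part (i). The bound $E^0_\lambda>-\infty$, i.e. $\calE(\lambda)<\infty$, comes from the nonlinear estimates: by Remark \ref{rem:simple}, $|V(a)|\lesssim a^{\gamma_1}+a^{\gamma_2}$, so $N(f)\lesssim \int(\norm{T_rf}_{\gamma_1}^{\gamma_1}+\norm{T_rf}_{\gamma_2}^{\gamma_2})\psi(r)\,dr$, and Hölder in $r$ together with the one–dimensional Strichartz estimate $\norm{T_\cdot f}_{L^q_rL^\gamma_x}\lesssim\norm f_2$ (with $\frac2q+\frac1\gamma=\frac12$) bounds this by a constant times $\norm f_2^{\gamma_1}+\norm f_2^{\gamma_2}$, provided $\psi\in L^{4/(6-\gamma_2)+\delta}$ has compact support; this is exactly where $\gamma_2<6$ and the integrability hypothesis on $\psi$ enter. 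For $E^0_\lambda\le 0$, i.e. $\calE(\lambda)\ge0$, I would spread mass out: with the $L^2$–isometry $D_\mu f(x)=\mu^{1/2}f(\mu x)$ one has $|T_rD_\mu f(x)|=\mu^{1/2}|T_{r\mu^2}f(\mu x)|$, and because $\psi$ has compact support and $\gamma_1,\gamma_2>2$ strictly, a change of variables gives $N(D_\mu f)=\Oh(\mu^{\gamma_1/2-1}+\mu^{\gamma_2/2-1})\to0$ as $\mu\to0$, so $\calE(\lambda)\ge0$. The threshold structure then follows from the super–homogeneity coming from \ref{ass:A2}: since $V(a)/a^{\gamma_0}$ is nondecreasing, $V(ca)\ge c^{\gamma_0}V(a)$ for $c\ge1$ pointwise, whence $N(cf)\ge c^{\gamma_0}N(f)$ and $\calE(\Lambda)/\Lambda^{\gamma_0/2}$ is nondecreasing in $\Lambda$. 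Thus $\{\lambda:\calE(\lambda)>0\}$ is an up–set $(\lambda^0_{\mathrm{cr}},\infty)$, which is all of part (i) once $\lambda^0_{\mathrm{cr}}<\infty$; the latter I would get from \ref{ass:A3} by a single test function, choosing $f$ with a frequency–localized profile so that $|T_rf|\gtrsim a_0$ on a set of positive $dx\,\psi\,dr$–measure for all $r\in\supp\psi$, making the positive part of $N$ dominate.

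The heart of the proof is part (ii). First note the strict binding inequality: since $\gamma_0/2>1$, monotonicity of $\calE(\Lambda)/\Lambda^{\gamma_0/2}$ gives $\calE(\mu)<(\mu/\lambda)\calE(\lambda)$ for $0<\mu<\lambda$ when $\calE(\lambda)>0$, hence the strict superadditivity $\calE(\lambda)>\calE(\mu)+\calE(\lambda-\mu)$. Given a maximizing sequence $f_n$, $\norm{f_n}^2=\lambda$, $N(f_n)\to\calE(\lambda)$, I would apply translations and boosts $g_n:=e^{i\xi_nx}f_n(\cdot-y_n)$—under which both $\norm\cdot_2$ and $N$ are invariant—chosen so that $g_n$ has a nonzero weak $L^2$–limit $g$. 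Granting this, I would split along $g_n=g+(g_n-g)$ with $g_n-g\rightharpoonup0$, using the Brezis–Lieb type identity for the nonlocal term, $N(g_n)=N(g)+N(g_n-g)+o(1)$, which is precisely the content of the splitting Lemma \ref{lem:V-splitting} and is the place where \ref{ass:A1} is indispensable, together with $\norm{g_n}^2=\norm g^2+\norm{g_n-g}^2+o(1)$. Writing $\mu=\norm g^2\in(0,\lambda]$, the two splittings give $\calE(\lambda)=N(g)+\lim N(g_n-g)\le\calE(\mu)+\calE(\lambda-\mu)$, which by strict superadditivity forces $\mu=\lambda$; hence $\norm{g_n-g}\to0$, i.e. strong convergence modulo symmetries, $\norm g^2=\lambda$ and $N(g)=\calE(\lambda)$, so $g$ is a minimizer. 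The Euler–Lagrange equation for the constrained problem is the dispersion management equation \eqref{eq:GT}, $-N'(g)=\omega g$; pairing with $g$ and using $\langle N'(g),g\rangle=\iint V'(|T_rg|)\,|T_rg|\,dx\,\psi\,dr\ge\gamma_0N(g)$ (again \ref{ass:A2}) gives $\omega\lambda=-\langle N'(g),g\rangle\le-\gamma_0N(g)=\gamma_0E^0_\lambda$, and since $E^0_\lambda<0$ and $\gamma_0>2$ this yields $\omega\le\gamma_0E^0_\lambda/\lambda<2E^0_\lambda/\lambda<0$.

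Finally, for part (iii) I would use the strengthened \ref{ass:A4}: when $V>0$ on $(0,\veps]$, take $f=\delta\,D_{1/L}\phi$ with $\delta=\sqrt\lambda/\norm\phi_2$; since $\psi$ has compact support, $\sup_{x,\,r\in\supp\psi}|T_rf(x)|\le 2\delta L^{-1/2}\norm\phi_\infty\le\veps$ for $L$ large, so the integrand $V(|T_rf|)\ge0$ everywhere and is positive on a set of positive measure, giving $N(f)>0$. Hence $\calE(\lambda)>0$ for every $\lambda>0$, i.e. $\lambda^0_{\mathrm{cr}}=0$.

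I expect the \emph{main obstacle} to be the non–vanishing step in part (ii): ruling out that the recentered maximizing sequence still converges weakly to $0$. Because for $\dav=0$ the invariance group contains both translations and boosts, this is a genuinely phase–space (Galilean) concentration statement, and one must show that vanishing in phase space forces the super–quadratic averaged $L^\gamma$–quantities controlling $N$ to disappear, so that $N(f_n)\to0$, contradicting $N(f_n)\to\calE(\lambda)>0$. This non–vanishing mechanism, together with the splitting identity of Lemma \ref{lem:V-splitting} for general $V$, are the two places where the extension beyond the Kerr case is substantial.
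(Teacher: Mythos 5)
Parts (i) and (iii) of your plan are sound and run parallel to the paper's: your H\"older--Strichartz bound on $N$ is Lemma \ref{lem:boundedness}/Proposition \ref{prop:N-boundedness} and reproduces the exponent $\f{4}{6-\gamma_2}$; your dilation argument for $E^0_\lambda\le 0$ replaces the paper's wide-Gaussian computation in Lemma \ref{lem:E-boundedness}; the up-set structure of $\{\lambda:\calE(\lambda)>0\}$ deduced from $V(ca)\ge c^{\gamma_0}V(a)$ is exactly Lemma \ref{lem:V-lower-bound}, Lemma \ref{lem:R-lower} and Corollary \ref{cor:properties-threshold}; and your small-amplitude test function under \ref{ass:A4} is essentially the paper's argument. (For $\lambda^0_{\mathrm{cr}}<\infty$ under \ref{ass:A3} you still owe the quantitative domination of the region where $V$ may be negative; the paper does this with a three-region Gaussian estimate at the end of Section \ref{sec:threshold}.)

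The genuine gaps are both in part (ii), and they sit exactly where the paper's machinery does the real work. First, the decoupling $N(g_n)=N(g)+N(g_n-g)+o(1)$ for $g_n\rightharpoonup g$ is \emph{not} ``precisely the content of Lemma \ref{lem:V-splitting}''. That lemma is a pointwise inequality; under the integral it only reduces the decoupling to showing that the cross terms $M_\psi^{\gamma_j}(g,g_n-g)$ of Definition \ref{def:building block} vanish. The paper's estimates for these building blocks (Propositions \ref{prop:M-bounded}, \ref{prop:M-fourier}, \ref{prop:M-time}, \ref{prop:M-H1-bilinear}) either bound them by $\|g\|\,\|g_n-g\|(\cdots)$ — which does not tend to zero along a weakly null sequence of fixed norm — or make them small only under separation of supports in $x$ or in frequency, never under weak convergence. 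Since $g_n-g\rightharpoonup 0$ does not imply that $|T_r(g_n-g)|$ pairs to zero against $|T_rg|^{\gamma-1}\psi$, you need an additional compactness input, e.g.\ that $T_r(g_n-g)\to 0$ strongly in $L^2_{\loc}(\R^2_{r,x})$ (obtainable from the local smoothing estimate for $e^{ir\partial_x^2}$ plus an Aubin--Lions argument; note that for $\dav=0$ minimizing sequences are only $L^2$-bounded, so Rellich is unavailable), followed by a genuine Brezis--Lieb argument for general $V$. None of this is in your sketch. Second — and you concede this — the non-vanishing step is absent: nothing in your proposal produces translations $y_n$ and boosts $\xi_n$ for which the recentred sequence has a \emph{nonzero} weak limit. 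For $\dav=0$ this is an inverse-Strichartz/profile-extraction statement and it is the technical heart of the theorem. The paper is organized precisely so as to avoid it: the splitting estimates with support separation (Proposition \ref{prop:N-splitting}) feed into the fat-tail bound (Proposition \ref{prop:fat-tail-bound0}), which together with strict subadditivity (Proposition \ref{prop:strict-subadditivity}) yields tightness modulo translations and boosts (Proposition \ref{prop:tightness for zero average}); Lemma \ref{lem:strong-convergence-L2} then upgrades weak to strong $L^2$ convergence, and the $L^2$-continuity of $N$ (Lemma \ref{lem:continuity}) identifies the limit as a minimizer. Until you supply both the non-vanishing mechanism and a justified decoupling, your part (ii) is a plan rather than a proof.
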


\begin{remarks}\label{rem:nonlinearity}
 \begin{theoremlist}
  \item Concerning the existence, Theorems \ref{thm:existence+} and \ref{thm:existence0} are sharp, see Theorem \ref{thm:non-existence} below.
  \item  In the application to nonlinear optics, $\psi$ is the density of the probability measure $\mu$ in \eqref{eq:GT-time-version-2} below, which in turn is naturally related to the dispersion profile $d_0$ in dispersion management cables, see the discussion just above Lemma \ref{lem:properties-mu}. It turns out that this probability measure always has compact support as soon as $d_0$ is integrable over the period $[0,L]$ and it has a density
  $\psi$ once the zero set of the local dispersion profile $d_0$ has zero Lebesgue measure, that is,
	$|\{s\in [0,L]: d_0(s)=0\}| =0$. This is a reasonable assumption on $d_0$ since otherwise $\psi$ would have some delta distribution component and the nonlinearity	 $N$ would be rather singular.

    The so far most studied case is the model case where
    \begin{align*}
    d_0= \id_{[0,1)} - \id_{[1,2)}	\, ,
    \end{align*}
	that is, two pieces of glass fiber cables with exactly opposite dispersion are concatenated together and this is repeated periodically with period $2$. In this case $\psi= \id_{[0,1]}$. Our existence results are valid for a considerably larger class of probability densities $\psi$, and thus a \emph{very large} class of dispersion profiles $d_0$. The $L^p$ conditions on $\psi$ in Theorems \ref{thm:existence+} and \ref{thm:existence0} translate to conditions on
    the dispersion profile $d_0^{1-p}$ via Lemma \ref{lem:properties-mu} below.

    In particular,  for $\dav>0$ and $\gamma_2<6$, we can allow for the
    \emph{largest possible} class of local dispersion profiles $d_0$, they only have to change sign finitely many times and their zero set has to have Lebesgue measure zero.

    Even in the case of a Kerr nonlinearity, where $V(a)\sim a^4$, i.e., $\gamma_1=\gamma_2=4$, the above two theorems strongly improve our result in \cite{HuLee2012} in terms of scales of $L^p$ spaces: In \cite{HuLee2012}, we needed in addition that $\psi\in L^4$, whereas now with $\gamma_2=4$, one sees that $\psi\in L^1$ is enough for strictly positive average dispersion and for vanishing average dispersion we only need $L^{2+\delta} $ for arbitrarily small $\delta>0$.
 \item For the Kerr nonlinearity, the smoothness and decay of the minimizers has been studied in \cite{EHL2009} and \cite{HuLee2009} for the simplest case of an  alternating dispersion profile given by $d_0= \id_{[0,1)} - \id_{[1,2)}$ and extended to more general dispersion profiles in \cite{GH}. In the more general setting discussed in this paper the smoothness and decay of solitary solutions is an open problem.
 \end{theoremlist}   	
\end{remarks}

Concerning the question whether the range of exponents in Theorems \ref{thm:existence+} and \ref{thm:existence0} is optimal, we note the following
\begin{theorem}\label{thm:non-existence}
  	Let $V$ be a pure power law nonlinearity given by $V(a)=ca^\gamma$ for $a\ge 0$
  	and some coefficients $c,\gamma>0$.
  \begin{theoremlist}
  	\item Assume further that $\dav>0$ and $\psi$ is a probability density with compact support which is strictly positive in a possibly one sided neighborhood of zero.
  	Then $H(f)$ is unbounded from below on $H^1$ for any fixed
  	$\|f\|^2=\lambda>0$, if $\gamma>10$.
  If $\gamma=10$, then  $H(f)$ is unbounded from below for any fixed
  	$\|f\|^2=\lambda>0$  as long as $c$ is large enough.
  \item
    If $\dav=0$ and $\gamma>6$, then $H(f)$ is unbounded from below on $L^2$ for 
    any fixed
  	$\|f\|^2=\lambda>0$, if $\psi$ is a probability density with compact support which is strictly positive in a possibly one sided neighborhood of zero. \\ 
  	If $\dav=0$ and $\gamma=6$, then no minimizers exist in the model case $\psi=\id_{[0,1]}$. 	
  \end{theoremlist}
\end{theorem}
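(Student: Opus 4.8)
The plan is to exhibit, for each fixed $\|f\|^2=\lambda$, an explicit one-parameter family of trial functions along which $H$ tends to $-\infty$ (this handles part (i) and the divergent part of (ii)), and then to rule out minimizers at the critical exponent $\gamma=6$ by a scaling identity. The natural trial functions are chirped Gaussians, since the free evolution $T_r$ maps Gaussians to Gaussians explicitly, and because a large chirp forces the Gaussian to focus at a short time $r_\ast\to 0$, precisely where $\psi$ is positive.

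Concretely, I would take $f(x)=A\,e^{-(a+ib)x^2}$ with $a>0$ fixed and $A$ chosen so that $\|f\|^2=\lambda$. A direct computation gives $T_rf(x)=A(1+4i(a+ib)r)^{-1/2}\exp(-\frac{a+ib}{1+4i(a+ib)r}x^2)$, so that, writing $D(r)=(1-4br)^2+(4ar)^2$,
\[
 |T_rf(x)|=A\,D(r)^{-1/4}\,e^{-ax^2/D(r)},\qquad \int_\R|T_rf(x)|^\gamma\,dx=A^\gamma\sqrt{\tfrac{\pi}{\gamma a}}\,D(r)^{(2-\gamma)/4}.
\]
Hence $N(f)=cA^\gamma\sqrt{\pi/(\gamma a)}\int_\R D(r)^{(2-\gamma)/4}\psi(r)\,dr$ and $\|f'\|^2=\frac{a^2+b^2}{a}\lambda$. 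The function $D$ attains its minimum $a^2/(a^2+b^2)$ at the focusing time $r_\ast=b/(4(a^2+b^2))$, and as $|b|\to\infty$ (with the sign of $b$ chosen so that $r_\ast$ approaches $0$ from the side on which $\psi$ is positive) one has $r_\ast\to0$, $D(r_\ast)\sim a^2/b^2$, and $D$ is sharply peaked with width $\sigma\sim a/b^2$. Since $\gamma>2$, the exponent $(2-\gamma)/4$ is negative, so $D^{(2-\gamma)/4}$ blows up at $r_\ast$; restricting the $r$-integral to an $O(\sigma)$ neighbourhood of $r_\ast$ on which $\psi\ge\psi_0>0$ yields $\int D^{(2-\gamma)/4}\psi\,dr\gtrsim \sigma\,D(r_\ast)^{(2-\gamma)/4}$, and after collecting the powers of $b$ one finds $N(f)\gtrsim b^{(\gamma-6)/2}$, with an implied constant proportional to $c$, while $\|f'\|^2\sim (\lambda/a)\,b^2$.

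The theorem then follows by comparing exponents. If $\dav=0$ then $H(f)=-N(f)\lesssim -b^{(\gamma-6)/2}\to-\infty$ whenever $\gamma>6$, which is part (ii). If $\dav>0$ then $H(f)\sim \frac{\dav\lambda}{2a}b^2-(\mathrm{const})\,b^{(\gamma-6)/2}$; the nonlinear term dominates precisely when $(\gamma-6)/2>2$, i.e. $\gamma>10$, giving $H(f)\to-\infty$, while for $\gamma=10$ both terms scale like $b^2$, so $H(f)\to-\infty$ exactly when the nonlinear coefficient (which is proportional to $c$) exceeds $\dav\lambda/(2a)$, that is, for $c$ large enough. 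This is part (i).

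For the borderline case $\dav=0$, $\gamma=6$, $\psi=\id_{[0,1]}$, the Gaussian family no longer diverges, so I would instead argue nonexistence directly. Using $T_rf_L(x)=L^{1/2}(T_{rL^2}f)(Lx)$ for the $L^2$-preserving scaling $f_L(x)=L^{1/2}f(Lx)$, together with $\psi=\id_{[0,1]}$ and the substitution $s=rL^2$, gives the exact identity $N(f_L)=c\int_0^{L^2}\|T_sf\|_6^6\,ds$, whence $\frac{d}{dL}H(f_L)=-2cL\,\|T_{L^2}f\|_6^6<0$ for every $L>0$ and every $f\not\equiv 0$. Thus along the admissible family $(f_L)$ the energy is strictly decreasing, so no $f$ with $\|f\|^2=\lambda$ can satisfy the first-order condition for a minimizer, and no minimizer exists. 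The main obstacle is organizational rather than conceptual: carrying out the Gaussian evolution and the Laplace-type lower bound cleanly while keeping exact track of the competing powers of $a$ and $b$, and, for $\gamma=6$, recognizing the exact scaling identity that is special to the model profile $\psi=\id_{[0,1]}$.
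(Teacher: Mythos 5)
Your proposal is correct and follows essentially the same route as the paper: the paper also uses focusing Gaussian trial functions (narrow unchirped Gaussians $g_{\sigma_0}$ with $\sigma_0\downarrow 0$, which focus at $r=0$; your chirped Gaussians with $\sigma_0\sim a/b^2$ are just their time-translates focusing at $r_\ast\to 0$), the same one-sided lower bound $\psi\ge m$ near zero, and the identical power counting $N\gtrsim b^{(\gamma-6)/2}$ against kinetic energy $\sim b^{2}$, yielding the thresholds $\gamma>10$ (or $\gamma=10$ with $c$ large) and $\gamma>6$. For the critical case $\dav=0$, $\gamma=6$, $\psi=\id_{[0,1]}$, the paper uses the very same scaling identity $\int_0^1\|T_r f_L\|_{L^6}^6\,dr=\int_0^{L^2}\|T_s f\|_{L^6}^6\,ds$, phrased via the $s$-independence of the supremum $C_s(\lambda)$ and a contradiction for a hypothetical maximizer, which is only an organizational variant of your direct strict-competitor argument.
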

\begin{remark}
	As the lower bound \eqref{eq:psi-lower} from Remark \ref{rem:psi-lower-bound} shows, the assumption of the first part of Theorem \ref{thm:non-existence} is fulfilled if the dispersion profile $d_0$ is bounded from above close to zero, which includes all physically relevant dispersion profiles.
\end{remark}

 The strategy of the proofs of our Existence Theorems \ref{thm:existence+} and \ref{thm:existence0} is as follows: The main observation which shows that $E^\dav_\lambda<0$ is \emph{equivalent} to gaining compactness is done in Theorem \ref{thm:existence}.
 The necessary space--time bounds which prevent splitting of minimizing sequences as soon as $E^\dav_\lambda<0$ are done in Section \ref{subsec:fractional-linear} and their consequences for the nonlinear and nonlocal potential in Section \ref{subsec:Vsplitting}. The main building blocks, for which one has to develop suitable space-time bounds, turn out to be of the form given in Definition \ref{def:building block} and are motivated by Lemma \ref{lem:V-splitting}.
 Our proofs for strictly positive average dispersion rely on some very useful space-time bounds for coherent states, see Lemma \ref{lem:gaussian coherent states}, which are new and proven in Appendix \ref{sec:Galilei}.
 Strict subadditivity of the energy is done in Section \ref{sec:concavity} and the necessary tightness bound, modulo the symmetries of the problem are established in Section \ref{sec:existence}.
 That there exists a threshold which distinguishes between $E^\dav_\lambda=0$ and
 $E^\dav_\lambda <0$ is the content of Theorem \ref{thm:threshold-phenomena}
 . At the end of Section \ref{sec:threshold} the proofs of Theorems \ref{thm:existence+} and  \ref{thm:existence0} are given.
 Theorem \ref{thm:non-existence} is proven in Section \ref{sec:non-existence}.

\subsection{The connection with nonlinear optics}\label{sec:connection-nonlinear-optics}

Our main motivation for studying \eqref{eq:min}
comes from the fact that the minimizer of the variational problem is related to
breather-type solutions of the dispersion managed nonlinear Schr\"odinger equation
 \beq\label{eq:NLS-gen}
      i \partial_t u = -d(t) \partial_x^2 u - p(|u|)u,
 \eeq
where the dispersion $d(t)$ is parametrically modulated and $P(u)=p(|u|)u$ is the nonlinear interaction due to the polarizability of the glass fiber cable.
In nonlinear optics \eqref{eq:NLS-gen} describes the evolution of a pulse in a frame moving
with the group velocity of the signal through a glass fiber cable, see \cite{SulemSulem}.
As a \emph{warning}: with our choice of notation the variable $t$ denotes the position along the glass fiber cable  and $x$ the (retarded) time. Hence $d(t)$ is \emph{not varying in time} but  denotes indeed a dispersion \emph{varying along} the optical cable. For physical reasons it would not be a strong restriction to assume that $d$ is piecewise constant, but we will not make this assumption in this paper.
By symmetry, one assumes that $P$ is odd and $P(0)=0$ can always be enforced by adding a constant term. Most often one makes a Taylor series expansion, keeping just the lowest order nontrivial term leads to $P(u)\sim |u|^2u$, the Kerr nonlinearity, but we will not make this approximation.

The dispersion management idea, i.e., the possibility to periodically manage
the dispersion by putting alternating sections with positive and negative dispersion together in an
optical glass-fiber cable to compensate for dispersion of the signal was predicted by Lin, Kogelnik,
and Cohen already in 1980, see \cite{LKC80}, and then implemented by Chraplyvy and Tkach for which they
received the Marconi prize in 2009. See the reviews \cite{TDNMSF99, Tetal03} and the references cited in \cite{HuLee2012} for a discussion of the dispersion management technique.

The periodic modulation of the dispersion can be modeled by the ansatz
 \beq\label{eq:dispersion}
    d(t) = \veps^{-1} d_0(t/\veps) + \dav .
 \eeq
Here  $\dav \ge 0$ is the average component and $d_0$ its mean zero part which we assume to have period $L$.
For small $\veps$, equation \eqref{eq:dispersion} describes a fast strongly varying dispersion which corresponds to  the regime of \emph{strong} dispersion management.

A technical complication is the fact that \eqref{eq:NLS-gen} is a non-autonomous equation. We seek to rewrite \eqref{eq:NLS-gen} into a more convenient form in order to find breather type solutions.
Let  $D(t)= \int_{0}^t d_0(r)\, dr$ and note that as long as  $d_0$ is locally
integrable and has period $L$ with mean zero, $D$ is also periodic with period $L$.
Furthermore,  $T_r= e^{ir\partial_x^2}$ is a unitary operator and thus the
unitary family $t\mapsto T_{D(t/\veps)}$ is periodic with period $\veps L$.
Making the ansatz $u(t,x)= (T_{D(t/\veps)}v(t,\cdot))(x)$ in \eqref{eq:NLS-gen}, a short calculation
shows
    \bdm
        i\partial_t v= -d_{\text{av}}\partial_x^2 v - T_{D(t/\veps)}^{-1}\big[P(T_{D(t/\veps)} v)\big]
    \edm
which is equivalent to \eqref{eq:NLS-gen} and still a non-autonomous equation.

For small $\veps$, that is, in the regime of strong dispersion
management, $T_{D(t/\veps)}$ is fast oscillating in the variable $t$, hence the
solution $v$ is expected to evolve on two widely separated time-scales, a slowly evolving part
$v_{\text{slow}}$ and a fast, oscillating part with a small amplitude.
Analogously to Kapitza's treatment of the unstable pendulum which is stabilized by fast
oscillations of the pivot, see \cite{LandauLifshitz}, the effective equation for the slow part
$v_{\text{slow}}$ was derived by Gabitov and Turitsyn \cite{GT96a,GT96b} for the special case of a Kerr nonlinearity. It is given by integrating the fast
oscillating term containing $T_{D(t/\veps)}$ over one period in $t$,
    \beq
    \begin{split}\label{eq:GT-time-version-1}
        i\partial_t v_{\text{slow}}
        &= - d_{\text{av}} \partial_x^2  v_{\text{slow}}
            - \frac{1}{\veps L}\int_{0}^{\veps L}
                T_{D(r/\veps)}^{-1}\big[P(T_{D(r/\veps)} v_{\text{slow}})\big]
              \, dr \\
        &= - d_{\text{av}} \partial_x^2  v_{\text{slow}}
            - \frac{1}{L}\int_{0}^{L}
                T_{D(r)}^{-1}\big[P(T_{D(r)} v_{\text{slow}})\big]
              \, dr .
    \end{split}
    \eeq
This averaging procedure leading to \eqref{eq:GT-time-version-1} was rigorously justified in \cite{ZGJT01}
for suitable dispersion profiles $d_0$ in the case of a Kerr nonlinearity.
The averaged equation is autonomous and stationary solutions of \eqref{eq:GT-time-version-1} can be found by making the ansatz
 \beq\label{eq:stationary}
    v_{\text{slow}}(t,x) = e^{-i\omega t} f(x) .
 \eeq
Before doing so, it turns out to be advantageous to rewrite the nonlocal nonlinear term in \eqref{eq:GT-time-version-1}: Define a measure $\mu(B)$ by setting $\mu(B)\coloneqq \tfrac{1}{L}\int_{0}^L \id_B(D(r))\, dr$
for any Lebesgue measurable set $B\subset \R$. Since $\mu(B)\ge 0$ and
$\mu(\R) = \tfrac{1}{L}\int_0^L \id_\R(D(r))\, dr = \tfrac{1}{L}\int_0^L dr=1$,
one sees that $\mu$ is a probability measure. Since $\mu$ is the image measure of normalized Lebesgue measure on $[0,L]$ under $D$, we can rewrite \eqref{eq:GT-time-version-1} as
    \beq\label{eq:GT-time-version-2}
        i\partial_t v_{\text{slow}}
        = - d_{\text{av}} \partial_x^2  v_{\text{slow}}
            - \int_\R
                T_{r}^{-1}\big[P(T_{r} v_{\text{slow}})\big]
              \, \mu(dr) .
    \eeq
One can easily check that the simplest case of dispersion management, $L=2$, 
$d_0= \id_{[0,1)}-\id_{[1,2)}$, which is
the case most studied in the literature, corresponds to the measure $\mu$ having density $\id_{[0,1]}$, the uniform distribution on $[0,1]$, see formula \eqref{eq:density-psi}.
For the general case, we gather some basic properties of the probability measure $\mu$ in the following

\begin{lemma}[Lemma 1.4 in \cite{HuLee2012}]\label{lem:properties-mu} Assume that the dispersion profile $d_0$ is locally integrable. Then the following holds.
\begin{theoremlist}
 \item  The probability measure $\mu$ has compact support.
 \item If the set $\{d_0=0\}$ has zero Lebesgue measure, then $\mu$ is absolutely continuous with respect to Lebesgue measure.
\item If furthermore $d_0$ changes sign finitely many times on $[0,L]$ and is bounded away from zero then $\mu$ has a bounded density $\psi$.
\item Moreover, if $d_0$ changes sign finitely many times on $[0,L]$ and for some $p>1$
 \bdm
  \int_{0}^L |d_0(s)|^{1-p}\, ds <\infty,
 \edm
 then $\mu$ has a density $\psi\in L^p$. More precisely, we have the bound
  \beq\label{eq:L-p-bound-psi}
  \|\psi\|_{L^p}
  \lesssim
  \Big(\int_{0}^L |d_0(s)|^{1-p}\, ds\Big)^\frac{1}{p}
 \eeq
 where the implicit constant depends only on the number of sign changes of $d_0$ and the period $L$.
\end{theoremlist}
\end{lemma}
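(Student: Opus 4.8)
The starting point is the fact already recorded in the text: $\mu$ is the image (pushforward) of normalized Lebesgue measure on $[0,L]$ under the continuous map $D(t)=\int_0^t d_0(r)\,dr$, so that $\int_\R g\,d\mu = \tfrac1L\int_0^L g(D(r))\,dr$ for every bounded measurable $g$. Since $d_0\in L^1_{\loc}$ is $L$-periodic, $D$ is absolutely continuous on $[0,L]$ with $D'=d_0$ a.e. The single analytic tool I would use throughout is the one-dimensional area formula (Banach indicatrix theorem) for absolutely continuous functions: for every measurable $h\ge 0$,
\[
 \int_0^L h(r)\,|d_0(r)|\,dr = \int_\R \Big(\sum_{r\in D^{-1}(y)} h(r)\Big)\,dy .
\]
Part (i) is then immediate: $D$ is continuous on the compact interval $[0,L]$, so its range $D([0,L])$ is a compact interval and $\supp\mu\subset D([0,L])$.

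For (ii) let $N\subset\R$ be Lebesgue null; I must show $\mu(N)=\tfrac1L\,|\{r:D(r)\in N\}|=0$. Feeding $h(r)=\id_N(D(r))$ into the area formula turns the right-hand side into $\int_N \#D^{-1}(y)\,dy$, which vanishes because $|N|=0$ and $\#D^{-1}(\cdot)$ is finite a.e. (it is integrable, since by Banach's theorem $\int_\R \#D^{-1}(y)\,dy=\int_0^L|d_0|\,dr<\infty$, $D$ being of bounded variation). Hence $\int_0^L \id_N(D(r))\,|d_0(r)|\,dr=0$, so $\id_N(D(r))\,|d_0(r)|=0$ for a.e.\ $r$; that is, $D(r)\in N$ forces $d_0(r)=0$ outside a null set. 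Since $\{d_0=0\}$ is assumed null, $|\{r:D(r)\in N\}|=0$ and $\mu(N)=0$.

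For (iii) and (iv) I would first extract an explicit density. Writing $g(D(r))=\frac{g(D(r))}{|d_0(r)|}\,|d_0(r)|$ on the full-measure set $\{d_0\neq0\}$ and applying the area formula to $h=g(D(\cdot))/|d_0(\cdot)|$ yields
\[
 \frac1L\int_0^L g(D(r))\,dr = \int_\R g(y)\,\psi(y)\,dy,\qquad \psi(y):=\frac1L\sum_{r\in D^{-1}(y)}\frac1{|d_0(r)|}.
\]
If $d_0$ changes sign finitely often, $[0,L]$ splits into $k$ intervals $I_1,\dots,I_k$ of constant sign, on each of which $D$ is strictly monotone (again because $\{d_0=0\}$ is null), so $\#D^{-1}(y)\le k$ and the sum is finite. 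Part (iii) follows at once: if $|d_0|\ge c>0$ then $\psi\le k/(Lc)$. For (iv) I would treat each monotone branch separately: with $J_j=D(I_j)$ and $\psi_j(y)=|d_0((D|_{I_j})^{-1}(y))|^{-1}\id_{J_j}(y)$, the substitution $y=D(r)$, $dy=|d_0(r)|\,dr$ (itself a special case of the area formula) gives $\|\psi_j\|_{L^p}^p=\int_{I_j}|d_0|^{1-p}\,dr$, and Minkowski's inequality yields $\|\psi\|_{L^p}\le \frac1L\sum_{j=1}^k\|\psi_j\|_{L^p}\le \frac{k}{L}\big(\int_0^L|d_0|^{1-p}\,dr\big)^{1/p}$, which is exactly \eqref{eq:L-p-bound-psi} with constant depending only on $k$ (hence on the number of sign changes) and $L$.

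The main obstacle is not any individual estimate but the careful bookkeeping on the set where $d_0$ vanishes: everything hinges on the area formula being legitimate for a merely absolutely continuous (not $C^1$) $D$ and on $\{d_0=0\}$ being null, which is what lets me (a) discard that set in the change of variables, (b) deduce strict monotonicity of $D$ on each sign-constant interval, and (c) guarantee a.e.\ finiteness of the Banach indicatrix $\#D^{-1}(y)$. Justifying the area formula in the AC setting, together with the measurability and a.e.\ well-definedness of the branch inverses $(D|_{I_j})^{-1}$, is the part that requires the most care.
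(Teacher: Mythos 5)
Your proposal is correct. For parts (i), (iii) and (iv) it is essentially the argument the paper itself records: the lemma is quoted from Lemma 1.4 of \cite{HuLee2012}, and in Remark \ref{rem:psi-lower-bound} the paper reproduces exactly your branch decomposition --- split $[0,L)$ into finitely many intervals on which $d_0$ has constant sign, so that $D$ is strictly monotone on each, change variables $y=D(s)$, and read off the density formula \eqref{eq:density-psi}; your $L^p$ bookkeeping ($\|\psi_j\|_{L^p}^p=\int_{I_j}|d_0|^{1-p}\,dr$ per branch, then Minkowski together with $\sum_{j=1}^k a_j^{1/p}\le k\bigl(\sum_j a_j\bigr)^{1/p}$) yields \eqref{eq:L-p-bound-psi} with a constant depending only on the number of sign changes and $L$, as claimed. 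Where you genuinely deviate is part (ii): the paper's in-text sketch needs the finitely-many-sign-changes hypothesis to set up monotone branches, whereas part (ii) of Lemma \ref{lem:properties-mu} assumes only $|\{d_0=0\}|=0$. Your use of the one-dimensional area formula (Banach indicatrix) for the absolutely continuous function $D$ closes this gap correctly: for a null set $N$ one has $\int_0^L \id_N(D(r))\,|d_0(r)|\,dr=\int_N \#D^{-1}(y)\,dy=0$, since $\#D^{-1}$ is integrable (its integral equals the total variation $\int_0^L|d_0|\,dr<\infty$), and the hypothesis $|\{d_0=0\}|=0$ then upgrades this to $\mu(N)=0$. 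This buys absolute continuity of $\mu$ with no restriction on the sign changes of $d_0$, i.e.\ the full strength of part (ii); the price is justifying the area formula (Luzin's property (N), a.e.\ differentiability) for merely absolutely continuous $D$, which is precisely the technical point you flag. One small remark: in part (iv) the hypothesis $\int_0^L|d_0|^{1-p}\,ds<\infty$ with $p>1$ already forces $|\{d_0=0\}|=0$, so your appeal to that null set there is self-contained and needs no extra assumption.
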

\begin{remarks}\label{rem:psi-lower}
 \begin{theoremlist}
  \item  As explained in \cite{HuLee2012}, the bound \eqref{eq:L-p-bound-psi} is quite natural and sharp. It translates $L^p$ restrictions on $\psi$ into integrability conditions on $d_0^{1-p}$. The extreme case $p=\infty$ yields that $\psi$ is bounded once $d_0$ is bounded away from zero, and the case $p=1$ poses the weak additional restriction that the set where $d_0$ is zero has Lebesgue measure zero.
\item\label{rem:psi-lower-bound} Without working too hard, one can derive a formula for the density $\psi$
	of the probability measure $\mu$. We give the short argument from \cite{HuLee2012},
	for the reader's convenience, since it has an important consequence for
	Theorem \ref{thm:non-existence}: We assume that the measure of the set $\{d_0=0\}$
	is zero, otherwise $\mu$ will have a Dirac point mass component, and that $d_0$
	changes sign only finitely many times on $[0,L]$. Let $\{A_j\}$ be a collection of
	disjoint half--open intervals with $\cup_j A_j= [0,L)$ such that, on each $A_j$, the dispersion profile $d_0$ has a
	fixed sign and so $D$ is strictly monotone.
Then  by the definition of $\mu$,
 \begin{align*}
  \int F(r)\, &\mu(dr)
  =
  \sum_j \frac{1}{L}\int_{A_j} F(D(s))\, ds
  =
  \sum_j \frac{1}{L}\int_{A_j} F(D(s)) |D'(s)| |D'(s)|^{-1} \,ds \\
 &=
  \sum_j \frac{1}{L}\int_{D(A_j)} F(r) \frac{1}{|d_0(D^{-1}(r))|}\, dr
  =
  \int\limits_{\supp(\mu)} F(r) \frac{1}{L}\sum_{s\in D^{-1}(\{r\})} |d_0(s)|^{-1}\, dr.
 \end{align*}
 In the third equality we used a change of variables $r= D(s)$ and that in each $A_j$ there is a unique $s_j\in A_j$ such that $D(s_j)=r$ and for the last equality we set $D^{-1}(\{r\})=\{s\in[0,L)\vert\, D(s)= r \}$, the set of pre-images of $r$ within $[0,L)$.
 Thus we have the formula
 \beq\label{eq:density-psi}
  \psi(r)=\frac{1}{L}\sum_{s\in D^{-1}(\{r\})} |d_0(s)|^{-1}
 \eeq
 for the density $\psi$ of $\mu$ in terms of the dispersion profile $d_0$. Since $D(r)= \int_0^r d_0(s)\, ds$ and $d_0$ is locally integrable, $D$ is continuous and we
 can use \eqref{eq:density-psi} to
 get a lower bound for  all $r$ in the support
 of $\psi$ close enough to zero, as long as  $d_0$ does not behave too wildly:
 If $d_0$ is bounded close to zero there exists $r_0>0$ such that with
 $m\coloneqq L^{-1}\inf\{|d_0(D(r))|^{-1}: 0\le r\le r_0\}>0$ one has the lower bound
 \begin{align}\label{eq:psi-lower}
   \psi \ge m\id_{[0,r_0]} \text{ or }  \psi \ge m\id_{[-r_0,0]} 	
 \end{align}
 which one of the above two lower bounds holds depends on the sign of $d_0$ close to zero.
 \end{theoremlist}
\end{remarks}

Coming back to our discussion of the dispersion management equation,
plugging \eqref{eq:stationary} into \eqref{eq:GT-time-version-2}, we see that
$f$ should solve
 \beq\label{eq:GT}
    \omega f
    = -d_{\text{av}}  f'' - \int_\R
                T_{r}^{-1}\big[P(T_{r} f)\big]
              \, \mu(dr)   ,
 \eeq
which is a nonlocal nonlinear eigenvalue equation for $f$. Testing \eqref{eq:GT} with suitable test functions $h$ one gets the weak formulation
 \bdm
    \omega \langle h, f\rangle
    =
    d_\text{av}\langle h', f' \rangle - \la h, \int_\R
                T_{r}^{-1}\big[P(T_{r} f)\big]
              \, \mu(dr) \ra
 \edm
where $\la h_1,h_2 \ra$ is the scalar product on $L^2(\R)$ given by
$\int_\R \ol{h_1(x)} h_2(x)\, dx$.
Exchanging integrals, a formal calculation, using the unicity of $T_r$, yields
    \begin{align*}
        \la h, \int_\R
                T_{r}^{-1}\big[P(T_{r} f)\big]
              \, \mu(dr )\ra
        = \int_\R \la T_r h , P(T_{r} f)\ra  \, \mu(dr)
    \end{align*}
and one arrives at the weak formulation of \eqref{eq:GT} in the form
    \beq\label{eq:GT-weak}
        \omega \langle h, f\rangle
        =
        d_\text{av}\langle h', f' \rangle - \int_\R \la T_r h , P(T_{r} f)\ra  \, \mu(dr)  ,
    \eeq
supposed to hold for any $h$ in the Sobolev
space $H^1(\R)$.

Using the formula from Lemma \ref{lem:differentiability} for the derivative of the nonlocal nonlinearity $N(f)$ from \eqref{eq:N(f)}, one sees that \eqref{eq:GT-weak} is the weak form of the Euler-\-Lagrange equation associated to the energy $H(f)$ given in \eqref{eq:energy}, as long as $V'(|T_rf|) \sgn{T_rf} = P(T_rf)$. This is the case if
 \begin{align*}
	V'(a) = p(a)a = P(a)\quad \text{for all } a>0,
 \end{align*}
 i.e., $V$ is the antiderivative of the polarizability $P$,
 \begin{align*}
 	V(a)\coloneqq \int_0^a P(s)\, ds .
 \end{align*}
In this case it is, up to some technicalities, clear that any minimizer of the associated constrained minimization problem
\eqref{eq:min} will be a weak solution of \eqref{eq:GT} for some choice of Lagrange multiplier $\omega$, as long as the variational problem \eqref{eq:min} admits minimizers.
In particular,  combining Theorems \ref{thm:existence+} and \ref{thm:existence0} with Lemma \ref{lem:properties-mu} one sees that \eqref{eq:GT} has a non trivial weak solution $f$  under the condition that the assumptions \ref{ass:A1}--\ref{ass:A3} hold, at least
for large enough power $\lambda=\|f\|^2$, or for arbitrary power, if additionally \ref{ass:A4}  holds  for the antiderivative of $P$ and that the dispersion profile $d_0$ changes signs finitely many times and $1/d_0$ obeys some mild integrability conditions given by the right hand side of \eqref{eq:L-p-bound-psi}.
This allows for a large class of dispersion profiles $d_0$,  covering all physically relevant cases.

\section{Nonlinear estimates} \label{sec:nonlinear-estimates}
\subsection{Fractional Bilinear Estimates}\label{subsec:fractional-linear}
In this paper, the nonlocal nonlinearity is not a pure power, thus the multilinear estimates from \cite{HuLee2012} cannot be used anymore.
First, we gather the estimates which will be used in the proof of fat--tail Propositions \ref{prop:fat-tail-bound+} and \ref{prop:fat-tail-bound0}, which are crucial for the existence proof in this paper.
The core of the argument will be suitable splitting bounds on the nonlocal nonlinearity $N(f)$ from \eqref{eq:N(f)} given in Proposition \ref{prop:N-splitting}. For this, inspired by the splitting Lemma \ref{lem:V-splitting} for $V$, one needs  certain \emph{fractional linear} bounds on the building blocks from Definition \ref{def:building block}.
\\
Since $T_r=e^{ir\partial_x^2}$ is the solution operator for the free Schr\"{o}dinger equation in dimension one, we can express $T_rf$ for any nice $f$, for example, in the Schwartz class, as follows:
\begin{align}
T_rf(x)
&=\f{1}{\sqrt{4\pi i r }}\int_{\R} e^{i\f{|x-y|^2}{4r}}f(y)dy \label{eq:explicit form of the sol}\\
&=\f{1}{\sqrt{2\pi}}\int_\R e^{ix\eta}e^{-ir\eta^2}\hat{f}(\eta)d\eta,\label{eq:Fourier representation}
\end{align}
where $\hat{f}$ is the Fourier transform of $f$ given by
$$
\hat{f}(\eta)=\f{1}{\sqrt{2\pi}}\int_\R e^{-ix\eta} f(x)dx.
$$

As a first step, we note that, for $\psi$ in suitable $L^p$ spaces, certain space time norms of $T_rf$ are bounded.
\begin{lemma}\label{lem:boundedness}
Let $f\in L^2(\R)$, $2\le q\le 6$ and $\psi\in L^{\f{4}{6-q}}(\R)$. Then
\beq\label{eq:boundedness}
	\|T_r f\|_{L^{q}(\R^2,dx\psi dr)}
	\lesssim
		\|\psi\|_{L^{\f{4}{6-q}}(\R)}\|f\|.
\eeq
\end{lemma}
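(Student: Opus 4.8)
The plan is to combine Hölder's inequality in the $r$--variable with the classical one--dimensional Strichartz estimates for the free Schrödinger propagator $T_r=e^{ir\partial_x^2}$. Since $\psi\ge 0$, the measure $\psi\,dr$ is positive and Fubini gives
\[
\|T_rf\|_{L^q(\R^2,dx\psi dr)}^q=\int_\R\Big(\int_\R|T_rf(x)|^q\,dx\Big)\psi(r)\,dr=\int_\R \phi(r)\,\psi(r)\,dr,\qquad \phi(r):=\|T_rf\|_{L^q_x}^q .
\]
I would then apply Hölder in $r$ with the conjugate pair $\theta=\tfrac{4}{q-2}$ and $\theta'=\tfrac{4}{6-q}$, which are indeed conjugate because $\tfrac1\theta+\tfrac1{\theta'}=\tfrac{q-2}{4}+\tfrac{6-q}{4}=1$, obtaining $\int\phi\,\psi\,dr\le\|\phi\|_{L^\theta_r}\,\|\psi\|_{L^{4/(6-q)}}$. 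Note $\theta,\theta'\in[1,\infty]$ for $q\in[2,6]$, so this is legitimate across the whole range.

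The key step is to read $\|\phi\|_{L^\theta_r}^{1/q}=\|T_rf\|_{L^{q\theta}_rL^q_x}$ as a mixed space--time norm with time exponent $a:=q\theta=\tfrac{4q}{q-2}$ and space exponent $b:=q$. This pair is Schrödinger--admissible in dimension one, since $\tfrac2a+\tfrac1b=\tfrac{q-2}{2q}+\tfrac1q=\tfrac12$; moreover as $q$ runs over $(2,6]$ one has $a\in[6,\infty)$ and $b\in(2,6]$, so $a\ge 4$ throughout and no forbidden endpoint is encountered. Hence the standard Strichartz estimate gives $\|T_rf\|_{L^a_rL^b_x}\lesssim\|f\|$, i.e. $\|\phi\|_{L^\theta_r}\lesssim\|f\|^q$. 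This Strichartz bound is itself a consequence of the dispersive estimate $\|T_rg\|_{L^\infty_x}\le(4\pi|r|)^{-1/2}\|g\|_{L^1_x}$, which one reads directly off the kernel in \eqref{eq:explicit form of the sol}, together with the usual $TT^\ast$/Keel--Tao argument. The borderline value $q=2$ is handled separately and is even simpler: there $\theta'=1$, $\theta=\infty$, and $\|\phi\|_{L^\infty_r}=\sup_r\|T_rf\|_{L^2_x}^2=\|f\|^2$ by unitarity of $T_r$.

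Putting the two displays together yields $\iint_{\R^2}|T_rf|^q\,dx\,\psi\,dr\lesssim\|f\|^q\,\|\psi\|_{L^{4/(6-q)}}$, and taking $q$-th roots gives \eqref{eq:boundedness} (the natural power of $\|\psi\|_{L^{4/(6-q)}}$ coming out of this argument is $1/q$, which is immaterial in the applications since there $\psi$ is a fixed probability density). I do not expect a genuine obstacle: the proof is a clean Hölder step followed by a single Strichartz estimate, and the only point requiring care is verifying that the entire segment of exponents $(a,b)=(\tfrac{4q}{q-2},q)$, $q\in[2,6]$, remains admissible for $d=1$, so that one estimate covers all $q$ simultaneously; the delicate endpoint $(4,\infty)$, admissible precisely in one dimension, is only approached as $q\to 2^+$ (where unitarity is used instead) and is never actually invoked.
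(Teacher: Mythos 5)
Your proof is correct, but it is packaged differently from the paper's. The paper applies H\"older's inequality \emph{jointly} in $(x,r)$ to the pointwise splitting $|T_rf|^q\psi=\bigl(|T_rf|^{\frac{2(6-q)}{4}}\psi\bigr)\bigl(|T_rf|^{\frac{6(q-2)}{4}}\bigr)$ with conjugate exponents $\frac{4}{6-q}$ and $\frac{4}{q-2}$; the factor carrying $\psi$ is then handled by unitarity of $T_r$ on $L^2$, and the other factor by the single symmetric Strichartz estimate $\iint|T_rf|^6\,dx\,dr\lesssim\|f\|^6$. You instead integrate out $x$ first, apply H\"older only in $r$, and recognize the resulting quantity as the mixed-norm Strichartz norm $\|T_rf\|_{L^{4q/(q-2)}_rL^q_x}^q$, which you control by the general admissible-pair Strichartz estimate in dimension one (your admissibility check $\frac{2}{a}+\frac{1}{b}=\frac12$ with $a=\frac{4q}{q-2}\in[6,\infty)$ is right, and $q=2$ via unitarity is the correct degenerate case). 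The two arguments rest on essentially equivalent ingredients: the mixed-norm estimates you quote are exactly what interpolation between $L^\infty_rL^2_x$ (unitarity) and $L^6_{r,x}$ produces, and in the paper H\"older performs that interpolation by hand inside the weighted integral. What each buys: the paper's version is more self-contained, needing only the one $L^6_{t,x}$ estimate (whose sharp constant is even known) plus unitarity, which fits its style of keeping the space--time input minimal; yours is more modular and transparent, and adapts immediately to other weights or exponent configurations, at the cost of invoking the full Strichartz family as a black box. Finally, your observation that the natural power of $\|\psi\|_{L^{4/(6-q)}}$ is $1/q$ rather than $1$ is accurate and matches what the paper's own computation yields; the discrepancy with the displayed statement is harmless in all applications.
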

\begin{proof}
Using the H\"older inequality, we get
\begin{align*}
	\iint_{\R^2}|T_r f |^{q}dx \psi dr
	&=
		\iint_{\R^2} \left( |T_r f|^{\frac{2(6-q)}{4}}\psi \right) \left(  |T_r f|^{\frac{6(q-2)}{4}}\right) dxdr\notag\\
	&\leq \left(\iint_{\R^2}|T_r f|^2\psi^{\f{4}{6-q}}\,dxdr\right)^{\f{6-q}{4}}\left(\iint_{\R^2}|T_r f|^6\,dxdr\right)^{\f{q-2}{4}}.
\end{align*}
Since $T_r$ is unitary on $L^2(\R)$,
\begin{align*}
	\iint_{\R^2}|T_r f|^2\psi^{\f{4}{6-q}}\,dxdr = \|f\|^2 \int_\R \psi^{\f{4}{6-q}}\,dr
\end{align*}
and the Strichartz inequality \cite{GV85,Strichartz}, needed here only in one dimension, gives
\begin{align*}
	\iint_{\R^2}|T_r f|^6\,dxdr \le S_1^6 \|f\|^6
\end{align*}
and so \eqref{eq:boundedness} follows.
\end{proof}
\begin{remark}
	The sharp value of the constant in the one-dimensional Strichartz inequality
	is known to be $S_1=12^{-1/12}$, the two dimensional sharp constant is known, too, and it is also known that Gaussians are the only maximizers in the Strichartz inequality in one and two space dimensions, see \cite{Foschi} and \cite{HZ06}.
	In recent years there has been a renewed interest in establishing existence of maximizers for certain space time norms of solutions of linear evolution equations, like the Schr\"odinger or wave equation, see, for example,
	\cite{Christ-Shao-1, Christ-Shao-2, Fanelli-Vega-Visciglia, HuShao}.
\end{remark}
To take advantage of the fact that an interaction term containing the product of two terms of the form $T_rf_1$ and $T_rf_2$ is typically small if the functions $\hat{f_1}$ and $\hat{f_2}$ have separated supports, we need

\begin{lemma}[Fractional bilinear estimate] \label{lem:fractional-bilinear}
 Let $2 \leq p< 3$ and $f_1,\ f_2 \in L^2(\R)$ whose Fourier transforms have separated supports, say
 $s= \dist (\supp \hat{f_1}, \supp  \hat{f_2})>0$.
 Then
 \beq \label{eq:fractional-bilinear}
   \|T_r f_1 T_r f_2\|_{L^p(\R^2,\,dx dr)} \lesssim \f{1}{s^{(3-p)/p}} \|f_1\| \|f_2\|.
 \eeq
\end{lemma}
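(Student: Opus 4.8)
The plan is to reduce the estimate to the two endpoints $p=2$ and $p=3$ and then interpolate by the log-convexity of $L^p$-norms applied to the single function $F(x,r):=T_rf_1(x)\,T_rf_2(x)$. At $p=3$ there should be no gain: by H\"older in $(x,r)$ and the one-dimensional Strichartz inequality used already in Lemma \ref{lem:boundedness},
\[
  \|F\|_{L^3(\R^2,\,dxdr)}\le \|T_rf_1\|_{L^6(\R^2)}\|T_rf_2\|_{L^6(\R^2)}\lesssim \|f_1\|\,\|f_2\|,
\]
which is the bound \eqref{eq:fractional-bilinear} with $s^0$. The whole gain in $s$ therefore has to come from the $L^2$ endpoint, which is the heart of the argument.

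For the $L^2$ bound I would work on the Fourier side, first for Schwartz $f_1,f_2$ and then extend by density. Writing each factor via the representation \eqref{eq:Fourier representation}, one has
\[
  F(x,r)=\tfrac{1}{2\pi}\iint_{\R^2} e^{ix(\eta_1+\eta_2)}e^{-ir(\eta_1^2+\eta_2^2)}\hat{f}_1(\eta_1)\hat{f}_2(\eta_2)\,d\eta_1 d\eta_2,
\]
so that $\|F\|_{L^2(\R^2,\,dxdr)}^2$, after carrying out the $x$- and $r$-integrations against the four-fold product, reduces to an integral over $\eta_1,\eta_2,\eta_1',\eta_2'$ carrying the two constraints $\delta(\eta_1+\eta_2-\eta_1'-\eta_2')$ and $\delta(\eta_1^2+\eta_2^2-\eta_1'^2-\eta_2'^2)$. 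The point is that these two constraints force the unordered pairs $\{\eta_1,\eta_2\}$ and $\{\eta_1',\eta_2'\}$ to coincide; since $\hat{f}_1$ and $\hat{f}_2$ have supports separated by $s>0$, the swapped solution $(\eta_1',\eta_2')=(\eta_2,\eta_1)$ is excluded, so only $(\eta_1',\eta_2')=(\eta_1,\eta_2)$ contributes. Resolving the double $\delta$ through the map $(\eta_1',\eta_2')\mapsto(\eta_1'+\eta_2',\eta_1'^2+\eta_2'^2)$, whose Jacobian determinant is $2(\eta_2'-\eta_1')$, produces the factor $1/(2|\eta_1-\eta_2|)$ and yields
\[
  \|F\|_{L^2(\R^2,\,dxdr)}^2=\iint_{\R^2}\frac{|\hat{f}_1(\eta_1)|^2\,|\hat{f}_2(\eta_2)|^2}{2|\eta_1-\eta_2|}\,d\eta_1 d\eta_2\le \frac{1}{2s}\,\|f_1\|^2\|f_2\|^2,
\]
using $|\eta_1-\eta_2|\ge s$ on the product of the supports together with Plancherel. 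This gives $\|F\|_{L^2}\lesssim s^{-1/2}\|f_1\|\|f_2\|$, i.e.\ \eqref{eq:fractional-bilinear} at $p=2$.

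With the two endpoints in hand, I would finish by interpolation: for $2\le p\le 3$ write $\tfrac1p=\tfrac{1-\theta}{2}+\tfrac{\theta}{3}$, so that $\theta=3-\tfrac6p$ and $\tfrac{1-\theta}{2}=\tfrac{3-p}{p}$, and apply $\|F\|_{L^p}\le\|F\|_{L^2}^{1-\theta}\|F\|_{L^3}^{\theta}$. Inserting the two bounds gives the claimed factor $s^{-(3-p)/p}$. I expect the main obstacle to be the careful justification of the distributional computation in the $L^2$ endpoint, specifically the resolution of the two simultaneous $\delta$-constraints and the verification that the separation of supports makes the underlying change of variables injective with Jacobian bounded below by $2s$; once this is set up rigorously (for Schwartz data, then by density), the remaining steps are routine.
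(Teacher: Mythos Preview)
Your proof is correct. At the $p=2$ endpoint you and the paper do essentially the same computation: the paper writes $T_rf_1\,T_rf_2$ as the space--time inverse Fourier transform of a function on the paraboloid and applies Plancherel, which is precisely the rigorous version of your $\delta$-constraint calculation (the separated supports make the map $(\eta_1,\eta_2)\mapsto(\eta_1+\eta_2,\eta_1^2+\eta_2^2)$ injective with Jacobian $2|\eta_1-\eta_2|\ge 2s$). The genuine difference is in the range $2<p<3$. The paper stays on the Fourier side: it applies Hausdorff--Young at the given $p$ to pass to an $L^{p'}$ norm in $(\eta_1,\eta_2)$, extracts the factor $s^{-(3-p)/p}$ from the kernel $|\eta_1-\eta_2|^{-(p'-1)}$, and then closes with the Hardy--Littlewood--Sobolev inequality. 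You instead pin down the second endpoint $p=3$ by H\"older and Strichartz and interpolate a single function between $L^2$ and $L^3$ via log-convexity. Your route is more elementary---it avoids HLS entirely and makes the exponent $(3-p)/p$ transparent as the interpolation parameter $(1-\theta)/2$---while the paper's route is self-contained at each $p$ and, as they remark, recovers the $p=3$ bound directly from the same Hausdorff--Young framework without any support hypothesis.
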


\begin{remark}
	The bound \eqref{eq:fractional-bilinear} is a well-known bilinear
	estimate for $p=2$, see \cite{Bourgain}. For readers' convenience,
	we give a proof of \eqref{eq:fractional-bilinear} for any $2\le p < 3$.
	As the proof shows, \eqref{eq:fractional-bilinear} holds also
	for $p=3$, without any support condition on $\hat{f_1}$ and $\hat{f_2}$.
\end{remark}

\begin{proof}
Using \eqref{eq:Fourier representation}, we get
\begin{align*}
	T_r f_1(x) T_r f_2(x)
	= \f{1}{2\pi}\iint_{\R^2}e^{ix(\eta_1+\eta_2)-ir(\eta_1^2+\eta_2^2)}
	  \hat{f_1}(\eta_1)\hat{f_2}(\eta_2)d\eta_1d\eta_2 .
\end{align*}
Doing the change of variables $a=\eta_1+\eta_2$, $b=\eta_1^2+\eta_2^2$, with Jacobian $J=\f{\partial(a,b)}{\partial(\eta_1,\eta_2)}=2(\eta_2-\eta_1)$ and introducing
\begin{align*}
	F(a,b)\coloneqq \frac{1}{|J|}\hat{f_1}(\eta_1(a,b))\hat{f_2}(\eta_2(a,b))\id_{[0,\infty)}(b)
\end{align*}
one sees
\begin{align*}
	T_r f_1(x) T_r f_2(x)
	= \f{1}{2\pi}\iint_{\R^2}e^{ixa-irb}
	  F(a,b)\, dadb ,
\end{align*}
that is, up to sign in one of the variables, $T_r f_1(x) T_r f_2(x)$ is the space-time Fourier transform of $F$. Since $p\ge 2$, one can apply the Hausdorff-Young inequality, see,  e.g., \cite{LiebLoss}, which reduces to Plancherel's identity for $p=2$, to get
\begin{align*}
  \|T_r f_1 T_r f_2\|_{L^p(\R\times\R,dxdr)}
  &\le
    \|F\|_{L^{p'}(\R^2,dadb)}
\end{align*}
with $p'$ the dual index to $p$. Undoing the above change of variables, one sees
\begin{align}\label{eq:great}
    \|F\|_{L^{p'}(\R^2,dadb)}
   =2^{-1/p}\left(\iint_{\R^2}\f{1}{|\eta_2-\eta_1|^{p'-1}}|\hat{f_1}(\eta_1) \hat{f_2}(\eta_2)|^{p'}\,d\eta_1d\eta_2\right)^{1/p'}.
\end{align}
If $p=p'=2$, we use $|\eta_2-\eta_1|\ge s$ on the support of the product $\hat{f_1}\hat{f_2}$ to get
\begin{align*}
  	\|F\|_{L^{2}(\R^2,dadb)} \lesssim \frac{1}{\sqrt{s}} \|\hat{f_1}\|\|\hat{f_2}\|
\end{align*}
which concludes the proof for $p=2$, since the Fourier transform is an isometry on $L^2$.

Since $3/2 < p'<2$, one can use the Hardy-Littlewood-Sobolev inequality to see
\begin{align*}
\eqref{eq:great} &\leq \f{1}{s^{2-3/p'}}\left(\iint_{\R^2}\f{|\hat{f_1}(\eta_1)|^{p'} |\hat{f_2}(\eta_2)|^{p'}}{|\eta_2-\eta_1|^{2-p'}}d\eta_1d\eta_2\right)^{\f{1}{p'}}\\
&\lesssim
\f{1}{s^{(3-p)/p}}\|\hat{f_1}\| \| \hat{f_2}\|
\end{align*}
which yields \eqref{eq:fractional-bilinear} for $2<p<3$.
\end{proof}


The following will be the building blocks for our bounds on the nonlocal nonlinear potential, see \eqref{eq:V-splitting-2}.
\begin{definition}\label{def:building block}
For any $\gamma \ge 2$, define
	\begin{align*}
		M_\psi^\gamma(f_1,f_2)\coloneqq
		\iint_{\R^2} |T_rf_1||T_rf_2|(|T_rf_1|+|T_rf_2|)^{\gamma-2} \, dx \psi dr .
	\end{align*}
\end{definition}
\begin{remark}
 At first $M_\psi^\gamma(f_1,f_2)$ is defined only when $f_1,f_2$ are Schwartz functions. Using Proposition \ref{prop:M-bounded} below one sees that for all
 $\gamma\ge 2$ and $\psi\in L^1$ one can extend $M_\psi^\gamma(f_1,f_2)$ to all of $H^1$, and even to all of $L^2$ if $2\le \gamma\le 6$ and $\psi$ in certain $L^p$ spaces,  by density of the Schwartz functions. 	
\end{remark}®

\begin{proposition}\label{prop:M-bounded}
  \begin{theoremlist}
  	\item
		Let $2\le \gamma\le 6$ and $\psi\in L^{\f{4}{6-\gamma}}$. Then
		\begin{align}\label{eq:M-bounded-1}
			M_\psi^\gamma(f_1,f_2)
			\lesssim \|f_1\|\|f_2\|(\|f_1\|+\|f_2\|)^{\gamma-2}
	\end{align}
	where the implicit constant depends only on $\gamma$ and the
	$L^{\f{4}{6-\gamma}}$ norm of $\psi$.
\item Let $2\le\gamma<\infty$ and $\psi\in L^1$. Then
\begin{align}\label{eq:M-bounded-2}
	M_\psi^\gamma(f_1,f_2)
	\lesssim \|f_1\|\|f_2\|(\|f_1\|_{H^1}+\|f_2\|_{H^1})^{\gamma-2}
\end{align}
where the implicit constant depends only on $\gamma$ and the $L^{1}$ norm of $\psi$.
  \end{theoremlist}
\end{proposition}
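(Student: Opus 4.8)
The plan is to prove both parts by dominating the trilinear integrand in Definition \ref{def:building block} by products of weighted space--time norms of the $T_rf_i$, and then to feed these into Lemma \ref{lem:boundedness} for part (i) and into the $H^1$--invariance of $T_r$ together with the one--dimensional Sobolev embedding for part (ii). Throughout I abbreviate $u=|T_rf_1|$ and $v=|T_rf_2|$, so that the integrand is $uv(u+v)^{\gamma-2}$.

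For part (i) the naive pointwise bound $uv\le\tfrac14(u+v)^2$ would only give $M_\psi^\gamma\lesssim(\|f_1\|+\|f_2\|)^\gamma$, which is too lossy: it destroys the genuinely \emph{bilinear} factor $\|f_1\|\|f_2\|$. Instead I would apply the three--factor H\"older inequality on the measure space $(\R^2,\d x\,\psi\,\d r)$ to the factors $u$, $v$ and $(u+v)^{\gamma-2}$, with exponents $\gamma,\ \gamma,\ \tfrac{\gamma}{\gamma-2}$ (whose reciprocals sum to one). Since $(\gamma-2)\cdot\tfrac{\gamma}{\gamma-2}=\gamma$, this yields
\[
 M_\psi^\gamma(f_1,f_2)\le \|T_rf_1\|_{L^\gamma(\d x\,\psi\,\d r)}\,\|T_rf_2\|_{L^\gamma(\d x\,\psi\,\d r)}\,\big\||T_rf_1|+|T_rf_2|\big\|_{L^\gamma(\d x\,\psi\,\d r)}^{\gamma-2}.
\]
Now Lemma \ref{lem:boundedness} applied with $q=\gamma$ --- which is precisely why the hypothesis $\psi\in L^{4/(6-\gamma)}$ is imposed --- bounds each of the first two factors by $\lesssim\|f_i\|$, while Minkowski's inequality together with the same lemma bounds the last factor by $\lesssim(\|f_1\|+\|f_2\|)^{\gamma-2}$. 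Combining these gives \eqref{eq:M-bounded-1}; in the borderline case $\gamma=2$ the third factor is absent and the estimate reduces to Cauchy--Schwarz on the weighted measure, with $\psi\in L^1$.

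For part (ii) the exponent $\gamma$ may exceed $6$ while only $\psi\in L^1$ is available, so Lemma \ref{lem:boundedness} is no longer applicable to the high power; here I would instead pull the high--power factor out in $L^\infty$. The key observation is that $T_r=e^{ir\partial_x^2}$ acts on the Fourier side as multiplication by $e^{-ir\eta^2}$, see \eqref{eq:Fourier representation}, hence it is an $L^2$--isometry that commutes with $\partial_x$; consequently $\|T_rf_i\|_{H^1}=\|f_i\|_{H^1}$ for every $r$. By the one--dimensional Sobolev embedding $\|g\|_{L^\infty}\lesssim\|g\|_{H^1}$ this gives, uniformly in $r$ and $x$,
\[
 \big(|T_rf_1(x)|+|T_rf_2(x)|\big)^{\gamma-2}\le\big(\|T_rf_1\|_{L^\infty}+\|T_rf_2\|_{L^\infty}\big)^{\gamma-2}\lesssim\big(\|f_1\|_{H^1}+\|f_2\|_{H^1}\big)^{\gamma-2}.
\]
Pulling this uniform factor out of the integral leaves $\iint|T_rf_1|\,|T_rf_2|\,\d x\,\psi\,\d r$, which by Cauchy--Schwarz in $x$ and unitarity of $T_r$ is at most $\|f_1\|\|f_2\|\int\psi\,\d r=\|\psi\|_{L^1}\|f_1\|\|f_2\|$. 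This yields \eqref{eq:M-bounded-2}, with implicit constant depending only on $\gamma$ (through the Sobolev constant raised to the power $\gamma-2$) and on $\|\psi\|_{L^1}$.

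The two H\"older/Cauchy--Schwarz manipulations are routine; the one delicate point --- and the only place where the bilinear structure $\|f_1\|\|f_2\|$ rather than a symmetric $(\|f_1\|+\|f_2\|)^\gamma$ is recovered --- is the choice of the H\"older triple $(\gamma,\gamma,\tfrac{\gamma}{\gamma-2})$ in part (i), which keeps $u$ and $v$ each at power one in their own factor. In part (ii) the essential structural input is the $H^1$--invariance of $T_r$, which is what makes the Sobolev bound uniform in the evolution parameter $r$.
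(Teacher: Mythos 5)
Your proof is correct and follows essentially the same route as the paper's: part (i) is the identical three-factor H\"older estimate with exponents $\gamma,\gamma,\tfrac{\gamma}{\gamma-2}$ followed by the triangle inequality and Lemma \ref{lem:boundedness}, and part (ii) is the paper's $(2,2,\infty)$ splitting, with your appeal to the Sobolev embedding $\|g\|_{L^\infty}\lesssim\|g\|_{H^1}$ playing the role of the paper's bound \eqref{eq:Kato} combined with the commutation of $T_r$ with $\partial_x$ and unitarity.
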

\begin{proof}
	Using H\"older's inequality for 3 functions with exponents $\gamma$, $\gamma$, and $\gamma/(\gamma-2)$ one has
	\begin{align*}
		M_\psi^\gamma(f_1,f_2)\le
		 \|T_rf_1\|_{L^\gamma(\R^2, dx\psi dr)}
		 \|T_rf_2\|_{L^\gamma(\R^2, dx\psi dr)}
		 \||T_rf_1|+|T_rf_2|\|_{L^\gamma(\R^2, dx\psi dr)}^{\gamma-2}.
	\end{align*}
	Applying the triangle inequality and Lemma \ref{lem:boundedness} completes the proof of \eqref{eq:M-bounded-1}.
	
	Similarly, using H\"older's inequality with exponents $2,2$, and $\infty$ shows
	\begin{align}
		M_\psi^\gamma(f_1,f_2)
		&\le
		 \|T_rf_1\|_{L^2(\R^2, dx\psi dr)}
		 \|T_rf_2\|_{L^2(\R^2, dx\psi dr)}
		 \sup_{r\in\R }\left(\|T_rf_1\|_{L^\infty}+\|T_rf_2\|_{L^\infty}\right)^{\gamma-2}
		 \|\psi\|_{L^1}^{\gamma-2} \nonumber\\
		&\le
		 \|\psi\|_{L^1}^\gamma \|f_1\|\|f_2\| \sup_{r\in\R }\left(\|T_rf_1\|_{L^\infty}+\|T_rf_2\|_{L^\infty}\right)^{\gamma-2} \label{eq:Linfty}
	\end{align}
	where we also used Lemma \ref{lem:boundedness}. Using the simple bound
\begin{align}\label{eq:Kato}
	\|h\|_{L^\infty}^2 \le \|h\| \|h'\|,
\end{align}
whose proof we postpone to the end of this proof, and the fact that the derivative and $T_r$ commute and $T_r$ is unitary on $L^2(\R)$, one gets
\begin{align*}
  \sup_{r\in\R }\|T_rf_1\|_{L^\infty}
	\le (\|f_1\| \|f_1'\|)^{1/2} \le \|f_1\|_{H^1}
\end{align*}
and similar for $T_rf_2$. Thus the second factor in \eqref{eq:Linfty} is bounded by
\begin{align}\label{eq:nice2}
	\sup_{r\in\R }(\|T_rf_1\|_{L^\infty}+\|T_rf_2\|_{L^{\infty}})^{\gamma-2}
	\le (\|f_1\|_{H^1} + \|f_2\|_{H^1})^{\gamma-2}
\end{align}
which finishes the proof of \eqref{eq:M-bounded-2}.

It remains to give an argument for \eqref{eq:Kato}. This is well--known, but we give the short proof for convenience of the reader. It is enough to assume that $h\in C^\infty_0(\R)$. Then
\begin{align*}
  |h(x)|^2	=\int_{-\infty}^x 2\re(\ol{h(s)}h'(s))\, ds = - \int_{x}^{\infty} 2\re(\ol{h(s)}h'(s))\, ds .
\end{align*}
So
\begin{align*}
	|h(x)|^2 \le \int_{\R } \left|h(s)h'(s)\right|\, ds
	\le \|h\|\|h'\|
\end{align*}
using the Cauchy-Schwarz inequality.
\end{proof}

\begin{proposition}\label{prop:M-fourier} Let $s= \dist (\supp \hat{f_1}, \supp  \hat{f_2})>0$. \\
If $2<\gamma< 6$, $\tau > 1$ and $\psi\in L^{\beta(\gamma, \tau)}$, then
 	\begin{align*}
	M_\psi^\gamma(f_1,f_2)
	\lesssim s^{-\alpha(\gamma, \tau)}\|f_1\|\|f_2\|(\|f_1\|+\|f_2\|)^{\gamma-2},
	\end{align*}
where $\alpha(\gamma, \tau) \coloneqq \min \{\f{\gamma-2}{6\tau}, \f{6-\gamma}{2\tau}\}$ and $\beta(\gamma, \tau) \coloneqq \f{4}{6-\gamma -2 \alpha(\gamma, \tau)}$.
\end{proposition}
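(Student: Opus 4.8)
The plan is to estimate $M_\psi^\gamma(f_1,f_2)$ from Definition~\ref{def:building block} by a single H\"older splitting that peels off one factor of the \emph{product} $T_rf_1\cdot T_rf_2$, on which the fractional bilinear estimate of Lemma~\ref{lem:fractional-bilinear} can act and supply the whole gain $s^{-\alpha}$, while all the remaining powers are controlled by the space--time boundedness of Lemma~\ref{lem:boundedness}. Writing $\alpha=\alpha(\gamma,\tau)$ and $\beta=\beta(\gamma,\tau)$, I factor the integrand as
\[
|T_rf_1||T_rf_2|\big(|T_rf_1|+|T_rf_2|\big)^{\gamma-2}\psi
=\big|T_rf_1\,T_rf_2\big|^{2\alpha}\cdot\Big[\big|T_rf_1\,T_rf_2\big|^{1-2\alpha}\big(|T_rf_1|+|T_rf_2|\big)^{\gamma-2}\psi\Big]
\]
(which is legitimate since $2\alpha\le1$) and apply H\"older's inequality in $(x,r)$ with the conjugate pair $\tfrac1\alpha,\tfrac1{1-\alpha}$.

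The first factor is $\|T_rf_1\,T_rf_2\|_{L^2(\R^2,dx\,dr)}^{2\alpha}$, and the classical $p=2$ case of Lemma~\ref{lem:fractional-bilinear} bounds it by $\big(s^{-1/2}\|f_1\|\|f_2\|\big)^{2\alpha}=s^{-\alpha}(\|f_1\|\|f_2\|)^{2\alpha}$; this is the only step that sees the Fourier separation $s$. In the second factor the $\tfrac1{1-\alpha}$-th power integral carries the weight $\psi^{1/(1-\alpha)}$ and a product of powers of $|T_rf_1|,|T_rf_2|$ and $|T_rf_1|+|T_rf_2|$ of total degree $\gamma-4\alpha$; a further (weighted) H\"older splits it into three $L^q(\R^2,dx\,\psi^{1/(1-\alpha)}dr)$ norms with the common exponent $q=\tfrac{\gamma-4\alpha}{1-\alpha}$, after which Lemma~\ref{lem:boundedness} and the triangle inequality give $\|f_1\|^{1-2\alpha}\|f_2\|^{1-2\alpha}(\|f_1\|+\|f_2\|)^{\gamma-2}$ times a power of $\|\psi\|_{L^\beta}$. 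The powers of $\|f_1\|$ and $\|f_2\|$ recombine to $\|f_1\|\|f_2\|(\|f_1\|+\|f_2\|)^{\gamma-2}$, and since $6-q=\tfrac{6-\gamma-2\alpha}{1-\alpha}$, Lemma~\ref{lem:boundedness} asks precisely for $\psi^{1/(1-\alpha)}\in L^{4/(6-q)}$, i.e.\ $\psi\in L^\beta$ with $\beta=\tfrac1{1-\alpha}\cdot\tfrac4{6-q}=\tfrac4{6-\gamma-2\alpha}$, matching $\beta(\gamma,\tau)$.

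What makes the estimate work across the whole range is the admissibility of the two exponents, and this is where the minimum in $\alpha(\gamma,\tau)$ enters. The bilinear factor only needs $2\alpha\le1$, while Lemma~\ref{lem:boundedness} needs $2\le q\le6$, which is equivalent to $\alpha\le\tfrac{\gamma-2}2$ and $\alpha\le\tfrac{6-\gamma}2$. Because $\tau>1$ gives $\alpha(\gamma,\tau)\le\min\{\tfrac{\gamma-2}6,\tfrac{6-\gamma}2\}$, all three inequalities hold with room to spare, so the stated $\alpha$ sits safely inside the admissible window for every $2<\gamma<6$ and every $\tau>1$. I expect the genuine difficulty to be this bookkeeping rather than any single inequality: the bilinear estimate is \emph{unweighted} whereas Lemma~\ref{lem:boundedness} carries $\psi$, so the weight must be apportioned cleanly between the two H\"older factors and then reassembled into one $L^\beta$ norm, with $\alpha$, $q$ and $\beta$ kept mutually consistent. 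A tidy way to isolate the role of $\tau$ is to first prove the endpoint bound at $\tau\to1$, where $\alpha_0=\min\{\tfrac{\gamma-2}6,\tfrac{6-\gamma}2\}$ and $\beta_0=\tfrac4{6-\gamma-2\alpha_0}$, and then interpolate it against part~(i) of Proposition~\ref{prop:M-bounded}; since $r\mapsto\int_\R|T_rf_1||T_rf_2|(|T_rf_1|+|T_rf_2|)^{\gamma-2}\,dx$ is nonnegative, $M_\psi^\gamma$ is linear in $\psi$, and interpolating with parameter $1/\tau$ returns $\alpha=\alpha_0/\tau$ together with the stated $\beta$ in one stroke.
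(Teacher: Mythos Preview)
Your proof is correct and follows essentially the same strategy as the paper: isolate a factor $|T_rf_1\,T_rf_2|^{2\alpha}$ via H\"older, bound it by the $p=2$ bilinear estimate (Lemma~\ref{lem:fractional-bilinear}) to extract the $s^{-\alpha}$ gain, and control the remainder through Lemma~\ref{lem:boundedness}, arriving at the same $L^\beta$ condition on $\psi$. The only cosmetic difference is in the H\"older grouping: the paper does a single three-way split with exponents $\tfrac{6}{8-\gamma-6\alpha},\,\tfrac1\alpha,\,\tfrac{6}{\gamma-2}$, sending $(|T_rf_1|+|T_rf_2|)^{\gamma-2}$ to the \emph{unweighted} $L^6$ Strichartz norm, whereas you first split $\tfrac1\alpha,\,\tfrac1{1-\alpha}$ and then H\"older again inside the weighted factor with a common exponent $q=\tfrac{\gamma-4\alpha}{1-\alpha}$; your version yields the slightly weaker constraint $\alpha\le\tfrac{\gamma-2}{2}$ in place of the paper's $\alpha\le\tfrac{\gamma-2}{6}$, but since the stated $\alpha(\gamma,\tau)$ satisfies both, the outcomes coincide.
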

\begin{remark}
 Note that $\beta(\gamma, \tau)$ is only slightly bigger than
 $\f{4}{6-\gamma}$ since $\alpha(\gamma, \tau)>0$ tends to zero as $\tau \to \infty$ and that it is increasing in $\gamma$.  	
 So we loose only an epsilon, by choosing $\tau$ large enough,  with respect to
 the bound from Proposition \ref{prop:M-bounded}.
\end{remark}

\begin{proof}
 Let $0<\alpha<\f{1}{2}$ to be chosen later and write
 \begin{align*}
 	M_\psi^\gamma(f_1,f_2)
 	=
		\iint_{\R^2} \left\{(|T_rf_1||T_rf_2|)^{1-2\alpha}\psi\right\}
		\left\{|T_rf_1||T_rf_2|\right\}^{2\alpha}\left\{(|T_rf_1|+|T_rf_2|)^{\gamma-2}\right\} \, dx dr.
 \end{align*}
 Now use H\"older's inequality for 3 functions with exponents
  $p_1$, $\f{1}{\alpha}$, and, $\f{6}{\gamma-2}$, where
 \begin{align*}
 	\f{1}{p_1}= 1- \alpha -\f{\gamma-2}{6} = \f{8-\gamma-6\alpha}{6}
 \end{align*}
 to see that
  \begin{align*}
 	M_\psi^\gamma(f_1,f_2)
 	&\le \left(\iint_{\R^2}|T_rf_1 T_rf_2|^{\f{6(1-2\alpha)}{8-\gamma-6\alpha}} \psi^{\f{6}{8-\gamma-6\alpha}}\, dxdr
 	     \right)^{\f{8-\gamma-6\alpha}{6}} \\
 	&\phantom{\le}
 		\|T_rf_1 T_rf_2\|_{L^{2}(\R^2, dxdr)}^{2\alpha}
		 \||T_rf_1|+|T_rf_2|\|_{L^{6}(\R^2, dxdr)}^{\gamma-2}.
 \end{align*}
 Up to a constant, the third factor is bounded by $(\|f_1\|+\|f_2\|)^{\gamma-2}$, using the triangle and Strichartz inequalities.
 Using Lemma 2.2, the second factor is bounded by
 \begin{align*}
 	\|T_rf_1 T_rf_2\|_{L^{2}(\R^2, dxdr)}^{2\alpha}
 	\lesssim s^{-\alpha} \|f_1\|^{2\alpha} \|f_2\|^{2\alpha} .
 \end{align*}
 For the first factor, we  note that with the help of the Cauchy-Schwarz inequality one gets
 \begin{align*}
 	\iint_{\R^2}& |T_rf_1 T_rf_2|^{\f{6(1-2\alpha)}{8-\gamma-6\alpha}} \psi^{\f{6}{8-\gamma-6\alpha}}\, dxdr
     \\
     & \le \left( \iint_{\R^2}|T_rf_1|^{\f{12(1-2\alpha)}{8-\gamma-6\alpha}} \psi^{\f{6}{8-\gamma-6\alpha}}\, dxdr \right)^{1/2}
     \left( \iint_{\R^2}|T_rf_2|^{\f{12(1-2\alpha)}{8-\gamma-6\alpha}} \psi^{\f{6}{8-\gamma-6\alpha}}\, dxdr \right)^{1/2}.
 \end{align*}

  In order to use Lemma \ref{lem:boundedness} for this, we need to have
  $2\le q\le 6$ with $q= \f{12(1-2\alpha)}{8-\gamma-6\alpha}$.
  This is equivalent to $6\alpha<8-\gamma$, $6\alpha\le \gamma-2$ and $2\alpha\le 6-\gamma$.

  Moreover, we need
  \begin{align*}
  	\psi^{\f{6}{8-\gamma-6\alpha}} \in L^{\f{4}{6-q}}
    = L^{\f{4(8-\gamma-6\alpha)}{6(6-\gamma-2\alpha)}}
  \end{align*}
  hence
  \begin{align*}
  	\psi\in L^{\f{4}{6-\gamma-2\alpha}} .
  \end{align*}

 Now we come to the choice of $\alpha$: In order to guarantee that $0<\alpha<1$,  $6\alpha<8-\gamma$, $6\alpha\le \gamma-2$, and $2\alpha\le 6-\gamma$, we take any $\tau > 1$ and put $\alpha\coloneqq\alpha(\gamma, \tau)$.
 Then one checks that $\alpha$ obeys the above bounds to finish the proof.
\end{proof}

\begin{lemma}[Duality]\label{lem:duality}
Define
\begin{align*}
	\wti{\psi}(s)\coloneqq \f{1}{(2|s|)^{\f{6-\gamma}{2}}}\psi\big(-\f{1}{4s}\big)	
\end{align*}
for $s\not=0$. Then
 \beq \label{eq:duality}
   M_\psi^\gamma(f_1,f_2) = M_{\wti{\psi}}^\gamma(\check{f}_1,\check{f}_2)
 \eeq
where $\check{f}$ is the inverse Fourier transform of $f$.
\end{lemma}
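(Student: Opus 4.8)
The plan is to derive a pointwise ``lens-type'' (pseudo-conformal) identity relating the propagator acting on $f$ at time $r$ to the propagator acting on $\check f$ at the reciprocal time $s\coloneqq-\f{1}{4r}$, and then to change variables in both the spatial and the temporal integral defining $M_\psi^\gamma$. Since $M_\psi^\gamma$ is initially defined only for Schwartz functions, I would fix $f_1,f_2$ Schwartz (so that $\check f_1,\check f_2$ are Schwartz as well) throughout, ensuring all integrals converge absolutely and Fubini applies; the identity \eqref{eq:duality} then extends to the completions recorded after Definition \ref{def:building block} by density, using that $f\mapsto\check f$ is an $L^2$-isometry.

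The heart of the argument is a single Gaussian computation. Completing the square in the kernel representation \eqref{eq:explicit form of the sol} and writing $\f{1}{4r}=-s$, one has $e^{i|x-y|^2/(4r)}=e^{-isx^2}\,e^{2isxy}\,e^{-isy^2}$, so that
\[
  T_r f(x)=\f{1}{\sqrt{4\pi i r}}\,e^{-isx^2}\int_\R e^{2isxy}e^{-isy^2}f(y)\,dy .
\]
Comparing the remaining integral with the Fourier representation \eqref{eq:Fourier representation} applied to $\check f$ (using $\widehat{\check f}=f$), one recognizes it as $\sqrt{2\pi}\,T_s\check f(2sx)$, whence
\[
  T_r f(x)=\f{1}{\sqrt{2ir}}\,e^{-isx^2}\,T_s\check f(2sx).
\]
Taking absolute values and using $|r|=\f{1}{4|s|}$ gives the clean relation $|T_r f(x)|=\sqrt{2|s|}\,|T_s\check f(2sx)|$.

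Next I would insert this into the definition of $M_\psi^\gamma$ and track the powers of $2|s|$. The integrand $ab(a+b)^{\gamma-2}$ is homogeneous of degree $\gamma$ in $(a,b)=(|T_rf_1|,|T_rf_2|)$, so each factor $\sqrt{2|s|}$ pulls out to produce a weight $(2|s|)^{\gamma/2}$. The substitution $\xi\coloneqq 2sx$ in the spatial integral contributes $dx=\f{d\xi}{2|s|}$, leaving the spatial weight $(2|s|)^{(\gamma-2)/2}$ in front of $\int_\R|T_s\check f_1||T_s\check f_2|(|T_s\check f_1|+|T_s\check f_2|)^{\gamma-2}\,d\xi$. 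Finally, substituting $s=-\f{1}{4r}$ in the outer integral --- a smooth bijection of $\R\setminus\{0\}$ onto itself with positive Jacobian $\f{ds}{dr}=\f{1}{4r^2}$ on each half-line, so $\psi(r)\,dr=\psi(-\f{1}{4s})\,\f{1}{4s^2}\,ds$ --- I collect the $s$-dependent factors into $(2|s|)^{(\gamma-2)/2}\,\f{1}{4s^2}\,\psi(-\f{1}{4s})$.

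The one genuine point of care is this bookkeeping: using $4s^2=(2|s|)^2$, the combined weight simplifies to $(2|s|)^{(\gamma-2)/2-2}\,\psi(-\f{1}{4s})=(2|s|)^{-(6-\gamma)/2}\,\psi(-\f{1}{4s})$, which is exactly $\wti{\psi}(s)$. Hence the whole expression equals $M_{\wti{\psi}}^\gamma(\check f_1,\check f_2)$, proving \eqref{eq:duality}. Apart from this algebra of exponents (where the identity $4s^2=(2|s|)^2$ is what makes the temporal Jacobian align precisely with the homogeneity weight), there are no analytic subtleties, since on Schwartz functions every integral and change of variables is justified.
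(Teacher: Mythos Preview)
Your proof is correct and is essentially the same as the paper's: both derive the pseudo-conformal identity $T_rf(x)=\tfrac{1}{\sqrt{2ir}}\,e^{ix^2/(4r)}\,T_s\check f(2sx)$ with $s=-\tfrac{1}{4r}$ (the paper writes the spatial argument as $-x/(2r)$, which is the same thing), and then perform the two changes of variables $\xi=2sx$ and $r=-1/(4s)$ in the double integral defining $M_\psi^\gamma$. Your bookkeeping of the powers of $2|s|$ matches the paper's, and your remark that it suffices to work with Schwartz functions is exactly the reduction the paper also makes.
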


\begin{remark}
 Of course, the definition of $\wti{\psi}$ depends on
 $\gamma$, but we drop this dependence in our notation, for simplicity.
 For $2\le \gamma\le 6$, Proposition  \ref{prop:M-bounded} yields a natural a priori bound on $M_\psi^\gamma(f_1,f_2)$ which depends on the
 $L^{\f{4}{6-\gamma}}$ norm of $\psi$. It is an easy exercise to check that
 $\|\psi \|_{L^{\f{4}{6-\gamma}}}= \|\wti{\psi} \|_{L^{\f{4}{6-\gamma}}}$, so Proposition \ref{prop:M-bounded} and the duality expressed in \eqref{eq:duality} are consistent.
\end{remark}

\begin{proof}
 Without loss of generality, assume that $f_1$ and $f_2$ are Schwartz functions for the calculations below. Defining $u_j(r,x)\coloneqq (T_rf_j)(x)$ and
 $\check{u}_j(r,x)\coloneqq (T_r\check{f_j})(x)$, $j=1,2$, using the explicit form of the free time evolution \eqref{eq:explicit form of the sol} for $u_j(r,x)$, and expanding the square, one sees
\begin{align*}
	u_j(r,x) =
	\frac{1}{\sqrt{2ir}} e^{i\frac{x^2}{4r}}
	 \check{u}_j\Big(\f{-1}{4r},\f{-x}{2r}\Big)
\end{align*}
which is often called pseudo-conformal invariance of the free Schr\"odinger evolution. Then
\begin{align}\label{eq:duality1}
	&M_\psi^\gamma(f_1,f_2) \nonumber\\
	&= \iint_{\R^2}\frac{\left|\check{u}_1\Big(\f{-1}{4r},\f{-x}{2r}\Big)\right|
	     \left|\check{u}_2\Big(\f{-1}{4r},\f{-x}{2r}\Big)\right|
	     \left( \left|\check{u}_1\Big(\f{-1}{4r},\f{-x}{2r}\Big)\right|
	     +\left|\check{u}_2\Big(\f{-1}{4r},\f{-x}{2r}\Big)\right| \right)^{\gamma-2}}{(2|r|)^{\gamma/2}}
	     \,dx \psi(r)dr .
\end{align}
Doing first the change of variables $x=-2ry$, $dx=2|r|dy$ and then $r=-1/(4s)$ with $dr= (2|s|)^{-2}\, ds$, yields
\begin{align*}
	\eqref{eq:duality1}
	& = \iint_{\R^2}\frac{\left|\check{u}_1(s,y)\right|
	     \left|\check{u}_2(s,y)\right|
	     \left( \left|\check{u}_1(s,y)\right|
	     +\left|\check{u}_2(s,y)\right| \right)^{\gamma-2}}{(2|s|)^{\f{6-\gamma}{2}}}
	     \,dy \psi(-\f{1}{4s})ds
\end{align*}
which completes the proof.
\end{proof}

This duality is a convenient tool in the proof of the analogue of  Proposition \ref{prop:M-fourier} when the functions $f_1$ and $f_2$ have separated supports.
\begin{proposition}\label{prop:M-time}
Let $s= \dist (\supp {f_1}, \supp  †{f_2})>0$.
If $2<\gamma< 6$, $\tau > 1$ and
$\psi\in L^{\beta(\gamma, \tau)}(|r|^{\alpha(\gamma,\tau)\beta(\gamma, \tau)}dr)$, then
 	\begin{align}\label{eq:M-time}
	M_\psi^\gamma(f_1,f_2)
	\lesssim s^{-\alpha(\gamma, \tau)}\|f_1\|\|f_2\|(\|f_1\|+\|f_2\|)^{\gamma-2}.
	\end{align}
\end{proposition}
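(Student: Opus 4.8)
The plan is to deduce Proposition \ref{prop:M-time} from Proposition \ref{prop:M-fourier} via the pseudo-conformal duality of Lemma \ref{lem:duality}. The guiding observation is that separation of the \emph{physical} supports of $f_1,f_2$ is exactly separation of the \emph{Fourier} supports of the inverse transforms $\check f_1,\check f_2$, so a time-side estimate should follow from the already-established frequency-side estimate, provided the weight generated by the duality is tracked correctly.

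First I would set $g_j\coloneqq \check f_j$ and apply Lemma \ref{lem:duality} to write $M_\psi^\gamma(f_1,f_2)=M_{\wti\psi}^\gamma(g_1,g_2)$, where $\wti\psi(s)=(2|s|)^{-(6-\gamma)/2}\psi(-1/(4s))$. Since $\hat g_j=f_j$, we have $\dist(\supp\hat g_1,\supp\hat g_2)=\dist(\supp f_1,\supp f_2)=s$, so the pair $g_1,g_2$ meets the Fourier separation hypothesis of Proposition \ref{prop:M-fourier} with the same $s$. To invoke that proposition on $M_{\wti\psi}^\gamma(g_1,g_2)$ I need $\wti\psi\in L^{\beta(\gamma,\tau)}(dr)$, and the heart of the argument is to verify that this requirement is precisely the weighted integrability $\psi\in L^{\beta}(|r|^{\alpha\beta}\,dr)$ assumed in the statement (abbreviating $\alpha=\alpha(\gamma,\tau)$, $\beta=\beta(\gamma,\tau)$).

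The computation proceeds by the change of variables $r=-1/(4s)$, for which $|s|=1/(4|r|)$ and $ds=(4r^2)^{-1}dr$. One finds
\begin{align*}
\|\wti\psi\|_{L^\beta(dr)}^\beta
=\int_\R (2|s|)^{-\frac{(6-\gamma)\beta}{2}}\,\big|\psi(-\tfrac{1}{4s})\big|^\beta\,ds
=C\int_\R |\psi(r)|^\beta\,|r|^{\frac{(6-\gamma)\beta}{2}-2}\,dr,
\end{align*}
with $C=2^{(6-\gamma)\beta/2}/4$. The definition $\beta=\tfrac{4}{6-\gamma-2\alpha}$ gives $6-\gamma=\tfrac{4}{\beta}+2\alpha$, hence $\tfrac{(6-\gamma)\beta}{2}-2=\alpha\beta$, so the exponent of $|r|$ is exactly $\alpha\beta$ and $\|\wti\psi\|_{L^\beta(dr)}^\beta=C\,\|\psi\|_{L^\beta(|r|^{\alpha\beta}dr)}^\beta$. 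Thus the assumed weighted integrability of $\psi$ is equivalent to $\wti\psi\in L^{\beta}$, with comparable norms.

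Finally, Proposition \ref{prop:M-fourier} applied to $M_{\wti\psi}^\gamma(g_1,g_2)$ yields $M_{\wti\psi}^\gamma(g_1,g_2)\lesssim s^{-\alpha}\|g_1\|\|g_2\|(\|g_1\|+\|g_2\|)^{\gamma-2}$, and since the Fourier transform is an isometry on $L^2$ we have $\|g_j\|=\|\check f_j\|=\|f_j\|$, which is exactly \eqref{eq:M-time}. Everything apart from the weighted-norm identity is a direct transcription through the duality, so I expect the only delicate point to be the bookkeeping of the weight exponent; the algebraic identity $\tfrac{(6-\gamma)\beta}{2}-2=\alpha\beta$ is what makes the two sides match, and it is the step most prone to arithmetic error.
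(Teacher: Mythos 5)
Your proposal is correct and follows exactly the paper's own route: apply the pseudo-conformal duality of Lemma \ref{lem:duality}, note that physical support separation of $f_1,f_2$ becomes Fourier support separation of $\check f_1,\check f_2$, invoke Proposition \ref{prop:M-fourier}, and check that $\wti\psi\in L^{\beta(\gamma,\tau)}$ is equivalent to the weighted assumption on $\psi$. Your explicit verification of the exponent identity $\tfrac{(6-\gamma)\beta}{2}-2=\alpha\beta$ is precisely the "short calculation" the paper leaves to the reader, and it is carried out correctly.
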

\begin{proof}
	Given the duality expressed in Lemma \ref{lem:duality} this is now  simple: We have
	\begin{align*}
		M_\psi^\gamma(f_1,f_2) = M_{\wti{\psi}}^\gamma(\check{f}_1,\check{f}_2)
	\end{align*}
	and note that the assumption on the separation of the supports of $f_1$ and $f_2$ means, of course, that $\check{f}_1$ and $\check{f}_2$ have separated Fourier support, so Proposition \ref{prop:M-fourier} applies to $M_{\wti{\psi}}^\gamma(\check{f}_1,\check{f}_2)$ as long as $\wti{\psi}$ is in the correct $L^p$ space.
	A short calculation shows
	\begin{align*}
		\|\wti{\psi}\|_{L^p(dr)}^p = \int_\R (2|r|)^{\f{p(6-\gamma)}{2}-2} |\psi(r)|^p\, dr
	\end{align*}
	and \eqref{eq:M-time} follows by choosing
	$p=\beta(\gamma,\tau)$.
\end{proof}

To handle the cases with $6\le\gamma<10$ for positive average dispersion, we need a fractional bilinear estimate for $M_\psi^\gamma$ in $H^1$ as follows.
\begin{proposition}[$H^1$ bilinear estimate] \label{prop:M-H1-bilinear}
  Let $\gamma\ge 2$ and $\psi\in L^{1}(\R)$ with compact support. Then for any $f_1, f_2 \in H^1(\R)$ with
  $s= \dist (\supp f_1, \supp f_2)>0$,
  \bdm 
    M_\psi^\gamma(f_1,f_2)
      \lesssim s^{-1}\|f_1\|_{H^1} \|f_2\|_{H^1}(\|f_1\|_{H^1}+\|f_2\|_{H^1})^{\gamma-2},
  \edm
  where the implicit constant depends only on the support and the $L^{1}$ norm of $\psi$.
\end{proposition}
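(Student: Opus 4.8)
The plan is to peel off the harmless high-power factor $(|T_rf_1|+|T_rf_2|)^{\gamma-2}$ and reduce to a genuinely bilinear quantity in which the gain $s^{-1}$ will be produced. Since $\gamma\ge 2$, I would bound this factor pointwise in $(x,r)$ by its $L^\infty$ norm and invoke \eqref{eq:nice2} (which rests on the elementary bound \eqref{eq:Kato}) to get
\[
(|T_rf_1(x)|+|T_rf_2(x)|)^{\gamma-2}\le \Big(\sup_{r}\|T_rf_1\|_{L^\infty}+\sup_{r}\|T_rf_2\|_{L^\infty}\Big)^{\gamma-2}\le(\|f_1\|_{H^1}+\|f_2\|_{H^1})^{\gamma-2}.
\]
Pulling this constant out of the integral defining $M_\psi^\gamma$ leaves the claim equivalent to the bilinear estimate
\[
\iint_{\R^2}|T_rf_1|\,|T_rf_2|\,dx\,\psi\,dr\lesssim s^{-1}\|f_1\|_{H^1}\|f_2\|_{H^1}.
\]

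The heart of the argument is a \emph{leaked-mass} bound quantifying that, for $r$ in the (compact) support of $\psi$, the free evolution transports only little $L^2$ mass away from $\supp f_j$. Introduce the Lipschitz weights $d_j(x)\coloneqq\dist(x,\supp f_j)$, which vanish on $\supp f_j$ and obey $|d_j'|\le 1$ a.e. Writing $g=T_rf_j$ and using the Schr\"odinger equation $\partial_r g=i\partial_x^2 g$ one has the continuity identity $\partial_r|g|^2=-\partial_x\big(2\im(\ol g\,\partial_x g)\big)$, whence
\[
\f{d}{dr}\|d_j\,T_rf_j\|^2=\int_\R 2\,d_j\,d_j'\,\big(2\im(\ol g\,\partial_x g)\big)\,dx,
\]
whose modulus is at most $4\|d_j\,T_rf_j\|\,\|\partial_x g\|=4\|d_j\,T_rf_j\|\,\|f_j'\|$ by Cauchy--Schwarz, $|d_j'|\le1$, and the commutation of $\partial_x$ with the unitary $T_r$. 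Since $d_j\,T_0f_j=d_jf_j\equiv0$, a Gr\"onwall (square-root) argument yields $\|d_j\,T_rf_j\|\le 2|r|\,\|f_j'\|$, and therefore
\[
\int_{\{d_j\ge s/2\}}|T_rf_j|^2\,dx\le\f{4}{s^2}\|d_j\,T_rf_j\|^2\le\f{16\,r^2}{s^2}\|f_j'\|^2.
\]
This is the step that simultaneously forces the $H^1$ norm (only $\|f_j'\|$ enters) and produces the \emph{full} power $s^{-1}$, in contrast to the fractional powers in Propositions \ref{prop:M-fourier} and \ref{prop:M-time}.

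With the leaked-mass bound in hand I would split the $x$-integral at the $s/2$-neighbourhood $U\coloneqq\{x:\dist(x,\supp f_2)<s/2\}$ of $\supp f_2$. On $U$ the separation $\dist(\supp f_1,\supp f_2)=s$ forces $d_1\ge s/2$, so Cauchy--Schwarz and the leaked-mass bound give $\int_U|T_rf_1||T_rf_2|\,dx\le\big(\int_{\{d_1\ge s/2\}}|T_rf_1|^2\big)^{1/2}\|f_2\|\le\tfrac{4|r|}{s}\|f_1'\|\,\|f_2\|$; on $U^c$ one has $d_2\ge s/2$, so symmetrically $\int_{U^c}|T_rf_1||T_rf_2|\,dx\le\tfrac{4|r|}{s}\|f_1\|\,\|f_2'\|$. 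Hence $\int_\R|T_rf_1||T_rf_2|\,dx\le\tfrac{8|r|}{s}\|f_1\|_{H^1}\|f_2\|_{H^1}$, and integrating against $\psi$,
\[
\iint_{\R^2}|T_rf_1||T_rf_2|\,dx\,\psi\,dr\le\f{8}{s}\|f_1\|_{H^1}\|f_2\|_{H^1}\int_\R|r|\,|\psi(r)|\,dr\le\f{8R}{s}\|\psi\|_{L^1}\|f_1\|_{H^1}\|f_2\|_{H^1},
\]
where $R\coloneqq\sup\{|r|:r\in\supp\psi\}<\infty$; this is the only place the compact support of $\psi$ is used and explains the stated dependence of the constant.

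The one genuinely technical point, as opposed to the conceptual obstacle above, is justifying the differentiation under the integral and the vanishing of boundary terms in the continuity identity. I would first carry out the computation for Schwartz $f_1,f_2$ (where the rapid decay tames the linearly growing weights $d_j$), and then pass to general $f_j\in H^1$ with $\dist(\supp f_1,\supp f_2)=s$ by mollifying at a scale $\veps\ll s$: this keeps the supports separated by at least $s-2\veps$ and converges in $H^1$, so the estimate survives in the limit. I expect the leaked-mass Gr\"onwall estimate to be the main obstacle; everything else is Cauchy--Schwarz and bookkeeping.
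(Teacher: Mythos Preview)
Your argument is correct and takes a genuinely different route from the paper. You and the paper agree on the first step---pulling out $(|T_rf_1|+|T_rf_2|)^{\gamma-2}$ via \eqref{eq:nice2}---but diverge on how to extract the gain $s^{-1}$ from the remaining bilinear $L^1$ norm $\|T_rf_1\,T_rf_2\|_{L^1(\R^2,\,dx\psi dr)}$. The paper introduces Gaussian coherent states (Lemma~\ref{lem:gaussian coherent states}): it splits $\id=P^{\scriptscriptstyle\le}_L+P^{\scriptscriptstyle>}_L$ in phase space, bounds the high-frequency pieces by $L^{-1}\|f_j\|_{H^1}$, shows the low-frequency piece is $\lesssim L^2 e^{L^2-B s^2}$ via a fairly involved computation with the explicit free evolution of boosted Gaussians, and then optimises in $L$. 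Your approach instead runs a weighted-$L^2$/virial identity $\tfrac{d}{dr}\|d_j\,T_rf_j\|^2\le 4\|d_j\,T_rf_j\|\,\|f_j'\|$ with $d_j(x)=\dist(x,\supp f_j)$, getting a clean leaked-mass bound $\|d_j\,T_rf_j\|\le 2|r|\,\|f_j'\|$ that feeds directly into Cauchy--Schwarz. This is considerably more elementary---no phase-space machinery, no optimisation step---and it produces the factor $|r|$ explicitly, which makes the role of the compact support of $\psi$ transparent. The coherent-state route, on the other hand, yields an intermediate exponential bound in $s^2$ before optimisation and is built on a tool (Lemma~\ref{lem:gaussian coherent states}) that the paper develops for broader use. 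Your density step is fine: mollify at scale $\veps\ll s$ and truncate at large $|x|$ to get $C^\infty_c$ approximants in $H^1$ with supports still separated by $s-2\veps$; for these the weighted computation is rigorous since $T_r$ preserves the Schwartz class.
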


\begin{proof}
{}From the definition of $M_\psi^\gamma(f_1,f_2)$ one sees
\begin{align}\label{eq:split1}
	M_\psi^\gamma(f_1,f_2)
	\le
	  \|T_rf_1 T_rf_2\|_{L^{1}(\R^2, dx\psi dr)}
	  \sup_{r\in\R }(\|T_rf_1\|_{L^\infty}+\|T_rf_2\|_{L^{\infty}})^{\gamma-2} .
\end{align}
We use \eqref{eq:nice2} to bound the second factor in \eqref{eq:split1} by
$(\|f_1\|_{H^1} + \|f_2\|_{H^1})^{\gamma-2}$.

To bound the first factor, we use the positive operators $P^{\scriptscriptstyle{\le}}_L$ and $P^{\scriptscriptstyle{>}}_L$ from Lemma \ref{lem:gaussian coherent states} for suitably chosen $L>0$.
Although they are not projection operators, we think of $P^{\scriptscriptstyle{\le}}_L $ as `projecting' onto frequencies localized to $\lesssim L$ and $P^{\scriptscriptstyle{>}}_L$ as `projecting' onto large frequencies $\gtrsim L$.
At the same time, the supports of $P^{\scriptscriptstyle{\le}}_Lf_1$ and $P^{\scriptscriptstyle{>}}_L f_2$ will still be essentially separated.
See Lemma \ref{lem:completeness} and \ref{lem:gaussian coherent states} in Appendix \ref{sec:Galilei} for the properties of $P^{\scriptscriptstyle{\le}}_L$ and $P^{\scriptscriptstyle{>}}_L$ which we will need.

Since $P^{\scriptscriptstyle{\le}}_L+P^{\scriptscriptstyle{>}}_L=\id$ on $L^2(\R)$ by Lemma \ref{lem:completeness}, we can use the triangle inequality and the linearity of $T_r$  to split
 \begin{equation}\label{eq:split3}
 \begin{split}
     \|T_r f_1 T_r f_2 & \|_{L^{1}(\R^2,\, dx\psi dr)}
      \\
    &\le \|T_r P^{\scriptscriptstyle{>}}_L f_1 T_r f_2\|_{L^{1}(\R^2,\, dx\psi dr)}
    + \|T_r P^{\scriptscriptstyle{\le}}_L f_1 T_r P^{\scriptscriptstyle{>}}_L f_2\|_{L^{1}(\R^2,\, dx\psi dr)}  \\
    &\phantom{\le} + \|T_r P^{\scriptscriptstyle{\le}}_L f_1 T_r P^{\scriptscriptstyle{\le}}_L f_2\|_{L^{1}(\R^2,\, dx\psi dr)}.
 \end{split}
 \end{equation}
The Cauchy--Schwarz  inequality and Lemma \ref{lem:boundedness} yield
 \begin{align*}
   \|T_r P^{\scriptscriptstyle{>}}_L f_1 T_r f_2\|_{L^{1}(\R^2,\, dx\psi dr)}
   &\le \|T_r P^{\scriptscriptstyle{>}}_L f_1\|_{L^{2}(\R^2,\, dx\psi dr)}
   \|T_r f_2\|_{L^{2}(\R^2,\, dx\psi dr)}\\
   &\lesssim \|P^{\scriptscriptstyle{>}}_L f_1\| \|f_2\|
     \lesssim L^{-1} \|f_1\|_{H^1} \|f_2\|,
  \end{align*}
  where we also use  \eqref{eq:high momenta} in the last bound. Note that the implicit constant from Lemma \ref{lem:boundedness} depends only on the $L^1$ norm
  of $\psi$.   Switching the roles of $f_1$ and $f_2$, using in addition that $P^{\scriptscriptstyle{\le}}_L\le \id$, shows
 \begin{align*}
   \|T_r P^{\scriptscriptstyle{\le}}_L f_1 T_r P^{\scriptscriptstyle{>}}_L f_2\|_{L^{1}(\R^2,\, dx\psi dr)}
  \lesssim L^{-1} \| f_1\| \| f_2\|_{H^1}.
 \end{align*}
To bound the last term of the right hand side in \eqref{eq:split3}, we use the bound \eqref{eq:low momenta bilinear}  to get
 \begin{align*}
     \|T_r P^{\scriptscriptstyle{\le}}_L f_1 T_r P^{\scriptscriptstyle{\le}}_L f_2\|_{L^{1}(\R^2,\, dx\psi dr)}
     &\le \|\psi\|_{L^1} \sup_{|r|\le R} \|T_r P^{\scriptscriptstyle{\le}}_L f_1 T_r P^{\scriptscriptstyle{\le}}_L f_2\|_{L^{1}(\R, dx)} \\
     &\le \|\psi\|_{L^1} A_R L^2\, e^{L^2 - B_{1,R}s^2}\, \|f_1\| \|f_2\|,
 \end{align*}
 with $R>0$ chosen such that $\supp\ \psi \subset [-R,R]$ and the constants
 $A_R$ and $B_{1,R}$ from Lemma \ref{lem:gaussian coherent states}.
Therefore
 \bdm
     \|T_r f_1 T_r f_2  \|_{L^{1}(\R^2,\, dx\psi dr)}
       \lesssim
          \Big[L^2 e^{L^2- B_{1,R}s^2} + L^{-1}\Big]\|f_1\|_{H^1} \|f_2\|_{H^1}
 \edm
for any $L \ge 0$. Choosing $2L^2= B_{1,R}s^2$, we get
\begin{align*}
	\|T_r f_1 T_r f_2\|_{L^{1}(\R^2,\, dx\psi dr)} \lesssim s^{-1}\|f_1\|_{H^1} \|f_2\|_{H^1},
\end{align*}
and using this in \eqref{eq:split1} proves Proposition \ref{prop:M-H1-bilinear}.
\end{proof}

\subsection{Splitting the nonlocal nonlinearity }\label{subsec:Vsplitting}

For the nonlinear potential $V:\R_+\to\R $ our assumption \ref{ass:A1} guarantees a simple bound which is central for our existence proofs.

\begin{lemma}\label{lem:V-splitting}	
	Assume that $V$ obeys \ref{ass:A1}
. Then
	\begin{align}\label{eq:V-bound}
		|V(a)|\lesssim  	a^{\gamma_1} + a^{\gamma_2}
	\end{align}
	for all $a\ge 0$. Moreover,
		\begin{align}\label{eq:V-splitting-1}
		|V(|z+w|)- V(|z|)|
		\lesssim |w|\left( (|z|+|w|)^{\gamma_1-1} + (|z|+|w|)^{\gamma_2-1} \right)
	\end{align}
	and
	\begin{align}\label{eq:V-splitting-2}
		|V(|z+w|)- V(|z|) -V(|w|)|
		\lesssim |z||w|\left( (|z|+|w|)^{\gamma_1-2} + (|z|+|w|)^{\gamma_2-2} \right)
	\end{align}
	for all $z,w\in \C $.
\end{lemma}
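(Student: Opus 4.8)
The plan is to derive all three inequalities from the single representation $V(a)=\int_0^a V'(s)\,ds$, valid for every $a\ge 0$. This is justified by \ref{ass:A1}: since $V$ is continuous at $0$ with $V(0)=0$ and $|V'(s)|\lesssim s^{\gamma_1-1}+s^{\gamma_2-1}$ is integrable near $0$ (the exponents are nonnegative because $\gamma_1,\gamma_2\ge 2$), one writes $V(a)-V(\veps)=\int_\veps^a V'(s)\,ds$ and lets $\veps\downarrow 0$, using continuity of $V$ at $0$ and dominated convergence. Once this is in hand, \eqref{eq:V-bound} is immediate: $|V(a)|\le\int_0^a|V'(s)|\,ds\lesssim\int_0^a(s^{\gamma_1-1}+s^{\gamma_2-1})\,ds\lesssim a^{\gamma_1}+a^{\gamma_2}$.

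For \eqref{eq:V-splitting-1} I set $a=|z|$ and $b=|z+w|$, and record the two elementary facts $|b-a|=\big||z+w|-|z|\big|\le|w|$ and $\max(a,b)\le|z|+|w|$. Writing $V(b)-V(a)=\int_a^b V'(s)\,ds$ and using that $s\mapsto s^{\gamma_i-1}$ is nondecreasing (so its supremum over the segment joining $a$ and $b$ is attained at $\max(a,b)\le|z|+|w|$) gives $|V(b)-V(a)|\le|b-a|\sup|V'|\lesssim|w|\big((|z|+|w|)^{\gamma_1-1}+(|z|+|w|)^{\gamma_2-1}\big)$, which is exactly \eqref{eq:V-splitting-1}.

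Finally, \eqref{eq:V-splitting-2} is where the product structure $|z||w|$ must be produced, and this is the only genuinely delicate point. Since the left-hand side is symmetric under $z\leftrightarrow w$, I may assume without loss of generality that $|w|\le|z|$, so that $|z|+|w|\le 2|z|$. The triangle inequality gives $|V(|z+w|)-V(|z|)-V(|w|)|\le|V(|z+w|)-V(|z|)|+|V(|w|)|$. For the first term I apply \eqref{eq:V-splitting-1} and then convert each power using $(|z|+|w|)^{\gamma_i-1}=(|z|+|w|)(|z|+|w|)^{\gamma_i-2}\le 2|z|(|z|+|w|)^{\gamma_i-2}$, yielding a contribution $\lesssim|z||w|\big((|z|+|w|)^{\gamma_1-2}+(|z|+|w|)^{\gamma_2-2}\big)$. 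For the second term \eqref{eq:V-bound} gives $|V(|w|)|\lesssim|w|^{\gamma_1}+|w|^{\gamma_2}$, and each $|w|^{\gamma_i}=|w|^2|w|^{\gamma_i-2}\le|z||w|\,|z|^{\gamma_i-2}\le|z||w|(|z|+|w|)^{\gamma_i-2}$, where I used $|w|^2\le|z||w|$, the monotonicity $|w|^{\gamma_i-2}\le|z|^{\gamma_i-2}$, and $|z|\le|z|+|w|$ (all relying on $|w|\le|z|$ and $\gamma_i\ge 2$). Adding the two estimates proves \eqref{eq:V-splitting-2}.

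The main obstacle is precisely the discrepancy between the prefactor $|w|$ in \eqref{eq:V-splitting-1} and the prefactor $|z||w|$ in \eqref{eq:V-splitting-2}: a naive triangle-inequality bound only produces $|w|(|z|+|w|)^{\gamma_i-1}$, which fails to vanish as $|z|\to 0$ with $|w|$ fixed. The reduction to $|w|\le|z|$, under which $|z|+|w|$ and $|z|$ are comparable, is exactly what trades one power of $(|z|+|w|)$ for the missing factor $|z|$; the hypothesis $\gamma_i\ge 2$ keeps every exponent nonnegative so that all the monotonicity comparisons above are legitimate.
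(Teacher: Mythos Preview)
Your proof is correct. Parts \eqref{eq:V-bound} and \eqref{eq:V-splitting-1} match the paper's argument essentially verbatim. For \eqref{eq:V-splitting-2} you take a genuinely different route: the paper does not use symmetry or a WLOG reduction but instead writes down the algebraic identity
\[
V(|z+w|)-V(|z|)-V(|w|)
=\Bigl[\tfrac{V(|z+w|)}{|z|+|w|}-\tfrac{V(|z|)}{|z|}\Bigr]|z|
+\Bigl[\tfrac{V(|z+w|)}{|z|+|w|}-\tfrac{V(|w|)}{|w|}\Bigr]|w|
\]
(valid for $z,w\neq 0$) and bounds each bracket by $|w|\bigl((|z|+|w|)^{\gamma_i-2}\bigr)$ and $|z|\bigl((|z|+|w|)^{\gamma_i-2}\bigr)$ respectively, using \eqref{eq:V-splitting-1} together with \eqref{eq:V-bound}. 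That decomposition makes the factors $|z|$ and $|w|$ appear for free, symmetrically, without having to compare $|z|$ and $|w|$. Your argument is more elementary---it avoids spotting the identity and reduces everything to a triangle inequality---at the cost of breaking symmetry and relying on $|z|+|w|\le 2|z|$ to trade a power of $(|z|+|w|)$ for the missing $|z|$. Both arguments use $\gamma_i\ge 2$ in the same essential way, to keep all exponents in the monotonicity comparisons nonnegative.
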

\begin{proof}
  As already observed in Remark \ref{rem:simple},  \eqref{eq:V-bound} follows from integrating the bound for $V'$.
For the second claim, 	let $c=\min \{|z|,|z+w|\}$ and $d=\max \{|z|,|z+w|\}\le |z|+|w|$. Then $d-c=||z+w|-|z||\le |w|$ and using the triangle inequality and \ref{ass:A1}, we have
	\begin{align*}
	 \left| V(|z+w|) -V(|z|) \right|
		&\leq
			\int\limits_{c}^{d} |V'(a)|\, da
			\lesssim  (d^{\gamma_1-1}+d^{\gamma_2-1})(d-c) \nonumber\\
		&\le ((|z|+|w|)^{\gamma_1-1}+(|z|+|w|)^{\gamma_2-1})|w| .
	\end{align*}
For the last claim note that
since $V(0)=0$, we have $V(|z+w|)-V(|z|)-V(|w|)=0$ if at least one of $z$
and $w$ equals zero. So assume $z,w\neq 0$ in the following. Then
	\begin{equation}
	\begin{split}\label{eq:diff1}
		V(|z+w|) -V(|z|) - V(|w|)
		&=
			 \left[ \frac{1}{|z|+|w|}V(|z+w|) - \frac{1}{|z|}V(|z|) \right] |z| \\
		&\phantom{=} \, + \left[ \frac{1}{|z|+|w|}V(|z+w|) - \frac{1}{|w|}V(|w|) \right] |w| .
	\end{split}
	\end{equation}
	Moreover,
	\begin{align} \label{eq:diff2}
		\frac{1}{|z|+|w|}V(|z+w|) - \frac{1}{|z|}V(|z|)
			&=
				\frac{1}{|z|+|w|} \left( V(|z+w|) - V(|z|) \right) - \frac{|w|}{(|z|+|w|)|z|} V(|z|) .
	\end{align}
	Using \eqref{eq:V-bound} we have
	\begin{align*}
		\frac{|w|}{(|z|+|w|)|z|}|V(|z|)|
		\lesssim
			\frac{|w|}{|z|+|w|}(|z|^{\gamma_1-1}+ |z|^{\gamma_2-1})
		\le |w| ((|z|+|w|)^{\gamma_1-2}+ (|z|+|w|)^{\gamma_2-2}) ,
	\end{align*}
	which together with \eqref{eq:V-splitting-1} in  \eqref{eq:diff2} gives
	\begin{align*}
		\left|\frac{1}{|z|+|w|}V(|z+w|) - \frac{1}{|z|}V(|z|)\right|
		\lesssim ((|z|+|w|)^{\gamma_1-2}+(|z|+|w|)^{\gamma_2-2})|w|
	\end{align*}
	and a similar inequality holds when we switch $z$ and $w$, so  \eqref{eq:diff1} and \eqref{eq:diff2} imply \eqref{eq:V-splitting-2}.
	\end{proof}
Recall that the nonlocal nonlinearity is given by
\begin{align*}
  N(f) = \iint_{\R^2} V(|T_rf(x)|)\, dx\psi dr 	.
\end{align*}

\begin{proposition}[Boundedness]\label{prop:N-boundedness}
	Assume that $V$ obeys assumption \ref{ass:A1}
. Furthermore, for $j=1,2$ choose $\kappa_j$ with\footnote{Here $(x)_+=\max\{x,0\}$ is the positive part of $x\in\R$.}
	$(\gamma_j-6)_+\le \kappa_j\le \gamma_j-2$
	and assume that $\psi\in L^{\f{4}{6-\gamma_1+\kappa_1}}\cap L^{\f{4}{6-\gamma_2+\kappa_2}}$.
	Then for all $f\in H^1(\R)$
	\begin{align}\label{eq:N-boundedness}
		|N(f)|\lesssim \|f'\|^{\f{\kappa_1}{2}}\|f\|^{\gamma_1-\f{\kappa_1}{2}}
		+ \|f'\|^{\f{\kappa_2}{2}}\|f\|^{\gamma_2-\f{\kappa_2}{2}},
	\end{align}
	where the implicit constant depends only on the $L^{\f{4}{6-\gamma_1+\kappa_1}}$ and
	$L^{\f{4}{6-\gamma_2+\kappa_2}}$ norms of $\psi$. 	
	Moreover, if $2\le \gamma_1\le\gamma_2\le 6$ and $\kappa_1=\kappa_2=0$, then the above bound extends to all $f\in L^2(\R)$.
\end{proposition}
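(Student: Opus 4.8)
The plan is to bound $N(f)$ by first controlling the integrand pointwise using assumption \ref{ass:A1}, then applying the space-time estimates from Lemma \ref{lem:boundedness} with a suitable choice of exponent $q$. The starting point is the bound \eqref{eq:V-bound} from Lemma \ref{lem:V-splitting}, which gives
\begin{align*}
	|N(f)| \le \iint_{\R^2} |V(|T_rf|)|\, dx\psi dr
	\lesssim \iint_{\R^2} \left( |T_rf|^{\gamma_1} + |T_rf|^{\gamma_2} \right) dx\psi dr,
\end{align*}
so it suffices to estimate each term $\iint |T_rf|^{\gamma_j}\, dx\psi dr$ for $j=1,2$. The difficulty is that $\gamma_j$ may be larger than $6$, so Lemma \ref{lem:boundedness} cannot be applied directly (it requires $2\le q\le 6$). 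This is where the parameter $\kappa_j$ and the $L^\infty$ bound \eqref{eq:Kato} come in, allowing us to trade excess powers of $|T_rf|$ for powers of $\|f'\|$.

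\textbf{Main step.} For each $j$, I would split off $\gamma_j - 2 - \kappa_j + 2 = \gamma_j - \kappa_j$ powers to be handled by Lemma \ref{lem:boundedness} and the remaining $\kappa_j$ powers by the $L^\infty$ bound. Concretely, writing $q_j \coloneqq \gamma_j - \kappa_j$, the constraint $(\gamma_j-6)_+ \le \kappa_j \le \gamma_j - 2$ is exactly equivalent to $2 \le q_j \le 6$, so Lemma \ref{lem:boundedness} applies with $q = q_j$ provided $\psi \in L^{\f{4}{6-q_j}} = L^{\f{4}{6-\gamma_j+\kappa_j}}$, which is the hypothesis. The estimate then runs as
\begin{align*}
	\iint_{\R^2} |T_rf|^{\gamma_j}\, dx\psi dr
	= \iint_{\R^2} |T_rf|^{\kappa_j} |T_rf|^{q_j}\, dx\psi dr
	\le \sup_{r\in\R}\|T_rf\|_{L^\infty}^{\kappa_j} \iint_{\R^2} |T_rf|^{q_j}\, dx\psi dr,
\end{align*}
where the supremum may be pulled out because $\kappa_j \ge 0$. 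Using \eqref{eq:Kato} together with the fact that $T_r$ commutes with differentiation and is unitary on $L^2$, one has $\sup_r \|T_rf\|_{L^\infty} \le (\|f\|\,\|f'\|)^{1/2}$, so $\sup_r\|T_rf\|_{L^\infty}^{\kappa_j} \le \|f'\|^{\kappa_j/2}\|f\|^{\kappa_j/2}$. Lemma \ref{lem:boundedness} bounds the remaining integral by $\|f\|^{q_j} = \|f\|^{\gamma_j-\kappa_j}$ up to a constant depending on the relevant $L^p$ norm of $\psi$. Multiplying gives $\|f'\|^{\kappa_j/2}\|f\|^{\kappa_j/2}\|f\|^{\gamma_j-\kappa_j} = \|f'\|^{\kappa_j/2}\|f\|^{\gamma_j-\kappa_j/2}$, which is precisely the $j$-th term in \eqref{eq:N-boundedness}. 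Summing over $j=1,2$ completes the estimate.

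\textbf{Obstacle and final case.} The main technical point to verify is the equivalence of the constraint on $\kappa_j$ with the admissibility range $2 \le q_j \le 6$ of Lemma \ref{lem:boundedness}: the lower bound $\kappa_j \ge (\gamma_j-6)_+$ gives $q_j = \gamma_j - \kappa_j \le 6$, while $\kappa_j \le \gamma_j - 2$ gives $q_j \ge 2$. One must also check $\kappa_j \ge 0$ so that the $L^\infty$ factor can be pulled out of the integral, which holds since $(\gamma_j-6)_+ \ge 0$. For the last assertion, when $2\le\gamma_1\le\gamma_2\le 6$ and $\kappa_1=\kappa_2=0$, the $L^\infty$ factor disappears entirely and the estimate reduces to a direct application of Lemma \ref{lem:boundedness} with $q=\gamma_j \in [2,6]$; since that lemma only requires $f\in L^2(\R)$, the bound $|N(f)|\lesssim \|f\|^{\gamma_1}+\|f\|^{\gamma_2}$ extends to all of $L^2(\R)$ by the density argument noted after Definition \ref{def:building block}. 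I do not expect any serious difficulty beyond bookkeeping of exponents.
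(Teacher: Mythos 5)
Your proposal is correct and follows essentially the same argument as the paper: split off $\kappa_j$ powers via the $L^\infty$ bound \eqref{eq:Kato} (using that $T_r$ commutes with $\partial_x$ and is unitary), apply Lemma \ref{lem:boundedness} with $q_j=\gamma_j-\kappa_j\in[2,6]$ to the remaining powers, and combine with \eqref{eq:V-bound}. The only cosmetic difference is the order of steps, and your explicit treatment of the $L^2$ extension for $\kappa_j=0$ is a slight plus, as the paper leaves that case implicit.
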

\begin{remark}
	As the condition in Proposition \ref{prop:N-boundedness} indicates, we need $\kappa_j>0$ only when $\gamma_j>6$ for some $j=1,2$. If $2\le \gamma_1\le \gamma_2\le 6$, we can bound $N(f)$ solely in terms of the $L^2$ norm of $f$. This is not possible anymore if some exponent $\gamma_j$ is bigger than $6$. In this case one has to use an $L^\infty$ bound and \eqref{eq:Kato} to extract some excess power and for this one has to pay the price that the bound then contains the $L^2$ norm of the
	derivative of $f$, but this allows to go beyond the exponent $6$ in the existence results for $\dav>0$.
	
	Note also that the choice $\kappa_j=\gamma_j-2$
	is allowed. In this case Proposition \ref{prop:N-boundedness} shows that for arbitrary
	$\psi\in L^1$ the nonlinearity is bounded by
	\begin{align}\label{eq:consequence1}
		|N(f)|\lesssim \|f'\|^{\f{\gamma_1-2}{2}}\|f\|^{\f{\gamma_1+2}{2}}
		+ \|f'\|^{\f{\gamma_2-2}{2}}\|f\|^{\f{\gamma_2+2}{2}} .
	\end{align}
  Moreover, using $\|f'\|\|f\|\le \|f\|_{H^1}^2$, \eqref{eq:consequence1} also gives the bound
   \begin{align}\label{eq:consequence2}
		|N(f)|\lesssim \|f\|^{2}
		\left( \|f\|_{H^1}^{\gamma_1-2} + \|f\|_{H^1}^{\gamma_2-2}\right)
	\end{align}
	where the implicit constant only depends on the $L^1$ norm of $\psi$.
\end{remark}
\begin{proof}[Proof of Proposition \ref{prop:N-boundedness}:]
  Take an arbitrary $f\in H^1(\R)$. As in the proof of Proposition \ref{prop:M-H1-bilinear} we can use \eqref{eq:Kato} to get
  \begin{align*}
  	\sup_{r\in\R } \|T_rf\|_{L^\infty} \le \|f'\|\|f\| .
  \end{align*}
  Thus, for any $\gamma\ge 2$ and $\kappa\ge 0$ with $\gamma-\kappa > 0$, we have
  \begin{align*}
  	\iint_{\R^2} |T_rf(x)|^\gamma \, dx \psi dr
  	&\le \sup_{r\in\R }\|T_rf\|_{L^\infty}^\kappa
  		\iint_{\R^2} |T_rf(x)|^{\gamma-\kappa} \, dx \psi dr \\
  	&\le \|f'\|^{\f{\kappa}{2}} \|f\|^{\f{\kappa}{2}}
  		\iint_{\R^2} |T_rf(x)|^{\gamma-\kappa} \, dx \psi dr .
  \end{align*}
  If, in addition, $2\le \gamma-\kappa\le 6$ and $\psi\in L^{\f{4}{6-\gamma+\kappa}}(\R)$, then we can use Lemma \ref{lem:boundedness} to see
  \begin{align*}
  	\iint_{\R^2} |T_rf(x)|^{\gamma-\kappa} \, dx \psi dr \lesssim \|f\|^{\gamma-\kappa},
  \end{align*}
  where the implicit constant depends only on the $L^{\f{4}{6-\gamma+\kappa}}$ norm of $\psi$.
  Thus,
  \begin{align*}
  	\iint_{\R^2} |T_rf(x)|^\gamma \, dx \psi dr
  	\lesssim \|f'\|^{\f{\kappa}{2}}\|f\|^{\gamma-\f{\kappa}{2}}
  \end{align*}
  for all $(\gamma-6)_+\le \kappa\le \gamma-2$.
  With the bound \eqref{eq:V-bound} 
   and the definition of $N(f)$ this proves \eqref{eq:N-boundedness} under the assumption that
  $\psi\in L^{\f{4}{6-\gamma_1+\kappa_1}}\cap L^{\f{4}{6-\gamma_2+\kappa_2}}$.
\end{proof}

\begin{proposition}[Splitting $N$]\label{prop:N-splitting}
Assume that $V$ obeys assumption \ref{ass:A1}.
  \begin{theoremlist}
  	\item
    If $2\le \gamma_1\le \gamma_2\le 6$ and $\psi\in L^1\cap L^{\f{4}{6-\gamma_2}}$, then
  \begin{align} \label{eq:N-splitting1}
     |N(f_1+f_2)- N(f_1) - N(f_2)|
     \lesssim
       \|f_1\|\|f_2\| \left(1+\|f_1\|^4+\|f_2\|^{4}\right) .
  \end{align}
	\item
 	If  $2<\gamma_1\le \gamma_2< 6$ and $\tau>1$, then with $\alpha(\gamma_1,\tau)$ and $\beta(\gamma_2,\tau)$ as in Proposition \ref{prop:M-fourier},
    \begin{equation}\label{eq:N-splitting2}
    \begin{split}
       |N(f_1+f_2)- N(f_1) - N(f_2)| \lesssim &
        ~s^{-\min\{\alpha(\gamma_1,\tau),\alpha (\gamma_2,\tau)\}} \|f_1\|\|f_2\|
        \left(1+\|f_1\|^4+\|f_2\|^{4}\right)
	\end{split}
	\end{equation}
 	if  $\psi\in L^1 \cap L^{\beta(\gamma_2,\tau)}$ and $s= \dist (\supp \hat{f_1}, \supp  \hat{f_2})>0$, or $\psi\in L^{\beta(\gamma_2, \tau)}$ has compact support and $s= \dist (\supp f_1, \supp  f_2)>0$.
 \item If  $2\le\gamma_1\le\gamma_2<\infty$ and $\psi\in L^1$, then
		\begin{align}\label{eq:N-splitting3}
		|N(f_1+f_2)- N(f_1) - N(f_2)|
		\lesssim \|f_1\|\|f_2\| \left(1+ \|f_1\|_{H^1}^{\gamma_2-2}+\|f_2\|_{H^1}^{\gamma_2-2}\right).
	\end{align}
 \item
  If  $2\le\gamma_1\le \gamma_2 < \infty$ and $\psi\in L^1$ has compact support, then
    \begin{equation}\label{eq:N-splitting4}
    \begin{split}
       |N(f_1+f_2)- N(f_1) - N(f_2)|
       \lesssim ~
	     s^{-1}\|f_1\|_{H^1}\|f_2\|_{H^1}
	     \left(1+\|f_1\|_{H^1}^{\gamma_2-2}+\|f_2\|_{H^1}^{\gamma_2-2}\right)
	\end{split}
	\end{equation}
 with  $s= \dist (\supp f_1, \supp  f_2)>0$.
  \end{theoremlist}
\end{proposition}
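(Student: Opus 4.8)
The proof of all four items follows one scheme, so I would set it up uniformly and then read off each case from the building-block estimates of Section \ref{subsec:fractional-linear}.

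\emph{Reduction to the building blocks.} Since $T_r$ is linear, $T_r(f_1+f_2)=T_rf_1+T_rf_2$, so putting $z=T_rf_1(x)$, $w=T_rf_2(x)$ in the definition of $N$ gives
\begin{align*}
  N(f_1+f_2)-N(f_1)-N(f_2)
  = \iint_{\R^2}\big[\, V(|T_rf_1+T_rf_2|)-V(|T_rf_1|)-V(|T_rf_2|)\,\big]\, dx\,\psi\,dr .
\end{align*}
Applying the pointwise splitting bound \eqref{eq:V-splitting-2} under the integral and recalling Definition \ref{def:building block}, one obtains at once the master estimate
\begin{align*}
  |N(f_1+f_2)-N(f_1)-N(f_2)|
  \lesssim M_\psi^{\gamma_1}(f_1,f_2)+M_\psi^{\gamma_2}(f_1,f_2) .
\end{align*}
To make this rigorous I would first take $f_1,f_2$ Schwartz, where every quantity is finite, and then pass to the stated classes via the density/extension remark after Definition \ref{def:building block}. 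After this step the whole proposition is reduced to inserting the four bounds on $M_\psi^\gamma$ that have already been proven.

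\emph{Matching the items.} For (i) I apply Proposition \ref{prop:M-bounded}(i) to each of $M_\psi^{\gamma_1},M_\psi^{\gamma_2}$; this needs $\psi\in L^{4/(6-\gamma_1)}\cap L^{4/(6-\gamma_2)}$, which follows from $\psi\in L^1\cap L^{4/(6-\gamma_2)}$ since $\gamma\mapsto 4/(6-\gamma)$ is increasing and $\ge 1$ on $[2,6]$, so interpolation against $L^1$ supplies the intermediate exponent. For (ii) I use Proposition \ref{prop:M-fourier} (Fourier-separated case) or Proposition \ref{prop:M-time} (physically separated, compactly supported case) for both exponents; here monotonicity of $\beta(\cdot,\tau)$, combined with interpolation against $L^1$ (resp.\ finiteness of $\supp\psi$), reduces the requirements $\psi\in L^{\beta(\gamma_1,\tau)}\cap L^{\beta(\gamma_2,\tau)}$ to the single stated condition at $\gamma_2$. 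For (iii) I invoke Proposition \ref{prop:M-bounded}(ii) and for (iv) Proposition \ref{prop:M-H1-bilinear}, both of which ask only for $\psi\in L^1$ (with compact support in (iv)). In particular the final estimate \eqref{eq:N-splitting4} is exactly $M_\psi^{\gamma_1}+M_\psi^{\gamma_2}\lesssim s^{-1}\|f_1\|_{H^1}\|f_2\|_{H^1}\big[(\|f_1\|_{H^1}+\|f_2\|_{H^1})^{\gamma_1-2}+(\|f_1\|_{H^1}+\|f_2\|_{H^1})^{\gamma_2-2}\big]$.

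\emph{Cleaning up the norms, and the main point.} Each right-hand side carries a factor $(\|f_1\|+\|f_2\|)^{\gamma_j-2}$ or its $H^1$ analogue; I would expand it by $(a+b)^{\gamma_j-2}\lesssim a^{\gamma_j-2}+b^{\gamma_j-2}$ and then merge the two exponents $\gamma_1-2\le\gamma_2-2$ using $t^{\gamma_1-2}\le 1+t^{\gamma_2-2}$ for $t\ge 0$. In (i)--(ii), where $\gamma_2<6$ forces $\gamma_j-2<4$, this produces $1+\|f_1\|^4+\|f_2\|^4$; in (iii)--(iv) it produces $1+\|f_1\|_{H^1}^{\gamma_2-2}+\|f_2\|_{H^1}^{\gamma_2-2}$, which is precisely the bracket in \eqref{eq:N-splitting4}. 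In (ii) the two separation factors combine via $s^{-\alpha(\gamma_1,\tau)}+s^{-\alpha(\gamma_2,\tau)}\lesssim s^{-\min\{\alpha(\gamma_1,\tau),\alpha(\gamma_2,\tau)\}}$ in the regime $s\gtrsim 1$ relevant to the splitting argument, giving the stated $\min$ exponent. The proof is thus largely an assembly of \eqref{eq:V-splitting-2} with the machinery of Section \ref{subsec:fractional-linear}; the only genuinely delicate bookkeeping is verifying that the hypotheses imposed at the larger exponent $\gamma_2$ simultaneously control the $M_\psi^{\gamma_1}$ term, and this is exactly where monotonicity of the $L^p$-exponents together with interpolation or compact support of $\psi$ is essential.
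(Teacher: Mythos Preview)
Your proof is correct and follows essentially the same route as the paper: reduce via \eqref{eq:V-splitting-2} to $M_\psi^{\gamma_1}+M_\psi^{\gamma_2}$, then plug in Propositions \ref{prop:M-bounded}, \ref{prop:M-fourier}, \ref{prop:M-time}, \ref{prop:M-H1-bilinear} respectively, with the same interpolation/monotonicity observation to reduce the $L^p$ hypotheses to the one at $\gamma_2$. One small point: your remark that $s^{-\alpha(\gamma_1,\tau)}+s^{-\alpha(\gamma_2,\tau)}\lesssim s^{-\min\{\alpha(\gamma_1,\tau),\alpha(\gamma_2,\tau)\}}$ only for $s\gtrsim 1$ leaves the stated proposition incomplete for small $s$; to close this, simply note that the hypotheses of (ii) imply those of (i), so for $0<s<1$ the bound from (i) already gives \eqref{eq:N-splitting2} since $s^{-\min\{\cdot\}}>1$ there (the paper glosses over this as well).
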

\begin{proof}
 Because of Lemma \ref{lem:V-splitting} and the Definition \ref{def:building block} of $M^\gamma_\psi$, we have
 \begin{align}
 \begin{split}\label{eq:N-splitting0}
   	|N(f_1+f_2) &- N(f_1) - N(f_2)| \\
   	& \le \iint_{\R^2} \Bigl| V(|T_rf_1(x)+T_rf_2(x)|) -V(|T_rf_1(x)|) -V(|T_rf_2(x)|) \Bigr|\, dx \psi dr \\
   	&\lesssim
   	  M_\psi^{\gamma_1}(f_1,f_2) + M_\psi^{\gamma_2}(f_1,f_2).
 \end{split}
 \end{align}
 So \eqref{eq:N-splitting3} follows from Proposition \ref{prop:M-bounded} and
 \eqref{eq:N-splitting4} follows from Proposition \ref{prop:M-H1-bilinear}, noting also that
 \begin{align*}
 	(a+b)^{\gamma_1-2} + (a+b)^{\gamma_2-2} \lesssim  1+a^{\gamma_2-2} + b^{\gamma_2-2} ,
 \end{align*}
 for all $a,b\ge 0$.
 Similarly, \eqref{eq:N-splitting1} follows from Proposition \ref{prop:M-bounded} as long as
 $\psi\in L^{\f{4}{6-\gamma_1}}\cap L^{\f{4}{6-\gamma_2}}$. Since we also assume $\psi\in L^1$ for convenience, this condition reduces to $\psi\in L^1\cap L^{\f{4}{6-\gamma_2}}$.

 For the proof of \eqref{eq:N-splitting2}, we first assume
 $s= \dist (\supp \hat{f_1}, \supp  \hat{f_2})>0$.
 Then Proposition \ref{prop:M-fourier} shows
  \begin{align*}
   	M_\psi^{\gamma}(f_1,f_2)
   	\lesssim s^{-\alpha(\gamma,\tau)}\|f_1\|\|f_2\|(\|f_1\|+\|f_2\|)^{\gamma-2}
   \end{align*}
 for any $2<\gamma<6$ and $\tau>1$,
 as long as $\psi\in L^{\beta(\gamma,\tau)}$. \\
 Thus \eqref{eq:N-splitting2} follows from \eqref{eq:N-splitting0} as long as
 $\psi\in L^{\beta(\gamma_1,\tau)}\cap L^{\beta(\gamma_2, \tau)}$.
 Noting
 $$
 1<\beta(\gamma_1, \tau) \le \beta(\gamma_2, \tau)
 \quad\text{ and }\quad
 	L^1\cap L^{\beta(\gamma_2, \tau)}
 	  \subset L^{\beta(\gamma_1,\tau)}\cap L^{\beta(\gamma_2, \tau)}
 $$
 finishes the proof of \eqref{eq:N-splitting2} when $\hat{f}_1$ and $\hat{f}_2$ have separated supports.

 If $s= \dist (\supp {f_1}, \supp  {f_2})>0$, we make the simple observation that
 for any compactly supported $\psi$ one has
 \begin{align*}
 	  \psi\in L^p \Rightarrow \psi\in L^p(|r|^a\, dr)\cap L^1
 \end{align*}
 for any weight $|r|^a$ with $a\ge 0$ and  $p\ge 1$.  With this observation, the above proofs carry over to the case that the functions $f_1$ and $f_2$ have separated supports, using now Proposition \ref{prop:M-time} instead of Proposition \ref{prop:M-fourier}.
\end{proof}

%

\section{Strict subadditivity of the ground state energy} \label{sec:concavity}

Recall that for $\dav \ge 0$
\begin{align*}
	H(f)= \f{\dav}{2}\|f'\|^2 - N(f)
\end{align*}
and
\begin{align*}
	E_\lambda^{\dav}= \inf \left\{ H(f): \|f\|^2=\lambda\right\}\, ,
\end{align*}
where,  if $f\in L^2\setminus H^1$,  we set $\|f'\|=\infty$, so the infimum in the definition of $E^{\dav}_\lambda$ is over all $f\in H^1$ with fixed $L^2$ norm if $\dav>0$.

In this section, we will give an a-priori bound on the ground-state energy which will be an essential ingredient in the construction of strongly convergent minimizing sequences. Recall also the definition of $\alpha_\delta=\max \{1,\f{4}{10-\gamma_2}+\delta \}$ for $\delta\ge0$ from Theorem \ref{thm:existence+}.
\begin{lemma} \label{lem:E-boundedness}
  Assume that $V$ obeys assumption \ref{ass:A1}.
  \begin{theoremlist}
  \item
   If  $\dav=0$, $2< \gamma_1\le \gamma_2\le 6$ and $\psi\in L^1\cap L^{\f{4}{6-\gamma_2}}$, then for every $\lambda >  0$
  \begin{align*}
  	-\infty< E^0_\lambda \le 0,
  \end{align*}
  in particular, the variational problem \eqref{eq:min} is well--posed.
  \item
  If $\dav>0$, $2< \gamma_1\le \gamma_2<10$ and $\psi\in L^1\cap L^{\alpha_\delta}$ for some $\delta>0$, then the energy functional $H(f)$ 
  is coercive in $\|f'\|$ for fixed $\|f\|$, that is,
  \begin{align}\label{eq:blow-up of H(f)}
  \lim_{\|f'\|\to\infty}  H(f)=\infty 	
  \end{align}
  for fixed $\|f\|^2=\lambda>0$.
  Also
   \begin{align*}
  	-\infty< E^\dav_\lambda \le 0
  \end{align*}
  and thus the variational problem \eqref{eq:min} is well--posed and  any minimizing sequence $(f_n)_n\subset H^1(\R)$ for $E^\dav_\lambda$ is bounded
  in the $H^1$ norm.
  \item
  If $V$ obeys assumption \ref{ass:A4}, then $E^\dav_\lambda<0$ for any $\lambda>0$ and
  $\dav\ge 0$.
  \end{theoremlist}
\end{lemma}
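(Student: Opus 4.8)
The plan is to prove the upper bound $E^{\dav}_\lambda\le 0$ (for both (i) and (ii)) by a spreading/dilation argument, the lower bound $E^{\dav}_\lambda>-\infty$ together with the coercivity \eqref{eq:blow-up of H(f)} from the a priori estimate of Proposition \ref{prop:N-boundedness}, and the strict negativity in (iii) by refining the dilation argument with the lower bound on $V$ coming from \ref{ass:A4}. For the upper bound I fix a Schwartz profile $f$ with $\|f\|^2=\lambda$ and dilate, $f_L(x)=L^{-1/2}f(x/L)$, so that $\|f_L\|^2=\lambda$ for all $L>0$. The identity I would extract from the explicit kernel \eqref{eq:explicit form of the sol} is the scaling relation $(T_rf_L)(x)=L^{-1/2}(T_{r/L^2}f)(x/L)$. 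Substituting $x=L\xi$, $r=L^2\sigma$ and using that $\psi$ is a compactly supported probability density gives, for every $\gamma>2$,
\begin{align*}
  \iint_{\R^2}|T_rf_L|^\gamma\,dx\,\psi\, dr
  = L^{1-\gamma/2}\int_\R \|T_{\sigma/L^2}f\|_{L^\gamma}^\gamma\,\psi(\sigma)\,d\sigma .
\end{align*}
Since $\sigma\mapsto\|T_\sigma f\|_{L^\gamma}^\gamma$ is continuous at $0$ (strong $H^1$-continuity of $T_\sigma$ and the Sobolev embedding $H^1\hookrightarrow L^\gamma$), the integral converges to $\|f\|_{L^\gamma}^\gamma$ as $L\to\infty$, while the prefactor $L^{1-\gamma/2}\to 0$. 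With \eqref{eq:V-bound} this yields $N(f_L)\to 0$; as $\tfrac{\dav}{2}\|f_L'\|^2=\tfrac{\dav}{2}L^{-2}\|f'\|^2\to 0$ as well, we get $H(f_L)\to 0$ and hence $E^{\dav}_\lambda\le 0$ in both cases.

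The lower bound and coercivity come from Proposition \ref{prop:N-boundedness}. For $\dav=0$ and $\gamma_2\le 6$ the choice $\kappa_1=\kappa_2=0$ gives $|N(f)|\lesssim \lambda^{\gamma_1/2}+\lambda^{\gamma_2/2}$ (using $\psi\in L^1\cap L^{4/(6-\gamma_2)}$, with interpolation against $L^1$ for the $\gamma_1$-term), so $E^0_\lambda=\inf(-N)\ge -C(\lambda)>-\infty$. For $\dav>0$ the crux is to select, for $j=1,2$, an exponent $\kappa_j$ with $(\gamma_j-6)_+\le\kappa_j\le\gamma_j-2$ and, crucially, $\kappa_j<4$: for $\gamma_j<6$ take $\kappa_j=\gamma_j-2$ (only $\psi\in L^1$ needed), and for $6\le\gamma_j<10$ take $\kappa_j=4-\eta$ with $\eta>0$ small, which is admissible precisely because $\gamma_j<10$ forces $\gamma_j-6<4$. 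The required integrability $\psi\in L^{4/(6-\gamma_j+\kappa_j)}=L^{4/(10-\gamma_j-\eta)}$ is then guaranteed by $\psi\in L^1\cap L^{\alpha_\delta}$ for $\eta$ small, since $4/(10-\gamma_j-\eta)\downarrow 4/(10-\gamma_j)\le 4/(10-\gamma_2)<\alpha_\delta$. Proposition \ref{prop:N-boundedness} then gives $H(f)\ge \tfrac{\dav}{2}\|f'\|^2-C(\lambda)\big(\|f'\|^{\kappa_1/2}+\|f'\|^{\kappa_2/2}\big)$ with both exponents strictly below $2$; the right-hand side is a continuous function of $\|f'\|$ tending to $+\infty$, which proves \eqref{eq:blow-up of H(f)}, bounds $E^{\dav}_\lambda$ from below, and forces any minimizing sequence to be $H^1$-bounded because $\|f_n\|$ is fixed.

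For (iii) I would re-run the dilation argument while tracking the sign of $N$. For $L$ large, \eqref{eq:Kato} gives $\sup_r\|T_rf_L\|_{L^\infty}\le L^{-1/2}(\|f\|\|f'\|)^{1/2}\le\veps$, so $|T_rf_L|\le\veps$ everywhere and \ref{ass:A4} applies pointwise. If $\dav>0$, then $V(a)\gtrsim a^{\kappa_0}$ turns the scaling computation above into a lower bound $N(f_L)\gtrsim L^{1-\kappa_0/2}$, while $\tfrac{\dav}{2}\|f_L'\|^2\sim L^{-2}$; since $2<\kappa_0<6$ we have $-2<1-\kappa_0/2<0$, so the nonlinear gain $L^{1-\kappa_0/2}$ dominates the kinetic cost $L^{-2}$ and $H(f_L)<0$ for $L$ large. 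If $\dav=0$, then $V>0$ on $(0,\veps]$ makes $V(|T_rf_L|)$ nonnegative and strictly positive on $\{|T_rf_L|>0\}$, a set of positive $dx\,\psi\,dr$ measure since $\|T_rf_L\|_{2}=\lambda^{1/2}>0$; hence $N(f_L)>0$ and $H(f_L)=-N(f_L)<0$. In all cases $E^{\dav}_\lambda<0$.

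I expect the main obstacle to be the exponent bookkeeping in the coercivity step: arranging $\kappa_2<4$ so that $\tfrac{\dav}{2}\|f'\|^2$ dominates, simultaneously with the structural constraints $\gamma_2-6\le\kappa_2\le\gamma_2-2$ and the integrability budget $\psi\in L^{\alpha_\delta}$, which is exactly where the threshold $\gamma_2<10$ enters. A secondary technical point is the rigorous justification of the dilation limit for $N$ (dominated convergence plus continuity of $\sigma\mapsto\|T_\sigma f\|_{L^\gamma}^\gamma$ at the origin) and, in (iii) with $\dav>0$, the matching of the two rates $L^{1-\kappa_0/2}$ and $L^{-2}$, where the bound $\kappa_0<6$ from \ref{ass:A4} is precisely what makes the gain outrun the cost.
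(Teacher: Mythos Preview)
Your argument is correct and follows essentially the same route as the paper: Proposition \ref{prop:N-boundedness} with exponents $\kappa_j<4$ for the lower bound and coercivity, and a spreading test-function for $E^{\dav}_\lambda\le 0$ and for strict negativity under \ref{ass:A4}. The only cosmetic difference is that the paper uses explicit Gaussians $g_{\sigma_0}$ (via Lemma \ref{lem:Gaussians}) in place of your general dilation $f_L$, which lets it read off $\|T_r g_{\sigma_0}\|_{L^\gamma}$ directly rather than invoke continuity of $s\mapsto\|T_sf\|_{L^\gamma}$ and dominated convergence; note also that compact support of $\psi$ is not assumed in this lemma, but your DCT step goes through with just $\psi\in L^1$ since $\sup_s\|T_sf\|_{L^\gamma}<\infty$ for $f\in H^1$.
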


\begin{proof} If $2< \gamma_1\le \gamma_2\le 6$ we choose $\kappa_1=\kappa_2=0$ in Proposition \ref{prop:N-boundedness} to see that for any $\psi\in L^1\cap L^{\f{4}{6-\gamma_2}}$ one has
\begin{align*}
	|N(f)|\lesssim \|f\|^{\gamma_1} + \|f\|^{\gamma_2} .
\end{align*}
Thus for $\dav=0$ we have
\begin{align*}
	E^0_\lambda = -\sup_{\|f\|^2=\lambda}N(f)\gtrsim - (\lambda^{\f{\gamma_1}{2}} + \lambda^{\f{\gamma_2}{2}}) >-\infty .
\end{align*}

To get a finite lower bound for $E^\dav_\lambda$ for $\dav>0$ and $2< \gamma_1\le\gamma_2<10$ we have to do a little bit of  numerology first: If $\alpha_\delta=1$, simply set $\kappa_j\coloneqq\gamma_j-2$. Since $\psi\in L^1$ the conditions on $\psi$ from Proposition \ref{prop:N-boundedness} are clearly satisfied.  Note that if $\alpha_\delta=1$, then necessarily $\gamma_2<6$, thus also
	$\gamma_1\le\gamma_2<6$, and hence
	\begin{align*}
		(\gamma_j-6)_+=0\le \kappa_j= \gamma_j-2.
	\end{align*}
  This shows that the condition on $\kappa_j$ from Proposition \ref{prop:N-boundedness} are fulfilled and it also shows that $\kappa_j\le \gamma_2-2<4$ in this case.
	
	If $\alpha_\delta>1$, pick $\beta_j>0$ such that
	$\f{4}{10-\gamma_j-\beta_j}= \alpha_\delta >1$. Setting $\kappa_j\coloneqq (4-\beta_j)_+$ we certainly have $0\le\kappa_j<4$. Also, since $10-\gamma_j-\beta_j>0$, we have $\beta_j<10-\gamma_j$ and this implies
	\begin{align*}
		\kappa_j=(4-\beta_j)_+\ge (\gamma_j-6)_+\,\,.
	\end{align*}
	Also, since $\f{4}{10-\gamma_j-\beta_j}>1$, we have
	\begin{align*}
		\kappa_j<\gamma_j-2 .
	\end{align*}
 So again, the conditions on $\kappa_j$ from Proposition \ref{prop:N-boundedness} are fulfilled. So from  \eqref{eq:N-boundedness} we get
 \begin{align*}
 	|N(f)|\lesssim \|f'\|^{\f{\kappa_1}{2}}\|f\|^{\gamma_1-\f{\kappa_1}{2}}
		+ \|f'\|^{\f{\kappa_2}{2}}\|f\|^{\gamma_2-\f{\kappa_2}{2}}
 \end{align*}
 and there exists a constant $C>0$ depending only on the $L^{\f{4}{6-\gamma_1+\kappa_1}}$ and
	$L^{\f{4}{6-\gamma_2+\kappa_2}}$ norms of $\psi$ such that
	\begin{align}\label{eq:coercive1}
	  H(f)\ge \f{\dav}{2} \|f'\|^2 - C\left( \|f'\|^{\f{\kappa_1}{2}}\|f\|^{\gamma_1-\f{\kappa_1}{2}}
		+ \|f'\|^{\f{\kappa_2}{2}}\|f\|^{\gamma_2-\f{\kappa_2}{2}} \right)	.
	\end{align}
 Since $\kappa_j=(4-\beta_j)_+$, for $\alpha_\delta>1$,  we have $6-\gamma_j+\kappa_j \ge 10-\gamma_j-\beta_j$, so
 \begin{align*}
 	1< \f{4}{6-\gamma_j+\kappa_j}\le \f{4}{10-\gamma_j-\beta_j} =\alpha_\delta
 \end{align*}
 and by interpolating, or simply H\"older's inequality, the constant $C$ in \eqref{eq:coercive1} can be made to depend only on the $L^1$ and $L^{\alpha_\delta}$ norms of $\psi$.

 If we fix $\|f\|^2=\lambda>0$, then we can rewrite \eqref{eq:coercive1} with $\|f'\|=t$ as
 \begin{align}\label{eq:coercive2}
 	H(f)\ge \f{\dav}{2}t^2 - C\left( t^{\f{\kappa_1}{2}}\lambda^{\f{\gamma_1-\f{\kappa_1}{2}}{2}}
		+ t^{\f{\kappa_2}{2}}\lambda^{\f{\gamma_2-\f{\kappa_2}{2}}{2}} \right)	.
 \end{align}
 Since $\kappa_j<4$ for $j=1,2$, this immediately implies \eqref{eq:blow-up of H(f)}. 

  The lower bound \eqref{eq:coercive1} also shows that
  \begin{align*}
  	E^\dav_\lambda =\inf\big\{H(f):\, \|f\|^2=\lambda\big\}
  	\ge \inf_{t>0}\left(\f{\dav}{2}t^2 - C\left( t^{\f{\kappa_1}{2}}\lambda^{\f{\gamma_1-\f{\kappa_1}{2}}{2}}
		+ t^{\f{\kappa_2}{2}}\lambda^{\f{\gamma_2-\f{\kappa_2}{2}}{2}} \right)\right)
		>-\infty \, .
  \end{align*}
  The coercivity expressed in \eqref{eq:coercive2} also makes it easy to see that
  any minimizing sequence $(f_n)_n\subset H^1(\R)$ for $E^\dav_\lambda$ is bounded
  in the $H^1$ norm. Indeed, if $f_n$ is such that $\|f_n\|^2=\lambda>0$ and $H(f_n)\to E^\dav_\lambda>-\infty$ as $n\to\infty$, then the lower bound \eqref{eq:coercive2} shows that $\|f_n'\|$ stays bounded and hence also $\|f_n\|_{H^1}^2= \|f_n\|^2+\|f_n'\|^2$
  stays bounded.

  To finish the proof of Lemma \ref{lem:E-boundedness}, we have to show that for $\lambda>0$ and $\dav\ge0$ one has $E^\dav_\lambda\le 0$ and even $E^\dav_\lambda< 0$, if, in addition,  assumption \ref{ass:A4} on $V$ holds. We will do this by computing the energy of suitable Gaussians.

 For this, we let $g_{\sigma_0}$ be the centered Gaussian from \eqref{eq:Gauss}
 with $\sigma_0>0$. Then $\|g_{\sigma_0}\|^2=\lambda>0$,
 $\|g_{\sigma_0}'\|^2=\lambda/\sigma_0$, and its time evolution is given in Lemma \ref{lem:Gaussians} by
   \begin{align*}
  	T_rg_{\sigma_0}(x) =  \left(\frac{2 \lambda^2}{\pi \sigma_0}\right)^{1/4} \left( \f{\sigma_0}{\sigma(r)} \right)^{1/2} e^{-\f{x^2}{\sigma(r)}}
  \end{align*}
 with $\sigma(r)=\sigma_0+4ir$.
 The first bound from Lemma \ref{lem:V-splitting} shows
 \begin{align*}
 	|N(g_{\sigma_0})|\le  \iint_{\R^2} |V(|T_rg_{\sigma_0}(x)|)|\, dx\psi dr \lesssim
 	  \|\psi\|_{L^1} \left(\|T_rg_{\sigma_0}\|_{L^{\gamma_1}}^{\gamma_1} + \|T_rg_{\sigma_0}\|_{L^{\gamma_2}}^{\gamma_2}\right)
 \end{align*}
 and Lemma \ref{lem:Gaussians} gives
 \begin{align*}
 	\|T_rg_{\sigma_0}\|_{L^\gamma}^{\gamma}
 	= \left( \f{\pi}{\gamma} \right)^{1/2} \left( \f{2\lambda^2}{\pi} \right)^{\gamma/4}
 	  	\sigma_0^{-\f{\gamma-2}{4}}
  			 \left( \f{|\sigma_0|}{|\sigma(r)|} \right)^{\f{\gamma-2}{2}}
 	\le \left( \f{\pi}{\gamma} \right)^{1/2} \left( \f{2\lambda^2}{\pi} \right)^{\gamma/4}
 	  	\sigma_0^{-\f{\gamma-2}{4}}
\, .
 \end{align*}
Thus,
 \begin{align*}
 	H(g_{\sigma_0})&= \f{\dav}{2}\|g_{\sigma_0}'\|^2 -N(g_{\sigma_0}) \\
 	&\le \f{\dav}{2}\cdot\f{\lambda}{\sigma_0} + C\|\psi\|_{L^1} \left( \f{\pi}{\gamma} \right)^{1/2}
    \left[\left( \f{2\lambda^2}{\pi} \right)^{\gamma_1/4} \sigma_0^{-\f{\gamma_1-2}{4}}
    +\left( \f{2\lambda^2}{\pi} \right)^{\gamma_2/4} \sigma_0^{-\f{\gamma_2-2}{4}}
  \right]
 \end{align*}
  for some constant $C>0$. Since $2<\gamma_1\le \gamma_2$ we can let $\sigma_0\to\infty$ to see
 \begin{align*}
   \lim_{\sigma_0\to\infty} H(g_{\sigma_0}) =0
 \end{align*}
 which clearly implies $E^\dav_\lambda\le 0$.

To apply \ref{ass:A4} when $\dav>0$, we consider $\sigma_0$ large enough so that
$$
 |T_rg_{\sigma_0}(x)| \le \left(\frac{2 \lambda^2}{\pi \sigma_0}\right)^{1/4} \left( \f{|\sigma_0|}{|\sigma(r)|} \right)^{1/2}\le \left(\f{2\lambda^2}{\pi\sigma_0}\right)^{1/4}<\epsilon .
$$
Then \ref{ass:A4} implies the lower bound
\begin{align*}
 N(g_{\sigma_0})&=\iint_{\R^2}V(|T_r g_{\sigma_0}(x)|)dx \psi dr
 \gtrsim \iint_{\R^2}|T_r g_{\sigma_0}(x)|^{\kappa_0} dx \psi dr \\
 &= \left( \f{\pi}{\kappa_0} \right)^{1/2} \left( \f{2\lambda^2}{\pi}
 	\right)^{\f{\kappa_0}{4}}
 	  	\sigma_0^{\f{2-\kappa_0 }{4}}
 	  	\int_{\R} \left( \f{\sigma_0}{|\sigma(r)|} \right)^{\f{\kappa_0 -2}{2}}\, \psi(r)dr\\
 &= \left( \f{\pi}{\kappa_0} \right)^{1/2} \left( \f{2\lambda^2}{\pi}
 		\right)^{\f{\kappa_0}{4}}
 	  	\sigma_0^{\f{2-\kappa_0}{4}}
 	\int_{\R}\f{\psi(r)}{[1+(4r/\sigma_0)^2]^{\f{\kappa_0-2}{4}}}\,dr,
 \end{align*}
 where in the second line we used \eqref{eq:Gaussian-Lp-norm}.
Thus the energy of this Gaussian test function is bounded above by
 \bdm
     H(g_{\sigma_0})
     \leq
     \frac{d_{\text{av}}\lambda}{2\sigma_0}
     \left[
     1- \f{C}{\dav \lambda}\left( \f{\pi}{\kappa_0} \right)^{1/2} \left( \f{2\lambda^2}{\pi}
 		\right)^{\f{\kappa_0}{4}}
 	  	\sigma_0^{\f{6-\kappa_0 }{4}}
 	  	\int_{\R}\f{\psi(r)}{(1+(4r/\sigma_0)^2)^{\f{\kappa_0-2}{4}}}\,dr
     \right]
 \edm
for some constant $C$. So, using a large enough $ \sigma_0$, we get $H(g_{\sigma_0})<0$ since $2 < \kappa_0<6$ and
 \bdm
 \int_{\R}\f{\psi(r)}{[1+(4r/\sigma_0)^2]^{\f{\kappa_0-2}{4}}}\,dr \to \|\psi \|_{L^1}
 \edm
as $\sigma_0 \to \infty$ by Lebesgue's dominated convergence theorem.

If $\dav=0$, we again use the Gaussian $g_{\sigma_0}$ with $\sigma$ so large that
$0<|T_rg_{\sigma_0}|\le \veps$. Then \ref{ass:A4} implies
$
  H(g_{\sigma_0}) = -N(g_{\sigma_0}) <0,	
$
so $E^0_\lambda<0$.
\end{proof}

For our proof of a quantitative version of strict subadditivity of the energy  we need one more ingredient.
\begin{lemma}\label{lem:V-lower-bound}
	$V$ obeys \ref{ass:A2} if and only if for all $ t \ge 1 $ we have
	\begin{align}\label{eq:V-lower-bound}
		V(ta) \ge t^{\gamma_0} V(a) \quad \text{for all } a>0.
	\end{align}
\end{lemma}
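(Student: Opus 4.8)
The plan is to prove the equivalence by recognizing that the scaling inequality \eqref{eq:V-lower-bound} is nothing but the monotonicity of a suitably rescaled version of $V$, and that conversely the differential inequality in \ref{ass:A2} is the first-order (one-sided) manifestation of that monotonicity at $t=1$. Throughout, the common regularity underlying \ref{ass:A2}, namely that $V$ is continuous on $\R_+$, differentiable on $(0,\infty)$, and $V(0)=0$, is in force on both sides, so the only real content is the equivalence of the two inequalities.

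For the implication \ref{ass:A2} $\Rightarrow$ \eqref{eq:V-lower-bound}, I would fix $a>0$ and introduce
\[
  g(t)\coloneqq t^{-\gamma_0}V(ta), \qquad t>0 .
\]
Since $ta>0$ and $V$ is differentiable on $(0,\infty)$, the chain rule gives
\[
  g'(t)=t^{-\gamma_0-1}\big(V'(ta)\,(ta)-\gamma_0 V(ta)\big).
\]
Applying \ref{ass:A2} at the point $ta>0$ shows the parenthesis is nonnegative, so $g'(t)\ge 0$ for every $t>0$ and $g$ is nondecreasing. In particular $g(t)\ge g(1)$ for all $t\ge 1$, which is precisely \eqref{eq:V-lower-bound} after multiplying through by $t^{\gamma_0}$.

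For the converse, I would again fix $a>0$ and set $h(t)\coloneqq V(ta)-t^{\gamma_0}V(a)$. The hypothesis \eqref{eq:V-lower-bound} says $h(t)\ge 0$ for $t\ge 1$, while $h(1)=0$; hence $h$ attains its minimum over $[1,\infty)$ at the left endpoint $t=1$. Because $V$ is differentiable at $a>0$, the map $t\mapsto V(ta)$ is differentiable at $t=1$, so $h$ is differentiable there with $h'(1)=V'(a)\,a-\gamma_0 V(a)$. The standard first-order condition for a one-sided minimum then forces $h'(1)\ge 0$, i.e. $V'(a)\,a\ge\gamma_0 V(a)$; as $a>0$ was arbitrary, \ref{ass:A2} holds.

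I do not expect a genuine obstacle here: the computation is elementary calculus. The only points requiring a touch of care are the justification that $h'(1)\ge 0$ follows from $h$ having a boundary minimum at $t=1$ (a one-sided derivative argument rather than a two-sided one), and the observation that the chain rule is legitimate precisely because the argument $ta$ stays in the open half-line $(0,\infty)$ where $V$ is assumed differentiable, so no issue arises at the boundary point $0$.
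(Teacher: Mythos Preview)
Your proof is correct and follows essentially the same approach as the paper: both introduce the rescaled function $t^{-\gamma_0}V(ta)$ and show its monotonicity via \ref{ass:A2} for the forward implication, and both obtain the converse by differentiating the inequality (which is an equality at $t=1$) at that point. Your treatment of the converse is slightly more explicit about the one-sided nature of the derivative condition at the boundary $t=1$, but the underlying idea is identical.
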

\begin{proof}
	Assume that $V$ obeys \ref{ass:A2}. Then
	\begin{align*}
		\frac{d}{dt} V(ta) = V'(ta)a \ge \frac{\gamma_0}{t} V(ta)
	\end{align*}
	for all $a>0$ and $t>1$. Thus
	\begin{align*}
		\frac{d}{dt} (t^{-\gamma_0} V(ta)) \ge 0	
	\end{align*}
	and integrating this yields \eqref{eq:V-lower-bound}.
	Conversely, since \eqref{eq:V-lower-bound} is an equality for $t=1$, we can differentiate it at $t=1$ to get \ref{ass:A2}.
\end{proof}

The lower bound from Lemma \ref{lem:V-lower-bound} will be the main input for the following quantitative version of strict subadditivity of $E_\lambda^\dav$, which in turn will be crucial in the proof of Propositions \ref{prop:fat-tail-bound+} and  \ref{prop:fat-tail-bound0}.

 \begin{proposition}[Strict Subadditivity]\label{prop:strict-subadditivity}
 Under assumptions \ref{ass:A1} and \ref{ass:A2} and
for any  $\lambda>0$, $0<\delta<\lambda/2$, and $ \lambda_1,\ \lambda_2\geq \delta$ with  $\lambda_1+\lambda_2\leq \lambda$, we have
 \bdm
    E_{\lambda_1}^\dav+E_{\lambda_2}^\dav\geq \left[1-(2^{\f{\gamma_0}{2}}-2)\left(\frac{\delta}{\lambda}\right)^{\f{\gamma_0}{2}}\right]E_{\lambda}^\dav,
 \edm
 for $\gamma_0>2$ as in assumption \ref{ass:A2}.
\end{proposition}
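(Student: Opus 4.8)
The plan is to derive the estimate from a single homogeneity (scaling) inequality for the ground-state energy, followed by an elementary convexity bound for a scalar function. \emph{Scaling step.} First I would fix $\theta\ge 1$ and an arbitrary $f$ with $\|f\|^2=b$, and compare $H(\theta f)$ with $\theta^{\gamma_0}H(f)$. The integrated form of Lemma \ref{lem:V-lower-bound} gives $N(\theta f)\ge \theta^{\gamma_0}N(f)$, while $\theta^2\le\theta^{\gamma_0}$ since $\gamma_0>2$ and $\theta\ge1$; hence
\[
  H(\theta f)-\theta^{\gamma_0}H(f)\le \tfrac{\dav}{2}\|f'\|^2\big(\theta^2-\theta^{\gamma_0}\big)\le 0 .
\]
For $a\ge b>0$, taking $\theta=(a/b)^{1/2}\ge1$ and then the infimum over all $f$ with $\|f\|^2=b$ (the factor $\theta^{\gamma_0}>0$ passes through the infimum) yields the scaling inequality
\[
  E^{\dav}_a\le (a/b)^{\gamma_0/2}\,E^{\dav}_b \qquad (a\ge b>0),
\]
which is the heart of the matter.

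\emph{Reduction to two terms.} Since $\lambda_i\le\lambda-\delta<\lambda$, I would apply the scaling inequality with $a=\lambda$, $b=\lambda_i$ and divide by the positive factor $(\lambda/\lambda_i)^{\gamma_0/2}$, obtaining $E^{\dav}_{\lambda_i}\ge (\lambda_i/\lambda)^{\gamma_0/2}E^{\dav}_\lambda$ for $i=1,2$. Adding these gives
\[
  E^{\dav}_{\lambda_1}+E^{\dav}_{\lambda_2}\ge \big(t_1^{\,p}+t_2^{\,p}\big)\,E^{\dav}_\lambda ,
\]
where I set $p\coloneqq\gamma_0/2>1$ and $t_i\coloneqq\lambda_i/\lambda$, so that $t_1,t_2\ge \epsilon\coloneqq\delta/\lambda$ and $t_1+t_2\le1$.

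\emph{Elementary convexity bound.} It then remains to bound $t_1^{\,p}+t_2^{\,p}$ from above and to flip the sign using $E^{\dav}_\lambda\le0$ (Lemma \ref{lem:E-boundedness}). Since $t\mapsto t^{\,p}$ is increasing, enlarging $t_2$ to $1-t_1$ only increases the sum, so it suffices to maximize $t_1^{\,p}+(1-t_1)^{\,p}$ over $t_1\in[\epsilon,1-\epsilon]$; this sum is convex, hence its maximum is at an endpoint and equals $\epsilon^{\,p}+(1-\epsilon)^{\,p}$. Thus $t_1^{\,p}+t_2^{\,p}\le \epsilon^{\,p}+(1-\epsilon)^{\,p}$, and because $E^{\dav}_\lambda\le0$ the claim follows once I verify the scalar inequality
\[
  \epsilon^{\,p}+(1-\epsilon)^{\,p}\le 1-(2^{\,p}-2)\epsilon^{\,p},
  \qquad\text{equivalently}\qquad (1-\epsilon)^{\,p}\le 1-(2^{\,p}-1)\epsilon^{\,p},
\]
for $0\le\epsilon\le\tfrac12$. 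I expect this last inequality to be the only genuinely technical point, and the main obstacle of the write-up: setting $\phi(\epsilon)\coloneqq 1-(2^{\,p}-1)\epsilon^{\,p}-(1-\epsilon)^{\,p}$, one checks the boundary values $\phi(0)=\phi(\tfrac12)=0$ and computes $\phi''(\epsilon)<0$ on $(0,\tfrac12)$, so $\phi$ is concave there and therefore $\phi\ge0$ on $[0,\tfrac12]$. Assembling the three steps gives
\[
  E^{\dav}_{\lambda_1}+E^{\dav}_{\lambda_2}\ge \big(t_1^{\,p}+t_2^{\,p}\big)E^{\dav}_\lambda \ge \Big[1-(2^{\gamma_0/2}-2)(\delta/\lambda)^{\gamma_0/2}\Big]E^{\dav}_\lambda ,
\]
which is exactly the asserted quantitative strict subadditivity.
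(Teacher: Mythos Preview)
Your proof is correct and follows essentially the same strategy as the paper: first establish the scaling inequality $E^{\dav}_{\mu\lambda}\ge \mu^{\gamma_0/2}E^{\dav}_\lambda$ for $0<\mu\le1$ via Lemma~\ref{lem:V-lower-bound}, then reduce to an upper bound on $t_1^{p}+t_2^{p}$ with $p=\gamma_0/2$, and multiply by $E^{\dav}_\lambda\le0$.

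The only difference lies in how the scalar bound is obtained. The paper writes, assuming WLOG $\mu_1\le\mu_2$,
\[
  \mu_1^{p}+\mu_2^{p}=(\mu_1+\mu_2)^{p}-\mu_1^{p}\Big[(1+\tfrac{\mu_2}{\mu_1})^{p}-1-(\tfrac{\mu_2}{\mu_1})^{p}\Big]
  \le 1-(2^{p}-2)\Big(\tfrac{\delta}{\lambda}\Big)^{p},
\]
using that $t\mapsto(1+t)^{p}-1-t^{p}$ is increasing on $[1,\infty)$, so its minimum at $t=1$ equals $2^{p}-2$. Your route instead enlarges $t_2$ to $1-t_1$, maximizes the convex function $t_1\mapsto t_1^{p}+(1-t_1)^{p}$ at an endpoint, and then proves $(1-\epsilon)^{p}\le 1-(2^{p}-1)\epsilon^{p}$ by a concavity argument. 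Both are valid; the paper's identity is a touch more direct, while your version makes the role of the constraint $t_1,t_2\ge\epsilon$ and the convexity structure more explicit.
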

\begin{remark}
	Since $1-(2^{\f{\gamma_0}{2}}-2)\left(\frac{\delta}{\lambda}\right)^{\f{\gamma_0}{2}}<1$ for any $\delta>0$ and $\gamma_0>2$, the energy is strictly subadditive whenever $E^\dav_\lambda<0$, since then
	\begin{align*}
		E_{\lambda_1}^\dav+E_{\lambda_2}^\dav> E_{\lambda}
	\end{align*}
	for all $\lambda_1,\lambda_2>0$ with $\lambda_1+\lambda_2=\lambda>0$, by Proposition \ref{prop:strict-subadditivity}.
\end{remark}	
\begin{proof}[Proof of Proposition \ref{prop:strict-subadditivity}:]
First we show that for all $\lambda> 0$ and $0<\mu\leq 1$
\begin{equation} \label{eq:scaling}
E_{\mu\lambda}^\dav\geq\mu^{\f{\gamma_0}{2}} E_{\lambda}^\dav.
\end{equation}
Setting $\widetilde{\lambda}=\mu\lambda$ and $\mu=\rho^{-1}$, one sees that the inequality \eqref{eq:scaling} is equivalent to
\begin{align}\label{eq:scaling-1}
E_{\rho\tilde{\lambda}}^\dav\leq \rho^{\f{\gamma_0}{2}} E_{\tilde{\lambda}} ^\dav   &\quad  \text{ for all } \rho \geq 1 ,\  \tilde{\lambda} > 0
\end{align}
which we are going to prove now:
 Given $f\in H^1(\R)$, or $f\in L^2(\R)$ if $\dav=0$,  with $\|f\|^2=\lambda$ and  $ \rho \geq 1$, we get from Lemma \ref{lem:V-lower-bound}
 \begin{align*}
 N(\rho^{1/2}f)&=\iint _{\R^2} V(\rho^{1/2}|T_r f(x)|)\, dx \psi dr\geq \rho^{\f{\gamma_0}{2}}N(f) .
 \end{align*}
Since $\|\rho^{1/2}f\|^2=\rho \lambda$, $\rho\ge1$ and $\gamma_0>2$ we get
 \begin{align*}
     H(\rho^{1/2}f)\
     \leq\rho \frac{d_{\text{av}}}{2}\|f'\|^2-\rho ^{\f{\gamma_0}{2}}N(f)
     \leq \rho^{\f{\gamma_0}{2}}H(f),
 \end{align*}
which proves \eqref{eq:scaling-1}.

Now let $\lambda_1=\mu_1\lambda$ and $\lambda_2=\mu_2\lambda$ with
$\mu_1+\mu_2\leq 1$ and $\mu_1,\ \mu_2\geq \delta/\lambda$.
Using \eqref{eq:scaling}, we get
 \beq\label{eq:strict-subadditivity-1}
     E_{\lambda_1}^\dav+E_{\lambda_2}^\dav
     =E_{\mu_1\lambda}^\dav+E_{\mu_2\lambda}^\dav
     \geq(\mu_1^{\f{\gamma_0}{2}}+\mu_2^{\f{\gamma_0}{2}})E_\lambda^\dav.
 \eeq
Without loss of generality, we may assume that
$\delta\le \mu_1\leq\mu_2$. Using this and $\mu_1+\mu_2\le 1$ one sees
\begin{equation}\label{eq:nice}
\begin{split}
  \mu_1^{\f{\gamma_0}{2}}+\mu_2^{\f{\gamma_0}{2}}
  &= (\mu_1+\mu_2)^{\f{\gamma_0}{2}}-\left((\mu_1+\mu_2)^\f{\gamma_0}{2} -\mu_1^{\f{\gamma_0}{2}}-\mu_2^{\f{\gamma_0}{2}}\right)\\
  &= (\mu_1+\mu_2)^{\f{\gamma_0}{2}}-\mu_1^{\f{\gamma_0}{2}}\left(\left(1+\f{\mu_2}{\mu_1}\right)^\f{\gamma_0}{2} -1-\left(\f{\mu_2}{\mu_1}\right)^{\f{\gamma_0}{2}}\right)\\
  &\le 1- \mu_1^{\f{\gamma_0}{2}}\left(2^\f{\gamma_0}{2} -2\right)
  	\le 1- \left(\f{\delta}{\lambda}\right)^{\f{\gamma_0}{2}}\left(2^\f{\gamma_0}{2} -2\right)
\end{split}
\end{equation}
where we have also used that the function $t\mapsto(1+t)^{\f{\gamma_0}{2}}-1-t^{\f{\gamma_0}{2}}$ is increasing on $[1,\infty)$.

 Since by Lemma \ref{lem:E-boundedness} we always have $E^\dav_\lambda\le 0$, we can use \eqref{eq:nice} in \eqref{eq:strict-subadditivity-1} to get
 \begin{align*}
     E_{\lambda_1}^\dav+E_{\lambda_2}^\dav
     \geq \left[ 1- \left(\f{\delta}{\lambda}\right)^{\f{\gamma_0}{2}}\left(2^\f{\gamma_0}{2} -2\right) \right] E_\lambda^\dav
 \end{align*}
 which completes the proof.
\end{proof}

\section{The existence proof} \label{sec:existence}
In this section we will characterize when minimizing sequences are precompact modulo tranlations and boosts. Recall the definition of the exponent $\alpha_\delta=\alpha_\delta(\gamma_2)=\frac{4}{10-\gamma_2}+\delta$.
\begin{theorem}\label{thm:existence}
	Let $\lambda>0$ and assume that $V$ obeys \ref{ass:A1} and \ref{ass:A2} and that the density $\psi$ has compact support.
	\begin{theoremlist}
	\item
	If $\dav>0$, $2<\gamma_1\le\gamma_2<10$, and $\psi\in L^{\alpha_\delta}$ for some
	$\delta>0$,  then every minimizing sequence for the variational problem \eqref{eq:min} is precompact modulo translations if and only if
	$E^{\dav}_\lambda<0$.
	\item
	If $\dav =0$, $2<\gamma_1\le\gamma_2<6$, and $\psi\in L^{\f{4}{6-\gamma_2}+\delta}$ for some $\delta>0$, then every minimizing sequence for the variational problem \eqref{eq:min} is precompact modulo translations and boosts if and only if
	$E^{0}_\lambda<0$.
	\end{theoremlist}
	In both cases minimizers of \eqref{eq:min} exist  if $E^{\dav}_\lambda<0$, and these miniminzers are solutions of the dispersion management equation \eqref{eq:GT} for some Lagrange multiplier
	$\omega< 2E^{\dav}_\lambda/\lambda<0$.
\end{theorem}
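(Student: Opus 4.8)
The plan is to prove the two directions of the equivalence separately and then read off existence and the multiplier bound. Throughout write $G$ for the symmetry group of the problem: translations alone if $\dav>0$, and translations together with boosts $f\mapsto e^{i\xi x}f$ if $\dav=0$ (note that $N$ is boost invariant by the Galilei covariance of $T_r$, whereas the kinetic term $\tfrac{\dav}{2}\|f'\|^2$ is only translation invariant, which is exactly why boosts are admissible precisely when $\dav=0$). For the easy direction I would argue by contraposition. Since $E^\dav_\lambda\le 0$ always holds by Lemma \ref{lem:E-boundedness}, it suffices to show that $E^\dav_\lambda=0$ forces \emph{some} minimizing sequence to fail to be precompact modulo $G$. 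The spreading Gaussians $g_{\sigma_0}$ from the proof of Lemma \ref{lem:E-boundedness}, with $\sigma_0\to\infty$, satisfy $\|g_{\sigma_0}\|^2=\lambda$ and $H(g_{\sigma_0})\to 0=E^\dav_\lambda$, hence form a minimizing sequence; but $\|g_{\sigma_0}\|_{L^\infty}=(2\lambda^2/\pi\sigma_0)^{1/4}\to 0$, and since translations and boosts leave the sup-norm invariant, any $L^2$-limit $h$ of $G$-translates would, along an a.e.-convergent subsequence, have modulus $\le \lim_{\sigma_0}\|g_{\sigma_0}\|_{L^\infty}=0$, forcing $h=0$ and contradicting mass $\lambda>0$. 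Thus precompactness modulo $G$ forces $E^\dav_\lambda<0$.

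For the substantive direction assume $E^\dav_\lambda<0$ and let $(f_n)$ be a minimizing sequence, $\|f_n\|^2=\lambda$ and $H(f_n)\to E^\dav_\lambda$. First I record the a priori bounds: for $\dav>0$ the coercivity in Lemma \ref{lem:E-boundedness} makes $(f_n)$ bounded in $H^1$, while for $\dav=0$ it is bounded in $L^2$. Next, since $H(f_n)=\tfrac{\dav}{2}\|f_n'\|^2-N(f_n)\ge -N(f_n)$, we get $\liminf_n N(f_n)\ge -E^\dav_\lambda>0$, so the nonlinearity stays bounded away from zero. The heart of the matter is then a tightness statement modulo $G$: the mass of $f_n$ cannot spread out over an unbounded region of phase space. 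This is precisely the content of the fat-tail bounds of Propositions \ref{prop:fat-tail-bound+} and \ref{prop:fat-tail-bound0}, whose proofs rest on the fractional bilinear estimate (Lemma \ref{lem:fractional-bilinear}), the Strichartz-type bounds (Lemma \ref{lem:boundedness}) and the splitting estimates for $N$ (Proposition \ref{prop:N-splitting}). Granting such a bound, one produces $g_n\in G$ and a subsequence with $g_n f_n\rightharpoonup f$ weakly in $L^2$ (weakly in $H^1$ when $\dav>0$) and $f\neq 0$.

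It remains to upgrade the nonzero weak limit to a strong limit by excluding splitting of mass. Write $g_nf_n=f+r_n$ with $r_n\rightharpoonup 0$ and set $\mu\coloneqq\|f\|^2\in(0,\lambda]$. Using the tightness step to separate, after a space/frequency cutoff, the piece $f$ from the remainder $r_n$ in the sense required by Proposition \ref{prop:N-splitting}, one obtains the almost-additivity $N(g_nf_n)=N(f)+N(r_n)+o(1)$, and likewise $\|g_nf_n\|^2=\|f\|^2+\|r_n\|^2+o(1)$ as well as $\|(g_nf_n)'\|^2=\|f'\|^2+\|r_n'\|^2+o(1)$ when $\dav>0$; since $H$ is $G$-invariant this gives $E^\dav_\lambda=\lim_n H(g_nf_n)\ge H(f)+\liminf_n H(r_n)\ge E^\dav_\mu+E^\dav_{\lambda-\mu}$, where the last step uses $\|r_n\|^2\to\lambda-\mu$ and continuity of the mass-dependence of $E^\dav$ coming from the scaling bound \eqref{eq:scaling}. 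If $0<\mu<\lambda$, then Proposition \ref{prop:strict-subadditivity}, which is strict because $E^\dav_\lambda<0$, yields $E^\dav_\mu+E^\dav_{\lambda-\mu}>E^\dav_\lambda$, contradicting the inequality just obtained. Hence $\mu=\lambda$, so $\|r_n\|\to 0$ and $g_nf_n\to f$ strongly in $L^2$; for $\dav>0$ the matching of $\|(g_nf_n)'\|$ upgrades this to strong $H^1$ convergence. By weak lower semicontinuity of the kinetic term and continuity of $N$ along the strongly convergent sequence (via the bounds of Proposition \ref{prop:N-boundedness} together with Lemma \ref{lem:V-splitting}), one gets $H(f)\le E^\dav_\lambda$, and since $\|f\|^2=\lambda$ also $H(f)\ge E^\dav_\lambda$; thus $f$ is a minimizer.

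Finally, a minimizer $f$ satisfies the constrained Euler--Lagrange equation, i.e. the weak form \eqref{eq:GT-weak} of \eqref{eq:GT} for some Lagrange multiplier $\omega$. Testing with $h=f$ gives $\omega\lambda=\dav\|f'\|^2-\iint_{\R^2}V'(|T_rf|)\,|T_rf|\,dx\,\psi\,dr$, while $2E^\dav_\lambda=2H(f)=\dav\|f'\|^2-2N(f)$. Assumption \ref{ass:A2} gives $V'(a)a\ge\gamma_0 V(a)$, whence $\iint V'(|T_rf|)|T_rf|\ge\gamma_0 N(f)$, and therefore $\omega\lambda-2E^\dav_\lambda\le-(\gamma_0-2)N(f)$. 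Since $E^\dav_\lambda<0$ forces $N(f)=\tfrac{\dav}{2}\|f'\|^2-E^\dav_\lambda>0$ and $\gamma_0>2$, the right-hand side is strictly negative, giving $\omega<2E^\dav_\lambda/\lambda<0$. I expect the tightness step to be the main obstacle: because $G$ is noncompact (and, for $\dav=0$, contains boosts), excluding vanishing and concentrating the phase-space mass of $f_n$ is genuinely hard and is exactly where the fractional bilinear and Strichartz space--time estimates, rather than soft arguments, are indispensable; once those bounds make $N$ almost additive across separated pieces, the no-splitting conclusion follows cleanly from strict subadditivity.
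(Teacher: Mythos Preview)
Your easy direction and the Lagrange-multiplier computation match the paper. The hard direction, however, follows a different route from the paper, and in one place is not adequately justified.

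The paper does \emph{not} argue via a weak-limit/remainder decomposition followed by a dichotomy step. Instead, the fat-tail bounds of Propositions~\ref{prop:fat-tail-bound+} and~\ref{prop:fat-tail-bound0} are used to prove \emph{full tightness} of the shifted (and, for $\dav=0$, boosted) minimizing sequence directly: see Propositions~\ref{prop:tightness for positive average} and~\ref{prop:tightness for zero average}. One shows that the width $b_{n,\delta}-a_{n,\delta}$ of the region carrying all but $2\delta$ of the mass stays uniformly bounded in $n$, for every small $\delta>0$. Tightness in both position and frequency (the latter for free from the $H^1$ bound when $\dav>0$, and from the Fourier version of the fat-tail bound when $\dav=0$) then upgrades weak to strong $L^2$ convergence via the characterization in Lemma~\ref{lem:strong-convergence-L2}. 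Strict subadditivity is used \emph{inside} the proof of the fat-tail bounds (to control $H(f_{-1})+H(f_1)$ from below), not in a separate splitting-exclusion argument afterwards. Continuity of $N$ (Lemma~\ref{lem:continuity}) and weak lower semicontinuity of the kinetic term then identify the limit as a minimizer.

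Your alternative route is essentially Lions' concentration compactness, which the paper explicitly set out to avoid, and your sketch has a real gap at the almost-additivity step $N(g_nf_n)=N(f)+N(r_n)+o(1)$. Proposition~\ref{prop:N-splitting} requires the two pieces to have \emph{separated supports} (in $x$ or in Fourier), which $f$ and $r_n=g_nf_n-f$ do not have; $r_n\rightharpoonup 0$ alone does not supply such separation. Repairing this by cutting $f$ to a large ball and showing $r_n$ lives mostly far away already presupposes tightness of $g_nf_n$---exactly what the paper's argument delivers directly---at which point the whole $f+r_n$ decomposition is redundant: once you have tightness, invoke Lemma~\ref{lem:strong-convergence-L2} and conclude strong convergence immediately.
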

\begin{remark}
	This theorem shows that compactness modulo translation, respectively modulo translations and boost, for minimizing sequences is equivalent to strict negativity of the energy.
\end{remark}
Key for our proof of Theorem \ref{thm:existence} are the following propositions, which will help to eliminate splitting of minimizing sequences.
First, we introduce notations.
 For $s>0$ and $0<\alpha \leq1$, define
 \beq\label{eq:G}
     G_\alpha(s)\coloneqq\left[(s+1)^{\frac{2\alpha}{1+2\alpha}}-1\right]^{-1/2}.
 \eeq
Note that $G_\alpha$  is a decreasing function on $(0,\infty)$ which vanishes at infinity, which is important for us, and
 \beq\label{eq:G-limit}
     \lim _{s\rightarrow 0^+}G_\alpha(s)=\infty
 \eeq
 which is of less importance.
Moreover, for $x\in\R$, let $x_+\coloneqq\text{max}\{x,0\}$.\\

\begin{proposition}[Fat-tail for positive average dispersion] \label{prop:fat-tail-bound+}
 Assume $V$ obeys \ref{ass:A1} with $2<\gamma_1 \le \gamma_2 <10$ and \ref{ass:A2}, $\dav>0$ and $\psi\in L^{1}$ has compact support.
 Let $\lambda>0$, $f\in H^1$ with $\|f\|^2=\lambda$, and $0<\delta<\lambda/2$, and choose any $a,b\in\R$ with
 \beq\label{eq3}
    \int_{-\infty}^{a}|f(x)|^2dx \geq \delta\;\; \text{and} \;\;\int_{b}^{\infty}|f(x)|^2dx \geq  \delta
 \eeq
then
 \beq\label{eq:fat-tail-bound+}
    H(f)\geq
    \left[1-(2^{\f{\gamma_0}{2}}-2)\left(\f{\delta}{\lambda}\right)^{\f{\gamma_0}{2}}\right]
    E^\dav_\lambda - C\|f\|_{H^1}^2\left(1+\|f\|_{H^1}^8\right) G_{1}\left((b-a-1)_+ \right) ,
 \eeq
 where the constant $C$ depends only on the support and the $L^{1}$ norm of $\psi$.
\end{proposition}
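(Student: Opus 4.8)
The plan is to split $f$ with a partition of unity adapted to $[a,b]$, control the kinetic and nonlocal cross terms, and then feed the two pieces into the strict subadditivity of Proposition \ref{prop:strict-subadditivity}.

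If $b-a\le 1$, then $(b-a-1)_+=0$ and $G_1$ blows up by \eqref{eq:G-limit}, so the asserted inequality is vacuous; hence assume $b-a>1$. Fix a transition width $1\le w\le b-a$, to be optimized at the end. Splitting $[a,b]$ into $\lfloor(b-a)/w\rfloor$ disjoint intervals of length $w$, whose $|f|^2$-masses sum to at most $\lambda$, I pick one such interval $[c,c+w]$ carrying mass $m\coloneqq\int_{[c,c+w]}|f|^2\,dx\lesssim \lambda w/(b-a)$. Choose $\phi\in C^\infty$ with $\phi\equiv1$ on $(-\infty,c]$, $\phi\equiv0$ on $[c+w,\infty)$, and $|\phi'|\lesssim w^{-1}$, $|\phi''|\lesssim w^{-2}$, and set $f_1\coloneqq\phi f$, $f_2\coloneqq(1-\phi)f$, so $f=f_1+f_2$. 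Because the transition sits inside $[a,b]$ we have $\|f_1\|^2\ge\int_{-\infty}^a|f|^2\ge\delta$, $\|f_2\|^2\ge\int_b^\infty|f|^2\ge\delta$, and $\|f_1\|^2+\|f_2\|^2=\int(\phi^2+(1-\phi)^2)|f|^2\,dx\le\lambda$, which are exactly the hypotheses of Proposition \ref{prop:strict-subadditivity}. Writing out the Hamiltonian,
\begin{align*}
  H(f)=H(f_1)+H(f_2)+\dav\,\re\langle f_1',f_2'\rangle-\bigl(N(f)-N(f_1)-N(f_2)\bigr).
\end{align*}

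For the kinetic cross term, a computation using $\phi+(1-\phi)=1$ gives $\re\langle f_1',f_2'\rangle=\int\phi(1-\phi)|f'|^2-\int|\phi'|^2|f|^2+\tfrac12\int\phi'(1-2\phi)(|f|^2)'$; discarding the nonnegative first term and integrating the last by parts bounds it below by $-C\int(|\phi'|^2+|\phi''|)|f|^2\,dx\ge -Cw^{-2}m$, so $\dav\,\re\langle f_1',f_2'\rangle\ge-C\dav w^{-2}m$. The nonlocal term is the crux. By \eqref{eq:V-splitting-2} of Lemma \ref{lem:V-splitting} and $T_rf=T_rf_1+T_rf_2$, $|N(f)-N(f_1)-N(f_2)|\lesssim M_\psi^{\gamma_1}(f_1,f_2)+M_\psi^{\gamma_2}(f_1,f_2)$. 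The obstacle is that $f_1$ and $f_2$ overlap on $[c,c+w]$, so they are \emph{not} of separated support and Proposition \ref{prop:M-H1-bilinear} does not apply directly. I would circumvent this by decomposing, with auxiliary smooth cutoffs, $f_1=g_1+h_1$ and $f_2=g_2+h_2$, where the cores $g_1,g_2$ have separated supports at distance $\gtrsim w$, while $h_1,h_2$ are supported in $[c,c+w]$ with $\|h_j\|\le\sqrt m$ and $\|h_j\|_{H^1}\lesssim\|f\|_{H^1}$ (using $w\ge1$). Monotonicity of $M_\psi^\gamma$ in the moduli of its arguments reduces $M_\psi^\gamma(f_1,f_2)$ to the four terms $M_\psi^\gamma(u,v)$ with $u\in\{g_1,h_1\}$, $v\in\{g_2,h_2\}$; the core--core term is bounded by $\lesssim w^{-1}\|f\|_{H^1}^\gamma$ via Proposition \ref{prop:M-H1-bilinear}, while each term involving some $h_j$ is estimated by \eqref{eq:M-bounded-2} of Proposition \ref{prop:M-bounded}, whose prefactor is the \emph{$L^2$} norm, giving $\lesssim\|h_j\|\,\|f\|_{H^1}^{\gamma-1}\lesssim\sqrt m\,\|f\|_{H^1}^{\gamma-1}$. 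Since $2<\gamma_1\le\gamma_2<10$ and $\sqrt m\lesssim\sqrt{\lambda w/(b-a)}\le\|f\|_{H^1}\sqrt{w/(b-a)}$, this yields
\begin{align*}
  |N(f)-N(f_1)-N(f_2)|\lesssim\Bigl(w^{-1}+\sqrt{\tfrac{w}{b-a}}\Bigr)\|f\|_{H^1}^2\bigl(1+\|f\|_{H^1}^8\bigr).
\end{align*}

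It remains to optimize and assemble. Adding the kinetic error $\dav w^{-2}m\lesssim\dav\lambda/\bigl(w(b-a)\bigr)$, which is of lower order, the total error is $\lesssim(w^{-1}+\sqrt{w/(b-a)})\|f\|_{H^1}^2(1+\|f\|_{H^1}^8)$; balancing the decreasing term $w^{-1}$ against the increasing term $\sqrt{w/(b-a)}$ at $w\sim(b-a)^{1/3}$ produces the rate $(b-a)^{-1/3}$, and carrying out the optimization precisely over $1\le w\le b-a$ reproduces the factor $G_1((b-a-1)_+)$ of \eqref{eq:G}. Finally, using $H(f_j)\ge E^\dav_{\|f_j\|^2}$ together with Proposition \ref{prop:strict-subadditivity} applied to $\lambda_1=\|f_1\|^2$, $\lambda_2=\|f_2\|^2$,
\begin{align*}
  H(f)\ge E^\dav_{\lambda_1}+E^\dav_{\lambda_2}-\mathrm{(error)}\ge\Bigl[1-(2^{\f{\gamma_0}{2}}-2)\bigl(\tfrac{\delta}{\lambda}\bigr)^{\f{\gamma_0}{2}}\Bigr]E^\dav_\lambda-C\|f\|_{H^1}^2\bigl(1+\|f\|_{H^1}^8\bigr)G_1\bigl((b-a-1)_+\bigr),
\end{align*}
which is the claim. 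The genuinely delicate point is the core-plus-small-overlap decomposition of the nonlocal cross term: it is \eqref{eq:M-bounded-2} placing the $L^2$ norm in front that makes the overlap contributions negligible, and the competition between separation gain and overlap loss is exactly what forces the $G_1$ rate.
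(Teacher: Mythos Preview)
Your overall strategy is sound and arrives at the same bound, but it differs from the paper's argument in one structural choice and contains one incorrect step.

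The paper uses a \emph{three}-piece smooth partition $\xi_{-1}^2+\xi_0^2+\xi_1^2=1$ adapted to an interval $[a',b']\subset[a,b]$, setting $f_{\pm1}=\xi_{\pm1}f$ and $f_0=f-f_{-1}-f_1$. The kinetic term is handled by the IMS formula \eqref{IMS}. For the nonlocal term one first writes $N(f)-N(h)$ with $h=f_{-1}+f_1$ via \eqref{eq:N-splitting3} (no separation needed; the error carries a factor $\|f_0\|$), bounds $N(f_0)$ directly by \eqref{eq:consequence2}, and only \emph{then} splits $N(h)-N(f_{-1})-N(f_1)$ via the separated-support estimate \eqref{eq:N-splitting4}. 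Since $f_{-1}$ and $f_1$ are genuinely separated from the outset, no secondary core/overlap decomposition is ever needed. The pigeonhole (done there by a continuous mean-value argument) and the optimization $l=(b-a)^{1/3}$ coincide with yours.

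Your two-piece partition forces the auxiliary decomposition $f_j=g_j+h_j$, and here the claim that ``monotonicity of $M_\psi^\gamma$ in the moduli of its arguments reduces $M_\psi^\gamma(f_1,f_2)$ to the four terms $M_\psi^\gamma(u,v)$'' is \emph{false} for $\gamma>2$: taking $|T_rg_1|=|T_rh_1|=|T_rg_2|=|T_rh_2|=1$ pointwise, the integrand on the left is $(1{+}1)(1{+}1)\cdot4^{\gamma-2}=4^{\gamma-1}$, while the sum of the four $M_\psi^\gamma(u,v)$ integrands is $4\cdot2^{\gamma-2}=2^{\gamma}$, and no fixed constant controls $4^{\gamma-1}/2^{\gamma}$ over $2<\gamma<10$. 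The repair is simple: first bound the factor $(|T_rf_1|+|T_rf_2|)^{\gamma-2}$ uniformly by $(\|f_1\|_{H^1}+\|f_2\|_{H^1})^{\gamma-2}$ using \eqref{eq:Kato} (exactly as in the proof of Proposition~\ref{prop:M-H1-bilinear}), which reduces matters to the genuinely bilinear quantity $\|T_rf_1\,T_rf_2\|_{L^1(\R^2,dx\psi dr)}$; this \emph{is} subadditive under $f_j=g_j+h_j$, and your four-term bound then follows with the core--core piece handled as in Proposition~\ref{prop:M-H1-bilinear} and the remaining pieces by Cauchy--Schwarz, each picking up a factor $\|h_j\|\lesssim\sqrt m$. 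With this fix your argument goes through; the paper's three-piece route simply avoids the detour.
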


We have a similar bound in the case of vanishing average dispersion.

\begin{proposition}[Fat-tail for zero average dispersion] \label{prop:fat-tail-bound0}
 Assume $V$ obeys \ref{ass:A1} with $2< \gamma_1\le \gamma_2< 6$ and \ref{ass:A2}, $\dav=0$ and $\psi\in L^{\beta(\gamma_2,\tau)}$ has compact support\footnote{Recall the definition of $\alpha(\gamma,\tau)$ and $\beta(\gamma,\tau)$ from Proposition \ref{prop:M-fourier}.}. Let $\lambda>0$, $f\in L^2$ with $\|f\|^2=\lambda$, and $0<\delta<\lambda/2$, and $a,b\in\R$ with either
 \beq\label{eq:time-fat-tail}
    \int_{-\infty}^{a}|f(x)|^2dx \geq \delta\;\; \text{and} \;\;\int_{b}^{\infty}|f(x)|^2dx \geq  \delta
 \eeq
 or
 \beq\label{eq:fourier-fat-tail}
   \int_{-\infty}^a |\hatt{f}(\eta)|^2\, d\eta \ge \delta \text{ and }
   \int_b^\infty |\hatt{f}(\eta)|^2\, d\eta    \ge \delta,
 \eeq
then
 \beq\label{eq:fat-tail-0}
    H(f)\geq \left[1-(2^{\f{\gamma_0}{2}}-2)\left(\f{\delta}{\lambda}\right)^{\f{\gamma_0}{2}}\right]E^0_\lambda - C \lambda(1+\lambda^2)
     G_{\min\{\alpha(\gamma_1,\tau),\alpha ( \gamma_2,\tau)\}}\left((b-a-1)_+ \right)
 \eeq
  where the constant $C$ depends only on the support and the $L^{\beta(\gamma_2,\tau)}$ norm of $\psi$.
\end{proposition}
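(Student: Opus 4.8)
The plan is to deduce \eqref{eq:fat-tail-0} from the strict subadditivity of Proposition \ref{prop:strict-subadditivity} by splitting $f$ into two pieces with \emph{separated} supports, one carrying the left tail and one the right tail, while quantitatively controlling both the interaction between the pieces and the mass discarded in the gap. Since $\dav=0$ we have $H(f)=-N(f)$, and for any decomposition $f=f_1+f_2+w$ the elementary fact $-N(f_i)=H(f_i)\ge E^0_{\|f_i\|^2}$ gives
\begin{align*}
  H(f)\ge E^0_{\|f_1\|^2}+E^0_{\|f_2\|^2}-\bigl|N(f)-N(f_1)-N(f_2)\bigr| .
\end{align*}
If the construction yields $\|f_1\|^2,\|f_2\|^2\ge\delta$ and $\|f_1\|^2+\|f_2\|^2\le\lambda$, then Proposition \ref{prop:strict-subadditivity} bounds the first two terms below by $\bigl[1-(2^{\gamma_0/2}-2)(\delta/\lambda)^{\gamma_0/2}\bigr]E^0_\lambda$, which is exactly the leading term of \eqref{eq:fat-tail-0}. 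Everything then rests on producing such a decomposition with a small error.

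For the construction I would treat the two hypotheses in parallel, working in physical space under \eqref{eq:time-fat-tail} and in Fourier space under \eqref{eq:fourier-fat-tail} (via Plancherel and the Fourier version of the splitting bound). One may assume $b-a>1$, since otherwise $(b-a-1)_+=0$, $G_\alpha$ is infinite by \eqref{eq:G-limit}, and \eqref{eq:fat-tail-0} is vacuous. For a gap width $0<\rho\le b-a$ to be optimized, partition $(a,b)$ into $\sim (b-a)/\rho$ consecutive intervals of length $\rho$; by pigeonhole at least one, say $(p,p+\rho)\subset(a,b)$, carries mass $\int_p^{p+\rho}|f|^2\,dx\lesssim \rho\lambda/(b-a)$. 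Setting $f_1=f\,\mathbf{1}_{(-\infty,p]}$, $f_2=f\,\mathbf{1}_{[p+\rho,\infty)}$, $w=f\,\mathbf{1}_{(p,p+\rho)}$ (and analogously in $\eta$), the pieces $f_1,f_2$ have separated supports with $\dist=\rho$, the fat-tail hypothesis forces $\|f_1\|^2,\|f_2\|^2\ge\delta$, and $\|w\|^2\lesssim\rho\lambda/(b-a)$ while $\|f_1\|^2+\|f_2\|^2=\lambda-\|w\|^2\le\lambda$.

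For the error estimate, write $g=f_1+f_2$ and split $|N(f)-N(f_1)-N(f_2)|\le |N(g)-N(f_1)-N(f_2)|+|N(g+w)-N(g)|$. The first term is the genuinely separated situation, so Proposition \ref{prop:N-splitting}(ii), i.e. \eqref{eq:N-splitting2}, bounds it by $\lesssim \rho^{-\alpha}\,\lambda(1+\lambda^2)$ with $\alpha=\min\{\alpha(\gamma_1,\tau),\alpha(\gamma_2,\tau)\}$, the hypothesis $\psi\in L^{\beta(\gamma_2,\tau)}$ with compact support supplying also the required $L^1$ and lower $L^p$ memberships. For the second term I would apply the pointwise bound \eqref{eq:V-splitting-1} of Lemma \ref{lem:V-splitting} with $z=T_r g$, $w'=T_r w$, then H\"older in $dx\,\psi dr$ and Lemma \ref{lem:boundedness}, to obtain $|N(g+w)-N(g)|\lesssim \|w\|\bigl(\lambda^{(\gamma_1-1)/2}+\lambda^{(\gamma_2-1)/2}\bigr)\lesssim \lambda(1+\lambda^2)\,(\rho/(b-a))^{1/2}$. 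Altogether the error is $\lesssim \lambda(1+\lambda^2)\bigl[\rho^{-\alpha}+(\rho/(b-a))^{1/2}\bigr]$.

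The final step is optimization: the competing terms $\rho^{-\alpha}$ and $(\rho/(b-a))^{1/2}$ balance at $\rho\sim (b-a)^{1/(1+2\alpha)}$ (admissible once $b-a$ is large), giving a penalty of order $(b-a)^{-\alpha/(1+2\alpha)}$, which is exactly the large-argument behaviour of $G_\alpha$ in \eqref{eq:G}; performing the optimization in closed form, and absorbing the pigeonhole discretization into the harmless shift $b-a\mapsto b-a-1$, produces the factor $G_\alpha\bigl((b-a-1)_+\bigr)$. I expect the main obstacle to be precisely this step: separated supports force us to throw away the gap mass $w$, whose size is not controlled a priori, so the low-mass choice of gap by pigeonhole, the resulting two-term error bound, and the optimization over $\rho$ are the heart of the matter and are exactly what manufactures the degraded exponent $\alpha/(1+2\alpha)$ (rather than $\alpha$) recorded in $G_\alpha$; matching this optimization to the exact form \eqref{eq:G} is the only genuinely delicate bookkeeping.
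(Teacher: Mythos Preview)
Your proposal is correct and follows essentially the same route as the paper: sharp cutoffs (no kinetic term to localize), a pigeonhole choice of a low-mass gap interval, the splitting bound \eqref{eq:N-splitting2} for the separated pieces together with a Lipschitz-type bound for the discarded gap mass, strict subadditivity, and the same optimization $\rho\sim(b-a)^{1/(1+2\alpha)}$ producing $G_\alpha$. The only cosmetic differences are that the paper uses the continuous mean-value version of pigeonhole (yielding the exact identity with $G_\alpha((b-a-1)_+)$ rather than an asymptotic match) and controls the gap contribution via \eqref{eq:N-splitting1} plus a bound on $N(f_0)$ instead of invoking \eqref{eq:V-splitting-1} directly; both variants are equivalent up to constants.
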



\bpf[Proof of Proposition \ref{prop:fat-tail-bound+}]
If $b-a\leq 1$, $\eqref{eq:fat-tail-bound+}$ holds immediately since its right hand side is $-\infty$ by \eqref{eq:G-limit}. So now we assume that $b-a>1$. Let $a'$ and $b'$ be arbitrary numbers satisfying $a\leq a'<b'\leq b$ and $b'-a'\ge 1$, which we will suitably choose later.
The estimate of $\|f'\|^2$ is based on a one-dimensional version of the well-known IMS localization formula
 \beq \label{IMS}
    \|f'\|^2=\sum_j\langle(\xi_j f)',(\xi_j f)'\rangle - \sum_j\langle f,|\xi_j'|^2f\rangle
 \eeq
for any collection of functions $\{\xi_j\}$ which are smooth, $0\leq \xi_j\leq 1$, and $\sum_j\xi_j^2=1$. To construct such a partition which suits our needs, consider smooth functions $\{\chi_{j}\}$ that satisfy

\begin{enumerate}[i)]
 \item  $0\leq\chi_j\leq1 $    for  $j=-1,0,1$.
 \item $\displaystyle{\sum_{j=-1}^{1}\chi^2_{j}=1}$.
 \item $\supp \chi_{0} \subset [-\frac{1}{2},\frac{1}{2}],\;\; \chi_0 = 1\;\; \hbox{on} \;\; [-\frac{1}{4},\frac{1}{4}], $ \\
    $\supp \chi_{-1} \subset (-\infty,-\frac{1}{4}], \;\; \chi_{-1} = 1\;\;  \hbox{on} \;\; (-\infty,-\frac{1}{2}], $\\
     $\supp  \chi_{1} \subset [\frac{1}{4},\infty), \;\; \chi_{1} = 1 \;\;\hbox{on} \;\; [\frac{1}{2},\infty).$
\end{enumerate}
Let
 \begin{align*}
    \xi_j(x)=\chi_j \left(\frac{x-\frac{1}{2}(a'+b')}{b'-a'} \right)\ \  \text{for}\  j=-1,0,1.
 \end{align*}
Since $\chi_j'$ is bounded, we see that for some constant $C_1>0$
 \begin{align*}\label{eq:loc-error}
 	  \sum_{j=-1}^1|\xi'_j|^2 \le \frac{C_1 }{(b'-a')^2}.
 \end{align*}
Plugging this into $\eqref{IMS}$ yields
 \beq\label{eq:kinetic}
 \begin{aligned}
    \|f'\|^2\ &\geq\|(\xi_{-1}f)'\|^2+\|(\xi_{0}f)'\|^2+\|(\xi_{1}f)'\|^2-\frac{C_1 \|f\|^2}{(b'-a')^2}\\
    &\geq \|(\xi_{-1}f)'\|^2+\|(\xi_{1}f)'\|^2-\frac{C_1 \|f\|^2}{(b'-a')^2} .
 \end{aligned}
 \eeq
 Now we set  $f_{j}\coloneqq\xi_{j}f$ for $j=-1,1$ and  $f_0\coloneqq f-f_1-f_{-1}=(1-\xi_{-1}-\xi_{1})f $, where we note that $f_0$ is defined differently from $f_{-1}$ and $f_{1}$!

Obviously, $\|f_j\|\leq\|f\|$ for  $j=-1,1$, and since the supports of $\xi_{-1}$ and $\xi_1$ are disjoint also $|f_0|\le |f|$, hence  $\|f_0\|\le \|f\|$.

Set $h\coloneqq f_{-1}+f_1$. Then $f=f_0+h$ and  the bound \eqref{eq:N-splitting3} from Proposition \ref{prop:N-splitting} shows
 \begin{align*}
 	N(f) -  N(f_0) - N(h) \lesssim \|f_0\|\|h\| \left(1+\|f_0\|_{H^1}^8 + \|h\|_{H^1}^8\right)
 \end{align*}
 where we also used $ 1+a^{\gamma_2-2}\lesssim 1+a^8$ for all $a\ge 0$ and $\gamma_2<10$.
 Using Proposition \ref{prop:N-boundedness}, more precisely equation \eqref{eq:consequence2}, which is one of its consequences, we have
 \begin{align*}
 	N(f_0) \lesssim \|f_0\|^2 \left( \|f_0\|_{H^1}^{\gamma_1-2} + \|f_0\|_{H^1}^{\gamma_2-2} \right)
 	\lesssim \|f_0\|^2 \left( 1+\|f_0\|_{H^1}^{8} \right),
 \end{align*}
 and combining the above two bounds we arrive at
 \begin{align}\label{eq:main-split-1}
 	N(f) - N(h) \lesssim \|f_0\|\|f\|(1+\|f\|_{H^1}^8),
 \end{align}
 where used $\|f_0\|,\|h\|\le \|f\|$ and also $\|f_0\|_{H^1}, \|h\|_{H^1}\lesssim \|f\|_{H^1}$, the latter holds because of our smoothness assumptions on the cut-off
 functions $\xi_j$ uniformly in  $b'-a'\ge 1$.

 Since $f_{-1}$ and $f_1$ have supports separated by at least $(b'-a')/2$, \eqref{eq:N-splitting4} gives
 \begin{align}
 	N(h) - N(f_{-1}) - N(f_1)
 	& \lesssim (b'-a')^{-1}\|f_{-1}\|_{H^1}\|f_{1}\|_{H^1} \left(1+\|f_{-1}\|_{H^1}^8 + \|f_{1}\|_{H^1}^8\right) \nonumber \\
 	&\lesssim (b'-a')^{-1}\|f\|_{H^1}^2 \left(1+\|f\|_{H^1}^8\right)\label{eq:main-split-2}
 \end{align}
 where we again  used that, because of our assumption that $b'-a'\ge 1$, the bound $\|f_j\|_{H^1}\lesssim \|f\|_{H^1}$ holds, where the implicit constant does not depend on $a'$ and $b'$.

Combining \eqref{eq:main-split-1} and \eqref{eq:main-split-2}, we get
 \begin{align*}
 	N(f) - N(f_{-1}) - N(f_1)
 	 \lesssim  \left( \|f_0\|\|f\| +  \f{\|f\|_{H^1}^2}{b'-a'} \right)
 	  \left(1+\|f\|_{H^1}^8\right)
 \end{align*}
so when combined with \eqref{eq:kinetic}, this yields
 \begin{align} \label{eq:energy-estimate}
     H(f)-H(f_{-1})-H(f_1)
     \gtrsim
      -\left[ \frac{\|f\|^2}{(b'-a')^2}+\left(\|f_0\|\|f\| +  \f{\|f\|_{H^1}^2}{b'-a'} \right)
 	  \left(1+\|f\|_{H^1}^8\right)\right] .
 \end{align}
To choose $a'$ and $b'$, we use a continuous version of the pigeon hole principle, as in our previous work \cite{HuLee2012}: Let $1\le l \le b-a$ and note that 
 \beq\label{mvt}
    \int_a^{b-l}\int_y^{y+l}|f(x)|^2dxdy \leq \int_a^b \int_{x-l}^x|f(x)|^2dy dx \leq l\|f\|^2.
 \eeq
Moreover, by the mean value theorem, there exists $y'\in (a,b-l)$ such that
 $$
     (b-a-l)\int_{y'}^{y'+l}|f(x)|^2dx=\int_a^{b-l}\int_{y}^{y+l}|f(x)|^2dxd\eta.
 $$
Thus, since $f_0$ has support in $[a',b']$ and $|f_0|\le |f|$, choosing $a'=y'$ and $b'=y'+l$ in the previous identity together with \eqref{mvt} gives $l=b'-a'$ and
 $$
     \|f_0\|^2\leq \|f\id_{[a',b']}\|^2\leq \frac{l}{b-a-l}\|f\|^2.
 $$
Plugging this into  $\eqref{eq:energy-estimate}$ yields
 \begin{align*}
     H(f)- H(f_{-1})-H(f_1)
     &\gtrsim -\left[ \frac{\|f\|^2}{l^2}+\left(\left(\frac{l}{b-a-l}\right)^{1/2}\|f\|^2 +  \f{\|f\|_{H^1}^2}{l} \right)
 	  \left(1+\|f\|_{H^1}^8\right)\right]\\
 	 &\ge -\|f\|_{H^1}^2\left(1+\|f\|_{H^1}^8\right)
       \left[ \frac{1}{l^2}
       +\left(\frac{l}{b-a-l}\right)^{1/2} +  \f{1}{l} \right].
 \end{align*}
Since $\|f\|^2=\lambda$, $\lambda_j=\|f_j\|^2 \geq \delta,\  j=-1,1$ and $\lambda_1+\lambda_2=\|f_{-1}\|^2+\|f_1\|^2 \leq \lambda$, we certainly have
\begin{align*}
	H(f_{-1})+ H(f_1) \ge E^\dav_{\lambda_1} + E^\dav_{\lambda_2}
	\ge \left[1-(2^{\f{\gamma_0}{2}}-2)\left(\f{\delta}{\lambda}\right)^{\f{\gamma_0}{2}}\right]
    E_{\lambda}^\dav
\end{align*}
by Proposition \ref{prop:strict-subadditivity}. Thus we arrive at the bound
 \begin{equation*}
 \begin{split}	
     H(f)- &\left[1-(2^{\f{\gamma_0}{2}}-2)\left(\f{\delta}{\lambda}\right)^{\f{\gamma_0}{2}}\right]
    E_{\lambda}^\dav
     \gtrsim - \|f\|_{H^1}^2\left(1+\|f\|_{H^1}^8\right)
       \left[ \frac{1}{l^2}
       +\left(\frac{l}{b-a-l}\right)^{1/2} +  \f{1}{l} \right]
 \end{split}
 \end{equation*}
for any $0<\delta<\lambda/2$ and all $1\le l\le b-a$.
Now we choose $l=\sqrt[3]{b-a}$. Then $1\le l\leq b-a$ since $b-a\ge 1$, and
 $$
     \max\left\{ \f{1}{l^2},\left(\frac{l}{b-a-l}\right)^{1/2},\f{1}{l} \right\}
     = \left(\frac{l}{b-a-l}\right)^{1/2}
    =\left(\frac{1}{(b-a)^{2/3}-1}\right)^{1/2} = G_{1}((b-a-1)_+)
 $$
which completes the proof.
\end{proof}

\begin{proof}[Proof of Proposition \ref{prop:fat-tail-bound0}]
 Since its proof is very analogous to that of Proposition
 \ref{prop:fat-tail-bound+}, let us mention only the things which need to be changed: In the case of zero average dispersion, the energy contains no
 $\|f'\|^2$ term, hence we do not need to use smooth cut-offs, that is,  we can use $f=f_{-1}+f_0+f_1$ where we set
 $f_{-1}=f\id_{(-\infty,a')}, f_0=f\id_{[a',b']}$ and $f_1=f\id_{(b',\infty)}$, and similarly for $\hat{f}$.

 We can then simply repeat the argument in the proof of \eqref{eq:energy-estimate}, again using \eqref{eq:N-splitting1} but now combined with \eqref{eq:N-splitting2}  instead of \eqref{eq:N-splitting4}, to see that
 \begin{align} \label{eq:energy-estimate-0}
     H(f)-H(f_{-1})-H(f_1)
     & \gtrsim
      -\left(\|f_0\|\|f\| +  \f{\|f\|^2}{(b'-a')^{\min\{\alpha(\gamma_1,\tau),\alpha ( \gamma_2,\tau)\}}} \right)
 	  \left(1+\|f\|^4\right) \nonumber\\
 	 &\ge
 	   - \lambda \left(1+\lambda^2\right)
         \left[ \left(\frac{l}{b-a-l}\right)^{1/2}
                + \f{1}{l^{\min\{\alpha(\gamma_1,\tau),\alpha ( \gamma_2,\tau)\}}}          \right]
 \end{align}
with the only restriction that $l=b'-a' \ge 1$.

If $0<b-a\le 1$, we note that \eqref{eq:fat-tail-0} trivially holds since the right hand side equals $-\infty$. So let $b-a>1$. We choose $l\coloneqq(b-a)^{\f{1}{1+2\min\{\alpha(\gamma_1,\tau),\alpha ( \gamma_2,\tau)\} }}$.
 Then $1<l<b-a$ and $l^{1+2\min\{\alpha(\gamma_1,\tau),\alpha ( \gamma_2,\tau)\}}=b-a>b-a-l>0 $ hence
\begin{align*}
	\left(\frac{l}{b-a-l}\right)^{1/2}
    \ge \f{1}{l^{\min\{\alpha(\gamma_1,\tau),\alpha ( \gamma_2,\tau)\}}} .
\end{align*}
This together with \eqref{eq:energy-estimate-0} and our choice of $G_{\min\{\alpha(\gamma_1,\tau),\alpha ( \gamma_2,\tau)\}} ((b-a-1)_+)$, which satisfies $0<{\min\{\alpha(\gamma_1,\tau),\alpha ( \gamma_2,\tau)\}} \leq1 $, finishes the proof.
\end{proof}

Since the function $G_{\alpha}$ is decreasing on $\R_+$ and vanishes at infinity, similar results to Proposition 2.4 in \cite{HuLee2012} follow from Propositions \ref{prop:fat-tail-bound+} and \ref{prop:fat-tail-bound0}.

\begin{proposition}[Tightness for Positive Average Dispersion]\label{prop:tightness for positive average}
Under the conditions of Theorem \ref{thm:existence} on $V$, $\gamma_1,\gamma_2$, and $\psi$,
let $(f_n)_n\subset H^1(\R)$ be a minimizing sequence for the variational problem \eqref{eq:min} for $\dav>0$ with $\lambda=\|f_n\|^2>0$ and assume
$E_\lambda^\dav <0$. Then there exists $K<\infty$ such that, for any $L>0$,
  \beq\label{eq:tightFourier}
    \sup_{n\in\N} \int_{|\eta|>L} |\hat{f}_n(\eta)|^2\, d\eta \le \frac{K}{L^2}
  \eeq
i.e., the sequence is tight in Fourier space. Moreover, there exist shifts $y_n$ such that
 \beq\label{eq:tightReal}
     \lim_{R\rightarrow\infty} \sup_{n\in{\mathbb N}}\int_{|x|>R}|f_n(x-y_n)|^2dx=0,
 \eeq
 i.e., the shifted sequence is also tight.
\end{proposition}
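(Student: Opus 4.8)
The plan is to treat the two assertions separately: the Fourier-space tightness \eqref{eq:tightFourier} is immediate from boundedness of the sequence, while the physical-space tightness \eqref{eq:tightReal} is the substance and is driven entirely by the fat-tail estimate of Proposition \ref{prop:fat-tail-bound+}.

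\textbf{Fourier tightness.} By Lemma \ref{lem:E-boundedness}(ii) the minimizing sequence is bounded in $H^1(\R)$, so $M:=\sup_n\|f_n'\|^2<\infty$. Since $\|f_n'\|^2=\int_\R \eta^2|\hat f_n(\eta)|^2\,d\eta$ by Plancherel, for any $L>0$ I simply estimate $\int_{|\eta|>L}|\hat f_n|^2\,d\eta\le L^{-2}\int_{|\eta|>L}\eta^2|\hat f_n|^2\,d\eta\le M/L^2$, which is \eqref{eq:tightFourier} with $K=M$.

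\textbf{Ruling out splitting.} The heart of the matter is that \eqref{eq:fat-tail-bound+} forbids a minimizing sequence from carrying two widely separated lumps of mass once $E^\dav_\lambda<0$. Fix $\delta\in(0,\lambda/2)$ and set $\epsilon_0=\epsilon_0(\delta):=-(2^{\gamma_0/2}-2)(\delta/\lambda)^{\gamma_0/2}E^\dav_\lambda$, which is strictly positive because $E^\dav_\lambda<0$ and $\gamma_0>2$; this is exactly the gap produced by strict subadditivity (Proposition \ref{prop:strict-subadditivity}). Writing $M':=\sup_n\|f_n\|_{H^1}^2$ and $C_M:=M'(1+M'^4)$, and noting that here $\psi$ has compact support and hence lies in $L^1$, Proposition \ref{prop:fat-tail-bound+} says that whenever $f_n$ has mass at least $\delta$ on both $(-\infty,a]$ and $[b,\infty)$ one has $H(f_n)\ge E^\dav_\lambda+\epsilon_0-C\,C_M\,G_1((b-a-1)_+)$. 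Since $G_1(s)\to 0$ as $s\to\infty$, I choose $D=D(\delta)\ge 1$ with $C\,C_M\,G_1(D-1)\le\epsilon_0/2$; then a separation $b-a\ge D$ with heavy tails would force $H(f_n)\ge E^\dav_\lambda+\epsilon_0/2$, contradicting $H(f_n)\to E^\dav_\lambda$ for all large $n$.

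\textbf{Centering and uniformity.} For each $n$ I introduce the $\delta$-quantile points $a_n,b_n$ with $\int_{-\infty}^{a_n}|f_n|^2=\delta=\int_{b_n}^\infty|f_n|^2$ and the median $c_n$ with $\int_{-\infty}^{c_n}|f_n|^2=\lambda/2$; all exist by continuity of the distribution function, and $a_n<c_n<b_n$. Applying the previous step to $(a,b)=(a_n,b_n)$ gives $b_n-a_n<D(\delta)$ for all large $n$, hence $\max\{c_n-a_n,\,b_n-c_n\}<D(\delta)$. Taking the shifts $y_n:=-c_n$ and $g_n:=f_n(\cdot-y_n)=f_n(\cdot+c_n)$, the region $|x|>D(\delta)$ unshifts into $\{x<a_n\}\cup\{x>b_n\}$, so $\int_{|x|>D(\delta)}|g_n|^2\le 2\delta$ for $n$ large. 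The step I expect to need the most care is the uniformity: the shifts $y_n$ must be chosen once and for all, independently of $\delta$, whereas the no-splitting argument produces a $\delta$-dependent window. Centering at the median resolves this, since the quantile points stay within $D(\delta)$ of $c_n$ for \emph{every} $\delta$ simultaneously. Finally, given $\eta>0$ I pick $\delta=\eta/3$, so that $\int_{|x|>R}|g_n|^2<\eta$ for $R\ge D(\delta)$ and $n$ large, while the finitely many remaining $g_n\in L^2$ are tight individually; enlarging $R$ then yields \eqref{eq:tightReal}.
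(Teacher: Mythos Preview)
Your proof is correct and follows essentially the same approach as the paper: Fourier tightness via the uniform $H^1$ bound, and real-space tightness by combining the fat-tail estimate \eqref{eq:fat-tail-bound+} with the strict-subadditivity gap to bound the separation of the $\delta$-quantile points, then centering. The only cosmetic difference is that the paper centers at the midpoint $y_n=\tfrac12(a_{n,\delta_0}+b_{n,\delta_0})$ for one fixed $\delta_0$ and uses the monotonicity $[a_{n,\delta},b_{n,\delta}]\supset[a_{n,\delta_0},b_{n,\delta_0}]$, whereas you center at the median; both choices make the shifts $\delta$-independent, which is the point.
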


\begin{proposition}[Tightness for Zero Average Dispersion]\label{prop:tightness for zero average}
Under the conditions of Theorem \ref{thm:existence} on $V$, $\gamma_1,\gamma_2$, and $\psi$, let $(f_n)_n\subset L^2(\R)$ be a minimizing sequence for the variational problem \eqref{eq:min} for $\dav=0$  with $\lambda=\|f_n\|^2>0$
and assume $E_\lambda^0 <0$. Then there exist shifts $y_n$ and boosts
$\xi_n$ such that
 \beq\label{eq:min-seq-tight-fourier}
  \lim_{L\to\infty}\sup_{n\in\N} \int_{|\eta -\xi_n|>L} |\hatt{f}_n(\eta)|^2\, d\eta = 0
 \eeq
and
 \beq\label{eq:min-seq-tight}
  \lim_{R\to\infty}\sup_{n\in\N} \int_{|x -y_n|>R} |f_n(x)|^2\, dx = 0 .
 \eeq
\end{proposition}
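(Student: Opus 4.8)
The plan is to run the fat--tail estimate of Proposition~\ref{prop:fat-tail-bound0} as a rigidity (no--splitting) mechanism, exactly in the spirit of Proposition~2.4 in \cite{HuLee2012}. Set $\rho(\delta):=1-(2^{\f{\gamma_0}{2}}-2)(\delta/\lambda)^{\f{\gamma_0}{2}}$; since $\gamma_0>2$ one has $\rho(\delta)<1$, and as $-\infty<E^0_\lambda<0$ by assumption, the product $\rho(\delta)E^0_\lambda$ is strictly \emph{larger} than $E^0_\lambda$, with a fixed positive gap $(1-\rho(\delta))|E^0_\lambda|$. Because $(f_n)_n$ is minimizing, $H(f_n)\to E^0_\lambda$, so \eqref{eq:fat-tail-0} can only hold if the $G$--term stays bounded away from zero; as $G$ is decreasing and vanishes at infinity, this forbids two separated lumps of mass from drifting infinitely far apart. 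I would fix once and for all a $\tau>1$ large enough that $\psi\in L^{\beta(\gamma_2,\tau)}$, which is possible because $\psi$ has compact support and, by the hypothesis of Theorem~\ref{thm:existence}, lies in $L^p$ for some $p>\f{4}{6-\gamma_2}$, while $\beta(\gamma_2,\tau)\downarrow\f{4}{6-\gamma_2}$ as $\tau\to\infty$. The two tightness statements are then proved \emph{independently}, since $|f_n|$ is invariant under boosts and $|\hatt{f}_n|$ is invariant under translations, so the choices of $y_n$ and $\xi_n$ do not interfere.

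For the Fourier tightness \eqref{eq:min-seq-tight-fourier}: as $|\hatt{f}_n|^2\in L^1$ with total mass $\lambda$, the cumulative distribution $\eta\mapsto\int_{-\infty}^\eta|\hatt{f}_n|^2\,d\eta$ is continuous and nondecreasing from $0$ to $\lambda$, so for each $0<\delta<\lambda/2$ I can define quantiles $\xi_n^-(\delta)<\xi_n^+(\delta)$ by $\int_{-\infty}^{\xi_n^-(\delta)}|\hatt{f}_n|^2\,d\eta=\delta$ and $\int_{\xi_n^+(\delta)}^\infty|\hatt{f}_n|^2\,d\eta=\delta$. The crux is the uniform bulk--width bound $W(\delta):=\sup_n(\xi_n^+(\delta)-\xi_n^-(\delta))<\infty$. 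To get it, apply the Fourier branch \eqref{eq:fourier-fat-tail} of Proposition~\ref{prop:fat-tail-bound0} with $a=\xi_n^-(\delta)$, $b=\xi_n^+(\delta)$ to obtain
\[
 H(f_n)\ge \rho(\delta)\,E^0_\lambda - C\lambda(1+\lambda^2)\,G_{\min\{\alpha(\gamma_1,\tau),\alpha(\gamma_2,\tau)\}}\big((\xi_n^+(\delta)-\xi_n^-(\delta)-1)_+\big).
\]
Were the width unbounded along a subsequence, the $G$--term would tend to $0$ and the right--hand side to a value $\ge\rho(\delta)E^0_\lambda>E^0_\lambda$, contradicting $H(f_n)\to E^0_\lambda$; hence $W(\delta)<\infty$. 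Finally fix the boost $\xi_n:=\xi_n^-(\lambda/4)$. For $0<\epsilon\le\lambda/4$, monotonicity of the quantiles gives $\xi_n^-(\epsilon)\le\xi_n\le\xi_n^+(\epsilon)$ and $\xi_n^+(\epsilon)-\xi_n^-(\epsilon)\le W(\epsilon)$, so $[\xi_n-W(\epsilon),\xi_n+W(\epsilon)]\supset[\xi_n^-(\epsilon),\xi_n^+(\epsilon)]$; since the mass outside the latter interval is $2\epsilon$, choosing $L=W(\epsilon)$ yields $\sup_n\int_{|\eta-\xi_n|>L}|\hatt{f}_n|^2\,d\eta\le 2\epsilon$, which is \eqref{eq:min-seq-tight-fourier}.

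For the position tightness \eqref{eq:min-seq-tight} I would repeat the same argument verbatim with the cumulative distribution of $|f_n|^2$ in place of $|\hatt{f}_n|^2$ and the position branch \eqref{eq:time-fat-tail} of Proposition~\ref{prop:fat-tail-bound0} in place of \eqref{eq:fourier-fat-tail}, producing shifts $y_n:=y_n^-(\lambda/4)$ from the position quantiles and the same bookkeeping.

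The only genuine obstacle is the uniform width bound $W(\delta)<\infty$; everything after it is routine quantile bookkeeping. Two points deserve care: (i) the contradiction uses \emph{both} $\rho(\delta)<1$ and $E^0_\lambda<0$, as it is precisely the strict inequality $\rho(\delta)E^0_\lambda>E^0_\lambda$ that is incompatible with a vanishing $G$--term --- this is where negativity of the energy is essential, matching the ``if and only if'' of Theorem~\ref{thm:existence}; and (ii) the quantiles exist and may be manipulated freely only because $|f_n|^2,|\hatt{f}_n|^2\in L^1$ have continuous (atomless) distribution functions.
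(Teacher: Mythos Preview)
Your proof is correct and follows essentially the same approach as the paper: the paper's own proof merely says the argument is ``virtually identical'' to the real-space tightness proof for $\dav>0$, which is precisely the quantile-plus-fat-tail-bound mechanism you spell out, with the only cosmetic difference that the paper centers at the midpoint $(a_{n,\delta_0}+b_{n,\delta_0})/2$ rather than at the left quantile. Your explicit remark that the choices of $y_n$ and $\xi_n$ decouple (via invariance of $|f_n|$ under boosts and of $|\hatt f_n|$ under shifts) and your check that a suitable $\tau$ exists are both useful details the paper leaves implicit.
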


\bpf[Proof of Proposition \ref{prop:tightness for positive average}]
 Let $(f_n)_n$ be a minimizing sequence. Lemma \ref{lem:E-boundedness} shows that $\|f_n'\|$ is bounded, that is,
 \bdm
   K\coloneqq  \sup_{n\in\N} \|f_n'\|^2  <\infty.
 \edm
Thus, for every $n \in \N$ and $L>0$, we obtain
 \bdm
   \int_{|\eta|>L} |\hat{f}_n(\eta)|^2\, d\eta \le \int_{|\eta|>L} \frac{|\eta|^2}{L^2} |\hat{f}_n(\eta)|^2\, d\eta \le \int_{\R} \frac{|\eta|^2}{L^2} |\hat{f}_n(\eta)|^2\, d\eta \le \frac{K}{L^2}
 \edm
which is \eqref{eq:tightFourier}.

 To prove the second bound, we follow the argument of \cite{HuLee2012} closely. We give some details for the readers' convenience. Define $a_{n,\delta}$ and $ b_{n,\delta}$ by
 \bdm
   a_{n,\delta}\coloneqq \inf\left\{a\in \R: \int_{-\infty}^{a} |f_n(x)|^2\, dx \ge \delta \right\}
 \edm
 and
 \bdm
   b_{n,\delta}\coloneqq \sup\left\{ b\in \R: \int_b^\infty |f_n(x)|^2\, dx \ge \delta \right\} .
 \edm
 Note that the measure $|f_n(x)|^2\, dx$ is absolutely continuous with respect to Lebesgue measure and hence
 \bdm
  \int_{-\infty}^{a_{n,\delta}} |f_n(x)|^2\, dx = \delta \quad\text{and }
    \int_{b_{n,\delta}}^{\infty} |f_n(x)|^2\, dx = \delta.
 \edm
 Furthermore $\delta\mapsto a_{n,\delta}$ and $\delta\mapsto b_{n,\delta}$ are monotone, more precisely,
 for $0<\delta_2<\delta_1<\lambda/2$ one has $a_{n,\delta_2}\le a_{n,\delta_1}$ and
 $b_{n,\delta_2}\ge b_{n,\delta_1}$.
 Let $R_{n,\delta}\coloneqq b_{n,\delta}-a_{n,\delta}$ and note that the above monotonicity yields $R_{n,\delta_2}\ge R_{n,\delta_1}$ for  $0<\delta_2<\delta_1<\lambda/2$.  Lastly, for some fixed $0<\delta_0<\lambda/2$ put
 \bdm
   y_n\coloneqq \frac{b_{n,\delta_0}+a_{n,\delta_0}}{2} \in [a_{n,\delta_0}, b_{n,\delta_0}].
 \edm
 In particular, $a_{n,\delta}\le a_{n,\delta_0}\le y_n\le b_{n,\delta_0}\le b_{n,\delta}$ for all $0<\delta\le \delta_0$.
 This implies $b_{n,\delta} - y_n  \le b_{n,\delta} - a_{n,\delta} = R_{n,\delta}$ and
 $ y_n- a_{n,\delta} \le b_{n,\delta} - a_{n,\delta} = R_{n,\delta}$, hence we are guaranteed that
 \beq\label{eq:inclusion}
   [a_{n,\delta}, b_{n,\delta}]\subset [y_n-R_{n\delta}, y_n+R_{n,\delta}].
 \eeq
 Now assume that
 \beq\label{eq:Rfinite}
   R_\delta \coloneqq \sup_{n\in\N} R_{n,\delta} <\infty
 \eeq
 for $0<\delta\le \delta_0$ and put $R_\delta\coloneqq R_{\delta_0}$ for $\delta_0<\delta<\lambda/2$. Then
 \eqref{eq:inclusion} yields
 \bdm
   \int_{|x-y_n|>R_\delta} |f_n(x)|^2\, dx
   \le \int_{-\infty}^{a_{n,\delta}} |f_n(x)|^2\, dx
        + \int_{b_{n,\delta}}^{\infty} |f_n(x)|^2\, dx  =2\delta
 \edm
 for all $0<\delta\le \delta_0$ but the same bound also holds when $\delta_0<\delta<\lambda/2$ since in this case
  \bdm
   \int_{|x-y_n|>R_\delta} |f_n(x)|^2\, dx =  \int_{|x-y_n|>R_{\delta_0}} |f_n(x)|^2\, dx \le 2\delta_0 <2\delta.
  \edm

 It remains to show \eqref{eq:Rfinite}: Recall that $K\coloneqq  \sup_{n\in\N} \|f_n'\|^2  <\infty$,  set $\wti{K}= \sqrt{\lambda +K}$, and note that  the minimizing sequence $f_n$ obeys
 $\|f_n\|_{H^1}\le \wti{K}$ for all $n\in\N$.
Using $b= b_{n,\delta}$ and $a= a_{n,\delta}$, rearranging \eqref{eq:fat-tail-bound+} from
 Proposition \ref{prop:fat-tail-bound+} yields
 \bdm
 E_\lambda^{\dav}-(2^{\f{\gamma_0}{2}}-2)\left(\f{\delta}{\lambda}\right)^{\f{\gamma_0}{2}}E_\lambda^{\dav} -H(f_n) \le C\wti{K}^2(1+\wti{K}^8) G_{1}\left((R_{n,\delta}-1)_+ \right)  .
 \edm
 Thus, since $H(f_n)\to E_{\lambda}^\dav<0$,
 \bdm
 0 < -(2^{\f{\gamma_0}{2}}-2)\left(\f{\delta}{\lambda}\right)^{\f{\gamma_0}{2}}E_\lambda^{\dav} \le C\wti{K}^2(1+\wti{K}^8)\liminf_{n\to\infty}G_{1}\left((R_{n,\delta}-1)_+\right).
 \edm
 Since $G_1$ is monotone decreasing, we get
 \bdm
   G_1((\limsup_{n\to\infty}R_{n,\delta}-1)_+) = \liminf_{n\to\infty}G_1((R_{n,\delta}-1)_+) >0
 \edm
 and so
 \bdm
   \limsup_{n\to\infty} R_{n,\delta}<\infty.
 \edm
 Hence \eqref{eq:Rfinite} holds.
\end{proof}

\begin{proof}[Proof of Proposition \ref{prop:tightness for zero average}]
Using the fact that the function $G_{\alpha}$ is monotone decreasing, the proof is virtually identical to the proof of the bound \eqref{eq:tightReal} of
Proposition \ref{prop:tightness for positive average}.
\end{proof}

To prove Theorem \ref{thm:existence}, we need one more result for the continuity of the nonlinear functional $N(f)$.

\begin{lemma} \label{lem:continuity}
\begin{theoremlist}
  \item If $2\le \gamma_1\le \gamma_2\le 6$ and $\psi\in L^{\f{4}{6-\gamma_2}}$ then the nonlinear nonlocal functional $N: L^2(\R)\to \R$ given by
\begin{align*}
L^2(\R)\ni f\mapsto N(f)=
\iint_{\R^2}V(|T_r f|)dx\psi dr
\end{align*}
 is locally Lipshitz continuous on $L^2$ in the sense that
 \begin{align*}
   	|N(f_1)-N(f_2)| \lesssim \left(1+ \|f_1\|^{\gamma_2-1} + \|f_2\|^{\gamma_2-1}\right)
   	\|f_1-f_2\|
 \end{align*}
  where the implicit constant depends only on the
  $L^{\f{4}{6-\gamma_2}}$ norm of $\psi$.
 \item If $2\le\gamma_1\le\gamma_2<\infty$ and $\psi\in L^1$, then the nonlinear nonlocal functional $N: H^1(\R)\to \R$ given by
\begin{align*}
H^1(\R)\ni f\mapsto N(f)=
\iint_{\R^2}V(|T_r f|)dx\psi dr
\end{align*}is locally Lipschitz continuous in the sense that
 \begin{align*}
 |N(f_1)-N(f_2)| \lesssim \left(1+ \|f_1\|_{H^1}^{\gamma_2-2} + \|f_2\|_{H^1}^{\gamma_2-2}\right)\left( \|f_1\|+ \|f_2\|\right)
 \|f_1-f_2\| .
 \end{align*}
\end{theoremlist}
\end{lemma}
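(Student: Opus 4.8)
The plan is to reduce both parts to a single pointwise bound on the integrand coming from the splitting estimate \eqref{eq:V-splitting-1} of Lemma \ref{lem:V-splitting}, and then to integrate using H\"older's inequality together with the space-time bound of Lemma \ref{lem:boundedness} (for part (i)) or the $L^\infty$ bound \eqref{eq:Kato} (for part (ii)). Concretely, I would start from
\begin{align*}
  N(f_1)-N(f_2) = \iint_{\R^2}\big(V(|T_rf_1|)-V(|T_rf_2|)\big)\,dx\,\psi\,dr ,
\end{align*}
and apply \eqref{eq:V-splitting-1} pointwise with $z=T_rf_2(x)$ and $w=T_r(f_1-f_2)(x)=T_rf_1(x)-T_rf_2(x)$, so that $z+w=T_rf_1(x)$. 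Since $|z|+|w|\le |T_rf_1|+2|T_rf_2|\lesssim |T_rf_1|+|T_rf_2|=:G$, this yields
\begin{align*}
  \big|V(|T_rf_1|)-V(|T_rf_2|)\big| \lesssim |T_r(f_1-f_2)|\,\big(G^{\gamma_1-1}+G^{\gamma_2-1}\big),
\end{align*}
so that $|N(f_1)-N(f_2)|$ is controlled by $\iint |T_r(f_1-f_2)|\,G^{\gamma_1-1}\,dx\psi dr$ plus the same integral with $\gamma_2$, and it remains to estimate these two terms.

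For part (i), for each $j$ with $2\le\gamma_j\le 6$ I would apply H\"older's inequality in the measure $dx\,\psi\,dr$ with conjugate exponents $\gamma_j$ and $\gamma_j/(\gamma_j-1)$, giving
\begin{align*}
  \iint |T_r(f_1-f_2)|\,G^{\gamma_j-1}\,dx\psi dr \le \|T_r(f_1-f_2)\|_{L^{\gamma_j}(dx\psi dr)}\,\|G\|_{L^{\gamma_j}(dx\psi dr)}^{\gamma_j-1},
\end{align*}
and then bound each factor by Lemma \ref{lem:boundedness} (with the triangle inequality for $G$), obtaining $\lesssim \|f_1-f_2\|(\|f_1\|+\|f_2\|)^{\gamma_j-1}$. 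Finally I would pass to the stated form by the elementary numerical inequalities $t^{\gamma_1-1}\lesssim 1+t^{\gamma_2-1}$ and $t^{\gamma_2-1}\lesssim \|f_1\|^{\gamma_2-1}+\|f_2\|^{\gamma_2-1}$ applied to $t=\|f_1\|+\|f_2\|\ge 0$ (valid since $\gamma_1\le\gamma_2$), the former also producing the additive constant $1$.

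For part (ii), where $\gamma_2$ may be arbitrarily large but only $\psi\in L^1$ is assumed, I would instead extract the excess power in $L^\infty$: bound $G^{\gamma_j-1}\le G\cdot\big(\sup_{r}(\|T_rf_1\|_{L^\infty}+\|T_rf_2\|_{L^\infty})\big)^{\gamma_j-2}$, pull the supremum out, and estimate it by $(\|f_1\|_{H^1}+\|f_2\|_{H^1})^{\gamma_j-2}$ via \eqref{eq:nice2} and \eqref{eq:Kato}. The remaining integral $\iint |T_r(f_1-f_2)|\,G\,dx\psi dr$ is then handled by the Cauchy--Schwarz inequality in $dx\,\psi\,dr$ together with the identity $\|T_rg\|_{L^2(dx\psi dr)}^2=\|\psi\|_{L^1}\|g\|^2$ (unitarity of $T_r$), giving $\lesssim \|\psi\|_{L^1}\|f_1-f_2\|(\|f_1\|+\|f_2\|)$. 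Collecting the $j=1,2$ contributions and using $t^{\gamma_1-2}+t^{\gamma_2-2}\lesssim 1+t^{\gamma_2-2}$ yields the claimed bound.

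The computation is essentially bookkeeping once the pointwise estimate is in hand; the only genuine choices are the H\"older exponents (tuned to $q=\gamma_j$ so that Lemma \ref{lem:boundedness} applies in part (i)) and the decision to trade excess powers for $L^\infty$ norms in part (ii), which is exactly what lets $\gamma_2$ be unbounded there at the cost of the $H^1$ norms. The one point requiring a little care is that the $\gamma_1$-term in part (i) naturally calls for Lemma \ref{lem:boundedness} with $q=\gamma_1$, i.e.\ for control by $\|\psi\|_{L^{4/(6-\gamma_1)}}$; since $\gamma_1\le\gamma_2$ gives $\tfrac{4}{6-\gamma_1}\le\tfrac{4}{6-\gamma_2}$, this is subsumed by the $L^{4/(6-\gamma_2)}$ bound whenever $\psi$ additionally lies in $L^1$, as in all the applications through Proposition \ref{prop:N-splitting}, by interpolation.
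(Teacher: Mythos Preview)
Your proof is correct and follows essentially the same route as the paper: pointwise control via \eqref{eq:V-splitting-1}, then H\"older with exponents $\gamma_j,\gamma_j/(\gamma_j-1)$ and Lemma~\ref{lem:boundedness} for part~(i), and Cauchy--Schwarz combined with the $L^\infty$ bound \eqref{eq:nice2} for part~(ii). The only cosmetic difference is the order of operations in part~(ii) (you pull out the $L^\infty$ factor before applying Cauchy--Schwarz, the paper does Cauchy--Schwarz first and then splits $(|T_rf_1|+|T_rf_2|)^{2(\gamma-1)}$), which is immaterial. Your closing remark about the $\gamma_1$-term requiring $\|\psi\|_{L^{4/(6-\gamma_1)}}$ rather than $\|\psi\|_{L^{4/(6-\gamma_2)}}$ is a valid observation that the paper glosses over; as you note, it is harmless in the applications since $\psi$ is always also in $L^1$.
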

\begin{remark}
	Note that the second part of Lemma \ref{lem:continuity} shows that the Lipschitz constant of $N$ on $H^1$ depends on the $H^1$ norm, however, if $f_1 $ and $f_2$ are bounded in $H^1$, then the difference $N(f_1)-N(f_2)$ is small whenever $f_1$ is close to $f_2$ in the much weaker $L^2$ norm!
\end{remark}
\begin{proof}
  We always have
  \begin{align*}
  	|N(f_1)-N(f_2)| \le \iint_{\R^2} \left| V(|T_rf_1|) - V(|T_rf_2|) \right| \, dx \psi dr
  \end{align*}
  and from \eqref{eq:V-splitting-1} we see that
  \begin{align*}
    	| V(|T_rf_1|) - V(|T_rf_2|)| \lesssim
    	|T_r(f_1-f_2)| \left( (|T_rf_1|+|T_rf_2|)^{\gamma_1-1} + (|T_rf_1|+|T_rf_2|)^{\gamma_2-1} \right)
  \end{align*}
 so
 \begin{equation}
 \begin{split}\label{eq:basic}
 	|N(f_1)-N(f_2)|
 	\lesssim &  ~~\|T_r(f_1-f_2) (|T_rf_1|+|T_rf_2|)^{\gamma_1-1}\|_{L^1(\R^2,\,dx\psi dr)}\\
 	&~~ + \|T_r(f_1-f_2) (|T_rf_1|+|T_rf_2|)^{\gamma_2-1}\|_{L^1(\R^2,\,dx\psi dr)} .
 \end{split}
 \end{equation}
If $2\le \gamma\le 6$ we can  use H\"older's inequality for 2 functions with exponents
$\gamma$, and $\gamma/(\gamma-1)$ to bound
	\begin{align*}
		\|T_r(f_1-f_2) (|T_rf_1|+ &|T_rf_2|)^{\gamma-1}\|_{L^1(\R^2,\,dx\psi dr)}
		\le \\
		&
		 \|T_r(f_1-f_2)\|_{L^\gamma(\R^2, dx\psi dr)}
		 \||T_rf_1|+|T_rf_2|\|_{L^\gamma(\R^2, dx\psi dr)}^{\gamma-1}.
	\end{align*}
	Applying the triangle inequality and Lemma \ref{lem:boundedness} then yields
	\begin{align*}
		\|T_r(f_1-f_2) (|T_rf_1|+ |T_rf_2|)^{\gamma-1}\|_{L^1(\R^2,\,dx\psi dr)}
		\lesssim
		 \|f_1-f_2\| \left(\|f_1\|^{\gamma-1}+\|f_2\|^{\gamma-1}\right)
	\end{align*}
  where the implicit constant depends only on the $L^{\f{4}{6-\gamma}}$ norm of $\psi$.
  Using these bounds in \eqref{eq:basic} shows that
  \begin{align*}
  	|N(f_1)-N(f_2)|&\lesssim \left( \|f_1\|^{\gamma_1-1}+\|f_2\|^{\gamma_1-1} +\|f_1\|^{\gamma_2-1}+\|f_2\|^{\gamma_2-1}\right) \|f_1-f_2\| \\
  	&\lesssim \left( 1+\|f_1\|^{\gamma_2-1}+\|f_2\|^{\gamma_2-1}\right) \|f_1-f_2\|
  \end{align*}
  where we also used that $ a^{\gamma_1-1}+a^{\gamma_2-1}\lesssim 1+a^{\gamma_2-1}$ for all $a\ge 0$ when $2\le\gamma_1\le\gamma_2$.
  This proves the first part of Lemma \ref{lem:continuity} when $2\le \gamma_1\le\gamma_2\le 6$.

  To prove the second part of the Lemma, we have to bound the terms in
  \eqref{eq:basic} slightly differently. For $\gamma\ge 2$ we use the Cauchy--Schwarz inequality to get
  \begin{align*}
  	\|T_r(f_1-f_2) (|T_rf_1|+&|T_rf_2|)^{\gamma-1}\|_{L^1(\R^2,\,dx\psi dr)}
  	\\
  	&\le \|T_r(f_1-f_2)\|_{L^2(\R^2,\,dx\psi dr)} \|(|T_rf_1|+|T_rf_2|)^{2(\gamma-1)}\|_{L^1(\R^2,\,dx\psi dr)}^{1/2} \\
  	&\le \|\psi\|_{L^1}\|f_1-f_2\| \|(|T_rf_1|+|T_rf_2|)^{2(\gamma-1)}\|_{L^1(\R^2,\,dx\psi dr)}^{1/2}
  \end{align*}
 using Lemma \ref{lem:boundedness}. Since $\gamma\ge 2$, we can further split
 \begin{align*}
 	\|(|T_rf_1|+|T_rf_2|)^{2(\gamma-1)}&\|_{L^1(\R^2,\,dx\psi dr)}^{1/2}
 	\\
 	&\le \sup_{r\in\R }\left( \|T_rf_1\|_{L^\infty} + \|T_rf_2\|_{L^\infty} \right)^{\gamma-2}
 		\||T_rf_1|+|T_rf_2|\|_{L^2(\R^2,\,dx\psi dr)} \,  .
 \end{align*}
 Because of   \eqref{eq:nice2} the first factor is bounded by $(\|f_1\|_{H^1} + \|f_2\|_{H^1})^{\gamma-2}$ and using Lemma \ref{lem:boundedness} and the triangle inequality, the second factor is bounded by $\|\psi\|_{L^1}(\|f_1\|+\|f_2\|)$.
 Using this in \eqref{eq:basic} proves the second part of the lemma for $2\le\gamma_1\le\gamma_2<\infty$.
\end{proof}
\begin{lemma} \label{lem:differentiability}
If $2\le \gamma_1\le\gamma_2\le 6$ and $\psi\in L^1\cap L^{\f{4}{6-\gamma_2}}$, respectively if $2\le\gamma_1\le\gamma_2<\infty$ and $\psi\in L^1$, then for any $f,h \in L^2(\R)$, respectively $f,h\in H^1(\R)$, the functional $N$ as above has directional derivative
given by
 \begin{align*}
  D_hN(f)=
 	   \int_\R \re\left\langle T_r h,V'(|T_rf|)\sgn{T_rf} \right\rangle \,\psi dr.
 \end{align*}
\end{lemma}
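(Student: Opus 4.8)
The plan is to compute the directional derivative straight from the definition. Writing $f+th$ and using the linearity of $T_r$,
\[
  D_hN(f)=\lim_{t\to 0}\frac{N(f+th)-N(f)}{t}
  =\lim_{t\to 0}\iint_{\R^2}\frac{V(|T_rf+tT_rh|)-V(|T_rf|)}{t}\,dx\psi dr ,
\]
and the whole argument reduces to justifying that the limit may be pulled inside the double integral by dominated convergence. Abbreviating $z=T_rf(x)$ and $w=T_rh(x)$, I first identify the pointwise limit of the integrand and then produce a $t$--independent integrable majorant valid for $0<|t|\le 1$.

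For the pointwise limit I would split according to whether $z=0$. If $z\neq 0$, then $t\mapsto|z+tw|$ is differentiable at $t=0$ with derivative $\re(\bar z w)/|z|$, and since $V$ is differentiable at $|z|>0$ by \ref{ass:A1}, the chain rule gives
\[
  \lim_{t\to 0}\frac{V(|z+tw|)-V(|z|)}{t}
  =V'(|z|)\frac{\re(\bar z w)}{|z|}
  =\re\big(\bar w\,V'(|z|)\sgn{z}\big),
\]
using $\re(\bar z w)=\re(\bar w z)$ and $\sgn{z}=z/|z|$. If $z=0$, then $V(0)=0$ together with \eqref{eq:V-bound} yields $|V(|tw|)|/|t|\lesssim |t|^{\gamma_1-1}|w|^{\gamma_1}+|t|^{\gamma_2-1}|w|^{\gamma_2}\to 0$ since $\gamma_1,\gamma_2>1$; this agrees with $\re(\bar w\,V'(|z|)\sgn{z})$ under the natural convention that this expression vanishes where $T_rf=0$, which is consistent because $|V'(a)|\to 0$ as $a\to 0^+$ by \ref{ass:A1}.

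The key step, and the one I expect to require the most care, is the integrable majorant. Applying \eqref{eq:V-splitting-1} of Lemma \ref{lem:V-splitting} with increment $tw$ and dividing by $|t|$ gives, uniformly for $0<|t|\le 1$,
\[
  \frac{|V(|z+tw|)-V(|z|)|}{|t|}
  \lesssim |w|\big((|z|+|w|)^{\gamma_1-1}+(|z|+|w|)^{\gamma_2-1}\big),
\]
so it remains to check that $\iint_{\R^2}|T_rh|\,(|T_rf|+|T_rh|)^{\gamma-1}\,dx\psi dr<\infty$ for $\gamma=\gamma_1,\gamma_2$. In the first regime, $2\le\gamma_1\le\gamma_2\le 6$ with $\psi\in L^1\cap L^{\f{4}{6-\gamma_2}}$ and $f,h\in L^2$, I would bound this by H\"older's inequality with exponents $\gamma$ and $\gamma/(\gamma-1)$ and then apply Lemma \ref{lem:boundedness} (noting $L^1\cap L^{\f{4}{6-\gamma_2}}\subset L^{\f{4}{6-\gamma}}$ by interpolation) to obtain the finite bound $\lesssim\|h\|(\|f\|+\|h\|)^{\gamma-1}$. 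In the second regime, $2\le\gamma_1\le\gamma_2<\infty$ with $\psi\in L^1$ and $f,h\in H^1$, I would instead factor $(|T_rf|+|T_rh|)^{\gamma-1}=(|T_rf|+|T_rh|)(|T_rf|+|T_rh|)^{\gamma-2}$, pull out the supremum of the last factor using \eqref{eq:Kato} and \eqref{eq:nice2} to bound it by $(\|f\|_{H^1}+\|h\|_{H^1})^{\gamma-2}$, and control the remaining $L^1(dx\psi dr)$ norm of $|T_rh|(|T_rf|+|T_rh|)$ by Cauchy--Schwarz and Lemma \ref{lem:boundedness}; this is exactly the mechanism behind Proposition \ref{prop:M-bounded}.

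With a $t$--independent integrable majorant in hand, dominated convergence (applied along any sequence $t_n\to 0$) permits the interchange of limit and integral, yielding
\[
  D_hN(f)=\iint_{\R^2}\re\big(\overline{T_rh}\,V'(|T_rf|)\sgn{T_rf}\big)\,dx\psi dr
  =\int_\R\re\big\langle T_rh,\,V'(|T_rf|)\sgn{T_rf}\big\rangle\,\psi dr,
\]
which is the asserted formula; the same majorant shows the right-hand side is absolutely convergent, so it is well defined. The only genuine obstacle is the uniform-in-$t$ integrable bound, and since Lemma \ref{lem:V-splitting} was tailored precisely to control such increments by the building blocks of Definition \ref{def:building block}, the estimate goes through in both the $L^2$ and the $H^1$ settings.
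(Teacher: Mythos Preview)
Your proof is correct and follows essentially the same dominated--convergence strategy as the paper. The only cosmetic difference is that the paper first rewrites the difference quotient via the fundamental theorem of calculus as $\iint\int_0^1 \tfrac{d}{ds}V(|T_r(f+s\epsilon h)|)\,ds\,dx\psi dr$ before passing to the limit, whereas you bound the quotient directly using the increment estimate \eqref{eq:V-splitting-1}; your route is arguably more self-contained since it makes the integrable majorant explicit in both the $L^2$ and $H^1$ regimes, which the paper leaves implicit.
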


\begin{proof}
Let $f \in L^2(\R)$ and $\epsilon \neq 0$. Fix any $h\in L^2(\R)$ and  the quotient of $N$ is
\begin{align}\label{eq:derivative}
\f{N(f+\epsilon h)-N(f)}{\epsilon}&=\f{1}{\epsilon}\left[\iint_{\R^2}V(|T_r(f+\epsilon h)|)-V(|T_r f|)dx \psi dr\right]\notag\\
& =\f{1}{\epsilon} \iint_{\R^2}\int _0^1 \f{d}{ds} V(|T_r(f+s\epsilon h)|) ds dx \psi dr.
\end{align}
By straightforward calculations, we obtain
\begin{align}
\f{d}{ds} V(|T_r(f+s\epsilon h)|)=V ' (|T_r (f+s \epsilon h )|) \f{\epsilon (T_rf \overline{T_rh}+T_rh \overline{T_rf}+2s\epsilon |T_r h|^2)}{2|T_r (f+s\epsilon h)|}\notag
\end{align}
and thus
\begin{align*}
\eqref{eq:derivative}=\iint_{\R^2} \int _0^1 V ' (|T_r (f+s \epsilon h )|) \f{T_rf \overline{T_rh}+T_rh \overline{T_rf}+2s\epsilon |T_r h|^2}{2|T_r (f+s\epsilon h)|}  ds dx \psi dr.
\end{align*}
By Lebesgue's dominated convergence theorem, letting $\epsilon \to 0$, we get
\begin{align*}
D_hN(f)=\iint_{\R^2} \int _0^1 V ' (|T_r f|) \f{\re (T_rf \overline{T_rh})}{|T_r f|}  ds dx \psi dr
=\iint_{\R^2} V ' (|T_r f|) \f{\re (T_rf \overline{T_rh})}{|T_r f|}  dx \psi dr
\end{align*}
which completes the proof when $2\le \gamma_1\le\gamma_2\le6$. The case $2\le\gamma_1\le\gamma_2<\infty$ is similar.
\end{proof}

Now we are ready to give the
\begin{proof}[Proof of Theorem \ref{thm:existence}:]
 It is easy to see that if $E^\dav_\lambda=0$ for some $\lambda>0$, then there are minimizing sequences which are not precompact modulo translations. Indeed, assume that $E^\dav_\lambda=0$ and let $g_n$ be the centered Gaussian from \eqref{eq:Gauss} with the choice $\sigma_0=n\in\N$. This gives a sequence of Gaussians which weakly converges to zero and no translates or boosts of $g_n$ converges strongly and, as the proof of Lemma \ref{lem:E-boundedness} shows, we have $H(g_n)\to 0=E^\dav_\lambda$ as $n\to\infty$.  By contrapositive, this is equivalent to that  if every sequence is modulo translations, respectively modulo translations and boosts, then necessarily $E^\dav_\lambda<0$.

 For the converse, assume that $E^\dav_\lambda<0$ and assume further that $\dav>0$.
Let $(f_n)_n\subset H^1(\R)$ be a minimizing sequence of the variational problem \eqref{eq:min}. Since $\|f_n\|^2=\lambda>0$ is fixed we can use Lemma
 \ref{lem:E-boundedness} to see that $f_n$ is bounded in the $H^1$ norm,
 \begin{align*}
  K_1\coloneqq\sup_{n\in\N}\|f_n\|_{H^1}<\infty.
 \end{align*}

 In addition, applying Proposition \ref{prop:tightness for positive average}, there exist shifts $y_n$ such that for the
 shifted sequence $h_n$,  $h_n(x)\coloneqq f_n(x-y_n)$ for $x\in \R$, we have
 \beq\label{eq:tightReal2}
     \lim_{R\rightarrow\infty} \sup_{n\in{\mathbb N}}\int_{|x|>R}|h_n(x)|^2dx=0.
 \eeq
 Clearly, by translation invariance of Lebesgue measure, we still have $\|h_n\|^2=\lambda$.
 On the Fourier side, shifts correspond to modulations
with $e^{iy_n\eta}$, so for the shifted sequence $h_n$ Proposition \ref{prop:tightness for positive average} also yields that there exists $K_2 < \infty$ such that for any $L>0$
  \beq\label{eq:tightFourier2}
    \sup_{n\in\N} \int_{|\eta|>L} |\hat{h}_n(\eta)|^2\, d\eta \le \frac{K_2}{L^2}.
  \eeq
  Thus, by translation invariance of the minimization problem, the shifted sequence
 is a minimizing sequence for $E^\dav_\lambda$  which is \emph{tight} in the sense of Lemma \ref{lem:strong-convergence-L2}.
 The shifted sequence $h_n$ is certainly also bounded in $H^1$, hence also bounded in $L^2$. By the weak sequential compactness of bounded sets in $L^2$ and $H^1$, we can extract a subsequence, which by some slight abuse of notation, we still denote by $h_n$, which converges weakly both to some $f$ in $L^2$ and some $\wti{f}$ in  $H^1$. By uniqueness of weak limits, we must have $f=\wti{f}$ and by the characterization of strong convergence in $L^2$ from  Lemma \ref{lem:strong-convergence-L2}, we know that $h_n$ converges \emph{even strongly} in $L^2$ to $f$. In particular,
 \begin{align*}
   \|f\|^2=\lim_{n\to\infty}\|h_n\|^2 =\lambda>0	
 \end{align*}
 so $f\not=0$.   Since $H^1$ is a Hilbert space, we also have the weak sequential lower semi--continuity of the $H^1$ norm, that is,
  \beq
   \|f\|_{H^1} \le \liminf_{n\to\infty} \|h_n\|_{H^1}
 \eeq
 Since $\|f\|^2=\lambda$ and $\|h_n\|^2=\lambda$ this implies
 \begin{align*}
   \|f'\|^2 \le \liminf_{n\to\infty} \|h_n'\|^2, 	
 \end{align*}
 that is, the kinetic energy is lower semi--continuous.

 Since $h_n$ is bounded in $H^1$ and $h_n$ converges strongly to $f$ in $L^2$, Lemma \ref{lem:continuity} shows that
 \begin{align*}
   \lim_{n\to\infty} N(h_n) = N(f). 	
 \end{align*}
 Together with the lower semi--continuity of the kinetic energy this implies
 \begin{align*}
 	E^\dav_\lambda\le H(f) \le \liminf_{n\to\infty} H(f_n) = E^\dav_\lambda
 \end{align*}
 and since $\|f\|^2=\lambda$, this shows that $f$ is a minimizer for the variational problem \eqref{eq:min}.

It remains to show that the existence of a minimizer of \eqref{eq:min} for $\dav=0$.
Again, the main task is to use translations and boosts to massage an arbitrary minimizing sequence into one having a strongly convergent subsequence.

Let $(f_n)_n\subset L^2(\R)$ be an arbitrary
minimizing sequence of the variational problem \eqref{eq:min} with $\|f_n\|^2=\lambda>0$.
Proposition \ref{prop:tightness for zero average} guarantees the existence of shifts
$y_n\in\R$ and boosts $\xi_n\in\R$ such that \eqref{eq:min-seq-tight-fourier} and
\eqref{eq:min-seq-tight} hold.
Define the shifted and boosted sequence $(h_n)_n=(f_{\xi_n,y_n,n})_n$ by
    \bdm
        h_n(x)= f_{\xi_n,y_n,n}(x)\coloneqq e^{i\xi_nx} f_n(x-y_n) \quad \text{for } x\in\R.
    \edm
Note that $\|h_n\|^2= \|f_n\|^2=\lambda$ since shifts and boost are unitary
operations on $L^2(\R)$ and $N(f_n)=N(h_n)$, see Appendix \ref{sec:Galilei}.
Hence $(h_n)_n$ is also a minimizing sequence.
Certainly $|h_n(x)|= |f_n(x-y_n)|$ for all $n\in\N$. The Fourier transform of $h_n$ is given by
 \beq
    \hatt{h}_n(\eta)= \frac{1}{\sqrt{2\pi}} \int e^{-ix\eta} e^{ix\xi_n} f_n(x-y_n)\, dx
    = e^{-iy_n\eta} \hatt{f}_n(\eta-\xi_n).
 \eeq
Thus also $|\hatt{h}_n(\eta)|= |\hatt{f}_n(\eta-\xi_n)|$.  In particular,
\eqref{eq:min-seq-tight-fourier} and \eqref{eq:min-seq-tight} show that the minimizing
sequence $(h_n)_n$  is tight in the sense of Lemma \ref{lem:strong-convergence-L2}.

Since $(h_n)_n$ is bounded in $L^2(\R)$, the weak compactness of the unit ball,
guarantees the existence of a weakly converging subsequence of $(h_n)_n$, denoted again by $(h_n)_n$.
 Obviously, this subsequence is also tight in the
sense of  Lemma \ref{lem:strong-convergence-L2} and thus hence converges even strongly in $L^2(\R)$.
We set
    \bdm
        f= \lim_{n\to\infty} h_n .
    \edm
By strong convergence $ \|f\|^2 = \lim_{n\to\infty} \|h_n\|^2 = \lambda $.
To conclude that $f$ is the sought after minimizer we note that by Lemma
\ref{lem:continuity} the map $f\mapsto N(f)=\iint_{\R^2}V(|T_r f|)dx\psi dr$ is continuous on $L^2(\R)$.
Hence
    \bdm
        E^0_\lambda\le H(f)= -N(f)= \lim_{n\to\infty}-N(h_n)
        = E_\lambda^0
    \edm
where the last equality follows since $(h_n)_n$ is a minimizing sequence for \eqref{eq:min}.
Thus $f$ is a minimizer for the variational problem \eqref{eq:min}.

To prove that the above minimizer is a weak solution of the associated Euler-Lagrange equation \eqref{eq:GT} is standard in the calculus of variations. One has to be a bit careful here, since we only have the directional derivative of $N$ and hence of the energy functional $H$ at our disposal.
 Let $f$ be a minimizer of \eqref{eq:min}.

 Recall  $H(f)= \f{\dav}{2}\|f'\|^2 -N(f)$. By
Lemma \ref{lem:differentiability} the directional derivative of the functional of $H$ at $f\in H^1$ in direction $h\in H^1$ is given by
\begin{align*}
	D_hH(f) = \dav\re\la h',f' \ra -D_hN(f) .
\end{align*}
Similarly, one can check that the derivative of $\varphi(f) \coloneqq \|f\|^2= \la f,f\ra$ is given by
$D_h \varphi(f)= 2\re\la h,f\ra$.

Now let $f$ be any minimizer of the constraint variational problem \eqref{eq:min} and
$h\in H^1$ arbitrary. Define, for any $(s,t)\in\R^2$,
 \begin{align*}
    F(s,t)&\coloneqq H(f  + sf  +t h) , \\
    G(s,t)&\coloneqq \varphi (f  + sf   +t h ) .
 \end{align*}
Note that
 \begin{align*}
    \nabla F(s,t)&= \left(\begin{array}{c} D_f H(f+ sf +t h) \\ D_hH(f+ sf +t h) \end{array} \right)
 \end{align*}
and
 \bdm
    \nabla G(s,t)= \left(\begin{array}{c} D_f\varphi(f+ sf +t h) \\ D_h\varphi(f+ sf +t h) \end{array} \right)
    = 2\left(\begin{array}{c} \re\la f,f+ sf +t h\ra \\
                              \re \la h,f+ sf +t h\ra  \end{array} \right) .
 \edm
Since $\la f,f\ra = \|f\|^2=\lambda\not= 0$,
 \bdm
    \nabla G(0,0)= 2\left(\begin{array}{c} \la f,f\ra \\
                              \re \la h,f\ra  \end{array} \right)
                 = 2\left(\begin{array}{c} \lambda \\
                              \re \la h,f\ra  \end{array} \right)
 \edm
 is not the zero vector in $\R^2$ and since $\nabla G(s,t)$ depends multi-linearly, in particular
 continuously, on $(s,t)$, the implicit function theorem \cite{Strichartz00} shows that there
 exists an open interval $I\subset\R$ containing $0$ and a differentiable function $\phi$ on $I$
 with $\phi(0)=0$ such that
 \bdm
    \lambda= \|f\|^2 = G(0,0) = G(\phi(t),t)
 \edm
 for all $t\in I$. Consider the function $I\ni t\mapsto F(\phi(t),t)$. Since $f$ is a minimizer for the constraint variational problem \eqref{eq:min}, $F(\phi(t),t)$ has a local minimum at
 $t=0$. Hence, using the chain rule,
 \bdm
    0 = \frac{d F(\phi(t),t)}{dt}\Big\vert_{t=0}
        = \nabla F(0,0)\cdot\left(\begin{array}{c}\phi'(0)\\1\end{array}\right)
        = D_f H(f) \phi'(0) + D_h H(f) .
 \edm
 Since $\lambda= G(\phi(t),t)$, the chain rule also yields
 \bdm
    0= \frac{d G(\phi(t),t)}{dt}\Big\vert_{t=0}
        = \nabla G(0,0)\cdot\left(\begin{array}{c}\phi'(0)\\1\end{array}\right)
        = 2 \la f,f\ra \phi'(0) + 2\re \la h,f\ra.
 \edm
 Solving this for $\phi'(0)$ and plugging it back into the expression for the derivative of $F$,
 we see that
 \bdm
    \frac{D_f H(f) }{\la f,f \ra} \re \la h,f\ra = D_h H(f).
 \edm
 In other words, with
 \begin{align}\label{eq:lagrange multiplier}
   \omega \coloneqq \frac{D_f H(f) }{\lambda}
 \end{align}
 and $f$ any minimizer of
 \eqref{eq:min} we have
 \beq\label{eq:euler-lagrange-real}
     \re(\omega\la h,f\ra) = D_h H(f) = \re\left(\dav\la h',f' \ra
       - \int_\R \left\langle T_r h,V'(|T_rf|)\sgn{T_rf} \right\rangle \,\psi dr\right)
 \eeq
 for any $h\in H^1$, using the formula for $D_hN(f)$ from Lemma \ref{lem:differentiability}. Replacing $h$ by $ih$ in \eqref{eq:euler-lagrange-real}, one gets
 \beq\label{eq:euler-lagrange-imaginary}
         \im(\omega\la h,f\ra) = \im\left(\dav\la h',f' \ra
       - \int_\R \left\langle T_r h,V'(|T_rf|)\sgn{T_rf} \right\rangle \,\psi dr\right)
 \eeq
 for all $h\in H^1$. \eqref{eq:euler-lagrange-real} and \eqref{eq:euler-lagrange-imaginary} together
 show
 \bdm
         \omega\la h,f\ra = \dav\la h',f' \ra
       - \int_\R \left\langle T_r h,V'(|T_rf|)\sgn{T_rf} \right\rangle \,\psi dr
 \edm
 for any $h\in L^2(\R)$, that is, $f$ is a weak solution of the
 dispersion management equation \eqref{eq:GT}.

It remains to prove $\omega< 2E^{\dav}_\lambda/\lambda$. For this, recall that assumption \ref{ass:A2} states that
\begin{align*}
	V'(a)a \ge \gamma_0 V(a) \quad \text {for all } a>0. 	
\end{align*}
Thus
\begin{align*}
	D_f N(f)
	= \iint_{\R^2} V'(|T_rf|)|T_rf| \,dx\psi dr
	\ge \gamma_0 \iint_{\R^2} V(|T_rf|) \,dx\psi dr  =\gamma_0 N(f)
\end{align*}
and since $E^{\dav}_\lambda<0$, we must have $N(f)>0$ for any minimizer
$f$, so \eqref{eq:lagrange multiplier} gives
\begin{align*}
	\omega(f)\lambda= D_f H(f)
	&= \dav\la f',f'\ra -D_f N(f)
	\le \dav\la f', f'\ra -\gamma_0 N(f) \\
	&= 2H(f) -(\gamma_0-2) N(f) < 2H(f) = 2E^{\dav}_\lambda <0
\end{align*}
for every minimizers.
\end{proof}

\section{Threshold phenomena}\label{sec:threshold}
As we showed in the previous section, assumptions \ref{ass:A1} and \ref{ass:A2} guarantee the existence of minimizers for arbitrary $\lambda>0$ and $\dav\ge 0$ as soon as the ground state  energy $E^{\dav}_\lambda$ is \emph{strictly negative}.  In this section we will prove there exists a threshold $0\le\lambda^\dav_{\mathrm{cr}}\le \infty$ such that solutions exist if the power $\lambda= \|f\|^2>\lambda^\dav_{\mathrm{cr}}$.
Furthermore $\lambda^\dav_{\mathrm{cr}}<\infty$ if assumption \ref{ass:A3} holds.

For pure power law nonlinearities and the model case $d_0=\id_{[0,1)}-\id_{[1,2]}$ for the diffraction profile, this had been partly investigated earlier in \cite{Moeser05} for
the diffraction management equation and for pure power nonlinearities in \cite{weinstein} for the discrete nonlinear Schr\"odinger equation. We are not aware of any work which investigates threshold phenomena for general nonlinearities for dispersion management solitons in the continuum.

 In the following we will always assume, without explicitly mentioning it every time,
 that $\psi$ is a probability density on $\R$ with compact support, that is, there exists $0<R<\infty$ such that $\supp(\psi)\subset [-R,R]$ together with further $L^p$ properties, depending on the range of $\gamma_1\le\gamma_2$.
Our main result in this section is
\begin{theorem}[Threshold phenomenon]
 \label{thm:threshold-phenomena}
	Assume that $V$ obeys \ref{ass:A1} for some $2<\gamma_1 \le \gamma_2<10$ if $\dav>0$ and some $2<\gamma_1 \le \gamma_2<6$ if $\dav=0$ and also  \ref{ass:A2}. Then
	\begin{theoremlist}
	\item\label{thm:threshold-phenomena-1} The map $\lambda\mapsto E^{\dav}_\lambda$ is decreasing on $(0,\infty)$, strictly decreasing where $E^\dav_\lambda<0$, and there exists  a critical threshold
	$0\le \lambda_{\mathrm{cr}}^\dav\le \infty$ such that for
	$0 < \lambda < \lambda_{\mathrm{cr}}^\dav$ we have $E^{\dav}_\lambda = 0$ and for $\lambda>\lambda_{\mathrm{cr}}^\dav$ we have $-\infty<E^{\dav}_\lambda < 0$.
	\item\label{thm:threshold-phenomena-2} 	If $\lambda> \lambda_{\mathrm{cr}}^\dav$, then minimizers of \eqref{eq:min} exist and any minimizing sequence is, up to translations, precompact in $L^2(\R)$ if $\dav>0$, respectively, precompact up to translations and boosts in $L^2$ if $\dav=0$. In both cases the suitably translated, respectively translated and boosted, minimizing sequence has  a subsequence which converges to a minimizer.
	\item\label{thm:threshold-phenomena-3} If $0< \lambda< \lambda_{\mathrm{cr}}^\dav$ and $\dav>0$, no minimizers of the variational problem \eqref{eq:min} exist.
	\item\label{thm:threshold-phenomena-4} If $6\le \gamma_1\le\gamma_2<10$, then
		$\lambda_{\mathrm{cr}}^\dav>0$ for all $\dav>0$.
	\item\label{thm:threshold-phenomena-5} If there exists
		$f\in H^1(\R)$ such that $N(f)>0$, then one has  $\lambda_{\mathrm{cr}}^\dav< \infty$ for all $\dav\ge  0$.
	\end{theoremlist}
\end{theorem}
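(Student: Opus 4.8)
The plan is to hang everything on two facts established earlier: the scaling inequality \eqref{eq:scaling}, $E^\dav_{\mu\lambda}\ge \mu^{\gamma_0/2}E^\dav_\lambda$ for $0<\mu\le 1$, together with $-\infty<E^\dav_\lambda\le 0$ from Lemma \ref{lem:E-boundedness}; and the precompactness/existence dichotomy of Theorem \ref{thm:existence}. For the monotonicity in \ref{thm:threshold-phenomena-1}, since $E^\dav_\lambda\le 0$ and $\mu^{\gamma_0/2}\le 1$ I read \eqref{eq:scaling} as $E^\dav_{\mu\lambda}\ge \mu^{\gamma_0/2}E^\dav_\lambda\ge E^\dav_\lambda$, so $\lambda\mapsto E^\dav_\lambda$ is nonincreasing; if $E^\dav_\lambda<0$ then $\mu^{\gamma_0/2}<1$ makes the first inequality strict, so writing $\mu=\lambda_1/\lambda_2$ for $\lambda_1<\lambda_2$ with $E^\dav_{\lambda_2}<0$ gives $E^\dav_{\lambda_1}>E^\dav_{\lambda_2}$, i.e.\ strict monotonicity on $\{E^\dav<0\}$. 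I would then simply \emph{define} $\lambda^\dav_{\mathrm{cr}}:=\sup\{\lambda>0:E^\dav_\lambda=0\}$ (with $\sup\emptyset=0$); monotonicity and $E^\dav_\lambda\le 0$ force $E^\dav_\lambda=0$ for $\lambda<\lambda^\dav_{\mathrm{cr}}$ and $E^\dav_\lambda<0$ for $\lambda>\lambda^\dav_{\mathrm{cr}}$, while $E^\dav_\lambda>-\infty$ is Lemma \ref{lem:E-boundedness}.

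Part \ref{thm:threshold-phenomena-2} is then immediate: for $\lambda>\lambda^\dav_{\mathrm{cr}}$ part \ref{thm:threshold-phenomena-1} gives $E^\dav_\lambda<0$, and Theorem \ref{thm:existence} supplies precompactness of minimizing sequences modulo translations (resp.\ translations and boosts) and hence a minimizer. For the non-existence statement \ref{thm:threshold-phenomena-3}, take $0<\lambda<\lambda^\dav_{\mathrm{cr}}$ with $\dav>0$, so $E^\dav_\lambda=0$, and suppose a minimizer $f$ existed. Then $H(f)=0$ forces $N(f)=\tfrac{\dav}{2}\|f'\|^2$, and since $f\in H^1(\R)$ with $\|f\|^2=\lambda>0$ cannot be constant we have $\|f'\|>0$, hence $N(f)>0$. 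The computation behind \eqref{eq:scaling-1} now gives, for $\rho>1$, $H(\rho^{1/2}f)\le \rho\tfrac{\dav}{2}\|f'\|^2-\rho^{\gamma_0/2}N(f)=\tfrac{\dav}{2}\|f'\|^2\,(\rho-\rho^{\gamma_0/2})<0$ because $\gamma_0>2$. Choosing $\rho>1$ with $\rho\lambda<\lambda^\dav_{\mathrm{cr}}$ yields $E^\dav_{\rho\lambda}\le H(\rho^{1/2}f)<0$, contradicting $E^\dav_{\rho\lambda}=0$ from part \ref{thm:threshold-phenomena-1}.

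For \ref{thm:threshold-phenomena-4}, with $6\le\gamma_1\le\gamma_2<10$ I would apply Proposition \ref{prop:N-boundedness} with the admissible choice $\kappa_1=\kappa_2=4$: this is allowed exactly because $(\gamma_j-6)_+\le 4\le \gamma_j-2$ in this range, and the required norm $\|\psi\|_{L^{4/(10-\gamma_j)}}$ is finite since $\tfrac{4}{10-\gamma_j}\le\alpha_\delta$ and $\psi$ has compact support. Then \eqref{eq:N-boundedness} with $\|f\|^2=\lambda$, $t=\|f'\|$ reads $N(f)\lesssim t^2\big(\lambda^{\gamma_1/2-1}+\lambda^{\gamma_2/2-1}\big)$, so $H(f)\ge t^2\big(\tfrac{\dav}{2}-C(\lambda^{\gamma_1/2-1}+\lambda^{\gamma_2/2-1})\big)$; as $\gamma_j/2-1>0$, the bracket is nonnegative for all small $\lambda$, giving $E^\dav_\lambda=0$ there and thus $\lambda^\dav_{\mathrm{cr}}>0$. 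For \ref{thm:threshold-phenomena-5}, pick $f\in H^1(\R)$ with $N(f)>0$ (so $f\neq 0$); for $\rho\ge 1$ Lemma \ref{lem:V-lower-bound} gives $N(\rho^{1/2}f)\ge\rho^{\gamma_0/2}N(f)$, whence $H(\rho^{1/2}f)\le \rho\tfrac{\dav}{2}\|f'\|^2-\rho^{\gamma_0/2}N(f)<0$ for large $\rho$ since $\gamma_0>2$, so $E^\dav_{\rho\|f\|^2}<0$ and $\lambda^\dav_{\mathrm{cr}}<\infty$.

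The genuinely substantive point is the choice $\kappa_j=4$ in \ref{thm:threshold-phenomena-4}: it sits at the endpoint $\tfrac{4}{10-\gamma_j}$ of the $L^p$ scale while keeping the power of $\|f'\|$ exactly equal to $2$, so that the nonlinearity is absorbed by the kinetic term with a coefficient that vanishes as $\lambda\to 0$; using instead the cruder bound \eqref{eq:consequence2} would spuriously make $H$ look unbounded below and fail. Everything else is bookkeeping around the scaling relation, the sign $E^\dav_\lambda\le 0$, and Theorem \ref{thm:existence}. The only care needed is the placement of the endpoint $\lambda^\dav_{\mathrm{cr}}$ itself and the precise reading of ``strictly decreasing where $E^\dav<0$'', both of which the $\sup$-definition handles cleanly.
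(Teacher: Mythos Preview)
Your proof is correct and, for parts \ref{thm:threshold-phenomena-1}, \ref{thm:threshold-phenomena-3}, and \ref{thm:threshold-phenomena-5}, takes a genuinely different and more elementary route than the paper. The paper introduces an auxiliary quantity $R(\lambda)\coloneqq\sup_{\|h\|=1} N(\sqrt{\lambda}h)/(\lambda\|h'\|^2)$, defines the threshold as $\lambda^\dav_{\mathrm{cr}}=\inf\{\lambda:R(\lambda)>\dav/2\}$, and proves a scaling lower bound $R(\lambda_2)\ge(\lambda_2/\lambda_1)^{(\gamma_0-2)/2}R(\lambda_1)$ from which the threshold dichotomy and both \ref{thm:threshold-phenomena-4} and \ref{thm:threshold-phenomena-5} follow; for \ref{thm:threshold-phenomena-3} it uses the strict inequality $R(\lambda)<\dav/2$ below threshold to force $\|f'\|=0$ for a putative minimizer. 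You bypass this machinery entirely: you read monotonicity directly off the scaling inequality \eqref{eq:scaling}, define $\lambda^\dav_{\mathrm{cr}}$ as the supremum of the zero set of $E^\dav$, and for \ref{thm:threshold-phenomena-3} you run a clean perturbation argument (a minimizer at level zero with $\dav>0$ must have $N(f)=\tfrac{\dav}{2}\|f'\|^2>0$, and then rescaling by $\rho^{1/2}$ with $\rho$ slightly bigger than $1$ produces negative energy still below threshold). Your arguments for \ref{thm:threshold-phenomena-4} (choice $\kappa_j=4$) and \ref{thm:threshold-phenomena-2} coincide with the paper's. What the paper's $R(\lambda)$ framework buys is a uniform picture that treats $\dav>0$ and $\dav=0$ symmetrically and makes the role of the ratio $N/\|f'\|^2$ explicit; what your approach buys is brevity and the avoidance of an extra layer of definitions and lemmas.
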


\begin{remarks}
 The precise definition of  $\lambda_{\mathrm{cr}}^\dav$ is given below in Definition \ref{def:threshold}.
 When $\lambda>\lambda_{\mathrm{cr}}^\dav$ we have $E^{\dav}_\lambda<0$ and Theorem \ref{thm:existence} shows that every minimizing sequence is precompact modulo translations if $\dav>0$, respectively, precompact modulo translations and boosts
 if $\dav=0$, and thus minimizers exist yielding solutions of \eqref{eq:GT} for some Lagrange multiplier $\omega < 2E^{\dav}_\lambda/\lambda<0$.

 Since $E^{\dav}_\lambda=0$ when $0<\lambda<\lambda_{\mathrm{cr}}^\dav$, Theorem \ref{thm:existence} also shows that there are minimizing sequences which are not precompact modulo translations and boosts in this case. Nevertheless, it could be that minimizers still exist. At least when $\dav>0$, Theorem \ref{thm:threshold-phenomena} shows that this cannot be the case.
 At the moment, we need $\dav>0$ to conclude nonexistence of minimizers when $0<\lambda<\lambda_{\mathrm{cr}}$.
\end{remarks}

We give the proof of Theorem \ref{thm:threshold-phenomena} at the end of this section after
some preparations.
Recall the definition of the energy functional
\begin{align*}
	H(f) = \frac{\dav}{2}\|f'\|^2  - N(f)
\end{align*}
and
\begin{align*}
	E^{\dav}_\lambda = \inf\{H(f):\, f\in H^1(\R), \|f\|^2=\lambda\} .
\end{align*}
For strictly positive average dispersion, we write  $f\in H^1(\R)$ with $\lambda=\|f\|^2>0$ as $f=\sqrt{\lambda} h$.  Then $h\in H^1(\R)$ with $\|h\|=1$ and
\begin{align}\label{eq:energy-threshold}
	H(f) = \frac{\dav}{2}\|f'\|^2  - N(f)
		= \|f'\|^2  \left( \frac{\dav}{2}- \frac{N(\sqrt{\lambda}h)}{\lambda \|h'\|^2} \right).
\end{align}
In the case of vanishing average dispersion, we simply write, again for $f\in H^1$,
\begin{align*}
	H(f)= -N(f) = - \|f'\|^2 \left(\frac{N(\sqrt{\lambda}h)}{\lambda\|h'\|^2} \right),
\end{align*}
so defining\footnote{Note that the null space of $\partial_x$ on $H^1(\R)$ is trivial, so $R(\lambda,h)$ is well defined for any $h\in H^1\setminus\{0\}$.} for $\dav\ge 0$
\begin{align*}
	R(\lambda,h)\coloneqq \frac{N(\sqrt{\lambda}h)}{\lambda \|h'\|^2}
	\quad \text{ and } \quad R(\lambda)\coloneqq \sup\left\{R(\lambda,h):\, h\in H^1,
	\|h\|=1\right\}
\end{align*}
 we see that the following holds\footnote{We combine the cases $\dav>0$ and $\dav=0$ only so that we do not have to distinguish the two cases in Lemma \ref{lem:R-lower} and  Corollary \ref{cor:properties-threshold}.}
\begin{lemma}\label{lem:threshold-basic}
	For any $\dav\ge0$ and $\lambda> 0$ one has $E^{\dav}_\lambda<0$ if and only if $R(\lambda,h)>\frac{\dav}{2}$ for some $h\in H^1(\R)$ with $\|h\|=1$ and this is the case if and only if $R(\lambda)>\frac{\dav}{2} $.
\end{lemma}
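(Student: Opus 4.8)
The plan is to read the lemma straight off the factorization \eqref{eq:energy-threshold}, together with the elementary fact that the infimum and supremum defining $E^\dav_\lambda$ and $R(\lambda)$ are negative/large precisely when some admissible element is. The one preliminary observation I would record is that for any admissible $f\in H^1(\R)$ with $\|f\|^2=\lambda>0$ one has $\|f'\|>0$: since $\lambda>0$ we have $f\neq 0$ in $L^2(\R)$, and the only constant in $L^2(\R)$ is the zero function (this is the triviality of the kernel of $\partial_x$ on $H^1(\R)$ noted in the footnote), so $f'\neq 0$. Writing $f=\sqrt{\lambda}\,h$ with $\|h\|=1$, both \eqref{eq:energy-threshold} (for $\dav>0$) and its $\dav=0$ analogue $H(f)=-\|f'\|^2R(\lambda,h)$ take the single form $H(f)=\|f'\|^2\bigl(\tfrac{\dav}{2}-R(\lambda,h)\bigr)$. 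Because the prefactor $\|f'\|^2$ is strictly positive, this yields the pointwise equivalence that $H(\sqrt{\lambda}\,h)<0$ holds if and only if $R(\lambda,h)>\tfrac{\dav}{2}$, for every unit vector $h\in H^1(\R)$.

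With this pointwise equivalence in hand, the first biconditional in the lemma is immediate. For the forward direction, if $E^\dav_\lambda<0$ then by definition of the infimum some admissible $f$ with $\|f\|^2=\lambda$ satisfies $H(f)<0$; setting $h=f/\sqrt{\lambda}$ produces a unit vector with $R(\lambda,h)>\tfrac{\dav}{2}$. Conversely, given a unit $h$ with $R(\lambda,h)>\tfrac{\dav}{2}$, the function $f=\sqrt{\lambda}\,h$ is admissible and has $H(f)<0$, so $E^\dav_\lambda\le H(f)<0$. The second biconditional is then just the definition of the supremum $R(\lambda)=\sup\{R(\lambda,h):\|h\|=1\}$: if some unit $h$ has $R(\lambda,h)>\tfrac{\dav}{2}$ then a fortiori $R(\lambda)>\tfrac{\dav}{2}$, while if $R(\lambda)>\tfrac{\dav}{2}$ then, choosing $\epsilon=\tfrac12\bigl(R(\lambda)-\tfrac{\dav}{2}\bigr)>0$, the definition of supremum furnishes a unit $h$ with $R(\lambda,h)>R(\lambda)-\epsilon>\tfrac{\dav}{2}$.

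I do not expect any genuine obstacle here; the statement is really a repackaging of \eqref{eq:energy-threshold}. The only point that deserves explicit mention is the strict positivity $\|f'\|>0$, since it is exactly what lets the sign of $H(f)$ be controlled by the sign of $R(\lambda,h)-\tfrac{\dav}{2}$, and it is precisely the triviality of the kernel of $\partial_x$ recorded in the footnote. I would therefore present the argument as the short three-step chain above, emphasizing the unified form $H(f)=\|f'\|^2\bigl(\tfrac{\dav}{2}-R(\lambda,h)\bigr)$ so that the cases $\dav>0$ and $\dav=0$ are treated simultaneously.
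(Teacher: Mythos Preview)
Your argument is complete for $\dav>0$, but there is a genuine gap in the case $\dav=0$. The infimum defining $E^0_\lambda$ is taken over all $f\in L^2(\R)$ with $\|f\|^2=\lambda$, not just over $H^1(\R)$; see the remark following \eqref{eq:min} and the setup at the start of Section~\ref{sec:threshold}. Thus in the forward direction, when $E^0_\lambda<0$, the admissible witness $f$ with $H(f)=-N(f)<0$ is a priori only in $L^2$, and your step ``setting $h=f/\sqrt{\lambda}$ produces a unit vector with $R(\lambda,h)>\tfrac{\dav}{2}$'' may fail because $R(\lambda,h)=N(\sqrt\lambda h)/(\lambda\|h'\|^2)$ is only defined for $h\in H^1$. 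Your unified factorization $H(f)=\|f'\|^2\bigl(\tfrac{\dav}{2}-R(\lambda,h)\bigr)$ presupposes $f\in H^1$ and so does not apply directly.

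The paper closes this gap with a short density-plus-continuity argument: given $f\in L^2$ with $N(f)>0$, one invokes the $L^2$--continuity of $N$ from Lemma~\ref{lem:continuity} (available in the range $2<\gamma_1\le\gamma_2\le 6$ relevant for $\dav=0$) together with the density of $H^1$ in $L^2$ to produce $\wti f\in H^1$ with $\|\wti f\|=\|f\|$ and $N(\wti f)>0$, and then sets $h=\wti f/\sqrt\lambda$. You should insert exactly this step for the $\dav=0$ forward implication; the rest of your argument is fine.
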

\begin{proof}
  If $\dav>0$ the claims are certainly true by the discussion above, which motivated the
  very definition of $R(\lambda)$.
  Now let $\dav=0$. We  note that $E^0_\lambda<0$ if and only if there exists $f\in L^2$
  with $\|f\|^2=\lambda$ and  $N(f)>0$. For $\dav=0$, we consider only the range
  $2<\gamma_1\le \gamma_2\le 6$ together with suitable further $L^p$ properties of $\psi$
  which guarantee that $L^2\ni f\mapsto N(f)$ is continuous, see Lemma \ref{lem:continuity}.
  Since $H^1$ is dense in $L^2$, we can find
  $\wti{f}\in H^1$ with $\|\wti{f}\|=\|f\|$ such that $N(\wti{f})>0$ if $N(f)>0$.
  Thus we also have $E^0_\lambda<0$ if and only if $R(\lambda)>0$.     	
\end{proof}

This simple observation motivates our definition of the threshold:
\begin{definition}[Threshold]\label{def:threshold}
		For $\dav\ge  0$ we let
	\begin{align*}
		\lambda_{\mathrm{cr}}^\dav
			\coloneqq
				\inf \{ \lambda>0:\, R(\lambda) > \frac{\dav}{2} \}.
	\end{align*}
\end{definition}
\begin{remark} \label{rem:R-finite}
  It is clear from Lemma \ref{lem:threshold-basic} that $E^\dav_\lambda<0$ for arbitrary $\lambda>0$ and all $\dav \ge  0$ if and only if $R(\lambda)=\infty$, in which case $\lambda_{\mathrm{cr}}^\dav=0$ for all $\dav\ge 0$.
  Thus it is important to know when $R$ is finite. Using the bound from Proposition \ref{prop:N-boundedness} with $\kappa_1=\kappa_2=4$, which is allowed if $6\le\gamma_1\le \gamma_2<10$, one sees that
  \begin{align*}
    R(\lambda)\lesssim   \lambda^{\f{\gamma_1-2}{2}} + \lambda^{\f{\gamma_2-2}{2}}<\infty \, .
  \end{align*}
  Thus
  $R(\lambda)<\f{\dav}{2}$ for small enough $\lambda>0$, hence $\lambda_{\mathrm{cr}}^\dav>0$ for all $\dav>0$ in this case. This gives an easy proof of Theorem \ref{thm:threshold-phenomena}.\ref{thm:threshold-phenomena-4}, which shows the naturalness of
  $R$ in the study of threshold phenomena.
  \end{remark}
  For a pure power law nonlinearity, given by $V(a)= a^\gamma$ for some $\gamma>2$,
  one can explicitly calculate
  \begin{align*}
  	R(\lambda) =
  		\sup_{\|h\|=1} \frac{N(\sqrt{\lambda} h)}{\lambda \|h'\|^2}
  		= \lambda^{\frac{\gamma-2}{2}} R_0
  		\quad \text{with } R_0= \sup_{\|h\|=1} \frac{\int_\R \|T_r h\|_{L^\gamma}^\gamma\,\psi dr}{\|h'\|^2} \in (0,\infty]
  \end{align*}
  and Remark \ref{rem:R-finite} shows that $R_0<\infty$ if $\gamma\ge 6$.
  This scaling property for $R$ for pure power nonlinearities shows that $R(\lambda)>\f{\dav}{2}$ for all $\lambda>\lambda_{\mathrm{cr}}^\dav$ and
  $R(\lambda)<\f{\dav}{2}$ for all $0<\lambda<\lambda_{\mathrm{cr}}^\dav$. Thus for pure power nonlinearities one immediately sees that the first claim of Theorem \ref{thm:existence+} and \ref{thm:existence0} holds and $\lambda_{\mathrm{cr}}^\dav>0$ if $\gamma\ge 6$.
  For the general class of nonlinearities one cannot expect a simple scaling property of $R$ to hold. However, condition \ref{ass:A2} ensures a lower bound of the same type which is enough to conlcude all necessary properties of $R$ and the threshold $\lambda^\dav_{\mathrm{cr}}$.
 This is made precise in the following

\begin{lemma}\label{lem:R-lower}
	Assume that $V$ obeys assumption \ref{ass:A2}. 
Then
	\begin{align}\label{eq:R-lower}
		R(\lambda_2) \ge \left( \frac{\lambda_2}{\lambda_1} \right)^{\frac{\gamma_0-2}{2}} R(\lambda_1)
	\end{align}
	for all $0<\lambda_1\le\lambda_2$.
\end{lemma}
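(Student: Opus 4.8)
The plan is to reduce the whole statement to the $h$-wise scaling inequality for $N$ that is encoded in assumption \ref{ass:A2}, and then pass to the supremum. First I would fix an arbitrary $h\in H^1(\R)$ with $\|h\|=1$ and set $t\coloneqq\sqrt{\lambda_2/\lambda_1}\ge 1$, so that $\sqrt{\lambda_2}\,|T_rh(x)| = t\,\bigl(\sqrt{\lambda_1}\,|T_rh(x)|\bigr)$ for every $r,x$. Since $\psi$ is a nonnegative probability density and $t\ge1$, integrating the pointwise bound from Lemma \ref{lem:V-lower-bound}, namely $V(ta)\ge t^{\gamma_0}V(a)$, against the measure $dx\,\psi\,dr$ yields
\[
  N(\sqrt{\lambda_2}\,h)
  = \iint_{\R^2} V\!\bigl(t\sqrt{\lambda_1}\,|T_rh|\bigr)\,dx\psi dr
  \ge t^{\gamma_0}\iint_{\R^2} V\!\bigl(\sqrt{\lambda_1}\,|T_rh|\bigr)\,dx\psi dr
  = t^{\gamma_0}N(\sqrt{\lambda_1}\,h).
\]

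The next step is pure bookkeeping of powers of $\lambda$. Dividing by $\lambda_2\|h'\|^2>0$ and using $t^{\gamma_0}=(\lambda_2/\lambda_1)^{\gamma_0/2}$ together with $\lambda_1/\lambda_2 = t^{-2}$, one obtains
\[
  R(\lambda_2,h)
  \ge \frac{t^{\gamma_0}N(\sqrt{\lambda_1}\,h)}{\lambda_2\|h'\|^2}
  = t^{\gamma_0-2}\,\frac{N(\sqrt{\lambda_1}\,h)}{\lambda_1\|h'\|^2}
  = \Bigl(\frac{\lambda_2}{\lambda_1}\Bigr)^{\frac{\gamma_0-2}{2}}R(\lambda_1,h).
\]
I would stress that this inequality holds for \emph{every} admissible $h$ with the \emph{same} constant $c\coloneqq(\lambda_2/\lambda_1)^{(\gamma_0-2)/2}$, which is independent of $h$; moreover the derivation never used the sign of $N$, so it stays valid for nonlinearities $V$ that take negative values.

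Finally I would take the supremum over $\{h\in H^1:\|h\|=1\}$. Because $c>0$ (in fact $c\ge 1$, as $\lambda_2\ge\lambda_1$ and $\gamma_0>2$), the constant factors out of the supremum: from $R(\lambda_2)\ge R(\lambda_2,h)\ge c\,R(\lambda_1,h)$, valid for all admissible $h$, we conclude $R(\lambda_2)\ge c\,\sup_h R(\lambda_1,h)=c\,R(\lambda_1)$, which is exactly \eqref{eq:R-lower}. The only delicate points — and the closest thing to an obstacle in an otherwise direct argument — are that $c$ must be pulled outside the supremum, which is legitimate precisely because $c>0$, and that the reasoning should remain correct both when $R(\lambda_1)=+\infty$ and when $N$ changes sign; both are handled automatically by the $h$-wise inequality above. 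The case $\lambda_1=\lambda_2$ is trivial, since then $c=1$.
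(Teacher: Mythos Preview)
Your argument is correct. Both your proof and the paper's rest on the same scaling content of assumption \ref{ass:A2}, but the executions differ: the paper works at the level of the functional, setting $A(s)\coloneqq s^{-2}N(sh)$, computing $A'(s)$ via Lemma~\ref{lem:differentiability}, deriving the differential inequality $A'(s)\ge \tfrac{\gamma_0-2}{s}A(s)$ from \ref{ass:A2}, and then integrating. You instead invoke the already-integrated pointwise form, Lemma~\ref{lem:V-lower-bound}, apply it to $a=\sqrt{\lambda_1}\,|T_rh(x)|$ with $t=\sqrt{\lambda_2/\lambda_1}$, and integrate against $dx\,\psi\,dr$ using $\psi\ge 0$. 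Your route is shorter and avoids the differentiability lemma entirely; the paper's route, on the other hand, makes transparent that the inequality is really a monotonicity statement for $s^{2-\gamma_0}A(s)$, which some readers may find conceptually cleaner. Either way the passage to the supremum is identical, and your remarks on the sign-independence and the case $R(\lambda_1)=+\infty$ are fine.
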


Before we give the proof we state and prove an important consequence.
\begin{corollary}\label{cor:properties-threshold}
  Assume that $V$ obeys 
  assumption \ref{ass:A2}. Then
  \begin{theoremlist}
  \item \label{cor:properties-threshold-1}
  	If $\lambda^\dav_{\mathrm{cr}}<\infty$, then $R(\lambda)>\f{\dav}{2}$
  	for all $\lambda>\lambda^\dav_{\mathrm{cr}}$.
  \item\label{cor:properties-threshold-2}
  	If $\lambda^\dav_{\mathrm{cr}}>0$, then $R(\lambda)<\f{\dav}{2}$
  	for all
  	$0<\lambda<\lambda^\dav_{\mathrm{cr}}$.
  \item \label{cor:properties-threshold-3}
  $R(\lambda_0)<\infty$ for some $\lambda_0>0$ if and only if $\limsup_{\lambda\to 0}R(\lambda)\le 0$.
  \item \label{cor:properties-threshold-4}
  	$R(\lambda_0) >0$ for some $\lambda_0>0$ if and only if $\liminf_{\lambda\to \infty}R(\lambda)=\infty$.
  \end{theoremlist}   	
\end{corollary}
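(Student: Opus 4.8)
The plan is to distill Lemma~\ref{lem:R-lower} into a single monotonicity statement and then read off all four assertions from it. Writing $\beta\coloneqq\f{\gamma_0-2}{2}>0$ (positive because $\gamma_0>2$ in assumption \ref{ass:A2}), the inequality \eqref{eq:R-lower} says precisely that the rescaled quantity $Q(\lambda)\coloneqq \lambda^{-\beta}R(\lambda)$ is nondecreasing on $(0,\infty)$: for $0<\lambda_1\le\lambda_2$ it reads $\lambda_2^{-\beta}R(\lambda_2)\ge \lambda_1^{-\beta}R(\lambda_1)$. Every part then follows by combining the monotonicity of $Q$ with the definition of $\lambda^\dav_{\mathrm{cr}}$ as the infimum of $\{\lambda>0:R(\lambda)>\f{\dav}{2}\}$.

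For part (i) I would assume $\lambda^\dav_{\mathrm{cr}}<\infty$ and fix $\lambda>\lambda^\dav_{\mathrm{cr}}$. By the definition of the infimum there is some $\lambda_1$ with $\lambda^\dav_{\mathrm{cr}}\le\lambda_1<\lambda$ and $R(\lambda_1)>\f{\dav}{2}\ge 0$; monotonicity of $Q$ together with $(\lambda/\lambda_1)^\beta>1$ gives $R(\lambda)\ge (\lambda/\lambda_1)^\beta R(\lambda_1)\ge R(\lambda_1)>\f{\dav}{2}$, which is the claim. For part (ii) I would assume $\lambda^\dav_{\mathrm{cr}}>0$ and take $0<\lambda<\lambda^\dav_{\mathrm{cr}}$; since $\lambda$ lies below the infimum, $R(\lambda)\le\f{\dav}{2}$ is immediate, and for the strict inequality I suppose $R(\lambda)=\f{\dav}{2}$ and pick $\lambda<\lambda_2<\lambda^\dav_{\mathrm{cr}}$. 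Then $R(\lambda_2)\ge(\lambda_2/\lambda)^\beta R(\lambda)=(\lambda_2/\lambda)^\beta\f{\dav}{2}>\f{\dav}{2}$ when $\dav>0$, contradicting $\lambda_2<\lambda^\dav_{\mathrm{cr}}$.

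Parts (iii) and (iv) are the two ends of the same monotonicity and I expect them to be genuinely clean. For (iv): if $R(\lambda_0)>0$ then $Q(\lambda_0)>0$, so for $\lambda\ge\lambda_0$ one has $R(\lambda)=\lambda^\beta Q(\lambda)\ge \lambda^\beta Q(\lambda_0)\to\infty$, giving $\liminf_{\lambda\to\infty}R(\lambda)=\infty$, and the converse is trivial. For (iii): if $R(\lambda_0)<\infty$ then for $0<\lambda\le\lambda_0$ monotonicity gives $R(\lambda)\le(\lambda/\lambda_0)^\beta R(\lambda_0)$, whose right-hand side tends to $0$ as $\lambda\to0^+$ irrespective of the sign of $R(\lambda_0)$, so $\limsup_{\lambda\to0}R(\lambda)\le0$; the converse uses only that a finite $\limsup$ forces $R$ to be finite near $0$.

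The one genuinely delicate point is the strict inequality in part (ii) in the degenerate case $\dav=0$, where $\f{\dav}{2}=0$: there the factor $(\lambda_2/\lambda)^\beta$ multiplies $R(\lambda)=0$ and the contradiction above collapses, so the scaling alone yields only $R(\lambda)\le 0$. I expect this to be the main obstacle. Upgrading $\le$ to $<$ requires ruling out that $Q$, and hence $R$, vanishes identically on a subinterval of $(0,\lambda^0_{\mathrm{cr}})$, a scenario consistent with equality holding throughout Lemma~\ref{lem:V-lower-bound} (as for a pure power $V=ca^{\gamma_0}$). One therefore either invokes strictness in \ref{ass:A2}/Lemma~\ref{lem:V-lower-bound} along the relevant ray, or observes that the non-strict bound $R(\lambda)\le 0$ is already all that the threshold phenomenon needs when $\dav=0$, consistent with nonexistence below threshold being claimed only for $\dav>0$ in Theorem~\ref{thm:threshold-phenomena}.
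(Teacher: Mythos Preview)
Your proof is correct and follows essentially the same route as the paper: the paper also applies Lemma~\ref{lem:R-lower} directly in each of the four parts, and your repackaging via $Q(\lambda)=\lambda^{-\beta}R(\lambda)$ being nondecreasing is just a convenient restatement of that lemma.

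One remark on part~(ii): you have in fact been more careful than the paper. The paper's displayed chain
\[
R(\lambda_2)\ge\Bigl(\tfrac{\lambda_2}{\lambda_1}\Bigr)^{\beta}R(\lambda_1)>R(\lambda_1)\ge\tfrac{\dav}{2}
\]
tacitly uses $R(\lambda_1)>0$ to get the strict middle inequality, which is automatic when $\dav>0$ but fails exactly in the $\dav=0$, $R(\lambda_1)=0$ scenario you flagged. Your diagnosis and resolution are both correct: the scaling alone gives only $R(\lambda)\le 0$ in that case, and this weaker bound is all that is actually used downstream (Theorem~\ref{thm:threshold-phenomena} invokes the strict inequality only for $\dav>0$, and for $\dav=0$ only $R(\lambda)\le 0$, i.e.\ $E^0_\lambda\ge 0$, is needed).
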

\begin{proof}
  Take any $\lambda> \lambda^\dav_{\mathrm{cr}}$. The definition of $\lambda^\dav_{\mathrm{cr}}$ shows that there exists $\lambda_1$ with $\lambda>\lambda_1>\lambda^\dav_{\mathrm{cr}}$ and
  	$R(\lambda_1)>\f{\dav}{2}$. Then Lemma \ref{lem:R-lower} shows
  \begin{align*}
  	  	R(\lambda) \ge \left( \frac{\lambda}{\lambda_1} \right)^{\frac{\gamma_0-2}{2}} R(\lambda_1)
  	  	\ge R(\lambda_1) >\f{\dav}{2}\, ,
  \end{align*}
 which proves the first claim.

 For the second claim, assume that $R(\lambda_1)\ge \f{\dav}{2}$ for some $0<\lambda_1<\lambda^\dav_{\mathrm{cr}}$. Then the bound from Lemma \ref{lem:R-lower} shows that
 for every $\lambda_2$ with $\lambda_1<\lambda_2< \lambda^\dav_{\mathrm{cr}}$ we have
 \begin{align*}
  	  	R(\lambda_2) \ge \left( \frac{\lambda_2}{\lambda_1} \right)^{\frac{\gamma_0-2}{2}} R(\lambda_1)
  	  	> R(\lambda_1) \ge \f{\dav}{2}
  \end{align*}
  which is in conflict with the definition of $\lambda^\dav_{\mathrm{cr}}$.

  For the third claim assume that $R(\lambda_0)<\infty$. Setting $\lambda_2=\lambda_0$ and $\lambda_1=\lambda$ in Lemma \ref{lem:R-lower}, we get
  \begin{align*}
  	\limsup_{\lambda\to 0}R(\lambda)
  		\le \limsup_{\lambda\to 0}\left( \frac{\lambda}{\lambda_0} \right)^{\frac{\gamma_0-2}{2}} R(\lambda_0) \le 0 .
  \end{align*}
  The converse is easy.

  For the last claim assume that $R(\lambda_0)>0$ for some $\lambda_0>0$.
  Arguing similarly as above we see that Lemma \ref{lem:R-lower} implies
   \begin{align*}
  	\liminf_{\lambda\to \infty}R(\lambda)
  		\ge  \liminf_{\lambda\to \infty}\left( \frac{\lambda}{\lambda_0} \right)^{\frac{\gamma_0-2}{2}} R(\lambda_0) =\infty .
  \end{align*}
  Again, the converse is easy.
\end{proof}

It remains to give the
\begin{proof}[Proof of Lemma \ref{lem:R-lower}]
	Fix $h\in H^1(\R)\setminus\{0\}$ and define
	\begin{align*}
		A(s)\coloneqq  s^{-2}N(sh)
	\end{align*}
	for $s>0$.
	Because of Lemma \ref{lem:differentiability} , $A$ is differentiable with derivative
	\begin{align*}
		A'(s) = s^{-3} \Big( sD_hN(sh) -2N(sh) \Big)
	\end{align*}
	where
	\begin{align*}
		sD_hN(sh) - 2N(sh)
			&= \iint_{\R^2} \left[ V'(|T_r(sh)(x)|)|T_r(sh)(x)| - 2V(|T_r(sh)(x)|)\right] \,dx \psi dr \\
			&\ge 	(\gamma_0-2) N(sh)
	\end{align*}
	and  the lower bound follows from assumption \ref{ass:A2}.
	Thus we arrive at the first order differential inequality
	\begin{align}\label{eq:A'lowerbound}
		A'(s) \ge \frac{\gamma_0-2}{s} A(s)
	\end{align}
	for all $s>0$. Using the integrating factor $s^{2-\gamma_0}$, one sees that this implies
	$\frac{d}{ds} (s^{2-\gamma_0}A(s)) \ge 0 $
	and thus
	\begin{align*}
		s^{2-\gamma_0} A(s) \ge s_0^{2-\gamma_0} A(s_0)
	\end{align*}
	for all $0<s_0\le s$. Since $R(\lambda,h) = A(\sqrt{\lambda})/\|h'\|^2$, this proves
	\begin{align*}
		R(\lambda_2,h) \ge \left( \frac{\lambda_2}{\lambda_1} \right)^{\frac{\gamma_0-2}{2}} R(\lambda_1,h)
	\end{align*}
	for all $0<\lambda_1\le \lambda_2$ and taking the supremum over all $h\in H^1(\R)$ with
	$\|h\|=1$ gives  \eqref{eq:R-lower}.
\end{proof}

Now we can give the proof of
\begin{proof}[Proof of Theorem \ref{thm:threshold-phenomena}:]
  By Lemma \ref{lem:E-boundedness} we know that $E^\dav_\lambda\le 0$ for all $\lambda>0$ and $\dav\ge 0$ and Proposition  \ref{prop:strict-subadditivity} shows
  \begin{align*}
  	E^\dav_{\lambda_1}\ge E^\dav_{\lambda_1} + E^\dav_{\lambda_2} \ge E^\dav_{\lambda_1+\lambda_2}
  \end{align*}
  where the last inequality is strict, when $E^\dav_{\lambda_1+\lambda_2}<0$. Thus
  $0<\lambda\mapsto E^\dav_\lambda$ is decreasing and strictly decreasing where $E^\dav_\lambda<0$.
  Corollary \ref{cor:properties-threshold} and Lemma \ref{lem:threshold-basic} show that
  $E^\dav_\lambda<0$ if $\lambda>\lambda^\dav_{\mathrm{cr}}$ and
  if $0<\lambda<\lambda^\dav_{\mathrm{cr}}$, Corollary \ref{cor:properties-threshold} and Lemma \ref{lem:threshold-basic} yields
  $E^\dav_\lambda\ge 0$, which together with Lemma  \ref{lem:E-boundedness} shows
  $E^\dav_\lambda= 0$ in this case. This proves the first claim.

   If $\lambda>\lambda^\dav_{\mathrm{cr}}$, we know by the first part that
   $E^\dav_\lambda<0$. So Theorem \ref{thm:existence} applies. This proves the second part.
   \smallskip

   To prove the third claim, assume that $\dav>0$, $0<\lambda<\lambda^\dav_{\mathrm{cr}}$,
   and  $f\in H^1 $ with $\|f\|^2=\lambda>0$ is a minimizer for $E^\dav_\lambda$.
   Using \eqref{eq:energy-threshold}
   we must have
	\begin{align*}
	  0 = E^\dav_\lambda= H(f) \ge 	\|f'\|^2  \left( \frac{\dav}{2}- R(\lambda) \right)\, .
	\end{align*}
	{}From Corollary \ref{cor:properties-threshold-2} we know that $R(\lambda)<\f{\dav}{2}$.  So the above inequality implies $\|f'\|^2 =0$. On $H^1$ the null--space of $\partial_x$ on $H^1$ is trivial, hence $f=0$, which is a contradiction to $\|f\|>0$.
	\smallskip
	
	Assume that $\dav>0$.  Since the  proof of the fourth claim was already given in Remark \ref{rem:R-finite}, we finish with the proof of the last claim.
		
	Assume that there exists $f\in H^1$ with $N(f)>0$. Then $R(\lambda_0)>0$ where $\lambda_0=\|f\|^2$. Corollary \ref{cor:properties-threshold-4} then shows that $\liminf_{\lambda\to\infty} R(\lambda)=\infty$, which implies that for every $\dav\ge 0$ there exists $\lambda>0$ with $R(\lambda)>\f{\dav}{2}$. By the definition of the threshold, this shows  $\lambda^\dav_{\mathrm{cr}}<\infty$ for all $\dav\ge 0$.
\end{proof}

We can finally give the
\begin{proof}[Proof of Theorems \ref{thm:existence+} and \ref{thm:existence0}: ]
  The first three claims of Theorem \ref{thm:existence+}, respectively the
  first two claims of Theorem \ref{thm:existence0}, follow from Theorem \ref{thm:existence} in tandem with  Theorem \ref{thm:threshold-phenomena}.

  It remains to prove that assumption \ref{ass:A3} guarantees that $\lambda^\dav_{\mathrm{cr}}<\infty$ and $\lambda^\dav_{\mathrm{cr}}=0$, if, additionally, \ref{ass:A4} holds.

  Under assumption \ref{ass:A4} Lemma \ref{lem:E-boundedness} shows that $E^\dav_\lambda<0$ for all $\dav\ge 0$ and the definition of $\lambda^\dav_{\mathrm{cr}}$ then yields  $R(\lambda)=\infty$ for $\lambda>0$, so $\lambda^\dav_{\mathrm{cr}}=0$.

  Now assume that \ref{ass:A1}, \ref{ass:A2}, and  \ref{ass:A3} hold.
    First, note that assumptions \ref{ass:A2} and \ref{ass:A3}, together with Lemma \ref{lem:V-lower-bound} show that there exists $a_0>0$ such that $V(a)\gtrsim a^{\gamma_0}$ for all $a\ge a_0$ and using assumption \ref{ass:A1}, we have $V(a)\gtrsim - a^{\gamma_1}$ for
	$0\le a<a_0 $. Thus, with $\gamma\coloneqq \min(\gamma_0,\gamma_1)$, we see that the lower bound	
	\begin{align}\label{eq:V pointwise-lower-bound}
		V(a) \gtrsim  -a^{\gamma}\id_{[0,a_0)}(a) + a^\gamma \id_{[a_0,\infty)}(a)
	\end{align}
	holds. In particular,  $V$ is bounded from below.

Let $\sigma_0>0$ and use the Gaussian  $g_{\sigma_0}$ from \eqref{eq:Gauss}.  Clearly $g_{\sigma_0}\in H^1$ for all $\sigma_0>0$.
Then
 \begin{align*}
   \left| T_r g_{\sigma_0}(x)\right|=A_0\left(\frac{\sigma_0}{|\sigma(r)|}\right)^{1/2}
   e^{-\frac{\sigma_0 x^2}{|\sigma(r)|^2}}
\end{align*}
with $A_0= ( \tfrac{2\lambda^2}{\pi \sigma_0}  )^{1/4}$. Then $\|g_{\sigma_0}\|^2=\lambda$ and, moreover, since $|\sigma(r)|^2=\sigma_0^2+(4r)^2$, we have, for any $|x| \leq \sqrt{\sigma_0}$,
 \beq \label{eq:large enough for small x}
 \left| T_r g_{\sigma_0}(x)\right| \geq A_0\left(\frac{\sigma_0^2}{\sigma_0^2+(4r)^2}\right)^{1/4}
e^{-\frac{\sigma_0^2}{\sigma_0^2+(4r)^2}} \ge \f{A_0}{3}
 \eeq
for any large enough $\sigma_0$ and all $|r| \leq R$, where $R>0$ is chose such that $\supp(\psi)\subset[-R,R]$.
On the other hand, for a large enough $\sigma_0$ and any $|x| \geq \sigma_0$ we also have
 \beq \label{eq:small enough for large x}
 \left| T_r g_{\sigma_0}(x)\right| \leq A_0 \, e^{-\frac{|x|}{2}}.
 \eeq

	By \eqref{eq:large enough for small x}, we can choose $\sigma_0$ and $A_0$
	large enough such that
	$| T_r g_{\sigma_0}(x)|\ge \tfrac{A_0}{3}\ge a_0$ for all $|x|\le \sqrt{\sigma_0}$ and
	$|r|\le R$. Then \eqref{eq:V pointwise-lower-bound} yields
	\begin{align*}
		I \coloneqq \int_{|x|\le \sqrt{\sigma_0}} V(|T_rg_{\sigma_0}(x)|)\, dx \gtrsim  \sqrt{\sigma_0} A_0^\gamma .
	\end{align*}
	Since $V$ is bounded from below, we also have
	\begin{align*}
		II\coloneqq \int_{\sqrt{\sigma_0} \leq |x|\le \sigma_0}
		V(|T_r g_{\sigma_0}(x)|)\, dx  \gtrsim - \sigma_0 ,
	\end{align*}
	and \eqref{eq:V pointwise-lower-bound} together with
	\eqref{eq:small enough for large x} gives
	\begin{align*}
		III\coloneqq \int_{|x|\ge \sigma_0} V(|T_r g_{\sigma_0}(x)|)\, dx
		\gtrsim -  A_0^\gamma \int_{|x|\geq \sigma_0} e^{-\gamma |x|/2}\, dx
		\gtrsim - \frac{A_0^\gamma}{\gamma} e^{-\gamma \sigma_0/2}
	\end{align*}
	for all $|r|\le R$. Thus, since $\psi$ is integrable, this gives the lower bound
	\begin{align*}
		N( g_{\sigma_0}) = \int_\R (I + II + III) \psi\, dr
		\gtrsim \sqrt{\sigma_0} A_0^\gamma - \sigma_0
		-  \frac{A_0^\gamma}{\gamma} e^{-\gamma \sigma_0/2}
	\end{align*}
	for all large enough $A_0$ and $\sigma_0$. Setting $\lambda=\sigma_0$, that is,
	$A_0=( 2\sigma_0/\pi  )^{1/4}$ shows
	$N(g_{{\sigma_0}})>0$ for large enough $\sigma_0>0$.
\end{proof}

\section{Nonexistence}\label{sec:non-existence}
In this section, we will make the standing assumption that $V$ is a
power--law nonlinearity given by
  \begin{align}\label{eq:power-law}
  	V(a) = ca^\gamma \text{ for } a\ge 0
  \end{align}
and  some $c>0, \gamma \ge 6$.
\begin{proof}[Proof of Theorem \ref{thm:non-existence}: ]
 For the proof of the first part of Theorem \ref{thm:non-existence} assume first that
 $\gamma>10$, $c>0$, and fix $\lambda>0$. Let $g_{\sigma_0}$ be the Gaussian from \eqref{eq:Gauss} with $\sigma_0>0$.  Since $V$ is a power--law, Lemma \ref{lem:Gaussians} shows that the nonlinearity $N(g_{\sigma_0})$ is given by
 \begin{equation}
 \begin{split}\label{eq:N on Gauss}
   N(g_{\sigma_0})
   &=  c \left( \f{\pi}{\gamma} \right)^{1/2} \left( \f{2\lambda^2}{\pi} \right)^{\gamma/4}
  			 \sigma_0^{\f{2-\gamma}{4}}
  			 \int_\R \left( \f{\sigma_0}{|\sigma(r)|} \right)^{\f{\gamma-2}{2}} \, \psi(r) dr \\
   &= c \left( \f{\pi}{\gamma} \right)^{1/2} \left( \f{2\lambda^2}{\pi} \right)^{\gamma/4}
  			 \,  \sigma_0^{\f{2-\gamma}{4}}
  			 \int_\R \left( \f{1}{1+ (4r/\sigma_0)^2} \right)^{\f{\gamma-2}{4}} \, \psi(r) dr \\
   &= c \left( \f{\pi}{\gamma} \right)^{1/2} \left( \f{2\lambda^2}{\pi} \right)^{\gamma/4}
  			 \,  \sigma_0^{\f{6-\gamma}{4}}
  			 \int_\R \left( \f{1}{1+ (4s)^2} \right)^{\f{\gamma-2}{4}} \, \psi(\sigma_0 s) ds
 \end{split}
 \end{equation}
 where we also did a change of variables $r=\sigma_0 s$.
 Since $\psi$ is bounded below by $m$, say, in a possibly one-sided neighborhood of zero and $\psi$ has compact support, we have
 \begin{align*}
   \liminf_{\sigma_0\to 0} 	\int_\R \left( \f{1}{1+ (4s)^2} \right)^{\f{\gamma-2}{4}} \, \psi(\sigma_0 s) ds
   \ge m \int_0^\infty \left( \f{1}{1+ (4s)^2} \right)^{\f{\gamma-2}{4}} \, ds >0
 \end{align*}
 so with $C_{\gamma,\lambda} = \tfrac{1}{2} c \left( \f{\pi}{\gamma} \right)^{1/2} \left( \f{2\lambda^2}{\pi} \right)^{\gamma/4} m \int_0^\infty \left( \f{1}{1+ (4s)^2} \right)^{\f{\gamma-2}{4}}\, ds >0 $
 we get from \eqref{eq:N on Gauss} the lower bound
 \begin{align}\label{eq:N lower}
 	N(g_{\sigma_0}) \ge C_{\gamma,\lambda} \, \sigma_0^{\f{6-\gamma}{4}}
 \end{align}
 for all small enough $\sigma_0>0$. Lemma \ref{lem:Gaussians} also yields
 $\|g_{\sigma_0}\|^2=\lambda$ and $ \|g_{\sigma_0}'\|^2=\lambda/\sigma_0$, so
 \begin{align*}
   H(g_{\sigma_0}) = \f{\dav}{2}\|g_{\sigma_0}'\|^2 - N(g_{\sigma_0})
   	\le \f{1}{\sigma_0}\left( \f{\dav \lambda}{2} -  C_{\gamma,\lambda} \, \sigma_0^{\f{10-\gamma}{4}} \right) \to -\infty \text{ as } \sigma_0\downarrow 0
 \end{align*}
 since $\gamma>10$. If $\gamma=10$, we can still conclude that
 $H(g_{\sigma_0})\to-\infty$ as $\sigma_0\downarrow 0$, as long as $C_{\gamma,\lambda}> \f{\dav \lambda}{2}$, which is the case if $c>0$ is large enough.
 This proves that $f\mapsto H(f)$ is unbounded from below on $H^1$ even for fixed $L^2$ norm of $f$.
\smallskip

 If $\dav=0$ and $\gamma>6$, then \eqref{eq:N lower} shows
 \begin{align*}
 	H(g_{\sigma_0})= -N(g_{\sigma_0}) \le  -C_{\gamma,\lambda} \, \sigma_0^{\f{6-\gamma}{4}}
 	  \to -\infty \text{ as } \sigma_0\downarrow 0,
 \end{align*}
 so in this case the energy functional $f\mapsto H(f)$ is again unbounded from 
 below on $L^2$ even for fixed $L^2$ norm of $f$. 
 
 If $\gamma=6$ and $\psi=\id_{[0,1]}$, we modify an argument of \cite{Stanislavova05}. Set
 \begin{align}
 	C_s(\lambda)\coloneqq
 	\sup\left\{ \int_0^s\int_\R |T_rf(x)|^6\, dx dr:\, \|f\|^2=\lambda \right\}
 \end{align}
 and note that $E^0_\lambda$ has a minimizer for $\psi=\id_{[0,1]}$ if there
 is a maximizer for $C_1(\lambda)$.  The main point for the argument is that $C_{s}(\lambda)$ is \emph{independent} of $s>0$: To see this, note that if $u:\R^2\to\C$ solves the free Schr\"odinger equation,
 $i\partial_r u= -\partial_x^2 u$, $u(0,\cdot) = f\in L^2$, then $u_\delta$ defined by
 $u_\delta(r,x)\coloneqq \delta^{1/2} u(\delta^2 r, \delta x )$ solves again the free
 Schr\"odinger equation with initial condition
 $u_\delta(0,x) = f_\delta(x)\coloneqq \delta^{1/2}f(\delta x)$, $x\in\R $. Then
 \begin{align*}
   \int_0^s\int_\R |T_rf_\delta (x)|^6\, dx dr
   	&= \int_0^s\int_\R |u_\delta(r,x)|^6\, dx dr	
   		= \int_0^s\int_\R \delta^3|u(\delta^2r,\delta x)|^6\, dx dr \\
   	&= 	\int_0^{\delta^2s}\int_\R |u(r, x)|^6\, dx dr
   		= \int_0^{\delta^2s}\int_\R |T_r f(x))|^6\, dx dr
 \end{align*}
 and noting $\|f_\delta\|^2=\|f\|^2=\lambda $ we get
 \begin{align*}
   C_s(\lambda) = C_{\delta^2 s} (\lambda)	
 \end{align*}
 for all $s,\delta,\lambda>0$, in particular, $C_s(\lambda)= C_{1}(\lambda)$
 for all $s > 0$ and $\lambda>0$. Assume that $f$ is a minimizer for $E^0_\lambda$ for
 $\psi=\id_{[0,1]}$, that is, $f$ is a maximizer for $ C_1(\lambda)$:
 \begin{align*}
   \|f\|^2=\lambda>0 \text{ and } C_1(\lambda)
   	= \int_0^1\int_\R |T_rf (x)|^6\, dx dr .	
 \end{align*}
 Then
 \begin{align*}
   0= C_2(\lambda)-C_1(\lambda) &\ge \int_0^2\int_\R |T_rf (x)|^6\, dx dr -\int_0^1\int_\R |T_rf (x)|^6\, dx dr \\
   	&= \int_1^2\int_\R |T_rf (x)|^6\, dx dr  	\ge 0\, .
 \end{align*}
 So $|T_rf(x)|=0$ for Lebesque almost all pairs $1\le r\le 2$ and $x\in\R $ and hence, since $T_r$ is unitary on $L^2$,
 \begin{align*}
   0 =\int_1^2 \int_\R |T_rf(x)|^2\, dx dr = \|f\|^2	,
 \end{align*}
 which contradicts $\|f\|^2=\lambda>0$. So no minimizer of \eqref{eq:min} exists if
 $\gamma=6$ in the model case where $\psi= \id_{[0,1]}$.

\end{proof}

\vspace{5mm}

\appendix
\setcounter{section}{0}
\renewcommand{\thesection}{\Alph{section}}
\renewcommand{\theequation}{\thesection.\arabic{equation}}
\renewcommand{\thetheorem}{\thesection.\arabic{theorem}}

\section{Tightness and strong convergence in $L^2$} \label{sec:strong-convergence}
A key step in our existence proof of minimizers of the variational problems \eqref{eq:min} is the following characterization of strong convergence in $L^2(\R)$ which is given in \cite{HuLee2012}.

\begin{lemma}\label{lem:strong-convergence-L2}
A sequence $(f_n)_n\subset L^2(\R)$ is strongly converging to $f$ in $L^2(\R)$
if and only if it is weakly convergent to $f$ and
 \begin{align}
    \lim_{L\to\infty} \limsup_{n\to\infty} \int_{|\eta|>L} |\hatt{f}_n(\eta)|^2 \, d\eta &= 0,
    \label{eq:tight-fourier}\\
    \lim_{R\to\infty} \limsup_{n\to\infty} \int_{|x|>R} |f_n(x)|^2 \, dx &= 0,
    \label{eq:tight-real}
 \end{align}
 where $\hatt{f}$ is the Fourier transform of $f$.
\end{lemma}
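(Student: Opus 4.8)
The plan is to prove the two implications separately; the forward direction is routine, and essentially all the content lies in showing that weak convergence together with the two tightness conditions forces strong convergence.

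For the forward direction, strong convergence $f_n\to f$ in $L^2$ trivially yields weak convergence. For \eqref{eq:tight-real} I would use the triangle inequality $\|\mathbf{1}_{\{|x|>R\}}f_n\|\le\|\mathbf{1}_{\{|x|>R\}}f\|+\|f_n-f\|$; taking first $\limsup_{n\to\infty}$ and then $R\to\infty$ makes the right-hand side vanish, since $f\in L^2$ kills the first term and strong convergence kills the second. Because the Fourier transform is an isometry on $L^2$ (Plancherel), one has $\hatt f_n\to\hatt f$ strongly as well, and the identical argument applied to $\hatt f_n$ gives \eqref{eq:tight-fourier}.

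For the reverse direction, assume $f_n\rightharpoonup f$ weakly and that \eqref{eq:tight-fourier}--\eqref{eq:tight-real} hold. Introduce the frequency cut-off $\Pi_L\coloneqq\calF^{-1}\mathbf{1}_{\{|\eta|\le L\}}\calF$ and the spatial cut-off $\chi_R\coloneqq\mathbf{1}_{\{|x|\le R\}}$, regarded as a multiplication operator, and split
\[
  f_n-f = (1-\chi_R)(f_n-f) + \chi_R(1-\Pi_L)(f_n-f) + \chi_R\Pi_L(f_n-f).
\]
The first term is bounded by $\|\mathbf{1}_{\{|x|>R\}}f_n\|+\|\mathbf{1}_{\{|x|>R\}}f\|$, which by \eqref{eq:tight-real} and $f\in L^2$ is small, uniformly in $n$, once $R$ is large. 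Using $\|\chi_R g\|\le\|g\|$ and Plancherel, the second term is bounded by $\|\mathbf{1}_{\{|\eta|>L\}}\hatt f_n\|+\|\mathbf{1}_{\{|\eta|>L\}}\hatt f\|$, which by \eqref{eq:tight-fourier} is likewise uniformly small once $L$ is large.

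The crux is the third term, and here is where weak convergence enters. For \emph{fixed} $R$ and $L$ the operator $\chi_R\Pi_L$ is Hilbert--Schmidt: its integral kernel is $\mathbf{1}_{\{|x|\le R\}}K_L(x-y)$ with $K_L(x)=\frac{\sin(Lx)}{\pi x}$, so that $\|\chi_R\Pi_L\|_{\mathrm{HS}}^2=2R\,\|K_L\|_{L^2}^2=2RL/\pi<\infty$. In particular $\chi_R\Pi_L$ is compact, and compact operators map weakly null sequences to strongly null ones; since $f_n-f\rightharpoonup 0$, we obtain $\|\chi_R\Pi_L(f_n-f)\|\to 0$ as $n\to\infty$. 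Combining the three estimates, given $\varepsilon>0$ I would first fix $R$ and then $L$ large enough that the first two terms are each below $\varepsilon$ uniformly in $n$, and then let $n\to\infty$ to annihilate the third term, yielding $\limsup_{n\to\infty}\|f_n-f\|\le 2\varepsilon$. As $\varepsilon>0$ is arbitrary, $f_n\to f$ strongly. The main obstacle is precisely the observation that simultaneous localization in space and frequency yields a compact operator; this is the mechanism that upgrades weak convergence to strong convergence on the localized part, while the remaining pieces are controlled directly by the two tightness hypotheses and the Fourier isometry.
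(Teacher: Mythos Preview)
Your argument is correct. The forward implication is handled cleanly, and in the reverse implication the three-term splitting together with the compactness of $\chi_R\Pi_L$ (which you verify directly via its Hilbert--Schmidt norm) is exactly the right mechanism: tightness in position and in frequency disposes of the two tails uniformly in $n$, and compactness upgrades weak to strong convergence on the doubly localized remainder.

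As for comparison with the paper: the paper does \emph{not} prove this lemma. It is stated in Appendix~A with the remark that it ``is given in \cite{HuLee2012}'', and no argument is supplied here. So there is nothing to compare your approach against in the present paper; you have supplied a self-contained proof where the authors chose to cite their earlier work. Your argument is the standard one (sometimes phrased as a Kolmogorov--Riesz--type criterion in $L^2$, or via the compactness of $\chi_R\Pi_L$ as you do), and it would serve perfectly well as the omitted proof.
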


\section{Galilei transformations and space-time localization properties of Gaussian coherent states} \label{sec:Galilei}
\setcounter{theorem}{0}
\setcounter{equation}{0}

We will only discuss the one-dimensional case which is somewhat easier since we do not have
to deal with rotations in one dimension.
The unitary operator implementing the shift $S_y :L^2(\R)\to L^2(\R)$,
$(S_y f)(x)= f(x-y)$ is given by
 \beq
    S_y= e^{-iy P}
 \eeq
where $P= -i\partial_x$ is the momentum operator. Indeed, since $e^{-iy P}$ corresponds
to multiplication by $e^{-iy k}$ in Fourier space, we have
 \bdm
    (e^{-iy P}f)(x) = \frac{1}{\sqrt{2\pi}} \int_\R e^{i(x-y)k}\hatt{f}(k)\, dk
    = f(x-y).
 \edm
Boosts, i.e., shifts in momentum space are given by $e^{iv\cdot}:L^2(\R)\to L^2(\R)$, i.e.,
multiplication by $e^{ivx}$, since
 \beq
    \hatt{e^{iv\cdot}f}(k)= \frac{1}{\sqrt{2\pi}} \int_\R e^{-ix(k-v)}f(x)\, dx
    = \hatt{f}(k-v).
 \eeq
Finally, if $G$ is a bounded (measurable) function then $G(P)$ 
is defined by
 \bdm
    \hatt{G(P)f}(k)= G(k) \hatt{f}(k) .
 \edm

Of course, for any $y \in\R$, the operators $G(P)$ and $e^{-iy P}$ commute,
$G(P)e^{-iy P}= e^{-iy P}G(P)$.  Moreover, for any $v\in\R$ the commutation relation
 \beq
    G(P)e^{iv\cdot} = e^{iv\cdot} G(P+v)
 \eeq
holds. Indeed, Computing the Fourier transform $\calF$ yields
 \bdm
 \begin{split}
    \calF \big(G(P) e^{iv\cdot} f \big) (k) 
    &= G(k) \hatt{e^{iv\cdot} f}(k) = G(k) \hatt{f}(k-v) \\
    &= ( G(\cdot+ v) \hatt{f} ) (k-v) = \calF \big( G(P + v) f \big) (k-v) \\
    &= \calF \big(e^{iv\cdot}G(P + v) f \big) (k).
 \end{split}
 \edm
In particular, choosing $G(P)= e^{-irP^2}$, we arrive at the commutation relation
 \beq\label{eq:commutation}
 \begin{split}
    e^{-irP^2} e^{iv\cdot} e^{-iy P}
    & = e^{iv\cdot} e^{-iy P} e^{-ir(P+v)^2}
        = e^{iv\cdot} e^{-iy P} e^{-ir(P^2+2vP + v^2)} \\
    &= e^{-irv^2} e^{iv\cdot} e^{-i(y +2rv)P} e^{-irP^2} .
 \end{split}
 \eeq
Now let $f\in L^2(\R)$. Then $u(r)= T_rf= e^{-irP^2}f$ is the solution of the
(one-dimensional) Schr\"odinger equation $-i\partial_r u = P^2 u = -\partial_x^2 u$
with initial condition $u(0)= f$. Using \eqref{eq:commutation}, the solution of the
free Schr\"odinger equation for the translated and boosted initial condition
$f_{y,v}= e^{iv\cdot}e^{-iy P} f$ is given by
 \beq\label{eq:Galilei-trafo}
 \begin{split}
    u_{y,v}(r,x)
    &\coloneqq  T_r f_{y,v} (x)
        = \big(e^{-irP^2} e^{iv\cdot}e^{-iy P} f\big)(x) \\
    &= \big(e^{-irv^2} e^{iv\cdot} e^{-i(y +2rv)P} e^{-irP^2} f\big)(x) \\
    &= e^{-irv^2} e^{iv x} \big(e^{-i(y +2rv)P} e^{-irP^2} f\big)(x) \\
    &   = e^{-irv^2} e^{iv x} \big( e^{-irP^2} f\big)(x - y -2rv) \\
    &= e^{-irv^2} e^{iv x} (T_r f)(x-y-2rv),
 \end{split}
 \eeq
that is, on the level of the solutions of the free time-dependent Schr\"odinger equation,
translations and boosts of the initial condition are implemented by the Galilei transformations $\calG_{y,v}$ given by
$(\calG_{y,v}u)(r,x) \coloneqq u_{y,v}(r,x)= e^{-irv^2} e^{iv x} u(r,x - y -2rv)$.
Except for the time-dependent phase factor $e^{-irv^2}$, formula \eqref{eq:Galilei-trafo} is
exactly what one would have guessed from classical mechanics

A simple calculation now shows that any functional of the form
 \bdm
        f\mapsto N(f)=\iint_{\R^2} V(|T_rf(x)|)\, dx \psi dr
 \edm
is invariant under translations and boosts of $f$ in $L^2(\R)$.


Now, we come to one of the major tools for our analysis, the so-called coherent states.
\begin{definition}[Coherent states]
  Let $h\in L^2$, $\|h\|=1$, $y, \, v \in \R$  and $h_{y,v}\coloneqq e^{iv\cdot} e^{-iyP}h$, i.e.,
   \beq
     h_{y,v}(x) = e^{ivx} h(x-y)
   \eeq
   for $x\in\R$  and define the coherent rank-one projection $P_{y,v}\coloneqq |h_{y,v}\ra\la h_{y,v}|$ in Dirac's notation, i.e., given by
 \beq
   f\mapsto P_{y,v}f \coloneqq   h_{y,v} \la h_{y,v},f\ra.
 \eeq
\end{definition}
A well-known property of coherent states  is their completeness expressed in

 \begin{lemma}[Completeness of coherent states]\label{lem:completeness}
  Let $h\in L^2(\R)$ with $\|h\|=1$ and $h_{y,v}$ the shifted and boosted $h$ as above. Then, in a weak sense,
  \begin{align}\label{eq:completeness}
    \frac{1}{2\pi} \iint_{\R^2} dydv  P_{y,v} =  \frac{1}{2\pi} \iint_{\R^2} dydv  |h_{y,v}\ra\la h_{y,v}|  = \id
  \end{align}
  on $L^2$. Moreover,
  \begin{align}\label{eq:completeness-time}
  	 \frac{1}{2\pi} \int_\R dv\, \la f, P_{y,v}f\ra
  	 = \int |h(x-y)|^2|f(x)|^2 \, dx
  \end{align}
  and
    \begin{align}\label{eq:completeness-fourier}
  	 \frac{1}{2\pi} \int_\R dy\, \la f, P_{y,v}f\ra
  	 = \int |\hat{h}(\eta-v)|^2|\hat{f}(\eta)|^2\, d\eta  .
  \end{align}
 \end{lemma}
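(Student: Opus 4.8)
The plan is to prove the three identities directly via Fourier analysis and the explicit form of the coherent states $h_{y,v}(x) = e^{ivx}h(x-y)$. I would begin with the two sliced identities \eqref{eq:completeness-time} and \eqref{eq:completeness-fourier}, since the full resolution of identity \eqref{eq:completeness} then follows by integrating either of them in the remaining variable. For \eqref{eq:completeness-time}, I would write out
\begin{align*}
  \la f, P_{y,v}f\ra = |\la h_{y,v}, f\ra|^2
  = \Big| \int_\R \overline{e^{ivx}h(x-y)}\, f(x)\, dx \Big|^2 .
\end{align*}
Setting $g_y(x) \coloneqq \overline{h(x-y)}\, f(x)$, the inner integral is $\int_\R e^{-ivx} g_y(x)\, dx = \sqrt{2\pi}\,\widehat{g_y}(v)$. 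Hence $\tfrac{1}{2\pi}\int_\R |\la h_{y,v},f\ra|^2\, dv = \int_\R |\widehat{g_y}(v)|^2\, dv = \|g_y\|^2$ by Plancherel's theorem, and $\|g_y\|^2 = \int_\R |h(x-y)|^2 |f(x)|^2\, dx$, which is exactly \eqref{eq:completeness-time}.

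For \eqref{eq:completeness-fourier} I would run the dual computation on the Fourier side. Using the boost/translation formula $\widehat{h_{y,v}}(\eta) = e^{-iy(\eta - v)}\widehat{h}(\eta - v)$ (which is recorded in the appendix) together with Plancherel, one gets $\la h_{y,v}, f\ra = \int_\R \overline{\widehat{h_{y,v}}(\eta)}\,\widehat{f}(\eta)\, d\eta = \int_\R e^{iy(\eta-v)}\overline{\widehat{h}(\eta-v)}\,\widehat{f}(\eta)\, d\eta$. Now defining $G_v(\eta)\coloneqq \overline{\widehat{h}(\eta-v)}\,\widehat{f}(\eta)$, the integral is $\sqrt{2\pi}\,\check{G}_v(y)$ up to the standard normalization, so integrating $|\la h_{y,v},f\ra|^2$ in $y$ and applying Plancherel once more yields $\tfrac{1}{2\pi}\int_\R |\la h_{y,v},f\ra|^2\, dy = \|G_v\|^2 = \int_\R |\widehat{h}(\eta-v)|^2 |\widehat{f}(\eta)|^2\, d\eta$, which is \eqref{eq:completeness-fourier}. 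Finally, to obtain \eqref{eq:completeness} in the weak sense, I would integrate \eqref{eq:completeness-time} over $y\in\R$ (or \eqref{eq:completeness-fourier} over $v$): by Tonelli's theorem the double integral of the nonnegative integrand factors, and since $\|h\|=1$ one finds $\tfrac{1}{2\pi}\iint_{\R^2} \la f, P_{y,v}f\ra\, dy\, dv = \int_\R |f(x)|^2 \big(\int_\R |h(x-y)|^2\, dy\big)\, dx = \|h\|^2 \|f\|^2 = \|f\|^2 = \la f, \id\, f\ra$. The full operator identity then follows by polarization from this quadratic form identity, which is precisely the meaning of \eqref{eq:completeness} holding in a weak sense.

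The only genuine subtlety — rather than a real obstacle — is justifying the interchange of the $dy$ (or $dv$) integration with the quadratic form and verifying that everything is finite and that Fubini/Tonelli applies; this is clean because the integrand $|\la h_{y,v},f\ra|^2$ is nonnegative and measurable, so Tonelli guarantees the factorization without any integrability hypothesis beyond $f,h\in L^2$. The phase factors from the boost cancel in modulus, so no oscillatory-integral estimates are needed, and the entire argument reduces to two applications of Plancherel's theorem.
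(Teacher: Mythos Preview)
Your proposal is correct and follows essentially the same approach as the paper: both recognize $\la h_{y,v},f\ra$ as a Fourier transform in $v$ (respectively an inverse Fourier transform in $y$ on the Fourier side), apply Plancherel to obtain \eqref{eq:completeness-time} and \eqref{eq:completeness-fourier}, and then integrate the sliced identity using $\|h\|=1$ together with polarization to get \eqref{eq:completeness}. The only cosmetic difference is ordering---the paper derives \eqref{eq:completeness} immediately after \eqref{eq:completeness-time} rather than at the end---but the mathematical content is identical.
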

 \bpf
 The completeness expressed in \eqref{eq:completeness}
 is well-known, see \cite{Perry, Simon}, the other two are less known.  We give a short proof for the convenience of the reader:
 In order to see that the operator $A$ given by its matrix elements
 \bdm
   \la f_1, Af_2\ra \coloneqq \frac{1}{2\pi}\int_{\R}\int_{\R} dydv  \la f_1, h_{y,v}\ra\la h_{y,v},f_2\ra
 \edm
 is the identity on $L^2$ it is enough, by polarization, to take $f_1=f_2=f$ and to check
 $\la f, Af\ra = \la f,f\ra$ for all $f\in L^2$.  Note
 \bdm
  \la h_{y,v},f\ra   = \int_{\R} e^{-ivx} \ol{h(x-y)} f(x)\, dx
  = (2\pi)^{1/2}\hatt{(\ol{h_{y,0}}f)}(v).
 \edm
 and thus by Plancherel,
 \bdm
  \frac{1}{2\pi} \int_\R dv\, \la f, P_{y,v}f\ra
   =
   \frac{1}{2\pi} \int_{\R} dv |\la h_{y,v},f \ra |^2
     = \int_{\R} dx \, |h_{y,0}(x)f(x)|^{2}
     = \int_{\R} dx |h(x-y)f(x)|^{2},
 \edm
 so \eqref{eq:completeness-time} follows and we also see
  \bdm
   \la f, Af \ra
     = \frac{1}{2\pi} \int_{\R} dy \int_{\R} dv |\la h_{y,v},f \ra |^2
     = \int_{\R} dy \int_{\R} dx |h(x-y)f(x)|^{2}
     = \int_{\R} |f(x)|^2\, dx
 \edm
 thus, in addition, \eqref{eq:completeness} follows. For \eqref{eq:completeness-fourier} we note that a short calculation reveals
 \begin{align*}
 	\widehat{h_{y,v}}(\eta) = e^{-iy(\eta-v)} \hat{h}(\eta-v)= e^{iyv}\hat{h}_{v,-y}(\eta).
 \end{align*}
 By Plancherel
 \begin{align*}
	\la h_{y,v},f \ra
 	= \la \widehat{h_{y,v}},\widehat{f} \ra
  	= \int_{\R} e^{iy(\eta-v)} \ol{\hat{h}(\eta-v)}\widehat{f}(\eta)\, d\eta
  = (2\pi)^{1/2}e^{-iyv} \mathcal{F}^{-1}\left[\ol{\hatt{h}_{v,0}}\hatt{f}\right](y)
 \end{align*}
 where $\mathcal{F}^{-1}$ denotes the inverse Fourier transform.
 Again by Plancherel, we thus have
 \begin{align*}
 	\frac{1}{2\pi} \int_\R dy\, \la f, P_{y,v}f\ra  	
 	=
 	  \frac{1}{2\pi} \int_\R dy\, |\la \widehat{h_{y,v}},\widehat{f} \ra |^2
 	=
 	\int_{\R} d\eta\, \left|\ol{ \hatt{h}_{v,0}(\eta) } \hatt{f}(\eta) \right|^2
 	=
 	\int_{\R} d\eta\, \left|\ol{ \hatt{h}(\eta-v) } \hatt{f}(\eta) \right|^2
  \end{align*}
  and  \eqref{eq:completeness-fourier} follows.
\epf

 We use coherent states in order to localize a wave function simultaneously in real and Fourier spaces and since Gaussians have nice localization properties simultaneously in real and Fourier spaces, it is natural to use Gaussian coherent states for this.

 First we note some important properties of Gaussians, which are needed in several places of this work.

\begin{lemma}[Properties of Gaussians] \label{lem:Gaussians}
 	Let $\lambda>0$, $\sigma_0\in\C$ with $\re(\sigma_0)>0$, and
 	\begin{align} \label{eq:Gauss}
 		g_{\sigma_{0}}(x) = \left(\frac{2\re(\sigma_0)\, \lambda^2}{\pi |\sigma_0|^2}\right)^{1/4}
 				e^{-\f{x^2}{\sigma_0}} .
 	\end{align}
  Then   $\|g_{\sigma_0}\|^2 =\lambda$, $\|g_{\sigma_0}'\|^2 =\f{\lambda}{\re\, \sigma_0}$, and its time evolution is given by
  \begin{align}\label{eq:centered Gaussian time evolution}
  	T_rg_{\sigma_0}(x) =  \left(\frac{2\re(\sigma_0)\, \lambda^2}{\pi |\sigma_0|^2}\right)^{1/4} \left( \f{\sigma_0}{\sigma(r)} \right)^{1/2} e^{-\f{x^2}{\sigma(r)}}
  \end{align}
  with $\sigma(r)= \sigma_0+4ir$. In particular, for all $\gamma\ge 1$,
  \begin{align}\label{eq:Gaussian-Lp-norm}
  	\|T_rg_{\sigma_0}\|_{L^\gamma(\R,dx)}^\gamma
  		=	\left( \f{\pi}{\gamma} \right)^{1/2} \left( \f{2\lambda^2}{\pi} \right)^{\gamma/4}
  			 \left( \f{\re(\sigma_0)}{|\sigma_0|^2} \right)^{\f{\gamma-2}{4}}
  			 \left( \f{|\sigma_0|}{|\sigma(r)|} \right)^{\f{\gamma-2}{2}}
  \end{align}
\end{lemma}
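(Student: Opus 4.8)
The plan is to obtain all four assertions by explicit Gaussian integration, reducing everything to the two elementary identities $\int_\R e^{-bx^2}\,dx=\sqrt{\pi/b}$ and $\int_\R x^2 e^{-bx^2}\,dx=\tfrac{1}{2b}\sqrt{\pi/b}$, both valid whenever $\re b>0$. Throughout I write $C\coloneqq 2\re(\sigma_0)\lambda^2/(\pi|\sigma_0|^2)$ for the normalizing constant, so $g_{\sigma_0}=C^{1/4}e^{-x^2/\sigma_0}$, and I record at the outset the pointwise identity $|e^{-x^2/\sigma_0}|^2=e^{-2\re(1/\sigma_0)x^2}=e^{-2\re(\sigma_0)x^2/|\sigma_0|^2}$, which uses $\re(1/\sigma_0)=\re(\sigma_0)/|\sigma_0|^2$ since $\re(\overline{\sigma_0})=\re(\sigma_0)$.

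First I would establish the two norm identities. With $b\coloneqq 2\re(\sigma_0)/|\sigma_0|^2>0$ one has $\|g_{\sigma_0}\|^2=C^{1/2}\sqrt{\pi/b}$, and substituting $b$ and $C$ the powers of $\re(\sigma_0)$ and $|\sigma_0|$ cancel to leave $\sqrt{\lambda^2}=\lambda$. For the derivative, $g_{\sigma_0}'(x)=-(2x/\sigma_0)g_{\sigma_0}(x)$ gives $\|g_{\sigma_0}'\|^2=(4/|\sigma_0|^2)\int_\R x^2|g_{\sigma_0}(x)|^2\,dx=(4/|\sigma_0|^2)C^{1/2}\cdot\tfrac{1}{2b}\sqrt{\pi/b}$, and inserting $b$ and simplifying the resulting powers of $\re(\sigma_0)$ and $|\sigma_0|$ produces exactly $\lambda/\re(\sigma_0)$.

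For the time evolution \eqref{eq:centered Gaussian time evolution} I would start from the explicit kernel \eqref{eq:explicit form of the sol}, writing $T_rg_{\sigma_0}(x)=C^{1/4}(4\pi ir)^{-1/2}\int_\R e^{i(x-y)^2/(4r)}e^{-y^2/\sigma_0}\,dy$ as a Gaussian integral in $y$ whose quadratic coefficient is $-A$ with $A\coloneqq 1/\sigma_0-i/(4r)$. Since $\re A=\re(\sigma_0)/|\sigma_0|^2>0$ the integral converges; completing the square yields the prefactor $(4\pi ir)^{-1/2}\sqrt{\pi/A}=(4irA)^{-1/2}$ times $\exp\!\big(x^2[\,\tfrac{i}{4r}-\tfrac{1}{16r^2A}\,]\big)$. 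The whole computation then rests on two algebraic identities: $4irA=\sigma(r)/\sigma_0$, which turns the prefactor into $(\sigma_0/\sigma(r))^{1/2}$, and $\tfrac{i}{4r}-\tfrac{1}{16r^2A}=\tfrac{i}{4r-i\sigma_0}=-1/\sigma(r)$, which collapses the exponent to $-x^2/\sigma(r)$; together with the constant $C^{1/4}$ this is precisely \eqref{eq:centered Gaussian time evolution}. The hard part will be the bookkeeping of the branches of the square roots $(4\pi ir)^{-1/2}$, $\sqrt{\pi/A}$ and $(\sigma_0/\sigma(r))^{1/2}$; I would fix them by continuity, requiring the prefactor $(\sigma_0/\sigma(r))^{1/2}\to 1$ as $r\to0$ (where the formula must reduce to $g_{\sigma_0}$), or, to be safe, first prove the identity for real positive $\sigma_0$—where $e^{-x^2/\sigma_0}$ is a genuine real Gaussian—and extend by analyticity in $\sigma_0$ on the half-plane $\re(\sigma_0)>0$.

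Finally, for the $L^\gamma$ norms I would simply take moduli in \eqref{eq:centered Gaussian time evolution}. Using $\re(\sigma(r))=\re(\sigma_0)$, since $4ir$ is purely imaginary for real $r$, gives $|T_rg_{\sigma_0}(x)|=C^{1/4}(|\sigma_0|/|\sigma(r)|)^{1/2}e^{-\re(\sigma_0)x^2/|\sigma(r)|^2}$. A last Gaussian integral with $b=\gamma\re(\sigma_0)/|\sigma(r)|^2$ then yields $\|T_rg_{\sigma_0}\|_{L^\gamma}^\gamma=C^{\gamma/4}(|\sigma_0|/|\sigma(r)|)^{\gamma/2}\sqrt{\pi/(\gamma\,\re(\sigma_0))}\,|\sigma(r)|$, and collecting the powers of $\re(\sigma_0)$, $|\sigma_0|$ and $|\sigma(r)|$ one checks that they combine into $(\re(\sigma_0)/|\sigma_0|^2)^{(\gamma-2)/4}$ and $(|\sigma_0|/|\sigma(r)|)^{(\gamma-2)/2}$, leaving the stated prefactor $(\pi/\gamma)^{1/2}(2\lambda^2/\pi)^{\gamma/4}$; this is exactly \eqref{eq:Gaussian-Lp-norm}.
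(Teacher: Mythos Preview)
Your proof is correct. The norm computations and the $L^\gamma$ identity match the paper's argument almost verbatim. The one genuine difference is how you obtain the time evolution \eqref{eq:centered Gaussian time evolution}: you compute it directly from the kernel \eqref{eq:explicit form of the sol} by completing the square in a complex Gaussian integral, whereas the paper makes the ansatz $T_rg(x)=A(r)e^{-x^2/\sigma(r)}$, plugs it into $i\partial_r u=-\partial_x^2 u$, and reads off the ODEs $\sigma'=4i$ and $iA'=2A/\sigma$. The paper's route is slightly slicker here because the branch of $(\sigma_0/\sigma(r))^{1/2}$ is fixed automatically by the initial condition $A(0)=A_0$, so there is no square-root bookkeeping at all; your kernel computation is equally valid but, as you correctly flag, forces you to track the branches of $(4\pi ir)^{-1/2}$ and $\sqrt{\pi/A}$ and reconcile them (your continuity argument at $r=0$, or analytic continuation from real $\sigma_0$, both do the job).
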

\begin{proof}	
  Write $g_{\sigma_0}$ as  $g(x)= A_0e^{-x^2/\sigma_0}$ with $A_0,\sigma_0\in \C$ with $\re(\sigma_0)>0$. Then
  \begin{align*}
    |g(x)| = |A_0| e^{-\f{\re(\sigma_0) x^2}{|\sigma_0|^2}}	
  \end{align*}
  and thus
  \begin{align*}
    \|g\|^2 = |A_0|^2 \int_\R 	e^{-\f{2\re(\sigma_0) x^2}{|\sigma_0|^2}}\, dx
    	= |A_0|^2  \left( \f{\pi |\sigma_0|^2}{2\re(\sigma_0)} \right)^{1/2}
  \end{align*}
  using the Gaussian integral $\int_\R e^{-\beta x^2}\, dx = (\tfrac{\pi}{\beta})^{1/2}$. Thus with the choice
  \begin{align}\label{eq:choice of A_0}
  	A_0= (\tfrac{2\re(\sigma_0)\, \lambda^2}{\pi |\sigma_0|^2})^{1/4}
  	\end{align}
  we have  $\|g\|^2 =\lambda$.
  In addition,
  \begin{align*}
  	\|g'\|^2 &=  |A_0|^2 4|\sigma_0|^{-2}\int_\R x^2	e^{-\f{2\re(\sigma_0) x^2}{|\sigma_0|^2}}\, dx
  		= |A_0|^2 4|\sigma_0|^{-2}
  		\left. \left(- \f{\partial}{\partial\beta}\int_\R e^{-\beta x^2}\, dx \right)\right|_{\beta= \f{2\re(\sigma_0)}{|\sigma_0|^2}} \\
  	&=  2|A_0|^2 |\sigma_0|^{-2} \pi^{1/2}
  		\left( \f{|\sigma_0|^2}{2\re(\sigma_0)} \right)^{3/2}
  		=
  		\f{\lambda}{\re(\sigma_0)}\, .
  \end{align*}

  To prove formula \eqref{eq:centered Gaussian time evolution} note that for a centered Gaussian the time evolution $T_rg$ can be found by making the ansatz
 \begin{align}\label{eq:ansatz}
   (T_rg)(x) = A(r) e^{-x^2/\sigma(r)}=:u(r,x).
 \end{align}
 A short calculation,  using that $u(r,x)$ solves $i \partial_r u = -\partial_x^2 u$, reveals that $A$ and $\sigma$ solve
 \begin{align*}
 	iA' = \frac{2A}{\sigma} \quad\text{and } \sigma'= 4i,
 \end{align*}
 thus $A(r)$ and $\sigma(r)$ are given by
 \beq\label{eq:A(r)}
  A(r)= A_0\left( \frac{\sigma_0}{\sigma(r)} \right)^{1/2}  \quad \text{and} \quad \sigma(r)= \sigma_0+4ir
 \eeq
 which proves \eqref{eq:centered Gaussian time evolution}.

 Using  \eqref{eq:ansatz}, \eqref{eq:A(r)}, and $\re(\sigma(r))= \re(\sigma_0)$  we get
 \begin{align*}
 	\|T_rg_{\sigma_0}\|_{L^\gamma(\R,dx)}^\gamma
 	&= |A_0|^\gamma \left|\f{\sigma_0}{\sigma(r)}\right|^{\gamma/2}
 		\int_\R e^{-\f{\gamma\re(\sigma_0)x^2}{|\sigma(r)|^2}}\, dx
 	=	|A_0|^\gamma \left|\f{\sigma_0}{\sigma(r)}\right|^{\gamma/2}
 		\left( \f{\pi |\sigma(r)|^2}{\gamma\re(\sigma_0)} \right)^{1/2}
 \end{align*}
 and with the choice \eqref{eq:choice of A_0} for $A_0$ and rearranging the terms this shows \eqref{eq:Gaussian-Lp-norm}.

\end{proof}

The localization properties of Gaussian coherent states are the content of

\begin{lemma}[Space-time localization properties of Gaussian coherent states] \label{lem:gaussian coherent states}
Let $g(x)= \pi^{-1/4} e^{-x^2/2}$ be the standard $L^2$ normalized Gaussian and
 \beq
   g_{y,v}(x)\coloneqq e^{ivx} g(x-y)
 \eeq
 its shifted and boosted version. Let
 \beq
   P^{\scriptscriptstyle{\le}}_L \coloneqq \frac{1}{2\pi}\int _{\R}dy\int_{|v|\le L}dv  |g_{y,v}\ra\la g_{y,v}|
 \eeq
 and
 \beq
 P^{\scriptscriptstyle{>}}_L \coloneqq \frac{1}{2\pi}\int_{\R} dy\int_{|v|> L}dv  |g_{y,v}\ra\la g_{y,v}|.
 \eeq
 Then $P^{\scriptscriptstyle{\le}}_L+P^{\scriptscriptstyle{>}}_L=\id$, $0\le P^{\scriptscriptstyle{\le}}_L\le \id$, and $0\le P^{\scriptscriptstyle{>}}_L\le \id$ as operators.
 Moreover $P^{\scriptscriptstyle{>}}_L$ localizes a wave function in the region of large frequencies $|\eta|\gtrsim L$ in
 the sense that for any $f\in H^\alpha$ we have
 \beq\label{eq:high momenta}
  \| P^{\scriptscriptstyle{>}}_Lf \| \lesssim L^{-\alpha} \| f \|_{H^\alpha}
 \eeq
 where the implicit constant does not depend on $f$ nor $L$.

 Moreover, the time-evolution of the shifted and boosted Gaussian $g_{y,v}$ is given by
 \beq\label{eq:gaussian time evolution}
   (T_rg_{y,v})(x) = \frac{1}{\pi^{1/4}\sqrt{1+2ir}} e^{-irv^2} e^{iv x}
   		e^{-\f{(x-y-2rv)^2}{2(1+2ir)}}
 \eeq
 and for any $f_1,f_2\in L^2$ which have separated supports we have the bilinear estimate
  \beq\label{eq:low momenta bilinear}
      \sup_{|r|\le R}\|T_r P^{\scriptscriptstyle{\le}}_L f_1 T_r P^{\scriptscriptstyle{\le}}_L f_2\|_{L^p_x}
        \lesssim A_R L^2 e^{L^2/p -B_{p,R} s^2} \|f_1\| \|f_2\|,~1 \le p <\infty,
  \eeq
  where $A_R\coloneqq \sqrt{1+4R^2}$,
   $B_{p,R}\coloneqq 2^{-4}(\sqrt{p(1+4R^2)}+1)^{-2}$, and $s\coloneqq \dist(\supp f_1, \supp f_2)$.
\end{lemma}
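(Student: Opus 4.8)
The plan is to build everything on the completeness relations of Lemma \ref{lem:completeness}, applied to the standard Gaussian $h=g$, together with the explicit evolution formula \eqref{eq:gaussian time evolution}, which I would establish first. For that formula I would simply combine the Galilei transformation law \eqref{eq:Galilei-trafo} with the centered-Gaussian evolution \eqref{eq:centered Gaussian time evolution} of Lemma \ref{lem:Gaussians}: taking $\sigma_0=2$, $\lambda=1$ there gives $T_rg(x)=\pi^{-1/4}(1+2ir)^{-1/2}e^{-x^2/(2(1+2ir))}$, and inserting this into $(T_rg_{y,v})(x)=e^{-irv^2}e^{ivx}(T_rg)(x-y-2rv)$ yields \eqref{eq:gaussian time evolution} at once.

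The partition-of-unity and positivity claims are immediate: splitting the $v$-integral in \eqref{eq:completeness} at $|v|=L$ gives $P^{\scriptscriptstyle{\le}}_L+P^{\scriptscriptstyle{>}}_L=\id$, while for every $f$ the quantity $\la f,P^{\scriptscriptstyle{\le}}_Lf\ra=\f{1}{2\pi}\int_\R dy\int_{|v|\le L}|\la g_{y,v},f\ra|^2\,dv$ is manifestly nonnegative and, being dominated by the full integral $\|f\|^2$, is at most $\|f\|^2$; the same applies to $P^{\scriptscriptstyle{>}}_L$. For the high-frequency bound \eqref{eq:high momenta} I would use the Fourier-side formula \eqref{eq:completeness-fourier}: since $\widehat g$ is again the standard Gaussian, $\la f,P^{\scriptscriptstyle{>}}_Lf\ra=\int_\R|\widehat f(\eta)|^2\big(\int_{|v|>L}|\widehat g(\eta-v)|^2\,dv\big)\,d\eta$, and a two-case estimate of the inner integral (for $|\eta|\le L/2$ the Gaussian tail gives super-polynomial decay $e^{-L^2/4}$; for $|\eta|>L/2$ one bounds the inner integral trivially by $1\lesssim L^{-2\alpha}(1+\eta^2)^\alpha$) shows $\int_{|v|>L}|\widehat g(\eta-v)|^2\,dv\lesssim L^{-2\alpha}(1+\eta^2)^\alpha$. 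Hence $\la f,P^{\scriptscriptstyle{>}}_Lf\ra\lesssim L^{-2\alpha}\|f\|_{H^\alpha}^2$, and since $0\le P^{\scriptscriptstyle{>}}_L\le\id$ gives $\|P^{\scriptscriptstyle{>}}_Lf\|^2=\la f,(P^{\scriptscriptstyle{>}}_L)^2f\ra\le\la f,P^{\scriptscriptstyle{>}}_Lf\ra$, this yields \eqref{eq:high momenta}.

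The real work is the bilinear estimate \eqref{eq:low momenta bilinear}. I would write $T_rP^{\scriptscriptstyle{\le}}_Lf_j$ through its kernel, $T_rP^{\scriptscriptstyle{\le}}_Lf_j(x)=\f{1}{2\pi}\int_\R dy\int_{|v|\le L}dv\,(T_rg_{y,v})(x)\,\la g_{y,v},f_j\ra$, and bound it by pulling absolute values inside. Using the modulus of \eqref{eq:gaussian time evolution}, $|T_rg_{y,v}(x)|=\pi^{-1/4}(1+4r^2)^{-1/4}e^{-(x-y-2rv)^2/(2(1+4r^2))}$, together with the crude localization $|\la g_{y,v},f_j\ra|\le\|f_j\|(\int_{\supp f_j}|g(\xi-y)|^2\,d\xi)^{1/2}$, I would reach a pointwise bound of the form $|T_rP^{\scriptscriptstyle{\le}}_Lf_j(x)|\lesssim L\,\|f_j\|\,e^{-(\dist(x,\supp f_j)-2|r|L)_+^2/(2(1+4r^2))}$, in which the factor $L$ records the length of the $v$-interval and the Gaussian width $\sqrt{1+4r^2}\le A_R$ is exactly the spreading scale of the evolved coherent state. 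Multiplying the two factors and using that $\dist(x,\supp f_1)+\dist(x,\supp f_2)\ge s$ forces the sum of the two squared exponents to be at least $2(s/2-2|r|L)_+^2/(2(1+4r^2))$ at every $x$, I would obtain a product bound carrying the separation factor $e^{-(s/2-2|r|L)_+^2/(1+4r^2)}$, after which taking the $L^p_x$ norm contributes the transverse Gaussian integration.

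The delicate point, and the step I expect to be the main obstacle, is converting the mixed exponent $-(s/2-2|r|L)_+^2/(1+4r^2)$, uniformly over $|r|\le R$, into the clean form $L^2/p-B_{p,R}s^2$ with the stated constants $A_R=\sqrt{1+4R^2}$ and $B_{p,R}=2^{-4}(\sqrt{p(1+4R^2)}+1)^{-2}$. This needs a completing-the-square / Young trade-off: writing $(s/2-2|r|L)_+^2\ge(1-t)\tfrac{s^2}{4}-(\tfrac1t-1)\,4r^2L^2$ for a free parameter $t\in(0,1)$, bounding $4r^2/(1+4r^2)\le1$ to peel off an $L^2$-loss, and then optimizing $t$ (whose optimal value is where the combination $\sqrt{p(1+4R^2)}$ surfaces) against the $p$-th power from the $L^p$ norm. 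The $e^{L^2/p}$ factor is the price of this split, and the fact that it is divided by $p$ is precisely what makes the later choice $2L^2=B_{1,R}s^2$ in Proposition \ref{prop:M-H1-bilinear} produce the desired $s^{-1}$ decay. The remaining bookkeeping — the prefactor $A_RL^2$, the harmless polynomial-in-$(RL)$ factors absorbed into $A_R$, and the uniform control of the transverse integral — is routine once this optimization is in place.
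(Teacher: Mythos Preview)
Your treatment of the identity $P^{\scriptscriptstyle{\le}}_L+P^{\scriptscriptstyle{>}}_L=\id$, the operator bounds, the high-frequency estimate \eqref{eq:high momenta}, and the evolution formula \eqref{eq:gaussian time evolution} is essentially the paper's argument.

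For the bilinear estimate \eqref{eq:low momenta bilinear}, however, your route differs from the paper's and contains a genuine gap. The paper does \emph{not} pass through a pointwise bound on $|T_rP^{\scriptscriptstyle{\le}}_Lf_j(x)|$; instead it applies Jensen's inequality (using the Gaussian kernel as a probability density in $(y,v)$) to control $|T_rP^{\scriptscriptstyle{\le}}_Lf_j(x)|^p$, multiplies and integrates in $x$ to produce an interaction Gaussian $e^{-[(y_1-y_2)+2r(v_1-v_2)]^2/(4(1+4r^2))}$, bounds $|\la g_{y,v},f_j\ra|\le (g_{0,0}*|f_j|)(y)$ \emph{without} extracting $\|f_j\|$, and then splits the $(y_1,y_2)$-integration over enlarged supports $\wti K_j=\{y:\dist(y,K_j)\le\tilde s\}$. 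The three resulting pieces are controlled via Young's inequality $\|g_{0,0}*|f_j|\|_{L^{2p}}\lesssim\|f_j\|$, and the precise constants $A_R,B_{p,R}$ fall out of the final optimization in $\tilde s$.

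The problem with your approach is the Cauchy--Schwarz step $|\la g_{y,v},f_j\ra|\le\|f_j\|(\int_{\supp f_j}|g(\xi-y)|^2\,d\xi)^{1/2}$. By pulling $\|f_j\|$ out immediately you discard the only integrability in $y$ (and hence in $x$) that makes the final $L^p_x$ norm finite for \emph{general} separated supports. Concretely, take $f_1$ supported in $\bigcup_{n\in\Z}[4n,4n+1]$ and $f_2$ in $\bigcup_{n\in\Z}[4n+2,4n+3]$, both in $L^2$, with $s=1$. Then $\dist(y,\supp f_j)\le 2$ for all $y$, so your factor $(\int_{\supp f_j}|g(\xi-y)|^2\,d\xi)^{1/2}$ is bounded below by a positive constant, your claimed pointwise bound degenerates to $|T_rP^{\scriptscriptstyle{\le}}_Lf_j(x)|\lesssim L\|f_j\|$ uniformly in $x$, and the product has infinite $L^p_x$ norm. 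The paper's bound, by contrast, is finite because $g_{0,0}*|f_j|$ inherits the $L^2$ decay of $f_j$ and Young's inequality converts this to the needed $L^{2p}$ control. Your scheme would work when the supports are half-lines (which is all that Proposition~\ref{prop:fat-tail-bound+} actually needs), but it does not prove the lemma as stated. Even in that restricted setting, the exponent in your pointwise bound is off by a constant factor and the optimization you sketch does not obviously recover the specific $B_{p,R}$.
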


\bpf
The first assertions are clear, since by Lemma \ref{lem:completeness} we have $P^{\scriptscriptstyle{\le}}_L+ P^{\scriptscriptstyle{>}}_L=\id $ and certainly
$P^{\scriptscriptstyle{\le}}_L$ and $P^{\scriptscriptstyle{>}}_L\ge 0$ in the sense of operators. So also $P^{\scriptscriptstyle{\le}}_L = \id - P^{\scriptscriptstyle{>}}_L\le \id $ and similarly $P^{\scriptscriptstyle{>}}_L\le \id $.

To prove \eqref{eq:high momenta}, we first note that because of $0\le P^{\scriptscriptstyle{>}}_L\le \id$, one has
 \begin{align*}
 	\|P^{\scriptscriptstyle{>}}_L f\|^2
 	= \la {P^{\scriptscriptstyle{>}}_L}^{1/2}f, {P^{\scriptscriptstyle{>}}_L}{P^{\scriptscriptstyle{>}}_L}^{1/2} f \ra
 	\le \la f, P^{\scriptscriptstyle{>}}_L f \ra .
 \end{align*}
Let $P_{y,v}\coloneqq |g_{y,v}\ra \la g_{y,v}| $, then
\begin{align}
	\la f, P^{\scriptscriptstyle{>}}_L f\ra
	&= \frac{1}{2\pi} \int_{\R } dy \int_{|v|>L} dv\, \la f, P_{y,v} f\ra
	=  \int_{|v|>L} \int_{\R } |\hat{g}(\eta-v)|^2|\hat{f}(\eta)|^2\, d\eta dv   \nonumber\\
	&= \frac{1}{\sqrt{\pi}}
	    \int_{|v|>L} \int_{\R } e^{-(\eta-v)^2}|\hat{f}(\eta)|^2\, d\eta dv
	  = \int_{\R } H_L(\eta) |\hat{f}(\eta)|^2\, d\eta
	    \label{eq:loc-fourier-1}
\end{align}
due to \eqref{eq:completeness-fourier} and $\hat{g}=g$ where we set
 \begin{align*}
 	H_L(\eta)\coloneqq \frac{1}{\sqrt{\pi}}
	    \int_{|v|>L}  e^{-(\eta-v)^2}\, dv .
 \end{align*}
Note that $H_L$ is even, $0<H_L\le 1$, increasing on $[0,\infty)$, and
 $\lim_{\eta\to\infty} H_L(\eta)=1$.
 A short calculation reveals
\begin{align*}
	H_L(L) =
	    \frac{1}{2} + \frac{1}{\sqrt{\pi}}
	    \int_{2L}^\infty  e^{-v^2}\, dv
\end{align*}
so $H_L(L)$ is extremely close to $1/2$ for large $L$. For $|\eta|\le L/2$ and $|v|\ge L$, one has $|v-\eta|\ge |v|-|\eta| \ge |v| -L/2\ge L/2$, hence
 \begin{align*}
 	H_L(\eta) \le \frac{2}{\sqrt{\pi}}
	    \int_L^\infty  e^{-\f{L}{2}(v-\f{L}{2})}\, dv
	    = \frac{4}{\sqrt{\pi} L } e^{-\f{L^2}{4}}
	    \quad \text{for all } |\eta|\le \frac{L}{2}.
 \end{align*}
So
 \begin{align*}
 	\int_{\R } H_L(\eta) |\hat{f}(\eta)|^2\, d\eta
 	&= \int_{|\eta|\le L/2} H_L(\eta) |\hat{f}(\eta)|^2\, d\eta
 	   + \int_{|\eta|> L/2} H_L(\eta) |\hat{f}(\eta)|^2\, d\eta \\
 	&\le
		\frac{4}{\sqrt{\pi} L } e^{-\f{L^2}{4}} \|f\|^2
		+ \int_{|\eta|> L/2}  |\hat{f}(\eta)|^2\, d\eta .
 \end{align*}
Using
 \begin{align*}
 	\int_{|\eta|> L/2}  |\hat{f}(\eta)|^2\, d\eta
 	\le
 	  (L/2)^{-2\alpha} \int_{|\eta|> L/2}  |\eta|^{2\alpha}| \hat{f}(\eta)|^2\, d\eta
 	  \le
 	    (L/2)^{-2\alpha} \|f\|_{H^\alpha}^2
 \end{align*}
 completes the proof of \eqref{eq:high momenta}.

To prove formula \eqref{eq:gaussian time evolution} first note that for the
centered Gaussian from \eqref{eq:Gauss} with $\sigma_0=2$ and $\lambda=1$
Lemma \ref{lem:Gaussians} gives the time evolution as
  \begin{align*}
    (T_rg_{0,0})(x) = \pi^{-1/4} \frac{1}{\sqrt{1+2ir}} e^{-\f{x^2}{2(1+2ir)}}.
  \end{align*}
 Now we use the Galilei transformation formula \eqref{eq:Galilei-trafo} to arrive at
 \bdm
   (T_rg_{y,v})(x) = \pi^{-1/4}\frac{e^{-irv^2} e^{iv x}}{\sqrt{1+2ir}}
   	e^{-\f{(x-y-2rv)^2}{2(1+2ir)}}
 \edm
which is \eqref{eq:gaussian time evolution}.

To prove \eqref{eq:low momenta bilinear},  fix $|r|\le R$ and note that
 $$
    (T_r P^{\scriptscriptstyle{\le}}_L f)(x)= \frac{1}{2\pi}\int_\R dy  \int _{|v| \le L} \!\!\!\!\!\! dv\,(T_rg_{y,v})(x) \la g_{y,v},f \ra .
 $$
Thus using \eqref{eq:gaussian time evolution} and the triangle inequality
  \bdm
    |(T_rP^{\scriptscriptstyle{\le}}_Lf)(x)|\le \frac{1}{2\pi(\pi(1+4r^2))^{1/4}} \int_{\R} dy \int_{|v|\le L} \!\!\!\!\!\! dv\,  e^{-\frac{(x-y-2rv)^2}{2(1+4r^2)}} | \la g_{y,v},f \ra |
  \edm
together with
  \bdm
    A(r,L)\coloneqq\int_\R dy  \int _{|v| \le L} dv \, e^{-\f{(x-y-2rv)^2}{2(1+4r^2)}} = 2L (2\pi(1+4r^2))^{1/2},
  \edm
 which is independent of $x$, by translation invariance of Lebesgue measure we can thus bound
 \bdm
   |(T_rP^{\scriptscriptstyle{\le}}_Lf)(x)|
     \le \frac{A(r,L)}{2\pi(\pi(1+4r^2))^{1/4}} \int_{\R}  \int_{\R} \nu_x(dy,dv)  | \la g_{y,v},f \ra |
  \edm
  with the probability measure $\nu_x(dy,dv)\coloneqq \tfrac{1}{A(r,L)} e^{-\f{(x-y-2rv)^2}{2(1+4r^2)}} \id_{|v|\le L} \, dydv$. Hence
 Jensen's inequality \cite{LiebLoss} for the convex function $r\to |r|^p,\ 1 \le p < \infty ,$ shows
 \begin{align*}
     \left| (T_r P^{\scriptscriptstyle{\le}}_L f)(x)\right|^p &
     \le
      \frac{A(r,L)^p}{(2\pi)^{p}(\pi(1+4r^2))^{p/4}} \int_{\R}  \int _{\R}\nu_x(dy,dv)  | \la g_{y,v},f \ra |^p \\
     & \lesssim L^{p-1} (1+4r^2)^{\f{p-2}{4}} \int_\R dy  \int _{|v| \le L} \!\!\!\!\!\! dv\, e^{-\f{(x-y-2rv)^2}{2(1+4r^2)}}|\la g_{y,v},f \ra|^p.
 \end{align*}
Therefore,
 \beq\label{eq:PL0}
 \begin{split}
     \|(T_rP^{\scriptscriptstyle{\le}}_Lf_1) (T_rP^{\scriptscriptstyle{\le}}_Lf_2)\|_{L_x^p}^p
     \lesssim L^{2(p-1)} (1+4r^2)^{\f{p-2}{2}}
     \int_\R dy_1  \int _{|v_1| \le L} \!\!\!\!\!\! dv_1 \int_\R dy_2  \int _{|v_2| \le L} \!\!\!\!\!\! dv_2 \hskip 5cm \\
     |\la g_{y_1,v_1},f_1 \ra|^p |\la g_{y_2,v_2},f_2 \ra|^p \int_\R dx \, e^{-\f{(x-y_1-2rv_1)^2 + (x-y_2-2rv_2)^2}{2(1+4r^2)}}  \hskip 4cm\\
      \lesssim L^{2(p-1)}(1+4r^2)^{\f{p-1}{2}}
     \int_\R dy_1  \int _{|v_1| \le L} \!\!\!\!\!\! dv_1 \int_\R dy_2  \int _{|v_2| \le L} \!\!\!\!\!\! dv_2 \
     |\la g_{y_1,v_1},f_1 \ra|^p |\la g_{y_2,v_2},f_2 \ra|^p \ e^{-\f{[(y_1-y_2)+2r(v_1-v_2)]^2}{4(1+4r^2)}}
      \hskip 2.2cm
 \end{split}
 \eeq
 where we used
 \bdm
   \int_\R dx \, e^{-\f{(x-y_1-2rv_1)^2 + (x-y_2-2rv_2)^2}{2(1+4r^2)}}  = (\pi(1+4r^2))^{1/2}e^{-\f{((y_1-y_2)+2r(v_1-v_2))^2}{4(1+4r^2)}}
 \edm
 by a simple convolution of Gaussians.
Since $(a+b)^2\ge \tfrac{1}{2}a^2- b^2$ for any $a,b\in\R$, the lower bound
  \bdm
    [(y_1-y_2)+2r(v_1-v_2)]^2 \ge \frac{1}{2}(y_1-y_2)^2 - 16r^2L^2
  \edm
 holds for all $y_1,y_2$, and $|v_1|, |v_2|\le L$. Moreover,
 \bdm
   |\la g_{y,v},f \ra| \le \int_\R  |g_{y,v}(x)||f(x)|\, dx= \pi^{-1/4} \int_\R e^{-\f{1}{2}(x-y)^2}|f(x)|\, dx = (g_{0,0}*|f|)(y),
 \edm
and thus \eqref{eq:PL0} gives the upper bound
  \beq\label{eq:PL}
 \begin{split}
     \|T_rP^{\scriptscriptstyle{\le}}_Lf_1 T_rP^{\scriptscriptstyle{\le}}_Lf_2\|_{L_x^p}^p
     \lesssim L^{2p}\ e^{L^2}(1+4r^2)^{\f{p-1}{2}}
     \int_\R dy_1  \int_\R dy_2\,
       e^{-\f{(y_1-y_2)^2}{8(1+4r^2)}} [ g_{0,0}*|f_1|(y_1) ]^p [ g_{0,0}*|f_2|(y_2) ]^p.
 \end{split}
 \eeq

Let $K_j\coloneqq \supp \, f_j,\  j=1,2$ be the support of $f_j$. Recall that we assume $s\coloneqq \dist(K_1,K_2)>0$. Given $0<\tilde{s}<s/2$, we will enlarge $K_j$ a little bit,
 \bdm
   \wti{K}_j\coloneqq \{ y\in\R|\, \dist(y,K_j) \le \tilde{s} \}.
 \edm
 Note that $\dist(\wti{K}_1,\wti{K}_2)= s-2\wti{s}>0$
 and we will split the integral in \eqref{eq:PL} according to the splitting
 $\R\times\R= (\wti{K}_1^c\times\R)\cup(\wti{K}_1\times \R )=  (\wti{K}_1^c\times\R)\cup(\wti{K}_1\times \wti{K}_2^c)\cup (\wti{K}_1\times \wti{K}_2)$.
As a further  preparation,
note that the Cauchy-Schwartz inequality implies
 \beq\label{eq:double-integral}
 \begin{aligned}
    &\left|\int\!\!\!\int_{\R^2} e^{-\f{1}{c}(y_1-y_2)^2}h_1(y_1)h_2(y_2)\,dy_1dy_2  \right| \\
     \le &\left[\int\!\!\!\int_{\R^2}e^{-\f{1}{c}(y_1-y_2)^2} |h_1(y_1)|^2 dy_1dy_2 \right]^{1/2} \left[\int\!\!\!\int_{\R^2}e^{-\f{1}{c}(y_1-y_2)^2} |h_2(y_2)|^2 dy_1dy_2 \right]^{1/2}\\
      = & \sqrt{c\pi} \|h_1\|\|h_2\|.
 \end{aligned}
 \eeq
 for any $h_1,h_2\in L^2(\R)$ and $c>0$. Using this, we can bound
 \begin{align}\label{eq:I1}
    I_1 \coloneqq & \int_{\wti{K_1}^c} dy_1 \int_\R dy_2\  e^{-\f{(y_1-y_2)^2}{8(1+4r^2)}} \bigl[(g_{0,0}*|f_1|)(y_1)\bigr]^p\bigl[(g_{0,0}*|f_2|)(y_2)\bigr]^p \notag\\
    & \lesssim (1+4r^2)^{1/2} \left[\int_{\wti{K_1}^c} \Bigl[(g_{0,0}*|f_1|)(y_1)\Bigr]^{2p} \, dy_1  \right]^{1/2}
    \left[\int_\R \Bigl[(g_{0,0}*|f_2|)(y_2)\Bigr]^{2p} \,  dy_2\right]^{1/2}.
 \end{align}
Moreover, by Young's inequality,
 \beq\label{eq:I1-1}
     \int_{\R} \Bigl[(g_{0,0}*|f_2|)(y_2)\Bigr]^{2p} \,  dy_2 \lesssim \|f_2\|^{2p}
 \eeq
and, on the other hand,
 \begin{align}\label{eq:I1-2}
    & \int_{\wti{K_1}^c}  \Bigl[(g_{0,0}*|f_1|)(y)\Bigr]^{2p}\, dy
    =\f{1}{(2\pi)^p} \int_{\wti{K_1}^c} dy \left[ \int_{K_1} e^{-\f{1}{2}(y-z)^2} |f_1(z)|\, dz \right]^{2p} \notag \\
    & \lesssim e^{-\f{p}{2} [\text{dist}(K_1,\wti{K_1}^c)]^2} \|e^{-\f{1}{4}| \cdot |^2}*|f_1| \|_{L^{2p}}^{2p} \notag\\
    & \lesssim e^{-\f{p}{2} [\text{dist}(K_1,\wti{K_1}^c)]^2} \|f_1\|^{2p},
 \end{align}
where again Young's inequality, similar as for \eqref{eq:I1-1}, has been used in the last inequality.
Plugging \eqref{eq:I1-1} and \eqref{eq:I1-2} into \eqref{eq:I1}, we obtain
 \beq\label{eq:I1-final}
     I_1 \lesssim (1+4r^2)^{1/2} e^{-\f{p}{4} [\text{dist}(K_1,\wti{K_1}^c)]^2} \|f_1\|^p \|f_2\|^p.
 \eeq
Furthermore, the bound
 \beq\label{eq:I2}
 \begin{aligned}
     I_{2} &\coloneqq  \int_{\wti{K_1}} dy_1 \int_{\wti{K_2}^c} dy_2\
     e^{-\f{(y_1-y_2)^2}{8(1+4r^2)}} \Bigl[(g_{0,0}*|f_1|)(y_1)\Bigr]^{p}\Bigl[(g_{0,0}*|f_2|)(y_2)\Bigr]^{p} \\
     &\lesssim (1+4r^2)^{1/2} e^{-\f{p}{4} [\text{dist}(K_2,\wti{K_2}^c)]^2} \|f_1\|^{p} \|f_2\|^{p}
 \end{aligned}
 \eeq
 follows as the one for $I_{1}$, by symmetry.

It remains to get a bound on
 \begin{align}\label{eq:I3}
     I_{3} & \coloneqq\int_{\wti{K_1}} dy_1 \int_{\wti{K_2}} dy_2\
     e^{-\f{(y_1-y_2)^2}{8(1+4r^2)}} \Bigl[(g_{0,0}*|f_1|)(y_1)\Bigr]^p\Bigl[(g_{0,0}*|f_2|)(y_2)\Bigr]^p.
 \end{align}
 Since $(y_1-y_2)^2\ge (y_1-y_2)^2/2+ [\dist(\wti{K}_1,\wti{K}_2)]^2/2$ in the integral in \eqref{eq:I3}, we get
  \begin{align}\label{eq:I3-1}
     I_{3} & \le e^{-\f{1}{16(1+4r^2)} [\text{dist}(\wti{K_1},\wti{K_2})]^2} \int_{\wti{K_1}} dy_1 \int_{\wti{K_2}} dy_2\
         e^{-\f{(y_1-y_2)^2}{16(1+4r^2)}} \Bigl[(g_{0,0}*|f_1|)(y_1)\Bigr]^p
     \Bigl[(g_{0,0}*|f_2|)(y_2)\Bigr]^p  \notag\\
     & \lesssim (1+4r^2)^{1/2}\,e^{-\f{1}{16(1+4r^2)} [\text{dist}(\wti{K_1},\wti{K_2})]^2} \|g_{0,0}*|f_1|\, \|_{L^{2p}}^p \|g_{0,0}*|f_2|\, \|_{L^{2p}}^p \notag\\
     & \lesssim (1+4r^2)^{1/2}\,e^{-\f{1}{16(1+4r^2)} [\text{dist}(\wti{K_1},\wti{K_2})]^2} \|f_1\|^p \|f_2\|^p
 \end{align}
 using again \eqref{eq:I1-1}.
 Combining
  \bdm
     \|T_rP^{\scriptscriptstyle{\le}}_Lf_1 T_rP^{\scriptscriptstyle{\le}}_Lf_2\|_{L_x^p}^p
     \lesssim L^{2p}\ e^{L^2}(1+4r^2)^{\f{p-1}{2}} \Big( I_1+I_2+I_3 \Big)
  \edm
with \eqref{eq:I1-final}, \eqref{eq:I2}, \eqref{eq:I3-1}, $\dist({K}_j,\wti{K}_j^c)=\tilde{s}$ for $j=1,2$, and  $\dist(\wti{K}_1,\wti{K}_2)=s-2\tilde{s}$, we obtain
 \begin{align*}
      \|T_r P^{\scriptscriptstyle{\le}}_L f_1 T_r P^{\scriptscriptstyle{\le}}_L f_2\|^p_{L^p_x}
      \lesssim L^{2p}\ e^{L^2}(1+4r^2)^{\f{p}{2}}
           \Big[
               e^{-\f{p\wti{s}^2}{4}} + e^{- \f{(s-2\wti{s})^2}{16(1+4r^2)}}
            \Big]
            \|f_1\|^p \|f_2\|^p
 \end{align*}
choosing $\tilde{s}=s/(2\sqrt{p(1+4r^2)} + 2)$, which makes $p\tilde{s}^2/4= (s-2\tilde{s})^2/(16(1+4r^2))$, gives the upper bound
 \bdm
  \begin{split}
   \|T_r P^{\scriptscriptstyle{\le}}_L f_1 T_r P^{\scriptscriptstyle{\le}}_L f_2\|_{L^p_x}
    &\lesssim
    (1+4r^2)^{1/2} L^{2} e^{L^2/p}
               e^{-\f{s^2}{16(\sqrt{p(1+4r^2)}+1)^2}}
            \|f_1\| \|f_2\| \\
     & \le (1+4R^2)^{1/2} L^{2} e^{L^2/p}
               e^{-\f{s^2}{16(\sqrt{p(1+4R^2)}+1)^2}}
            \|f_1\| \|f_2\|
  \end{split}
 \edm
 for all $|r|\le R$,  which proves \eqref{eq:low momenta bilinear}.
\epf

\noindent
\textbf{Acknowledgements: } Mi-Ran Choi and Young-Ran~Lee thank the Department of Mathematics at KIT and  Dirk Hundertmark thanks the Department of Mathematics at Sogang University for their warm hospitality.
	We would also like to thank the anonymous referees for constructive criticism on an earlier version of this work, making us rethink some of our results, leading to, in part,  strong improvements of our previous results.
	Dirk Hundertmark gratefully acknowledges financial support by the Deutsche Forschungsgemeinschaft (DFG) through CRC 1173. He also thanks the Alfried Krupp von Bohlen und Halbach Foundation for financial support. Young-Ran Lee thanks the National Research Foundation of Korea(NRF) for financial support funded by the Korea government under grants (MSIP) No. 2011-0013073 and (MOE) No. 2014R1A1A2058848.
\renewcommand{\thesection}{\arabic{chapter}.\arabic{section}}
\renewcommand{\theequation}{\arabic{chapter}.\arabic{section}.\arabic{equation}}
\renewcommand{\thetheorem}{\arabic{chapter}.\arabic{section}.\arabic{theorem}}

\def\cprime{$'$}

\end{document}